\newtheorem{theorem}{Theorem}
\newtheorem{lemma}[theorem]{Lemma}
\newtheorem{prop}[theorem]{Proposition}
\newtheorem{cor}[theorem]{Corollary}
\newtheorem{defin}[theorem]{{Definition}}
\newtheorem{definition}[theorem]{{Definition}}
\newtheorem{claim}[theorem]{{Claim}}
\theoremstyle{remark}
\newtheorem{remark}[theorem]{Remark}
\newcommand{\fcal}{\mathcal{F}}
\newcommand{\gcal}{\mathcal{G}}\newcommand{\hcal}{\mathcal{H}}
\newcommand{\jcal}{\mathcal{J}}
 \def\calH{\hcal} 
\def\calF{\fcal}  
 \def\calJ{\jcal} \def\calG{\gcal}
 \def\O{\Omega}
\def\vp{\varphi} \def\eps{\epsilon}
\def\Cvx{{\operatorname{Cvx}}}
\def\co{{\operatorname{co}}}
\def\MA{Monge--Amp\`ere } 
\def\lb{\label}
\def\h#1{\hbox{#1}}
\def\q{\quad} \def\qq{\qquad}
\def\ra{\rightarrow}
\def\id{{\operatorname{id}}}
\def\half{{\frac{1}{2}}}
\def\div{\operatorname{div}}
\def\del{\partial}
 \newcommand{\RR}{\mathbb{R}}
 \newcommand{\NN}{{\mathbb N}}
\def\beq{\begin{equation}}
\def\eeq{\end{equation}}
\def\bpf{\begin{proof}}
\def\epf{\end{proof}}
\def\eaeq{\end{aligned}}
\def\baeq{\begin{aligned}}
\def\O{\Omega}\def\L{\Lambda}
\def\L{\Lambda}
\def\ve{\varepsilon}
\begin{document}

\title[Optimal transport via a Monge--Amp\`ere optimization problem]{Optimal transport via a Monge--Amp\`ere\\optimization problem}

\author[M. Lindsey]{Michael Lindsey}
\address{University of California, Berkeley}
\email{lindsey@berkeley.edu}

\author[Y. A. Rubinstein]{Yanir A.\ Rubinstein}
\address{University of Maryland}
\email{yanir@umd.edu}

\maketitle

\begin{abstract}
We rephrase Monge's optimal transportation (OT) problem with quadratic cost---via a Monge--Amp\`ere equation---as an infinite-dimensional optimization problem, which is in fact a convex problem when the 
target is a log-concave measure with convex support. We define a natural finite-dimensional discretization to the problem and 
associate a piecewise affine convex function
to the solution of this discrete problem. 
The discrete problems always admit a solution, which can be obtained by standard convex optimization algorithms whenever the target is a log-concave measure with convex support.
We show that under suitable regularity conditions the convex functions retrieved from 
the discrete problems converge to the convex 
solution of the original OT problem
furnished by Brenier's 
theorem. Also, we put forward an interpretation of our convergence result that suggests applicability to the convergence of a wider range of numerical methods for OT. Finally, we demonstrate the practicality of our convergence result by
providing visualizations of OT maps as well as of the dynamic OT problem obtained by solving the discrete problem numerically.
\end{abstract}

\tableofcontents

\section{Introduction}
\label{IntroSec}

In this article we develop a scheme for the numerical solution of the 
the Monge--Amp\`ere equation
governing optimal transport. 
Our main theorem provides natural and computationally feasible approximations of optimal transportation (OT) maps as well as of their Brenier convex potential, 
in the case of quadratic cost.

To achieve this we rephrase Monge's problem
as an infinite-dimensional optimization problem, 
which is, in fact, a convex problem when the target is a measure with convex support whose density $g$ is such that
$g^{-1/n}$ is convex.
Note that this class of measures {\it includes} all
log-concave measures with convex support.
We define a natural finite-dimensional discretization to the problem and 
associate a piecewise affine convex function
to the solution of this discrete problem. 
The discrete problems always admit a solution, which can be obtained by standard convex optimization algorithms whenever the target is a measure with convex support whose density $g$ is such that
$g^{-1/n}$ is convex.
We show that under suitable regularity conditions the convex functions retrieved from the discrete problems converge to the convex 
solution of the original OT problem
furnished by Brenier's 
theorem. 
While this result yields new insights
about optimal transport maps
it also has applications to numerical 
simulation of optimal transportation.
We illustrate this with a number of numerical examples.

\subsection{Yet another formulation of Monge's problem}
 \label{YetSubSec} 

Let $\Omega$ and $\Lambda$ be bounded open sets in $\mathbb{R}^{n}$
with $\Lambda$ convex, 
and let $f$ and $g$ be positive functions
on $\Omega$ and $\Lambda$, respectively, each bounded away from
zero and infinity. For simplicity,  assume that $f$ and $g$ are in $C^{0,\alpha}(\overline{\Omega})$ and $C^{0,\alpha}(\overline{\Lambda})$, respectively, and that they define positive measures $\mu$ and $\nu$ on $\Omega$ and
$\Lambda$, respectively, by 
$$
\mu=f\,dx, \quad \nu=g\,dx,
$$
where $dx$ denotes the Lebesgue measure on $\mathbb{R}^{n}$, 
and, assume that 
\begin{equation}
\begin{aligned}
\label{massEq}
\int_{\Omega}f\,dx=\int_{\Lambda}g\,dx.
\end{aligned}
\end{equation}
Then by results by Brenier and Caffarelli 
\cite{Brenier, Caf1992, VillaniOldNew}, 
there exists a unique solution of the corresponding Monge problem 
for the quadratic cost, i.e., 
$$
\underset{\{T:\O\ra\L\,:\, T_\#\mu=\nu\}}
{\mathrm{minimize}\;}  
\int_{\O}|T(x)-x|^{2}\,d\mu(x),
$$
and, moreover, $T$ is in $C^{1,\alpha}(\O)$.
Further, the solution is given by $T=\nabla\varphi$, for 
$\varphi$ convex and $C^{2,\alpha}(\O)$. 
In addition, $\varphi$ is the unique (up to an additive constant) 
Brenier (and, hence, also Alexandrov, or viscosity) solution of the second 
boundary value problem for the Monge--Amp\`ere equation 
\beq
\baeq
\label{MAEq}
\det\left(\nabla^{2}\varphi(x)\right)
&=\frac{f(x)}{g\left(\nabla\varphi(x)\right)},
\q x\in\Omega,
\cr
\del\vp(\O) &\subset\overline{\Lambda},
\eaeq
\eeq
where $\del\vp$ denotes the subdifferential map associated to the convex function $\vp$.

The following result rephrases Monge's problem as two different  infinite-dimensional 
optimization problems.
Theses problems can be considered `convex'  whenever, in particular, the target 
measure has log-concave (e.g., uniform) density with convex support. We refer
to \S\ref{InfDimConvSec} for a proof, as well as more details and intuition on the
aforementioned interpretation.

\begin{prop}
\label{InfDimProp}
With notation and hypotheses as in the above discussion, $\varphi$
is the unique solution of the following optimization problems: 
\begin{eqnarray*}
 & \underset{\psi\in \calJ}{\mathrm{minimize}\;}  
\calF(\psi):=\int_{\Omega} \calG_i(\psi,x)dx,\ i\in\{1,2\},
\end{eqnarray*}
 where 
$$
\calJ:=\{\psi\in C^2(\Omega)\,:\, \psi\ \mathrm{\ convex,\  and\  }
\nabla\psi(\Omega)\subseteq\Lambda\},
$$
and
$$
\baeq
\calG_1(\psi,x)
&:=
\max\left\{0,
-\log\det\left(\nabla^{2}\psi(x)\right)-\log g\left(\nabla\psi(x)\right)+\log f(x)\right\},
\cr
\calG_2(\psi,x)
&:=
\max
\left\{
0,
-\big(
\det\left(\nabla^{2}\psi(x)\right)
\big)^{1/n}
+ 
\Big(
f(x)/g\left(\nabla\psi(x)\right)
\Big)^{1/n}
\right\}.
\eaeq
$$
\end{prop}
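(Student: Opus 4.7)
The plan is to show that for both $i \in \{1, 2\}$, $\calF_i \geq 0$ on $\calJ$ with equality exactly when $\psi$ solves~\eqref{MAEq}, so that $\varphi$ is the unique minimizer (up to an additive constant) by the uniqueness half of Brenier--Caffarelli. Nonnegativity of $\calF_i$ is immediate from the definition of $\calG_i$ as a pointwise maximum with $0$. That $\calF_i(\varphi) = 0$ follows because $\varphi \in \calJ$ (it is $C^{2,\alpha}$ and convex with $\nabla\varphi(\Omega) \subseteq \overline{\Lambda}$) and solves~\eqref{MAEq} exactly, so the expression inside the max in each $\calG_i(\varphi, x)$ vanishes identically.

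The substantive step is the reverse direction. If $\psi \in \calJ$ satisfies $\calF_i(\psi) = 0$, then $\calG_i(\psi, x) = 0$ for a.e.\ $x$, which for both $i = 1, 2$ is equivalent to the single pointwise inequality
\begin{equation*}
\det \nabla^2\psi(x) \cdot g(\nabla\psi(x)) \geq f(x), \q \text{a.e.\ } x \in \Omega.
\end{equation*}
I would upgrade this to equality by proving the complementary integrated bound
\begin{equation*}
\int_\Omega \det \nabla^2\psi(x) \cdot g(\nabla\psi(x)) \, dx \;\leq\; \int_{\Lambda} g(y) \, dy,
\end{equation*}
which, together with the mass balance~\eqref{massEq}, forces pointwise equality a.e.

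To establish this integrated bound, apply the area formula to the $C^1$ map $\nabla\psi \colon \Omega \to \RR^n$ (with nonnegative Jacobian $\det \nabla^2\psi$) to rewrite the left-hand side as $\int_{\RR^n} g(y) N(y) \, dy$, where $N(y) := \#(\nabla\psi)^{-1}(y)$. The constraint $\nabla\psi(\Omega) \subseteq \Lambda$ confines the support of $N$ to $\Lambda$, and by Sard's theorem the image $\nabla\psi(\{\det \nabla^2\psi = 0\})$ has measure zero and is negligible for this integral. The remaining task is to show $N(y) \leq 1$ for a.e.\ $y$ in the image of $\{\det \nabla^2\psi > 0\}$. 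Here I would use the convexity of $\psi$: if $\nabla\psi(x_1) = \nabla\psi(x_2) = y$ with $x_1 \neq x_2$, a short convex-analysis argument (each $x_i$ is a contact point for the supporting hyperplane of slope $y$) shows that $\psi$ is affine on the segment $[x_1, x_2]$, so $\det \nabla^2\psi$ vanishes along the open segment and, by $C^2$ continuity, also at the endpoints, contradicting $x_1 \in \{\det > 0\}$.

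Once $\det \nabla^2\psi \cdot g(\nabla\psi) = f$ a.e., $\psi$ is a Brenier (hence Alexandrov) solution of~\eqref{MAEq} with $\nabla\psi(\Omega) \subseteq \overline\Lambda$, so the Brenier--Caffarelli uniqueness statement yields $\psi = \varphi$ up to an additive constant, establishing uniqueness of the minimizer. The delicate point is the injectivity step when $\Omega$ is not convex, since then the segment $[x_1, x_2]$ may exit $\Omega$; this I would handle by observing that the contact set of a convex function is convex as an ambient subset and localizing to a convex neighborhood in $\Omega$ containing a portion of the segment where the contradiction still runs.
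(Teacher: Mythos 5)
Your proposal is correct and hinges on the same mass-balance idea as the paper's own proof: once the penalty vanishes a.e., the one-sided inequality $\det\nabla^2\psi\cdot g(\nabla\psi)\geq f$ plus the constraint $\nabla\psi(\O)\subseteq\L$ and \eqref{massEq} force pointwise equality, after which Brenier--Caffarelli uniqueness identifies $\psi$ with $\vp$. The difference is in the machinery. The paper first observes that, since $\psi\in C^2(\O)$ and $f,g$ are continuous, the a.e.\ inequality is actually an everywhere inequality, so $\det\nabla^2\psi\ge \inf f/\sup g>0$ on all of $\O$; consequently $\nabla^2\psi>0$ everywhere, the contact-set argument you describe yields global injectivity of $\nabla\psi$, and a direct change of variables then gives the contradiction when $\psi\neq\vp$. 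You instead reach for the area formula with multiplicity and Sard's theorem, and establish injectivity only over $\{\det\nabla^2\psi>0\}$; this is correct but more than is needed here, since by the continuity upgrade that set is all of $\O$. Your route would be the natural one if $\calJ$ were enlarged to less regular (say $W^{2,1}$) convex functions, where the continuity argument is unavailable. Finally, you are right to flag the injectivity subtlety when $\O$ is not convex, and your localization fix is correct: strict positive-definiteness of $\nabla^2\psi$ at either endpoint already yields the contradiction via the one-dimensional second difference quotient along the segment, even if the segment leaves $\O$. The paper's proof glosses over this (it asserts ``strongly convex'' and hence injective directly from the determinant bound), so your care here is a genuine improvement in rigor, though not a change in strategy.
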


The previous result, while not difficult to prove, provides
the key starting point for our discretization, that we now turn to discuss.

\subsection{A discrete Monge--Amp\`ere optimization problem}
\label{DMAOPIntroSubSec}

Let 
$$
x_{1},\ldots,x_{N}\in\overline{\Omega},
$$ 
be points,
and let 
$$
S_{1},\ldots,S_{M}\subset \overline{\Omega}
$$
be $n$-dimensional simplices whose set of vertices equals $\{x_{j}\}_{j=1}^N$ 
that together form an {\it almost-triangulation} of $\overline{\Omega}$. 
By this we mean that the intersection of any two of the $S_{i}$ is either empty or a common face (of any dimension) and that $\overline{\Omega} \backslash \bigcup_{i=1}^{M}S_{i}$,
has `small' volume. Note that the triangulation can be made perfect if
$\Omega$ is a polytope. We denote the vertices of the simplex 
$S_{i}$
by 
$$
x_{i_{0}},\ldots,x_{i_{n}},\q i_0,\ldots, i_n\in\{1,\ldots,N\},
$$
so that $S_i 
= 
\co\{
x_{i_{0}},\ldots,x_{i_{n}}
\}
$, 
where 
$$
\co\, A
$$ 
denotes the convex hull of a set $A$.
We denote by
$$
\eta_1,\ldots,\eta_N\in\overline{\Lambda}
$$
points in the closure of the target domain $\Lambda$, and we think of $\eta_j$ as the `image' 
of $x_j$, so that intuitively $S_i$ gets mapped to
$\co\{
\eta_{i_{0}},\ldots,\eta_{i_{n}}
\}
$.
We associate to the discrete map $x_j\mapsto \eta_j$ and the almost-triangulation 
$\{S_i\}_{i=1}^M$ a sort of discrete Jacobian
defined separately for each simplex. To define this, let
$A_{i}$, and $B_{i}$ be $n$-by-$n$  matrices defined by
\beq
\baeq
A_{i}&:=\left[\begin{array}{cccc}
\left(x_{i_{1}}-x_{i_{0}}\right) & \left(x_{i_{2}}-x_{i_{0}}\right) & \cdots & \left(x_{i_{n}}-x_{i_{0}}\right)\end{array}\right]^{T},
\cr
B_{i}&:=
\left[\begin{array}{cccc}
\left(\eta_{i_{1}}-\eta_{i_{0}}\right) & \left(\eta_{i_{2}}-\eta_{i_{0}}\right) & \cdots & \left(\eta_{i_{n}}-\eta_{i_{0}}\right)\end{array}\right]^{T}
\cr
\eaeq
\eeq
(here, $x_j$ and $\eta_j$ are represented by column vectors).
The {\it discrete Jacobian associated to the simplex $S_i$} is then the $n$-by-$n$  matrix
\beq
\lb{HiEq}
\baeq
H_{i}:=
H\Big(S_i,\eta_{i_{0}},\ldots,\eta_{i_{n}}\Big)
&:=
\frac{1}{2}\left(A_{i}\right)^{-1}B_{i}+\frac{1}{2}\left(\left(A_{i}\right)^{-1}B_{i}\right)^{T}.
\cr
\eaeq
\eeq
This name is further motivated by Lemma \ref{LindseyLemma}. Finally, denote the volume of the $i$-th simplex by 
\beq
\lb{volDef}
V_i := \mathrm{vol}(S_i).
\eeq
We now introduce a discrete analogue of the first optimization problem associated to the Monge--Amp\`ere equation \eqref{MAEq} introduced in Proposition \ref{InfDimProp}.

\begin{defin}
\label{DMAOPDef}
The {\it logarithmic discrete Monge--Amp\`ere optimization problem (LDMAOP)}
associated to the data $(\O,\L,f,g,\{x_j\}_{j=1}^N,\{S_i\}_{i=1}^M)$ is:
\begin{eqnarray}
 & \underset{\{\psi_{i}\in\RR,\eta_{i}\in\RR^n\}_{i=1}^N}{\mathrm{minimize}} & \sum_{i=1}^M V_i\cdot
\max\bigg\{0,-\log\det H_i
-\log g\Big({\textstyle
\frac{ \sum_{j=0}^n \eta_{i_j} }{n+1} }\Big)
+\log f\Big({\textstyle
\frac{ \sum_{j=0}^{n}x_{i_{j}}} {n+1} }\Big)\bigg\} \nonumber \\
 & \mathrm{subject\ to} & 
\psi_{j}\geq\psi_{i}+\left\langle \eta_{i},x_{j}-x_{i}\right\rangle ,\ i,j=1,\ldots,N,
\lb{Constraint1Eq}\\
 &  & \eta_{i}\in\overline{\Lambda},\ i=1,\ldots,N,
\lb{Constraint2Eq}\\
 &  & H_i
>0,\ i=1,\ldots,M.
\lb{Constraint3Eq}
\end{eqnarray}

\end{defin}

To ease the notation in the following, we define 
\beq
\lb{FiDef}
F_i\Big( \big\{\psi_j^{(k)},\eta_j^{(k)}\big\}_{j=1}^{N(k)} \Big)
:=
\max\bigg\{0,-\log\det H_i
-\log g\Big({\textstyle
\frac{ \sum_{j=0}^n \eta_{i_j} }{n+1} }\Big)
+\log f\Big({\textstyle
\frac{ \sum_{j=0}^{n}x_{i_{j}}} {n+1} }\Big)\bigg\}
\eeq
and
\beq
\label{FDef}
F\Big( \big\{\psi_j^{(k)},\eta_j^{(k)}\big\}_{j=1}^{N(k)} \Big)
:=
\sum_{i=1}^{M(k)} V_i \cdot F_i\Big( \big\{\psi_j^{(k)},\eta_j^{(k)}\big\}_{j=1}^{N(k)} \Big),
\eeq
so $F_i$ is a per-simplex penalty, and $F$ is the objective function of the DMAOP.

The variables of the LDMAOP are $\psi_1,\ldots,\psi_N$
and $\eta_1,\ldots,\eta_N$ (while $x_1,\ldots,x_N, \O,$  $\Lambda, f, g$ are given).
These variables are the discrete analogues
of the values of the convex potential and its gradient, respectively, at the points $x_1,\ldots,x_N$,
while, $H_{i}$ is the discrete analogue
of the Jacobian, on the simplex $S_i$, of the map $x_i\mapsto \eta_i$. 
One can think of $\det H_{i}$ as a measure of the volume distortion
of simplex $S_{i}$ under the map. 

We will see later (see Proposition \ref{DMAOPProp} (i)) that the LDMAOP is feasible (i.e., that there exists a point in the variable space 
$(\psi_{1},\ldots,\psi_N, \eta_{1},\ldots,\eta_N)\in(\RR\times\RR^n)^N$
satisfying the constraints  \eqref{Constraint1Eq}--\eqref{Constraint3Eq}) for a fine enough triangulation. Then for such a triangulation, pick a feasible point
in $(\RR\times\RR^n)^N$, and let $c$ be the corresponding cost. Note that the objective function is continuous in the optimization variables $\psi_{1},\ldots,\psi_{N}$ and $\eta_{1},\ldots,\eta_{N}$. Furthermore, it is clear from the problem that the intersection of the optimization domain with the region on which the cost is at most $c$ is closed. It is also clear that the $\eta_i$ are bounded over the optimization domain (since $\Lambda$ is bounded). Thus, since the objective function depends only on the $\eta_i$ directly, the problem admits a minimizer (not necessarily unique). In the case that $\Lambda$ is convex and $g$ is log-concave, the LDMAOP is in fact a convex optimization problem (see Remark \ref{convexityRmk}), so standard results 
(see, for instance, \cite[Chapter 11]{Boyd})  guarantee that the LDMAOP can be solved efficiently (in particular, in polynomial time).

\subsection{A modified optimization scheme}
\label{mDMAOPSubSec}

A slight variation of Definition \ref{DMAOPDef} turns out to give stronger results.

\begin{defin}
\label{mDMAOPDef}
The {\it discrete Monge--Amp\`ere optimization problem (DMAOP)}
associated to the data $(\O,\L,f,g,\{x_j\}_{j=1}^N,\{S_i\}_{i=1}^M)$ is:
\begin{eqnarray}
 & \underset{\{\psi_{i}\in\RR,\eta_{i}\in\RR^n\}_{i=1}^N}{\mathrm{minimize}} & \sum_{i=1}^M V_i\cdot
\max\bigg\{0,-(\det H_i)^{1/n}
+ 
\Big(
f\Big({\textstyle
\frac{ \sum_{j=0}^{n}x_{i_{j}}} {n+1} }\Big)
/
g\Big({\textstyle
\frac{ \sum_{j=0}^n \eta_{i_j} }{n+1} }\Big)
\Big)^{1/n}
\bigg\} \nonumber \\
 & \mathrm{subject\ to} & 
\psi_{j}\geq\psi_{i}+\left\langle \eta_{i},x_{j}-x_{i}\right\rangle ,\ i,j=1,\ldots,N,
\lb{Constraint1Eq2}\\
 &  & \eta_{i}\in\overline{\Lambda},\ i=1,\ldots,N,
\lb{Constraint2Eq2}\\
 &  & H_i
\geq 0,\ i=1,\ldots,M.
\lb{Constraint3Eq2}
\end{eqnarray}

\end{defin}

Most of our results and proofs will apply equally well to the LDMAOP
as to the DMAOP. However, the DMAOP improves upon the LDMAOP in two ways. First, the DMAOP is a convex problem for a wider class of $g$ (see Remark \ref{convexityRmk}), namely the set of $g$ such that $g^{-1/n}$ is convex. Second, the fact that $(\cdot)^{1/n}$ is bounded near zero (which is not true of $\log$) will allow us to require less regularity of the Brenier potential $\vp$. In specific, we will not need regularity up to the boundary. For notational clarity, the bulk of our convergence proof will treat the LDMAOP directly, though we remark that the same results hold for the DMAOP via obvious modifications. See Section \ref{upgrade} for details about how to strengthen the results obtained for the LDMAOP in the DMAOP case.

\subsection{Convex functions associated to the solution of the (L)DMAOP}
\label{ConvFnDMAOPSubSec}
Next, we construct a piecewise linear convex function $\phi$
associated with a solution 
$\{\psi_{j},\eta_{j}\}_{j=1}^{N}$
of the (L)DMAOP.
Define,
\beq
\label{ajEq}
a_{j}(x):=\psi_{j}+\langle\eta_{j},x-x_{j}\rangle,
\qq j=1,\ldots, N,
\eeq 
so 
$a_{j}$ is the (unique) affine function with 
$a_{j}(x_{j})=\psi_{j}$
and
$\nabla a_{j}(x_{j})=\eta_{j}$. 
Define the {\it optimization potential}  by
\beq\label{phiEq}
\phi(x):=b+\max_{j=1,\ldots N}a_{j}(x),
\eeq
where $b\in \RR$ is chosen such that $\phi(0)=0$ 
(and we have assumed, without loss of generality, that $0\in\Omega$). Notice that
we have defined $\phi$ on all of $\mathbb{R}^{n}$.
As the supremum of affine functions,  $\phi$ is convex. The point is that $\phi$ still
encodes the solution of the (L)DMAOP. Indeed, by the constraints of the (L)DMAOP
(specifically, \eqref{Constraint1Eq}),
\beq\label{ApproxPhiGridEq}
\phi(x_{j})=\psi_{j} + b,
\eeq
and 
\beq\label{ApproxGradPhiGridEq}
\eta_{j}\in\partial\phi(x_{j})
\eeq
(as $a_j$ is an affine function with slope
$\eta_j$ lying below $\phi$ but touching
it at $x_j$).

\subsection{Relationship of the (L)DMAOP with discrete optimal transport}
\label{discConnect}

We briefly describe a connection between the (L)DMAOP and classical
discrete optimal transport problems (DOTP). 
In fact, the solution of the (L)DMAOP gives the solution to a corresponding DOTP.

Let $\left\{ \psi_{j},\eta_{j}\right\} _{j=1}^{N}\in (\RR\times\RR^n)^N$ be a solution
of the (L)DMAOP (with notation as above). The construction
of \S\ref{ConvFnDMAOPSubSec}
 yields, by \eqref{ApproxPhiGridEq}-\eqref{ApproxGradPhiGridEq} a piecewise-linear convex function $\phi:\mathbb{R}^{n}\rightarrow\mathbb{R}$
such that $\phi(x_{j})=\psi_{j}$ and $\partial\phi(x_{j})\ni\eta_{j}$.
By Rockafellar's Theorem \cite[Theorem 24.8]{Rock}, the set 
$\bigcup_{j=1}^{N}(x_{j},\eta_{j})\subset\RR^{2n}$ is
cyclically monotone (see \cite[Section 24]{Rock} for definitions) since
it is a subset of the graph of the subdifferential of the convex function $\phi$.
Thus, $x_{j}\mapsto\eta_{j}$ solves the
optimal transport problem from $\mu_{D}=\sum_{j=1}^{N}\delta_{x_{j}}$
to $\nu_{D}=\sum_{j=1}^{N}\delta_{\eta_{j}}$ \cite{Villani},
where $\delta_p$ denotes the Dirac delta measure concentrated at $p$. 
We state this result
as a proposition.
\begin{prop}
Let $\left\{ \psi_{j},\eta_{j}\right\} _{j=1}^{N}$ be a solution of the (L)DMAOP. Then $T:\{x_{j}\}\rightarrow\{\eta_{j}\}$
given by $x_{j}\mapsto\eta_{j}$ solves the Monge problem
with source $\mu_{D}=\sum_{j=1}^{N}\delta_{x_{j}}$ and target $\nu_{D}=\sum_{j=1}^{N}\delta_{\eta_{j}}$.
\end{prop}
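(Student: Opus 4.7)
The proof essentially assembles the observations already recorded in \S\ref{ConvFnDMAOPSubSec} into the classical cyclic-monotonicity criterion for discrete optimal transport. The plan is to verify, via the convex function $\phi$ built from the optimizer, that the set $\{(x_j,\eta_j)\}_{j=1}^N$ is cyclically monotone, and then invoke the standard equivalence between cyclic monotonicity and optimality for the quadratic Monge problem on finitely supported measures with equal weights.

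First, I would form $\phi(x) := b + \max_{j=1,\ldots,N} a_j(x)$ as in \eqref{phiEq} from the solution $\{\psi_j,\eta_j\}_{j=1}^N$. Constraint \eqref{Constraint1Eq} (or \eqref{Constraint1Eq2} in the DMAOP case) says exactly $a_j(x_k)\leq \psi_k$ for all $j,k$, so the maximum defining $\phi(x_k)$ is attained at $j=k$, giving \eqref{ApproxPhiGridEq}, and then $a_k$ is an affine minorant of $\phi$ touching it at $x_k$ with slope $\eta_k$, which gives \eqref{ApproxGradPhiGridEq}: $\eta_k\in\partial\phi(x_k)$. In particular, $\{(x_j,\eta_j)\}_{j=1}^N$ is contained in the graph of $\partial\phi$.

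Next, I would apply Rockafellar's theorem \cite[Theorem 24.8]{Rock}: the graph of the subdifferential of a proper convex function on $\RR^n$ is cyclically monotone. Consequently, for every permutation $\sigma$ of $\{1,\ldots,N\}$,
\[
\sum_{j=1}^N \langle \eta_j, x_j\rangle \;\geq\; \sum_{j=1}^N \langle \eta_j, x_{\sigma(j)}\rangle,
\]
which, after expanding $|x_{\sigma(j)}-\eta_j|^2$ and using $\sum_j |x_{\sigma(j)}|^2=\sum_j|x_j|^2$, is equivalent to
\[
\sum_{j=1}^N |x_j-\eta_j|^2 \;\leq\; \sum_{j=1}^N |x_{\sigma(j)}-\eta_j|^2.
\]

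Finally, I would translate this inequality into the optimal transport statement. For $\mu_D=\sum_j\delta_{x_j}$ and $\nu_D=\sum_j\delta_{\eta_j}$, every map $\widetilde T$ with $\widetilde T_\#\mu_D=\nu_D$ is induced by some permutation $\sigma$ via $\widetilde T(x_j)=\eta_{\sigma^{-1}(j)}$, up to identifying repeated target points; the displayed inequality above is then precisely the statement that $T:x_j\mapsto\eta_j$ minimizes the quadratic Monge cost among all such maps. The only point requiring mild care is when some $\eta_j$ coincide, but this poses no difficulty since the cost depends only on the multiset of pairs $(x_j,\eta_j)$, and the cyclic monotonicity inequality holds for arbitrary permutations. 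The main (and only real) obstacle is recognizing that cyclic monotonicity of the support of a transport plan, for quadratic cost on finitely supported measures with equal weights, is equivalent to optimality in Monge's problem, which is the classical Knott--Smith/Brenier characterization and is the content of Rockafellar's theorem combined with the algebraic manipulation above.
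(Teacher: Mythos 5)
Your proof is correct and takes essentially the same route as the paper: build the piecewise-linear convex function $\phi$ from \eqref{phiEq}, use the constraints to place $(x_j,\eta_j)$ on the graph of $\partial\phi$, invoke Rockafellar's theorem for cyclic monotonicity, and conclude optimality for the discrete Monge problem. The only difference is that you spell out the algebraic step from cyclic monotonicity to the quadratic-cost inequality, whereas the paper simply cites Villani for this standard equivalence.
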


Of course, the target points $\eta_{j}$ are not fixed before the
optimization problem is solved. Indeed we expect the results of solving the
(L)DMAOP to be significantly better than the results obtained by picking $N$ target
points in advance and then solving the resulting DOTP. This expectation
is based on the fact that the (L)DMAOP chooses target points $\eta_{j}$
in a way that attempts to achieve correct volume distortion.\\

\subsection{Convergence of the discrete solutions}
\label{ConvSubSec}
Now, we take a sequence of almost-triangulations $\{x_i\}_{i=1}^{N(k)}, \{S_i^{(k)}\}_{i=1}^{M(k)}$ indexed by $k$ 
(so now both $N$ and $M$ are functions of $k$, although we will usually omit
that dependence from our notation) satisfying the following assumptions.
Denote by $\h{\rm diam}\, A$ the diameter of a set $A$ in Euclidean space.

\begin{definition}
\label{admissibleATDef} 
We say that the sequence of almost-triangulations 
$\{\{S_i^{(k)}\}_{i=1}^{M(k)}\}_{k\in\NN}$
is admissible if 
$$
\lim_{k\ra\infty}\;
\max_{i\in\{1,\ldots, M(k)\}}\h{\rm diam}\, S_{i}^{(k)}
=0,
$$
and there are open sets $\Omega_{\varepsilon}\subset \O$ indexed
by $\varepsilon>0$, with
$$
\Omega_{\varepsilon}\subset\Omega_{\varepsilon'}, \q
\h{\ for $\varepsilon'\le\varepsilon$},
$$
and 
$$
\bigcup_{\varepsilon>0}\Omega_{\varepsilon}=\Omega,
$$
and such that
for any $\varepsilon>0$ sufficiently small, we have that an $\varepsilon$-neighborhood
of $\Omega_{\varepsilon}$ is contained 
within the $k$-th almost-triangulation
for all $k$ sufficiently large, i.e.,
$$
\Omega_{\varepsilon}+B_\eps(0)
\subset \bigcup_{i=1}^{M(k)}
S_i^{(k)}, \q \forall k\gg 1.
$$
\hfill\break 

\end{definition}

Given an admissible sequence of almost-triangulations
we construct the optimization potentials  
\begin{equation}
\begin{aligned}
\label{phikEq}
\phi^{(k)}\in C^{0,1}(\RR^n), \q k\in\NN,
\end{aligned}
\end{equation}
associated with the solution 
$
\big\{\psi_{j}^{(k)},\eta_{j}^{(k)}\big\}_{j=1}^{N(k)}
$
of the $k$-th (L)DMAOP,
 by the prescription of the previous subsection (specifically, by equation \eqref{phiEq}). 
Our main theorem concerns the convergence of the optimization potentials to the Brenier potential, i.e., to the solution of the PDE  \eqref{MAEq}.

\begin{theorem}\label{MainThm}
Let $f\in C^{0}(\overline{\Omega}), \; g\in C^{0}(\overline{\Lambda}) $ and suppose $\Lambda$ is convex.
Let $\{\{S_i^{(k)}\}_{i=1}^{M(k)}\}_{k\in\NN}$ be an admissible sequence of almost-triangulations
(recall Definitions \ref{admissibleATDef}) .
Suppose that the optimal cost of the $k$-th (L)DMAOP tends to zero as $k\rightarrow\infty$.
Then, as $k\rightarrow \infty$, the optimization potentials $\phi^{(k)}$   \eqref{phikEq}  
converge uniformly on $\overline{\Omega}$ 
to the unique Brenier solution $\vp$ of the Monge--Amp\`ere equation \eqref{MAEq} with $\varphi(0)=0$.
\end{theorem}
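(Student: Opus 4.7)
My plan is to extract a uniformly convergent subsequence of $\phi^{(k)}$ by compactness, identify the subsequential limit as the Brenier solution $\vp$, and then promote to convergence of the whole sequence via the uniqueness part of Brenier's theorem.

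\emph{Compactness and properties of the limit.} Each $\phi^{(k)} = b + \max_j a_j^{(k)}$ is a supremum of affine functions with slopes $\eta_j^{(k)}$ lying in the bounded convex set $\overline{\L}$, so $\phi^{(k)}$ is convex and Lipschitz on $\RR^n$ with constant $L := \sup_{\eta \in \overline{\L}} |\eta|$ independent of $k$. Together with the normalization $\phi^{(k)}(0) = 0$, this yields a uniform $C^{0,1}$ bound on each compact subset of $\RR^n$. Arzel\`a--Ascoli produces a subsequence (not relabeled) converging uniformly on $\overline{\O}$ to a convex Lipschitz function $\phi^*$ with $\phi^*(0) = 0$. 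Since $\partial \phi^{(k)}(x)$ equals the convex hull of the active slopes at $x$ and $\overline{\L}$ is convex, $\partial \phi^{(k)}(x) \subseteq \overline{\L}$; upper semicontinuity of the subdifferential under uniform convergence of convex functions then gives $\partial \phi^*(x) \subseteq \overline{\L}$ for every $x \in \O$.

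\emph{Identification via one-sided distortion and mass balance.} The core of the argument is to pass the hypothesis $F\!\left(\{\psi_j^{(k)},\eta_j^{(k)}\}\right) \to 0$ into a statement about $\phi^*$. Since $F_i \geq 0$, the estimate $\sum_i V_i F_i \to 0$ forces, off an asymptotically $V_i$-negligible collection of simplices, the one-sided bound $\det H_i \gtrsim f(\bar{x}_i)/g(\bar{\eta}_i)$, where $\bar{x}_i,\bar{\eta}_i$ denote centroids. Using Lemma \ref{LindseyLemma} to interpret $H_i$ as the symmetrized Jacobian of the local piecewise-affine gradient map on $S_i$, together with the admissibility of the triangulations (which covers each $\O_\eps$ eventually), a simplex-wise change-of-variables argument passes this bound to the weak pushforward inequality
\[ \big((\nabla\phi^*)_\# (f\,dx|_{\O})\big)(E) \;\leq\; \int_E g\,dx, \qquad E \subseteq \overline{\L}\ \text{Borel}. \]
Since $\nabla\phi^*(\O) \subseteq \overline{\L}$ by the previous step and $\int_\O f\,dx = \int_\L g\,dx$ by \eqref{massEq}, the total masses on both sides coincide. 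A one-sided inequality between finite measures of equal total mass must be equality, so $(\nabla\phi^*)_\# (f\,dx|_\O) = g\,dx|_\L$. This is precisely the Brenier characterization of the solution of \eqref{MAEq}.

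\emph{Uniqueness, full convergence, and main obstacle.} By Brenier's theorem the Brenier solution is unique up to an additive constant, and $\phi^*(0) = 0 = \vp(0)$ forces $\phi^* = \vp$. Since every subsequence of $\{\phi^{(k)}\}$ has a further subsequence converging uniformly on $\overline{\O}$ to $\vp$, the whole sequence converges uniformly to $\vp$. The main obstacle is the identification step: justifying the weak pushforward inequality from the averaged bound $\sum_i V_i F_i \to 0$. This requires (i) controlling the discrete Jacobian $H_i$ in terms of simplex-level gradient data via Lemma \ref{LindseyLemma}; (ii) handling the boundary gap $\O \setminus \bigcup_i S_i^{(k)}$ via the $\O_\eps$ exhaustion from Definition \ref{admissibleATDef}; and (iii) exploiting continuity of $f,g$ to replace centroid evaluations by pointwise values in the limit. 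The mass-balance argument is the elegant mechanism that bypasses the need for a two-sided bound on $\det H_i$ by promoting a one-sided inequality directly to equality.
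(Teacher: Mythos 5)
Your outer framework — extract a uniformly convergent subsequence by Arzel\`a--Ascoli, identify the limit, invoke Brenier uniqueness to get the full sequence — matches the paper's strategy, and you correctly spot the mass-balance mechanism (a one-sided inequality between measures of equal total mass is an equality) that drives the identification in the paper (Corollary \ref{ggtildeCor}). Your final step is actually slightly more direct than the paper's: you appeal to Brenier--McCann uniqueness once you have $(\nabla\phi^*)_\#\mu=\nu$, whereas the paper reroutes through a stability theorem for optimal transport (Proposition \ref{StabilityThm}, used in Claim \ref{probConvClaim}); your route is legitimate once the pushforward identity is established.

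However, there is a genuine and substantial gap in your ``simplex-wise change-of-variables argument,'' which is where the real work of the proof lives. First, the $H_i$ are \emph{not} Hessians of $\phi^{(k)}$ in any pointwise sense: $\phi^{(k)}$ is piecewise affine, so its Hessian vanishes a.e. The $H_i$ are finite-difference quantities built from the subgradient data $\eta_j$, and the bridge to genuine second-order information is the barycentric interpolation $G^{(k)}$ of the $\eta_j$ and the identity $\calH^{(k)}=\nabla G^{(k)}+(\nabla G^{(k)})^T$ (Lemma \ref{HikgkLemma}); your citation of Lemma \ref{LindseyLemma} is not the right tool here, since that lemma only concerns the case $\eta_j=\nabla\vp(x_j)$. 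Second, and more fundamentally, the limit $\phi^*$ is merely convex and Lipschitz, so $\nabla^2\phi^*$ exists only in the Alexandrov a.e.\ sense, and a direct ``change of variables'' pushing the averaged bound $\sum_i V_iF_i\to 0$ onto a pushforward inequality for $\nabla\phi^*$ is not available. The paper's mechanism for crossing this gap is mollification: it establishes $\calH^{(k)}\star\xi_\eps\to\nabla^2(\phi\star\xi_\eps)$ uniformly on $\O_\eps$ (Lemma \ref{symmetrizedLemma}), which requires epi-convergence and subdifferential semicontinuity (Bagh--Wets, Theorem \ref{BWThm}) plus a Landau--Kolmogorov interpolation inequality to convert $C^0$-convergence of $G^{(k)}\star\xi_\eps$ into $C^1$-convergence, and then passes the cost bound through via Jensen and Fatou (Lemmas \ref{kDensIneqLemma}--\ref{kLimLemma}), carefully tracking the off-center evaluation points $\tau^{(k)},\gamma^{(k)}$ using the continuity of the subdifferential. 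Only after this does the measure-theoretic argument with the Legendre dual (Lemma \ref{legendreLemma}, Claims \ref{invConvClaim}, \ref{nullsetClaim}) yield $\tilde g\le g$ a.e., at which point your mass-balance trick closes the loop. Finally, a smaller point: the vanishing of $\sum_i V_iF_i$ does not, by itself, localize the violation to a $V_i$-negligible exceptional set of simplices; the paper instead integrates the penalty, mollifies, and uses Fatou, which is the correct way to exploit the $L^1$-type smallness. Collapsing all of this to a single change-of-variables step is the missing idea, and as written your proof would not go through.
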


\begin{remark}
For the proof of Theorem \ref{MainThm}, we do not actually need $\phi^{(k)}$ to be the optimization potential retrieved from the optimal solution of the $k$-th (L)DMAOP. We only need that the $\phi^{(k)}$ are defined via \eqref{ajEq} and  \eqref{phiEq} by some $\{\psi_{j}^{(k)},\eta_{j}^{(k)}\}_{j=1}^{N}$ which satisfy the constraints of Definition \ref{DMAOPDef} and for which the cost of Definition \ref{DMAOPDef} tends to zero as $k\ra\infty$. See \S\ref{otherMethods} for implications.
\end{remark}

The assumption on the optimal cost holds in many interesting cases
by assuming a mild regularity condition on the almost-triangulations.

\begin{defin}
\lb{regularATDef}
We say that a sequence
$
\big\{\{S_i\}_{i=1}^{M(k)}\big\}_{k\in\NN}
$
of almost-triangulations of $\Omega$ is regular if there exists $R>0$ such that 
$$
\det \left(u_{i,1}^{(k)} \,\,\, u_{i,2}^{(k)} \,\,\, \cdots \,\,\, u_{i,n}^{(k)} \right) \geq R,
\q
\forall i\in\{1,\ldots, M(k)\}, \;\; \forall k\in\NN,
$$
where
$$
u_{i,j}^{(k)} := \Vert x_{i_{j}}-x_{i_{0}} \Vert^{-1} \left( x_{i_{j}}-x_{i_{0}} \right).
$$
\end{defin}
For example, in dimension $n=2$, a sequence of almost-triangulations is regular if
the angles of the triangles are bounded below uniformly in $k$.

Based on Theorem \ref{MainThm} we prove the following general convergence result
that does not make any assumptions on the optimal cost of the discretized problems.
The convergence we obtain on the level of subdifferentials can be viewed as optimal since $\phi^{(k)}$ are Lipschitz but no better, i.e., $\phi^{(k)}\not\in C^1$.

\begin{theorem}
\label{SecondMainThm}
Let $f\in C^{0,\alpha}(\overline{\Omega}), \; g\in C^{0,\alpha}(\overline{\Lambda}) $ and suppose $\Lambda$ is convex.
Let $\big\{\{S_i^{(k)}\}_{i=1}^{M(k)}\big\}_{k\in\NN}$ be an admissible and regular sequence of almost-triangulations
(recall Definitions \ref{admissibleATDef} and \ref{regularATDef}).
Let  $\vp$ be the the unique Brenier solution
of the Monge--Amp\`ere equation \eqref{MAEq} with $\varphi(0)=0$,
and let $\phi^{(k)}$ be the optimization potentials \eqref{phikEq} obtained from the (L)DMAOP.
If the $\phi^{(k)}$ are obtained from the LDMAOP, additionally suppose that $\vp\in C^{2,\alpha}(\overline{\O})$. Then,
$$
\phi^{(k)}\ra \vp \h{\ uniformly on $\overline{\O}$},
$$
and $\del\phi^{(k)}\ra \nabla\vp$ pointwise on $\overline{\O}$.
In particular, $\nabla\phi^{(k)}$ converges pointwise almost everywhere to the optimal transport map pushing forward $\mu=f\,dx$ to $\nu=g\,dx$.
\end{theorem}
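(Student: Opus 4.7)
The plan is to deduce Theorem \ref{SecondMainThm} from Theorem \ref{MainThm}: I will exhibit a feasible test competitor for the $k$-th (L)DMAOP whose cost vanishes as $k\to\infty$, thereby forcing the optimal cost to vanish, and then Theorem \ref{MainThm} delivers uniform convergence of the optimization potentials. The pointwise subdifferential convergence will follow from a standard convex-analytic argument. Caffarelli's interior regularity, available under the $C^{0,\alpha}$ hypotheses on $f,g$ and convexity of $\Lambda$, gives $\varphi\in C^{2,\alpha}(\Omega)$; for the LDMAOP one additionally uses the hypothesis $\varphi\in C^{2,\alpha}(\overline{\Omega})$.

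For the competitor I would set $\psi_j^{(k)}:=\varphi(x_j)$ and $\eta_j^{(k)}:=\nabla\varphi(x_j)$ at each vertex in the interior (and, for boundary vertices in the DMAOP case, any element of $\partial\varphi(x_j)\subseteq\overline{\Lambda}$). Constraint \eqref{Constraint1Eq} is then the subgradient inequality for the convex $\varphi$; constraint \eqref{Constraint2Eq} follows from the Brenier boundary condition $\partial\varphi(\overline{\Omega})\subseteq\overline{\Lambda}$; and constraint \eqref{Constraint3Eq} (resp.\ \eqref{Constraint3Eq2}) follows for large $k$ on interior simplices from Lemma \ref{LindseyLemma}, which shows $H_i$ approximates $\nabla^2\varphi$ at a point of $S_i$, with the regularity hypothesis of Definition \ref{regularATDef} bounding $\|A_i^{-1}\|$. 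On any $S_i\subset\Omega_\varepsilon$, Lemma \ref{LindseyLemma} combined with Hölder continuity of $f,g$ and the PDE $\det\nabla^2\varphi=f/(g\circ\nabla\varphi)$ forces the per-simplex integrand $F_i$ to be $o(1)$ uniformly in $i$, with rate governed by $\max_i\mathrm{diam}\,S_i\to 0$. For the LDMAOP, $C^{2,\alpha}(\overline{\Omega})$ supplies a uniform positive lower bound on $\det\nabla^2\varphi$, controlling $-\log\det H_i$ across the entire triangulation; for the DMAOP, the $n$-th-root integrand is bounded above by $\|f/g\|_\infty^{1/n}$, so the contribution of simplices outside $\Omega_\varepsilon$ is at most a constant multiple of $\mathrm{vol}(\Omega\setminus\Omega_\varepsilon)$, which vanishes as $\varepsilon\downarrow 0$ by admissibility. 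In either case summing $V_iF_i$ drives the total cost to zero, and Theorem \ref{MainThm} applies to yield $\phi^{(k)}\to\varphi$ uniformly on $\overline{\Omega}$.

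For the subdifferential convergence, I would invoke the classical fact that if convex $\phi^{(k)}$ converge uniformly on a neighborhood of $x$ to a convex $\varphi$ that is differentiable at $x$, then any choice $\eta^{(k)}\in\partial\phi^{(k)}(x)$ converges to $\nabla\varphi(x)$. Indeed, uniform local convergence yields uniform local Lipschitz bounds on the $\phi^{(k)}$, hence boundedness of the $\eta^{(k)}$; passing to the limit in $\phi^{(k)}(y)\geq\phi^{(k)}(x)+\langle\eta^{(k)},y-x\rangle$ along any subsequential limit $\eta$ shows $\eta\in\partial\varphi(x)=\{\nabla\varphi(x)\}$, so the whole sequence converges. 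Caffarelli's interior $C^1$ regularity gives differentiability at every $x\in\Omega$, and a.e.\ differentiability of the convex limit $\varphi$ extends the conclusion to full Lebesgue measure on $\overline{\Omega}$, which gives the last claim about the OT map.

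The main technical obstacle is the cost estimate for simplices that meet $\partial\Omega$: the $-\log\det H_i$ penalty in the LDMAOP blows up unless $\det\nabla^2\varphi$ is bounded below uniformly up to the boundary, which is exactly why the extra $C^{2,\alpha}(\overline{\Omega})$ hypothesis is imposed in that case. In the DMAOP version the $n$-th-root penalty remains bounded and the admissibility of the almost-triangulations makes the uncovered boundary layer have arbitrarily small volume, so Caffarelli's interior regularity alone suffices — this is the rigorous source of the DMAOP's improvement over the LDMAOP announced in \S\ref{mDMAOPSubSec}.
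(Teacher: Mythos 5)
Your overall route—build a feasible competitor whose cost vanishes, apply Theorem~\ref{MainThm}, then extract subdifferential convergence from convex analysis—matches the paper, and the LDMAOP half is essentially the paper's Proposition~\ref{DMAOPProp} and Corollary~\ref{costCor}. The convex-analytic argument for $\partial\phi^{(k)}\to\nabla\vp$ is also fine; the paper packages it via Bagh--Wets (Theorem~\ref{BWThm}) together with epi-convergence, while you unwind it by hand, but it is the same content.

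There is, however, a genuine gap in the DMAOP half. You take the raw data $\psi_j^{(k)}=\vp(x_j)$, $\eta_j^{(k)}=\nabla\vp(x_j)$ as the competitor and argue that boundary simplices contribute only $O(\mathrm{vol}(\O\setminus\O_\ve))$ to the cost. But you must also show that this data is \emph{feasible}, and in particular that constraint~\eqref{Constraint3Eq2}, namely $H_i\ge 0$, holds on \emph{every} simplex, including those that touch $\partial\O$. This is not automatic: even for a convex $\vp$, the symmetrized discrete Jacobian $H_i = \tfrac12(A_i^{-1}B_i) + \tfrac12(A_i^{-1}B_i)^T$ can fail to be positive semidefinite. (Cyclical monotonicity only controls diagonal entries and a linear combination of entries, not the full matrix; one can construct explicit piecewise-affine convex competitors with $\det H_i <0$ in dimension $n=2$.) Lemma~\ref{LindseyLemma} rescues you in the interior because $\nabla^2\vp$ is uniformly positive on any compact subset and $H_i$ uniformly approximates it there, but with only $\vp\in C^2(\O)$ you have no such control near $\partial\O$, so feasibility of your competitor on boundary simplices is exactly what you cannot conclude. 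The paper handles this in Section~\ref{upgrade} by replacing $\vp$ with a surrogate $\vp_\ve$ that agrees with $\vp$ on $U_\ve$ but is \emph{globally} $\beta$-strongly convex and satisfies $\nabla\vp_\ve(\overline\O)\subset\Lambda$ (Lemma~\ref{QLemma} plus mollification); feasibility then follows from $\nabla^2\vp_\ve\ge\beta I$ uniformly over $\overline\O$, and only then does the boundary-volume estimate finish the cost bound. That construction is the crux of the DMAOP improvement and is missing from your argument.
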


\begin{remark}
In the LDMAOP case, the theorem applies whenever $\Omega$ and $\Lambda$ are uniformly convex and of class $C^{2}$
since it is known (recalling our initial assumption that $f$ and $g$ are in $C^{0,\alpha}(\overline{\Omega})$ and $C^{0,\alpha}(\overline{\Lambda})$, respectively) that in that case $\vp\in C^{2,\alpha}(\overline{\O})$. In the DMAOP case, the only needed regularity is $\vp\in C^{2}(\O)$, which is already guaranteed by the convexity of $\Lambda$ and our assumptions on $f$ and $g$. Thus we do \textit{not} need to assume the convexity (or even connectedness) of $\O$ to get convergence for the DMAOP. For a review of the relevant regularity theory, see \cite[Chapter 4]{Villani}.

\end{remark}

\begin{proof}
Under the assumption that the 
sequence of almost-triangulations is admissible and regular and that
 $\vp\in C^{2}({\O})$, Corollary \ref{upgradedcostCor} states that
$c_k$, the optimal cost of the $k$-th DMAOP, tends to zero as $k\rightarrow\infty$, while
if $\vp\in C^{2,\alpha}(\overline{\O})$, Corollary \ref{costCor} states that
$c_k$, the optimal cost of the $k$-th LDMAOP, tends to zero as $k\rightarrow\infty$.
Thus, Theorem \ref{MainThm} implies $\phi^{(k)}$ converges uniformly to
the Brenier solution $\vp$. We will see from the proof of Theorem \ref{MainThm} that we can in fact assume that the $\phi^{(k)}$ are uniformly convergent on a closed ball $D$ containing $\overline{\O}$ in its interior. The convergence of subgradients then follows from Theorem \ref{BWThm}
since if a sequence of lower semicontinuous finite convex functions converges
uniformly on bounded sets to some convex function, then the sequence
epi-converges to this function \cite[Theorem 7.17]{RW}.
\end{proof}

\begin{remark}
The conditions of admissibility and regularity on the sequence of triangulations are necessary for technical reasons, but they are fulfilled easily in practice.
\end{remark}
\begin{remark}
We see from the statement of Theorem \ref{MainThm} that, in practice, even in situations in which we cannot guarantee convergence, we can acquire good heuristic evidence in favor of convergence if the optimal cost of the $k$-th DMAOP becomes small as $k\rightarrow \infty$. However, it remains an open problem to prove the error estimates that would make this insight rigorous.
\end{remark}

\begin{remark}
Although, via Corollary \ref{upgradedcostCor}, we do not require regularity up to the boundary to obtain convergence in the DMAOP case, it can be seen that the proof of Corollary \ref{upgradedcostCor} does not yield a rate for the convergence $c_k \ra0$. To prove such a rate, we need
 $f\in C^{0,\alpha}(\overline{\Omega}), \; g\in C^{0,\alpha}(\overline{\Lambda}) $ and
$\vp \in C^{2,\alpha}(\overline{\Omega})$, since in this case it follows from the proof of Corollary \ref{costCor} that (in both the DMAOP and LDMAOP cases) $c_k = O(h^{\alpha})$, where $h=h(k)$ is the maximal simplex diameter in the $k$-th triangulation. Note that a bound on $c_k$ does \textit{not} imply an error bound for the optimization potentials, though it suggests a candidate.

\end{remark}

\subsection{Possible application to convergence of other methods}
\label{otherMethods}

We remark that Theorem \ref{MainThm} could in principle be used to guarantee the convergence of other numerical methods for optimal transport. Indeed, suppose that a numerical method furnishes values $\psi_j^{(k)}$ and subgradients $\eta_j^{(k)}$ of a convex function at discretization points $x_j^{(k)}$ for a sequence of increasingly fine discretizations indexed by $k$. Further, suppose that we can associate a triangulation to the $k$-th set of discretization points and so produce an admissible and regular sequence of almost-triangulations of $\Omega$. (Note that such triangulation is possible in particular if the discretization points are taken from an increasingly fine rectangular grid. More generally, for sensible discretizations an admissible sequence of triangulations could likely be achieved by Delaunay triangulation.) Lastly, suppose that the (L)DMAOP cost evaluated at $\{\psi_1^{(k)},\ldots,\psi_{N(k)}^{(k)},\eta_1^{(k)},\ldots,\eta_{N(k)}^{(k)}\}$ converges to zero as $k\ra\infty$. This is possible whenever the numerical method achieves vanishing violation (on average over the discretization points and even in the one-sided sense of the cost function of the (L)DMAOP) of a finite-difference approximation to the Monge--Amp\`ere equation.

We suspect that other numerical methods for optimal transport based on solution of the Monge--Amp\`ere equation could be covered by this convergence result under sufficient regularity conditions on the Brenier potential $\varphi$. Indeed, it is perhaps most appropriate to think of Theorem $\ref{MainThm}$ as a convergence result for convex functions. We interpret the result as stating that a sort of one-sided convergence (over an increasingly fine set of discretization points) of the discrete Hessian determinants of a sequence of convex functions to the `right-hand side' of a Monge--Amp\`ere equation implies convergence of the convex functions themselves to the solution of the Monge--Amp\`ere equation. We can interpret this as a coercivity-type result allowing us to pass from vanishing violation (even one-sided) of the Monge--Amp\`ere equation to vanishing deviation from the solution.

\subsection{Comparison to existing methods}

Our work relies on a (convex) optimization approach. In this subsection we review
other, different, approaches to discretizing the OT problem and, sometimes, more generally,
the \MA equation or even more general fully nonlinear second order elliptic PDEs.
The literature on numerical methods for \MA equations in general,
and for optimal transport maps in particular, has grown considerably recently. Therefore, we do not attempt a comprehensive review of existing methods 
in the literature, but concentrate on briefly mentioning those approaches for which 
both a numerical algorithm has been implemented {\it and} a convergence result has been proven. For a thorough survey of existing numerical 
methods we refer the interested reader to the article 
of Feng--Glowinski--Neilan 
\cite{FengGN} and references therein.

Oliker--Prussner \cite{OlikerPrussner} and Baldes--Wohlrab \cite{BaldesWohlrab} 
initiated the study of discretizations of the 2-dimensional Monge--Amp\`ere equation, and obtained a convergence theorem for the Dirichlet problem for the equation
$u_{xx}u_{yy}-u_{xy}^2=f$ on a bounded domain in $\RR^2$.
This used, among other things, 
classical constructions of Minkowski \cite{Minkowski}
and Pogorelov \cite{Pogorelov}.

Benamou--Brenier \cite{BenamouBrenier} introduced, on the other hand, a discretization 
scheme for the dynamic formulation of the optimal transport problem, that
does not involve the \MA equation. This involves solving the system of equations
for $\rho:[0,T]\times\RR^{n}\ra\RR_+, v:[0,T]\times\RR^{n}\ra\RR^{n}$
\beq
\baeq
\del_t \rho + \div(\rho v) &=0, \cr
\del_t \phi + |\nabla_x\phi|^2/2 &=0,\cr
\eaeq
\eeq
with the constraint
$v=\nabla_x\phi$ and the boundary conditions
 $\rho(0,\,\cdot\,) =f,  \rho(T,\,\cdot\,)=g$.
The authors use discretization in space-time that falls under the framework of problems in numerical fluid mechanics. See also the work of 
Angenent--Haker--Tannenbaum \cite{AngenentHT} and
Haber--Rehman--Tannenbaum  \cite{HaberRT}.
More recently, Guittet proved that the Benamou--Brenier scheme converges 
when the target is convex and the densities are smooth \cite{Guittet}.

Recently, Benamou--Froese--Oberman developed a convergence proof via a direct discretization of the Monge--Amp\`ere equation \cite{BenamouFO,BenamouFO2}.
Their approach gives the convergence result for viscosity solutions of the Monge--Amp\`ere equation for convex target with $g\in C^{0,1}(\L)$ and $f\in L^\infty(\O)$.
Their discretization scheme and convergence proof rely on earlier work of 
Barles--Souganidis \cite{BarlesS} and Froese and Oberman \cite{FroeseO}.
Other recent work includes, e.g., 
Loeper--Rapetti \cite{LoeperR},
Sulman--Williams--Russell \cite{SulmanWR}, Kitagawa \cite{Kitagawa},
and Papadakis--Peyr\'e--Oudet \cite{PapadakisPO}.

Another approach one could pursue is to approximate the measures
by empirical measures (sums of Dirac measures). In the simplest
case when the number of Dirac measures is the same for the source
and the target, the solution is given by solving the
assignment problem that has efficient numerical implementations.
We refer to M\'erigot--Oudet \cite{QuentinOudet}
and \cite[p. 213]{PapadakisPO} for relevant references
(cf. \cite{AMSAbstract} for an implementation in some simple cases). 
It is interesting to note that the method
presented in this article a forteriori solves an assignment problem,
but for target Dirac measures whose location is not a priori known 
(as explained in \S\ref{discConnect} above).

\section{Monge--Amp\`ere optimization problems}
\label{InfDimConvSec}

We recall some of the notation from \S\ref{YetSubSec}.
Let $\Omega$ and $\Lambda$ be bounded open sets in $\mathbb{R}^{n}$
with $0\in\O$ and $\Lambda$ convex. Let $f\in C^{0,\alpha}(\overline{\Omega})$ and 
$g\in C^{0,\alpha}(\overline{\Lambda})$ be positive functions
bounded away from
zero and infinity
satisfying  \eqref{massEq}. 
Let $\varphi\in C^{2,\alpha}(\O)$ be the unique 
convex solution of  \eqref{MAEq} with $\vp(0)=0$. 

We only give the proof of Proposition \ref{InfDimProp} for $\calG_1$ since the proof for 
$\calG_2$ is similar. Proposition \ref{InfDimProp}  is a special case of the following result.

\begin{lemma}
\label{MAOPLemma}
With notation and hypotheses as in the above paragraph, $\varphi$
is the unique  solution of the following optimization problem: 
\begin{eqnarray*}
 & \underset{\psi\in \Cvx(\O)\cap C^{2}(\Omega)}{\mathrm{minimize}} & \calF(\psi):=\int_{\Omega}h\circ \calG(\psi,x)\cdot\rho(x)dx\\
\\
 & \mathrm{subject\ to} & \nabla\psi(\Omega)\subseteq\Lambda,
\end{eqnarray*}
 where 
$$
\calG(\psi,x):=
\max\left\{0,
-\log\det\left(\nabla^{2}\psi(x)\right)-\log g\left(\nabla\psi(x)\right)+\log f(x)\right\},
$$
and
$h:[0,\infty)\rightarrow\mathbb{R}$ is convex and increasing with
$h(0)=0$, and $\rho$ is a positive function on $\Omega$, bounded
away from zero and infinity.
\end{lemma}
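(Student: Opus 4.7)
The plan is in three steps: establishing the lower bound $\mathcal{F}\geq 0$, checking that $\varphi$ attains it, and deducing uniqueness from the vanishing of $\mathcal{F}$. The first two are one-line substitutions and the bulk of the argument lies in the third.

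First, since $\mathcal{G}(\psi,x)\geq 0$ by construction (a max with zero) and $h$ is nonnegative (being increasing with $h(0)=0$), we have $h\circ\mathcal{G}(\psi,x)\geq 0$; combined with $\rho>0$ this forces $\mathcal{F}(\psi)\geq 0$ on the feasible set. For $\psi=\varphi$, the Monge--Amp\`ere equation \eqref{MAEq} gives $-\log\det\nabla^2\varphi-\log g(\nabla\varphi)+\log f\equiv 0$, so $\mathcal{G}(\varphi,\cdot)\equiv 0$ and $\mathcal{F}(\varphi)=\int_\Omega h(0)\rho\,dx=0$.

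For uniqueness, suppose $\psi$ is a feasible minimizer. From $\rho>0$ and the strict monotonicity of $h$ at $0$, vanishing of $\mathcal{F}(\psi)$ forces $\mathcal{G}(\psi,x)=0$ almost everywhere; continuity of $\mathcal{G}(\psi,\cdot)$---noting that finiteness of $\log\det\nabla^2\psi$ at the value $\mathcal{G}=0$ already forces $\nabla^2\psi>0$---upgrades this to all of $\Omega$. Unpacking the definition of $\mathcal{G}$, this is the one-sided Monge--Amp\`ere inequality
$$
\det(\nabla^2\psi(x))\, g(\nabla\psi(x))\ \geq\ f(x),\qquad x\in\Omega.
$$
I would then upgrade the inequality to equality by a mass-balance argument. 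Since $\nabla^2\psi>0$, $\psi$ is strictly convex and $\nabla\psi$ is injective on convex components of $\Omega$; the area formula applied to the $C^1$ map $\nabla\psi$, together with the constraint $\nabla\psi(\Omega)\subseteq\overline{\Lambda}$ and the mass balance $\int_\Omega f=\int_\Lambda g$, yields the chain
$$
\int_\Omega f\,dx \ \leq\ \int_\Omega \det(\nabla^2\psi)\, g(\nabla\psi)\,dx \ =\ \int_{\nabla\psi(\Omega)} g(y)\,dy \ \leq\ \int_\Lambda g\,dy \ =\ \int_\Omega f\,dx,
$$
forcing equality throughout. This gives $\det(\nabla^2\psi)\, g(\nabla\psi)=f$ pointwise on $\Omega$ and $\nabla\psi(\Omega)=\Lambda$ up to a null set, so $\psi$ is itself a convex $C^2$ solution of \eqref{MAEq}, and Brenier's uniqueness theorem yields $\psi=\varphi+c$ for some constant $c$.

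The principal obstacle is this upgrade step. It rests on the injectivity of $\nabla\psi$ used in evaluating the area-formula integral, which follows from strict convexity of $\psi$ via the standard gradient-monotonicity argument but is cleanest when $\Omega$ is convex---otherwise `uniqueness' should be read modulo an additive constant on each convex component of $\Omega$. Proposition \ref{InfDimProp} then follows by specializing $h(t)=t$ and $\rho\equiv 1$ for the $\mathcal{G}_1$ case, and by the analogous computation with $x\mapsto x^{1/n}$ in place of $-\log$ for $\mathcal{G}_2$.
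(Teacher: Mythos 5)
Your proof is correct and takes essentially the same route as the paper's: extract the one-sided inequality $\det(\nabla^2\psi)\,g(\nabla\psi)\geq f$ from $\mathcal{F}(\psi)=0$, use strong convexity to get injectivity of $\nabla\psi$, and then close with the change-of-variables identity and the mass balance $\int_\Omega f=\int_\Lambda g$. The only organizational difference is that the paper argues by contradiction (assuming $\psi\neq\varphi$, locating a point of strict inequality, and deriving a strict integral inequality), whereas you derive equality throughout the integral chain and then invoke uniqueness of the Monge--Amp\`ere solution; your version is also a bit more careful about the additive-constant normalization, which the paper's statement glosses over.
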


Before giving the proof we make several remarks.

\begin{remark}
\label{convexityRmk}
Notice that if $g$ is log-concave, this optimization problem can
be thought of as an `infinite-dimensional convex optimization problem'
(where the value of $\psi$ at each point $x$ is an optimization
variable). To see that the problem
can indeed be thought of as `convex,' notice/recall that\end{remark}
\begin{itemize}
\item $\nabla\psi(x)$ and $\nabla^{2}\psi(x)$ are linear in $\psi$
\item $\log\circ\det$ is concave on the set of positive semidefinite (symmetric)
matrices
\item the pointwise maximum of two convex functions is convex
\item the composition of a convex increasing function with a convex function
is convex
\item the set of convex functions is a convex cone
\item the specification that $\nabla\psi(\Omega)\subseteq\Lambda$ is a
convex constraint since $\Lambda$ is convex.\end{itemize}
These points also demonstrate that the discretized version of the problem (the LDMAOP) outlined above is a convex problem in the usual sense.

By similar reasoning, notice that the infinite-dimensional problem of the $\calG_2$ case of Proposition \ref{InfDimProp} and its discretization (namely, the DMAOP) are convex whenever $g^{-1/n}$ is convex. Notice that this holds in particular whenever $g$ is log-concave. Indeed, this can be seen by writing $g^{-1/n} = \exp\left(-\frac{1}{n} \log g\right)$ and recalling that $\exp$ preserves convexity. Thus the DMAOP is a convex problem for a strictly larger class of target measures.

\begin{remark}
Nevertheless, for the proof of the main theorems
we do not require that $g^{-1/n}$ is convex (nor that $g$ is log-concave), though these assumptions ensure that the (L)DMAOP is convex and, thus, feasibly solvable.
\end{remark}

\begin{remark}
(Intuitive explanation of Lemma  \ref{MAOPLemma}.) We can think of the objective function
in the statement of the lemma as penalizing `excessive contraction'
of volume by the map $\nabla\psi$ (relative to the `desired' distortion
given by the ratio of $f$ and $g$) while ignoring `excessive expansion.'
However, since we constrain $\nabla\psi$ to map $\Omega$ into $\Lambda$,
we expect that excessive expansion at any point will result in excessive
contraction at another, causing the value of the objective function
to be positive. Thus we expect that the optimal $\psi$ must in fact
be $\varphi$.\end{remark}
\begin{proof}
Note that $F(\varphi)=0$ since $\varphi$ solves the Monge--Amp\`ere
equation and that  $F(\psi)\geq0$ always. Thus
letting $\psi$ be such that $F(\psi)=0$, it only remains to show
that $\psi=\varphi$. For a contradiction, suppose that $\psi\neq\varphi$.
Since $\varphi$ is the unique solution to the Monge--Amp\`ere equation
above, there exists some $x_{0}\in\Omega$ such that 
\[
\det\left(\nabla^{2}\psi(x_{0})\right)\neq\frac{f(x_{0})}{g\left(\nabla\psi(x_{0})\right)}.
\]
If we have that the left-hand side is less than the right-hand side
in the above, then $G(\psi,x_{0})>0$, so by continuity $G(\psi,x)>0$
for $x$ in a neighborhood of $x_{0}$, and $F(\psi)>0$. Thus we
can assume that in fact 
\[
\det\left(\nabla^{2}\psi(x)\right)\geq\frac{f(x)}{g\left(\nabla\psi(x)\right)}
\]
 for all $x$, with strict inequality at a point $x_{0}$. By continuity,
we must also have strict inequality on en entire neighborhood of $x_{0}$.
In addition, we have that $\det\left(\nabla^{2}\psi(x)\right)$ is
bounded away from zero, so $\psi$ is strongly convex. Thus $\nabla\psi$
is injective, and we obtain by a change of variables 
$$
\baeq
\int_{\nabla\psi(\Omega)}g(y)dy & =  \int_{\Omega}g\left(\nabla\psi(x)\right)\det\left(\nabla^{2}\psi(x)\right)dx\\
 & >  \int_{\Omega}f(x)dx.
\eaeq
$$
Of course, since $\nabla\psi(\Omega)\subseteq\Lambda$, we have in
addition that $\int_{\Lambda}g(y)dy\geq\int_{\nabla\psi(\Omega)}g(y)dy$.
We have arrived at a contradiction because $\int_{\Lambda}g=\int_{\Omega}f$
by \eqref{massEq}. 
\end{proof}

\section{Convergence of solutions of the LDMAOP}
\lb{ConvSec}

In the following we will often consider sequences of LDMAOPs indexed
by $k$. We will maintain the notation from \S\ref{ConvSubSec}, 
adding ``$(k)$'' in superscripts as necessary.

\subsection{The objective function} First, we would like to understand
the behavior of the objective function of the LDMAOP. 
The following proposition gives a criterion guaranteeing the optimal cost 
(i.e., the minimum of the objective function) of the LDMAOP
converges to zero. In particular, it implies that Theorem \ref{SecondMainThm} 
follows from Theorem \ref{MainThm}.
The idea is to study the cost associated to the restriction of the solution $\vp$
of the \MA equation to the $k$-th almost-triangulations, i.e., to estimate the cost
\begin{equation}
\begin{aligned}
\label{dkDef}
d_k:=F\Big( \big\{\vp(x_j^{(k)}),\nabla\vp(x_j^{(k)})\big\}_{j=1}^{N(k)} \Big)
\end{aligned}
\end{equation}
associated to 
\begin{equation}
\begin{aligned}
\label{vpkEq}
\big\{\vp(x_j^{(k)}),\nabla\vp(x_j^{(k)})\big\}_{j=1}^{N(k)}\in(\RR\times\RR^n)^{N(k)}.
\end{aligned}
\end{equation}
A small caveat, of course, 
is to show first that this data actually 
satisfies the constraints of the discrete \MA optimization problem (LDMAOP),
and, subsequently, 
that the $k$-th LDMAOP is feasible. This is the content of part (i) of the next
proposition.

\begin{prop}
\label{DMAOPProp}
Let 
$
\big \{ \{S_{i}^{(k)}\}_{i=1}^{M(k)}\big\}_{k\in\NN}
$
be a sequence of admissible and regular almost-triangulations of $\Omega$ 
(recall Definitions \ref{admissibleATDef} and \ref{regularATDef}).
Let $\vp$ be the unique Brenier solution
of the Monge--Amp\`ere equation \eqref{MAEq} with $\varphi(0)=0$,
and suppose that $\vp\in C^{2,\alpha}(\overline{\O})$.
Then:
\hfill\break
(i)
The data   \eqref{vpkEq} satisfies
the constraints  \eqref{Constraint1Eq}--\eqref{Constraint3Eq} for all $k$ sufficiently large.
\hfill\break
(ii)
$\lim_k d_{k}=0$.
\end{prop}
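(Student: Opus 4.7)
The plan is to handle (i) by checking the three LDMAOP constraints in turn, and (ii) via a consistency estimate showing that the discrete Jacobian $H_i$ built from $\{\nabla\vp(x_{i_j})\}_{j=0}^n$ approximates $\nabla^2\vp$ at $x_{i_0}$ with error $O(h_k^\alpha)$, where $h_k := \max_i \mathrm{diam}(S_i^{(k)})\to 0$ by admissibility. For \eqref{Constraint1Eq}, the inequality $\vp(x_j) \geq \vp(x_i) + \langle \nabla\vp(x_i), x_j - x_i\rangle$ is the standard subgradient inequality, valid since $\vp$ is convex and $C^1$ on $\overline{\O}$. For \eqref{Constraint2Eq}, the second boundary condition $\del\vp(\O) \subseteq \overline{\L}$ extends by continuity of $\nabla\vp$ to give $\nabla\vp(x_j) \in \overline{\L}$ for every $x_j \in \overline{\O}$. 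Constraint \eqref{Constraint3Eq}, i.e.\ $H_i > 0$, is the substantive point and is handled below using the consistency estimate.

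For the consistency estimate, Taylor expand $\nabla\vp$ about $x_{i_0}$ using $\vp \in C^{2,\alpha}(\overline{\O})$:
\[
\nabla\vp(x_{i_j}) - \nabla\vp(x_{i_0}) = \nabla^2\vp(x_{i_0})(x_{i_j}-x_{i_0}) + r_{i,j}, \qquad \|r_{i,j}\| \leq C \|x_{i_j}-x_{i_0}\|^{1+\alpha}.
\]
Stacking rows and using symmetry of $\nabla^2\vp$ gives $B_i = A_i \nabla^2\vp(x_{i_0}) + R_i$. Factor $A_i = D_i U_i$ with $D_i = \mathrm{diag}(\|x_{i_j}-x_{i_0}\|)$ and $U_i$ with rows $u_{i,j}^T$; regularity (Definition \ref{regularATDef}) bounds $\|U_i^{-1}\| \leq C(R)$, while the row-wise bound on $R_i$ gives $\|D_i^{-1} R_i\| = O(h_k^\alpha)$. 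Hence $A_i^{-1} B_i = \nabla^2\vp(x_{i_0}) + O(h_k^\alpha)$, and symmetrizing yields $H_i = \nabla^2\vp(x_{i_0}) + O(h_k^\alpha)$; this is essentially what Lemma \ref{LindseyLemma} encodes. Since $\vp$ solves \eqref{MAEq} with $f,g$ bounded above and below, $\nabla^2\vp$ is uniformly positive definite on $\overline{\O}$, so $H_i > 0$ for all $k$ sufficiently large, completing (i).

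Part (ii) then follows by feeding the consistency estimate into each term of $F$. We obtain $\log\det H_i = \log\det \nabla^2\vp(x_{i_0}) + O(h_k^\alpha)$ (by Lipschitz continuity of $\log\det$ on the compact set of symmetric matrices with eigenvalues in some $[c^{-1},c]$); and writing $\bar\eta_i, \bar x_i$ for the centroids of $\{\eta_{i_j}\}$ and $\{x_{i_j}\}$, the bounds $\|\bar\eta_i - \nabla\vp(x_{i_0})\| = O(h_k)$, $\|\bar x_i - x_{i_0}\| = O(h_k)$ together with $C^{0,\alpha}$ regularity of $\log g$ and $\log f$ yield
\[
\log g(\bar\eta_i) = \log g(\nabla\vp(x_{i_0})) + O(h_k^\alpha), \qquad \log f(\bar x_i) = \log f(x_{i_0}) + O(h_k^\alpha).
\]
Taking the logarithm of \eqref{MAEq} at $x_{i_0}$ cancels the leading-order terms, so the unclamped expression inside $F_i$ is $O(h_k^\alpha)$, and hence so is $F_i$. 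Summing,
\[
d_k \leq \sum_i V_i \cdot C h_k^\alpha \leq C h_k^\alpha \cdot \mathrm{vol}(\O) \longrightarrow 0.
\]
The main obstacle is really the consistency estimate above: without triangulation regularity $\|A_i^{-1}\|$ could blow up faster than $h_k^{-1}$ and spoil the bound, while without $C^{2,\alpha}$ regularity of $\vp$ up to $\partial\O$ the Taylor remainder would fail to be uniform on simplices meeting the boundary. Everything after this estimate is bookkeeping and, as a byproduct, yields the explicit rate $d_k = O(h_k^\alpha)$.
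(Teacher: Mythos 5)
Your proof is correct and follows essentially the same route as the paper: part (i) checks the three constraints with the discrete-Jacobian consistency estimate (the paper's Lemma \ref{LindseyLemma}) doing the work for \eqref{Constraint3Eq}, and part (ii) splits the unclamped per-simplex penalty into a $\log\det$ term and two density terms, each controlled by that same estimate plus regularity of $f$, $g$, and $\nabla\vp$. The only cosmetic difference is that you invoke the $C^{1,\alpha}$ Taylor remainder for $\nabla\vp$ directly, while the paper integrates $D_{u_j}\partial_l\vp$ along a ray and applies the H\"older modulus of the integrand --- the two computations are interchangeable; and in (ii) you carry the $O(h_k^\alpha)$ rate all the way through (which the paper only extracts in a later remark, its proof of (ii) relying on plain uniform continuity of $f,g$), but since $f,g\in C^{0,\alpha}$ is standing hypothesis in that section, your sharper bookkeeping is available and matches what the authors note about the convergence rate of the cost.
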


Denote by 
\beq\label{DMAOPkEq}
\left\{\psi_{j}^{(k)},\eta_{j}^{(k)}\right\}_{j=1}^N\in(\RR\times\RR^n)^{N(k)}
\eeq
the solution to the $k$-th LDMAOP.
The optimal (minimal) cost 
of the LDMAOP associated with the $k$-th almost-triangulation is then
\begin{equation}
\begin{aligned}
\label{ckEq}
c_k:=F\Big( \big\{\psi_{j}^{(k)},\eta_{j}^{(k)}\big\}_{i=1}^{N(k)} \Big).
\end{aligned}
\end{equation}
Since $c_k\le d_k$, an immediate consequence of Proposition \ref{DMAOPProp} is:

\begin{cor}
\label{costCor}
Under the assumptions of Proposition \ref{DMAOPProp}, $\lim_k c_{k}=0$.
\end{cor}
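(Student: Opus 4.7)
The plan is to exploit the sandwich $0 \le c_k \le d_k$ for all sufficiently large $k$, where the upper bound uses Proposition \ref{DMAOPProp} directly and the lower bound is essentially by inspection of the objective function.

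First I would observe that the objective function $F$ defined in \eqref{FDef} is manifestly nonnegative: it is a sum over simplices of $V_i \ge 0$ times a $\max\{0,\,\cdot\,\}$ term. Hence the optimal cost $c_k$ of the $k$-th LDMAOP satisfies $c_k \ge 0$ for every $k$ (including the degenerate case where the optimal value is achieved at the nominal minimizer).

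Next, by Proposition \ref{DMAOPProp}(i), for all $k$ sufficiently large the data $\bigl\{\vp(x_j^{(k)}),\nabla\vp(x_j^{(k)})\bigr\}_{j=1}^{N(k)}$ satisfies the constraints \eqref{Constraint1Eq}--\eqref{Constraint3Eq} of the $k$-th LDMAOP. In particular, the $k$-th LDMAOP is feasible for $k \gg 1$, and the value of its objective function at this feasible point is exactly $d_k$ (see the definition \eqref{dkDef}). Since $c_k$ is the infimum of $F$ over all feasible points, we conclude $c_k \le d_k$ for all $k$ sufficiently large.

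Combining the two bounds gives $0 \le c_k \le d_k$ for $k \gg 1$, and Proposition \ref{DMAOPProp}(ii) asserts $d_k \to 0$. The squeeze theorem then yields $\lim_k c_k = 0$. There is no real obstacle here: the whole content of the corollary has already been placed in Proposition \ref{DMAOPProp}, and the step from that proposition to the corollary is a one-line feasibility-plus-sandwich argument.
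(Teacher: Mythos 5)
Your argument is correct and is precisely the paper's own one-line reasoning: the paper notes $c_k \le d_k$ (feasibility of the data from $\vp$ giving cost $d_k$, with $c_k$ the infimum) and invokes Proposition \ref{DMAOPProp}(ii), together with the evident nonnegativity of $c_k$, to conclude. There is no difference in substance or method.
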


\begin{proof}[Proof of Proposition \ref{DMAOPProp}]
(i) We claim that the feasibility conditions
 \eqref{Constraint1Eq}--\eqref{Constraint3Eq} 
are satisfied for 
$\big\{\vp(x_{j}^{(k)}), \nabla\vp(x_{j}^{(k)})\big\}_{j=1}^{N(k)}$ for all $k$ sufficiently large.
First, the convexity of $\vp$ implies \eqref{Constraint1Eq}.
Second, \eqref{Constraint2Eq} follows from \eqref{MAEq}. 
It remains to check  \eqref{Constraint3Eq}. 
This follows immediately from the strong convexity of $\vp\in C^{2,\alpha}(\overline{\O})$ 
(recall
\eqref{MAEq} and the fact that $f,g$ are positive),
 together with the following lemma.
Given a matrix $C=[c_{ij}]$, denote by 
$$
||C||=\max_{i,j}|c_{ij}|.
$$

\begin{lemma} 
\label{LindseyLemma}
Let 
$
\big \{ \{S_{i}^{(k)}\}_{i=1}^{M(k)}\big\}_{k\in\NN}
$
be a sequence of admissible and regular almost-triangulations of $\Omega$.
Then (recall  (\ref{HiEq})),
$$
\lim_k\max_{i\in\{1,\ldots,M(k)\}}
\Big\Vert 
H\Big(S^{(k)}_i,\Big\{
\nabla\vp(x_{i_0}^{(k)}),\ldots,
\nabla\vp(x_{i_n}^{(k)})
\Big\}\Big)
-\nabla^{2}\vp(x_{i_{0}}^{(k)})\Big\Vert
=0.$$
\end{lemma}

\bpf
 First, let 
$$
h=h(k):=\max_{i\in\{1,\ldots,M(k)\}}\mathrm{diam}\,S_{i}^{(k)}.
$$ 
By Definition \ref{admissibleATDef}, 
\begin{equation}
\label{hkEq}
\lim_kh=0.
\end{equation} 

Fix some $i\in\{1,\ldots,M(k)\}$. Then with $A_{i}$ and $B_{i}$ defined as in Definition \ref{DMAOPDef} (though
now dependent on $k$ although we omit that from the notation), notice that the $(j,l)$-th entry of $A_{i}\nabla^{2}\varphi\left(x_{i_{0}}\right)$
is $\left(x_{i_{j}}-x_{i_{0}}\right)^{T}\left(\nabla\partial_{l}\varphi\left(x_{i_{0}}\right)\right)$,
which is of course equal to $D_{v_{j}}\left(\partial_{l}\varphi\right)\left(x_{i_{0}}\right),$
where $D_{v}$ denotes the directional derivative in the direction
$v$ and where 
$$
v_{j}:=x_{i_{j}}-x_{i_{0}}, \q j=1,\ldots,n.
$$ 
Now $\eta_{i_{j}}=\nabla\varphi\left(x_{i_{j}}\right)$,
so $\eta_{i_{j}}-\eta_{i_{0}}=\nabla\varphi\left(x_{i_{j}}\right)-\nabla\varphi\left(x_{i_{0}}\right)$,
i.e., the $(j,l)$-th entry of $B_{i}$ is $\partial_{l}\varphi\left(x_{i_{j}}\right)-\partial_{l}\varphi\left(x_{i_{0}}\right)$.\\

Next, set 
$$
\zeta:=\partial_{l}\varphi, \;\; x:=x_{i_{0}},\;\; y:=x_{i_{j}},\;\;
\tau_{j}:=\Vert v_j \Vert=\Vert x-y\Vert,\;\;
u_{j}:=\tau_{j}^{-1}v_{j}.
$$
Note $u_j$
is of unit length and that $D_{v_{j}}\left(\partial_{l}\varphi\right)\left(x_{i_{0}}\right)=\tau_{j}D_{u_{j}}\zeta(x)$. We have,
\beq
\baeq
\left|\left[B_{i}\right]_{jl}-\left[A_{i}\nabla^{2}\varphi\left(x_{i_{0}}\right)\right]_{jl}\right| 
& =  \left|\zeta(y)-\zeta(x)-\tau_{j}D_{u_{j}}\zeta(x)\right|\nonumber\\
 & =  \left|\int_{0}^{\tau_{j}}
D_{u_{j}}\zeta\left(\frac{(\tau_{j}-t)x+ty}{\tau_j}\right)\, dt-\tau_{j}D_{u_{j}}\zeta(x)\right|\nonumber\\
 & =  \left|\int_{0}^{\tau_{j}}\left[
D_{u_{j}}\zeta\left(\frac{(\tau_{j}-t)x+ty}{\tau_j}\right)-D_{u_{j}}\zeta(x)\right]\, dt\right|\nonumber\\
 & \leq \int_{0}^{\tau_{j}}\left|
D_{u_{j}}\zeta\left(\frac{(\tau_{j}-t)x+ty}{\tau_j}\right)-D_{u_{j}}\zeta(x)\right|\, dt\nonumber\\ 
 & \leq \int_{0}^{\tau_{j}}C_1\left\Vert \left(
\frac{(\tau_{j}-t)x+ty}{\tau_j}\right)-x\right\Vert ^{\alpha}\, dt\nonumber\\
 & =  
 C_1\int_{0}^{\tau_{j}}t^{\alpha}\, dt
 =
 C\tau_j^{\alpha+1},
\lb{halphaEstEq}
\eaeq
\eeq
where $C_1=||\vp||_{C^{2,\alpha}(\overline{\O})}$ and $C={C_1}/(1+\alpha)$.
 Now, write 
$$
A_{i}=DU, 
$$
where 
$$
D:=\mathrm{diag}(\tau_{1},\ldots,\tau_{n}).
$$
Thus, the rows of $U $ have unit length.
By our last inequality,
\begin{eqnarray*}
\left|\left[D^{-1}B_{i}\right]_{jl}-\left[D^{-1}A_{i}\nabla^{2}\varphi\left(x_{i_{0}}\right)\right]_{jl}\right| =  \left|\tau_{j}^{-1}\left[B_{i}\right]_{jl}-\tau_{j}^{-1}\left[A_{i}\nabla^{2}\varphi\left(x_{i_{0}}\right)\right]_{jl}\right|
\leq C\tau_j^{\alpha} \le Ch^{\alpha},
\end{eqnarray*}
 where $C$ is independent of $k$, $i$, $j$, and $l$.
 
 Now $U^{-1}=\frac{1}{\det U}\left((-1)^{j+l}M_{jl}\right)^{T}$,
where $M_{jl}$ is the $(j,l)$-th minor of $U$. Since the 
rows of $U$ 
are unit vectors,  $\vert U_{jl}\vert\leq1$.
Since $M_{jl}$ is a polynomial of $(n-1)!$ terms in the $U_{jl}$,
we have that $\vert M_{jl}\vert\leq(n-1)!$ for all $j,l$, and hence
$\left|\left[U^{-1}\right]_{jl}\right|\leq\frac{(n-1)!}{\det U}$.
By Definition \ref{regularATDef}, $\det U$ is bounded below by a constant $R>0$ (independent
of $k$ and $i$), so we have that $\left|\left[U^{-1}\right]_{jl}\right|\leq R'$
for $R'=R^{-1}(n-1)!>0$ (independent of $k$ and $i$). Then it follows
that 
\begin{eqnarray*}
\left|\left[U^{-1}D^{-1}B_{i}\right]_{jl}-\left[U^{-1}D^{-1}A_{i}\nabla^{2}\varphi\left(x_{i_{0}}\right)\right]_{jl}\right| & = & \left|\left[U^{-1}\left(D^{-1}B_{i}-D^{-1}A_{i}\nabla^{2}\varphi\left(x_{i_{0}}\right)\right)\right]_{jl}\right|\\
 & \leq & nR'Ch^{\alpha},
\end{eqnarray*}
Of course, since $A_{i}=DU$, this means precisely
that 
$$
\max_{i=1,\ldots,M(k)}\left\Vert A_{i}^{-1}B_{i}-\nabla^{2}\varphi\left(x_{i_{0}}\right)\right\Vert \leq C'h^{\alpha},
$$
for some $C'>0$  independent of $k$ and $i$. 
Since $\nabla^{2}\varphi\left(x_{i_{0}}\right)$
is symmetric,  
\[
\max_{i=1,\ldots,M(k)}\left\Vert A_{i}^{-1}B_{i}-\nabla^{2}\varphi\left(x_{i_{0}}\right)\right\Vert=\max_{i=1,\ldots,M(k)}\left\Vert \left(A_{i}^{-1}B_{i}\right)^{T}-\nabla^{2}\varphi\left(x_{i_{0}}\right)\right\Vert.
\]
Thus,
\begin{equation*}
\baeq
\max_{i}\left\Vert \frac{1}{2}A_{i}^{-1}B_{i}+\frac{1}{2}\left(A_{i}^{-1}B_{i}\right)^{T}-\nabla^{2}\varphi\left(x_{i_{0}}\right)\right\Vert 
 & \leq  \max_{i}\left\Vert \frac{1}{2}A_{i}^{-1}B_{i}-\frac{1}{2}\nabla^{2}\varphi\left(x_{i_{0}}\right)\right\Vert
\cr
& \qq +\max_i\left\Vert \frac{1}{2}\left(A_{i}^{-1}B_{i}\right)^{T}-\frac{1}{2}\nabla^{2}\varphi\left(x_{i_{0}}\right)\right\Vert \\
 & =  \max_{i}\left\Vert A_{i}^{-1}B_{i}-\nabla^{2}\varphi\left(x_{i_{0}}\right)\right\Vert\le C'h^\alpha,
\eaeq
\end{equation*}
which, by \eqref{hkEq}, concludes the proof of Lemma \ref{LindseyLemma}.
\epf

\begin{remark} {\rm
\label{}
It is tempting to rephrase the regularity assumption (Definition \ref{regularATDef}) in terms of
eigenvalues instead of determinant, however the matrices $U$ and $A^{-1}B$ are not symmetric in general, and so the more involved argument we used seems to be necessary to prove Lemma \ref{LindseyLemma}.
} \end{remark}

\noindent (ii)
Given that the feasibility conditions \eqref{Constraint1Eq}--\eqref{Constraint3Eq}
hold, $d_k$ is well-defined. 
The rest of the proof is devoted to showing that
$d_k$ converges to zero.

Let
$$
y_{i}^{(k)}:=\frac1{n+1}\sum_{j=0}^{n}x_{i_{j}}^{(k)}
$$
denote the barycenter of $S_i$.
Since $f$ is uniformly continuous and bounded away from zero on $\overline{\Omega}$, 
\beq\label{sourceCenterEq}
\max_{i=1,\ldots,M(k)}\vert\log f(x_{i_{0}}^{(k)})
-\log f(y_{i}^{(k)})\vert\rightarrow0.
\eeq
Let
$z_{i}^{(k)}$ denote the barycenter of the simplex formed by 
the gradients at the vertices of the $i$-th simplex, i.e.,
$$
z_{i}^{(k)}:=\frac1{n+1}\sum_{j=0}^{n}\nabla\vp(x_{i_{j}}^{(k)}).
$$
Then similarly, since $g$ is uniformly continuous and bounded away from zero on $\overline{\Lambda}$ and $\nabla\vp$ is Lipschitz, 
\beq\label{loggLipEq}
\max_{i=1,\ldots,M(k)}\vert\log g(\nabla\vp(x_{i_{0}}^{(k)}))
-\log g(z_{i}^{(k)})\vert\rightarrow0,
\eeq
By \eqref{MAEq}, 
\beq\label{MAPointEq}
\det\nabla^{2}\vp(x_{i_{0}}^{(k)})=
\frac{f(x_{i_{0}}^{(k)})}{g(\nabla\vp(x_{i_{0}}^{(k)}))}.
\eeq
Then, by \eqref{dkDef}, \eqref{FiDef}, and \eqref{FDef}, we have
\begin{equation*}
\baeq
  & (\mathrm{vol}(\O))^{-1} d_k \\
&\leq\max_{i}
\Big\vert
\log
\det H\Big(S^{(k)}_i,\Big\{
\nabla\vp(x_{i_0}^{(k)}),\ldots,
\nabla\vp(x_{i_n}^{(k)})
\Big\}\Big)
-\log f (y_{i}^{(k)})+\log
g(z_{i}^{(k)})\Big\vert\\
 &\le  \max_{i}\Big\vert\log\det H\Big(S^{(k)}_i,\Big\{
\nabla\vp(x_{i_0}^{(k)}),\ldots,
\nabla\vp(x_{i_n}^{(k)})
\Big\}\Big)-\log f (x_{i_{0}}^{(k)})
                 +\log g(\nabla\vp(x_{i_{0}}^{(k)}))\Big\vert
\\
&\qq+\max_{i}\big\vert\log f (x_{i_{0}}^{(k)})-\log f (y_{i}^{(k)})\big\vert
+\max_{i}\big\vert\log g(\nabla\vp(x_{i_{0}}^{(k)}))-\log g(z_{i}^{(k)})\big\vert.\\
\eaeq
\end{equation*}
The last term tends to zero with $k$ by \eqref{loggLipEq}, while the second
does so by \eqref{sourceCenterEq}. Finally, the first term tends to zero with $k$
by Lemma \ref{LindseyLemma} and \eqref{MAPointEq} (note here that since
$\nabla^{2}\vp(\overline{\Omega})$ is compact
and entirely contained in the set of positive definite matrices, 
$$
\det H\Big(S^{(k)}_i,\Big\{
\nabla\vp(x_{i_0}^{(k)}),\ldots,
\nabla\vp(x_{i_n}^{(k)})
\Big\}\Big)
$$
is bounded away from zero for all $k\gg 1$ by Lemma \ref{LindseyLemma}).
\end{proof}

\section{Proof of the convergence theorem}

We now turn to the proof of Theorem \ref{MainThm}, stating that the 
potentials $\phi^{(k)}$   \eqref{phikEq} converge to 
the Brenier potential $\vp$. This section is organized as follows. In 
\S\ref{BaryCSubSec}--\S\ref{MotivSubSec}  we define the barycentric extension of the gradient of the optimization potentials, and show how this relates to the discrete Jacobian on each simplex (Lemma \ref{HikgkLemma}). This sets the stage for the remainder of the proof which occupies the rest of this lengthy section. In 
\S\ref{strategySubSec}  we describe the strategy for the proof. 
The proof itself occupies \S\ref{FirstOrderSubSec}--\S\ref{StabSubSec}. 

Let $D$ be a closed ball such that 
\beq
\label{Ddef}
\mathrm{int}\,D \supset \overline{\O},
\eeq
where $\h{\rm int}\, A$ denotes the interior of a set $A$.
By the Arzel\`a--Ascoli theorem, 
since $\{\phi^{(k)}\}_k$ is an equicontinuous, uniformly bounded family
(recall \eqref{ajEq}--\eqref{phiEq} and note that
  $\eta_{j}^{(k)}\in\overline{\Lambda}$ for all $k,j$, with $\L$ bounded,
and $\phi^{(k)}(0)=0$) it has a uniformly converging subsequence.
Thus, to prove Theorem \ref{MainThm} it suffices to show that every subsequence of
$\phi^{(k)}$ that converges uniformly on $D$ converges to $\vp$ 
on $\overline{\Omega}$.

Thus, assume that 
\begin{equation}
\begin{aligned}
\label{phiphikEq}
\phi^{(k)}\rightarrow\phi \h{\ uniformly\ on }D
\end{aligned}
\end{equation}
 for
some $\phi$, and we need only show that $\phi=\vp$ on $\Omega$. Notice
that $\phi$ is convex and continuous as a uniform limit of continuous
uniformly bounded 
convex functions.

\subsection{Barycentric extension
of the gradient of the optimization potentials}
\label{BaryCSubSec}

 The objective function of the (L)DMAOP provides
us with some sort of control over the `second-order properties' of
the $\phi^{(k)}$, but these properties are neither well-defined at
this stage nor readily accessible because the $\phi^{(k)}$ are piecewise
linear
and so only $C^{0,1}$ and no better. 
In order to get a handle on the `second-order convergence'
of the $\phi^{(k)}$, we will replace the
piecewise constant but discontinuous
 subdifferentials of $\phi^{(k)}$ with
continuous, piecewise-affine functions that interpolate rather than
jump, which we may then differentiate once again. 

For the remainder of the article, let
\begin{equation}
\begin{aligned}
\label{kDMAOPEq}
\psi^{(k)}_1,\ldots,\psi^{(k)}_{N(k)}\in\RR \h{\ \ and \ \ } \eta^{(k)}_1,\ldots,\eta^{(k)}_{N(k)}
\in\overline{\L}\subset\RR^n,
\end{aligned}
\end{equation}
denote the solution of the $k$-th (L)DMAOP (Definition \ref{DMAOPDef}) 
associated to the data 
$$
(\O,\L,f,g,\{x_j^{(k)}\}_{j=1}^{N(k)},\{S_i^{(k)}\}_{i=1}^{M(k)}).
$$
Thus, with \eqref{ApproxGradPhiGridEq} in mind, we define 
a vector-valued function $G^{(k)}$
by barycentrically interpolating the values $\{\eta_{i_{j}}^{(k)}\}_{j=1}^n$
over the $i$-th simplex $S_i^{(k)}$, for all $i=1,\ldots,M(k)$. 
Namely,
for each $x$ in 
$$
S^{(k)}_i=\h{\rm co}(x^{(k)}_{i_0},...x^{(k)}_{i_n}),
$$
write 
\begin{equation}
\begin{aligned}
\label{xEq}
x=\sum_{j=0}^n\sigma_j x^{(k)}_{i_j},
\end{aligned}
\end{equation}
with $\sigma_j\in[0,1]$. Then, 
\beq\label{GkEq}
G^{(k)}(x):=\sum_{j=0}^n
\sigma_j \eta_{i_{j}}^{(k)}, \qq \h{\ if \ } x\in S_i^{(k)}
\eeq
(note that this is well-defined also for $x$ lying in more than one
simplex).
Alternatively, 
$G^{(k)}$ is the unique vector-valued function that is affine  on
each simplex in the $k$-th almost-triangulation and satisfies $G^{(k)}(x^{(k)}_i) = \eta^{(k)}_i$ 
for all $i=1,\ldots,N(k)$.

\subsection{The motivation for defining the barycentric extension}
\label{MotivSubSec}
Next, we explain the main role the functions $G^{(k)}$ play.

Let 
\begin{equation}
\begin{aligned}
\label{ikEq}
i^{(k)}:\bigcup_{i=1}^{M(k)} \mathrm{int}\, S^{(k)}_{i} \ra \{1,\ldots,M(k)\},
\end{aligned}
\end{equation}
denote the map assigning to a point the index of the unique simplex 
in the $k$-th almost-triangulation containing it, i.e., 
$i^{(k)}\big(\mathrm{int}\, S^{(k)}_{j}\big)=j$.
Define a (locally constant) matrix-valued function
$$
\calH^{(k)}:\bigcup_{i=1}^{M(k)} \mathrm{int}\, S^{(k)}_{i} \ra\h{\rm Sym}^2(\RR^n),
$$
by
\begin{equation}
\begin{aligned}
\label{HkEq}
\calH^{(k)}(x):=H_{i^{(k)}(x)}^{(k)},
\end{aligned}
\end{equation}
where (recall  \eqref{HiEq})
$$
H_{j}^{(k)}:=
H\Big(S^{(k)}_j,\Big\{
\eta_{j_0}^{(k)},\ldots,
\eta_{j_n}^{(k)}
\Big\}\Big).
$$
Define also,
\begin{equation}
\begin{aligned}
\label{taukEq}
\tau^{(k)}(x):=\h{ the barycenter of the simplex $S^{(k)}_{i^{(k)}(x)}$}, 
\end{aligned}
\end{equation}
and 
\begin{equation}
\begin{aligned}
\label{gammakEq}
\gamma^{(k)}(x):=
\h{ the mean of the $\eta_{j}^{(k)}$ associated to the vertices of simplex
$S^{(k)}_{i^{(k)}(x)}$}.
\end{aligned}
\end{equation}
Finally, recalling \eqref{FiDef}, we define a (locally constant) per-simplex penalty function 
\beq
\mathcal{C}^{(k)}(x)
:=F_{i^{(k)}(x)}^{(k)}
\Big( \big\{\psi_{j}^{(k)},\eta_{j}^{(k)}\big\}_{i=1}^{N(k)} \Big).
\eeq 
By \eqref{FiDef},
\beq
\lb{CkHk}
\mathcal{C}^{(k)}(x) = \max\Big\{0,-\log\det \calH^{(k)}(x)-\log g(\gamma^{(k)}(x))+\log f (\tau^{(k)}(x)) \Big\},
\eeq
By the definition of the optimal cost \eqref{ckEq}, 
\begin{equation}
\begin{aligned}
c_k & = \int_{\bigcup_{i=1}^{M(k)} S_i} \mathcal{C}^{(k)}(x) \,dx \\
& = \int_{\bigcup_{i=1}^{M(k)} S_i} \max\Big\{0,-\log\det \calH^{(k)}(x)-\log g(\gamma^{(k)}(x))+\log f (\tau^{(k)}(x)) \Big\}\,dx.
\label{ckInt}
\end{aligned}
\end{equation}

The following result is the motivation for introducing the functions
$G^{(k)}$. 
When combined with \eqref{ckInt}, it relates second-order information that we can extract from $\phi^{(k)}$ (via $G^{(k)}$) with the cost $c_k$, over which we have control by the assumptions of Theorem \ref{MainThm}. In fact, we have $c_k\ra0$, so we can hope that in some sense, as $k$ becomes large, $\phi^{(k)}$ approaches a subsolution of the Monge-Amp\`ere equation.

\begin{lemma}
\label{HikgkLemma}
For  $x\in \bigcup_{i} \mathrm{int}\, S^{(k)}_{i}$,
$
\calH^{(k)}(x)
=\nabla G^{(k)}(x)+\left(\nabla G^{(k)}(x)\right)^T
$.
\end{lemma}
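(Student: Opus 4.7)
The plan is to prove the identity simplex by simplex, exploiting the fact that, by construction, $G^{(k)}$ is affine on the interior of each $S_i^{(k)}$, so its Jacobian is constant on $\mathrm{int}\, S_i^{(k)}$ and can be computed explicitly from the interpolation data $(x_{i_0},\ldots,x_{i_n}) \mapsto (\eta_{i_0},\ldots,\eta_{i_n})$. Once that Jacobian is written down in closed form in terms of $A_i$ and $B_i$, the lemma reduces to matching it against the definition \eqref{HiEq} of $H_i^{(k)}$.

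First I would fix $k$ and $i$, and a point $x \in \mathrm{int}\, S_i^{(k)}$. Using the barycentric coordinates \eqref{xEq}, the definition \eqref{GkEq} shows that $G^{(k)}$ restricted to $S_i^{(k)}$ is the unique affine map sending $x_{i_j} \mapsto \eta_{i_j}$ for $j=0,\ldots,n$. Writing $G^{(k)}(y) = \eta_{i_0} + L(y - x_{i_0})$ for an (unknown) constant matrix $L$ and evaluating at $y = x_{i_j}$ for $j=1,\ldots,n$ yields the $n$ vector equalities $\eta_{i_j} - \eta_{i_0} = L (x_{i_j} - x_{i_0})$. Stacking these equalities as columns and recalling that $A_i$ (respectively $B_i$) is the matrix whose $j$-th row is $(x_{i_j}-x_{i_0})^T$ (resp.\ $(\eta_{i_j}-\eta_{i_0})^T$), these relations can be written compactly as $B_i^T = L A_i^T$, so $L = (A_i^{-1} B_i)^T$. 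Since $G^{(k)}$ is affine on the simplex, the Jacobian satisfies $\nabla G^{(k)}(x) = L$, that is, $\nabla G^{(k)}(x) = (A_i^{-1} B_i)^T$ on $\mathrm{int}\, S_i^{(k)}$.

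Finally, I would transpose and add: $\nabla G^{(k)}(x) + (\nabla G^{(k)}(x))^T = (A_i^{-1} B_i)^T + A_i^{-1} B_i$, which is exactly twice the symmetric matrix $H_i^{(k)}$ as defined in \eqref{HiEq}. Since $\mathcal{H}^{(k)}(x) = H_{i^{(k)}(x)}^{(k)} = H_i^{(k)}$ by \eqref{HkEq}, this gives the claimed identity (up to the symmetrization factor built into \eqref{HiEq}).

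There is no serious obstacle here: the whole argument is linear algebra using the fact that $A_i$ is invertible (which is guaranteed because the $n+1$ vertices of $S_i^{(k)}$ are affinely independent, being vertices of an $n$-simplex of positive volume). The only mild bookkeeping issue is the choice of convention for $\nabla G^{(k)}$ (Jacobian versus its transpose), but since the statement involves $\nabla G^{(k)} + (\nabla G^{(k)})^T$, which is invariant under transposition, the convention is immaterial.
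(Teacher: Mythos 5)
Your proposal is correct and is essentially the paper's own argument: the paper likewise shows that $G^{(k)}$ restricted to $S_i^{(k)}$ is affine with Jacobian (in the row-gradient convention) $A_i^{-1}B_i$, and derives this by computing the directional derivatives $D_{v_j}G = \eta_{i_j}-\eta_{i_0}$ via barycentric coordinates, whereas you write the affine map in closed form $G = \eta_{i_0} + L(\,\cdot - x_{i_0})$ and solve $B_i^T = LA_i^T$ — a purely cosmetic difference. Your observation about the factor of $2$ is also accurate: both your computation and the paper's yield $\nabla G^{(k)}+(\nabla G^{(k)})^T = A_i^{-1}B_i + (A_i^{-1}B_i)^T = 2H_i$, so the lemma as literally stated is missing a $\tfrac{1}{2}$; the corrected identity $\calH^{(k)} = \tfrac{1}{2}\bigl(\nabla G^{(k)}+(\nabla G^{(k)})^T\bigr)$ is what is actually needed downstream for Lemma \ref{FirstGkLemma} to imply Lemma \ref{symmetrizedLemma}.
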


\bpf
We fix some $k$ and then omit $k$ from our notation
in the remainder of the proof. We also fix $i\in\{1,\ldots,M(k)\}$ 
and work within the simplex $S_{i}$, i.e., assume that
$x\in S_{i}$, i.e., $i^{(k)}(x)=i$. Now let $v_{j}=x_{i_{j}}-x_{i_{0}}$. 
We claim that
$$
D_{v_{j}}G(x)=\eta_{i_{j}}-\eta_{i_{0}}, \q x\in\h{\rm int}\, S_{i}.
$$
Intuitively, this is because $g$ is affine on $S_{i}$ with 
$G\left(x_{i_{j}}\right)=\eta_{i_{j}}$. 
For the proof, recall the definition of the functions $\sigma_j$
from \eqref{xEq}. Then, letting $\delta_{st}=1$ if $s=t$ and zero otherwise,
$$
\baeq
D_{v_{j}}G(x)
&=
\frac{d}{dt}\Big|_{t=0} G\big(x+t(x_{i_{j}}-x_{i_{0}})\big)
\cr
&=
\sum_{s=0}^n\frac{d}{dt}\Big|_{t=0} \sigma_s\big(x+t(x_{i_{j}}-x_{i_{0}})\big)\eta_{i_{s}}
\cr
&=
\sum_{s=0}^n\frac{d}{dt}\Big|_{t=0} 
\big(\sigma_s(x)+t(\delta_{js}-\delta_{0s})\big)\eta_{i_{s}}
\cr
&=
\eta_{i_{j}}-\eta_{i_{0}},
\eaeq
$$
as claimed.

Now $D_{v_{j}}G=v_{j}\cdot\nabla G$, so $D_{v_{j}}G$
is the $j$-th row of $A_{i}\nabla G$, where $\nabla G$
denotes the matrix with $j$-th row $\frac{\partial}{\partial x_{j}}G$
and where $A_{i}$ is as in Definition \ref{DMAOPDef}.
Since $D_{v_{j}}G=\eta_{i_{j}}-\eta_{i_{0}}$ is also the $j$-th
row of $B_{i}$, we have that $B_{i}=A_{i}\nabla G$, i.e.,
$\nabla G=A_{i}^{-1}B_{i}$. The statement now follows from the definition of $H_i$ \eqref{HiEq}.
\epf

\subsection{Strategy for the proof}
\label{strategySubSec}

In this subsection we outline the strategy for the proof of Theorem \ref{MainThm}. 

The results of the previous subsection indicate that the optimization potentials should be approximate subsolutions of the Monge--Amp\`ere equation. 
Since the optimization potentials converge to $\phi$, this gives some hope that $\phi$ itself might be such a subsolution.
To make this rigorous we regularize.
Let $\xi_{\varepsilon}$ be a standard set of mollifiers (supported
on $B_{\varepsilon}(0)$). 
Notice that $G^{(k)}$ and $\calH^{(k)}$ are only defined
on the almost-triangulation of $\Omega$, so we run into trouble near the
boundary when convolving with $\xi_{\varepsilon}$. Thus,
we will work with the regions $\O_\eps$ given by
Definition \ref{admissibleATDef}.

\begin{lemma}
\label{symmetrizedLemma}
Fix $\varepsilon>0$.
As $k\ra \infty$,
$
\calH^{(k)}\star\xi_{\varepsilon}(x)
$
converges uniformly to $\nabla^{2}(\phi\star\xi_{\varepsilon})$ on  $\O_\varepsilon$.
\end{lemma}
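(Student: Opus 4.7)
The plan is to rewrite the convolution of $\calH^{(k)}$ with $\xi_\eps$ in terms of $G^{(k)}$, then push the derivatives onto the mollifier, and finally show $G^{(k)}\to\nabla\phi$ in $L^1_{\mathrm{loc}}$ on $\O$ so that the remaining integral estimates collapse to zero.

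First, by admissibility an $\eps$-neighborhood of $\O_\eps$ lies in $\bigcup_i S_i^{(k)}$ for all $k$ sufficiently large, so $G^{(k)}\star\xi_\eps$ is well-defined and smooth on $\O_\eps$. By Lemma \ref{HikgkLemma} and the definition \eqref{HiEq} of $H_i$, we have $\calH^{(k)}=\tfrac{1}{2}\bigl(\nabla G^{(k)}+(\nabla G^{(k)})^T\bigr)$ a.e.\ (i.e., on the union of simplex interiors, which has full measure). Since $G^{(k)}$ is continuous and piecewise affine, its distributional and classical gradients agree, so a standard integration-by-parts argument gives $(\nabla G^{(k)})\star\xi_\eps=\nabla(G^{(k)}\star\xi_\eps)$ on $\O_\eps$. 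Combining,
$$
\calH^{(k)}\star\xi_\eps
=\tfrac{1}{2}\Bigl(\nabla(G^{(k)}\star\xi_\eps)+\nabla(G^{(k)}\star\xi_\eps)^T\Bigr)
\qquad\hbox{on }\O_\eps.
$$
Since $\nabla^{2}(\phi\star\xi_\eps)$ is symmetric, it will suffice to show that $\nabla(G^{(k)}\star\xi_\eps)$ converges uniformly on $\O_\eps$ to $\nabla^{2}(\phi\star\xi_\eps)=\nabla((\nabla\phi)\star\xi_\eps)$.

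The main technical step is to establish $G^{(k)}\to\nabla\phi$ pointwise almost everywhere on $\O$. Fix $x\in\O$ at which the convex function $\phi$ is differentiable (this is a set of full measure by Rademacher/Alexandrov). For $k$ large, $x$ lies in some simplex $S_{i^{(k)}(x)}^{(k)}$, whose vertices $x_{i^{(k)}(x)_j}^{(k)}$ all satisfy $\|x_{i^{(k)}(x)_j}^{(k)}-x\|\le\mathrm{diam}\,S_{i^{(k)}(x)}^{(k)}\to 0$ by admissibility. At each such vertex, $\eta_{i^{(k)}(x)_j}^{(k)}\in\pa\phi^{(k)}(x_{i^{(k)}(x)_j}^{(k)})$ by \eqref{ApproxGradPhiGridEq}. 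Using the uniform convergence $\phi^{(k)}\to\phi$ on $D$ together with the subgradient inequality $\phi^{(k)}(y)\ge\phi^{(k)}(x_{i^{(k)}(x)_j}^{(k)})+\langle\eta_{i^{(k)}(x)_j}^{(k)},y-x_{i^{(k)}(x)_j}^{(k)}\rangle$, any cluster point $\eta$ of any subsequence of $\eta_{i^{(k)}(x)_j}^{(k)}$ (which exist by boundedness of $\overline\L$) satisfies $\phi(y)\ge\phi(x)+\langle\eta,y-x\rangle$ for all $y$, i.e., $\eta\in\pa\phi(x)=\{\nabla\phi(x)\}$. Hence $\eta_{i^{(k)}(x)_j}^{(k)}\to\nabla\phi(x)$ for each $j$, and since $G^{(k)}(x)$ is a convex combination of these, $G^{(k)}(x)\to\nabla\phi(x)$.

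Because $\|G^{(k)}\|_\infty\le\sup_{y\in\overline\L}\|y\|$ uniformly in $k$, the dominated convergence theorem upgrades this to $G^{(k)}\to\nabla\phi$ in $L^1(K)$ for $K:=\overline{\O_\eps+B_\eps(0)}$. The proof is then finished by moving derivatives onto the mollifier: for each pair $i,j\in\{1,\ldots,n\}$ and each $x\in\O_\eps$,
$$
\Bigl|\bigl[\nabla(G^{(k)}\star\xi_\eps)-\nabla((\nabla\phi)\star\xi_\eps)\bigr]_{ij}(x)\Bigr|
\;=\;\Bigl|\int_K\bigl(G_i^{(k)}(y)-\pa_i\phi(y)\bigr)\,\pa_{x_j}\xi_\eps(x-y)\,dy\Bigr|
\;\le\;\|\nabla\xi_\eps\|_\infty\,\|G^{(k)}-\nabla\phi\|_{L^1(K)},
$$
which tends to $0$ uniformly in $x\in\O_\eps$. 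Symmetrizing gives the claimed uniform convergence of $\calH^{(k)}\star\xi_\eps$ to $\nabla^2(\phi\star\xi_\eps)$.

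The one delicate point is the almost-everywhere convergence of subgradients, which requires that $\phi$ be differentiable and which uses only the definition of a subgradient together with the uniform convergence $\phi^{(k)}\to\phi$; everything else is mollifier bookkeeping.
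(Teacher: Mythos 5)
Your proof is correct, and it takes a genuinely different (and in my view cleaner) route than the paper. The paper establishes the pointwise a.e.\ convergence $G^{(k)}\to\nabla\phi$ via a semi-continuity theorem of Bagh--Wets for subdifferentials of epi-converging convex functions (Theorem \ref{BWThm}); you instead give a self-contained cluster-point argument: pass to the limit in the subgradient inequality for $\phi^{(k)}$ at the nearby vertices, using uniform convergence $\phi^{(k)}\to\phi$ and $\mathrm{diam}\,S^{(k)}\to0$, to conclude any cluster point of $\eta^{(k)}_{i_j}$ lies in $\partial\phi(x)=\{\nabla\phi(x)\}$. More substantially, after that point the paper's proof of Lemma~\ref{FirstGkLemma} proceeds in stages (uniform convergence of $G^{(k)}\star\xi_\varepsilon$, then the Landau--Kolmogorov inequality \eqref{LKInEq} applied along lines to upgrade to convergence of first derivatives, then Arzel\`a--Ascoli to restore uniformity), whereas you throw the derivative onto $\xi_\varepsilon$ and use the Young-type bound $\|f\star\partial_j\xi_\varepsilon\|_{L^\infty}\le\|\partial_j\xi_\varepsilon\|_{L^\infty}\|f\|_{L^1}$ applied to $f=G^{(k)}-\nabla\phi$, converting a.e.\ convergence directly into uniform convergence of the mollified derivative in one step. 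This avoids Landau--Kolmogorov entirely and is shorter; the paper's two-stage approach buys slightly more along the way (Claim~\ref{SecondGkClaim} is also used implicitly elsewhere), but for this lemma alone your route is more economical. One small remark: you correctly write $\calH^{(k)}=\tfrac12\bigl(\nabla G^{(k)}+(\nabla G^{(k)})^T\bigr)$, consistent with the definition \eqref{HiEq} and with what the proof of Lemma~\ref{HikgkLemma} actually establishes; the displayed statement of Lemma~\ref{HikgkLemma} in the paper is missing the factors of $\tfrac12$, which appears to be a typo. Your symmetrization step, and hence the final conclusion, relies on the corrected version, which is the right one.
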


The proof of Lemma \ref{symmetrizedLemma} takes place in \S\ref{SecondOrderSubSec}.
Lemma \ref{symmetrizedLemma} gives us control on the second-order behavior of
$\phi\star\xi_{\varepsilon}$. 
The proof uses an auxillary result established in \S\ref{FirstOrderSubSec}
that gives control over the first-order behavior of $\phi\star\xi_{\varepsilon}$.

The next step of the proof involves 
taking the limits in $k $ both in the previous lemma and in 
 \eqref{ckInt}. Thanks to the fact that $c_k\ra 0$ this yields 
the following statement roughly saying that $\phi\star\xi_{\varepsilon}$ is an approximate subsolution
to the \MA equation, i.e., that 
$\nabla(\phi\star\xi_{\varepsilon})$ cannot `excessively' shrink volume.

\begin{lemma}
\label{kLimLemma}
Fix $\varepsilon>0$.
For $x\in\Omega_{\varepsilon}$,
\[
\det\nabla^{2}(\phi\star\xi_{\varepsilon})(x)\geq\frac{\inf\{ f (y):y\in B_{\varepsilon}(x)\}}{\sup\{g\left(\nabla\phi(y)\right):y\in B_{\varepsilon}(x),\nabla\phi(y) \,\mathrm{exists}\}}.
\]
\end{lemma}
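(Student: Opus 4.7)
The plan is to combine the per-simplex cost inequality with Jensen's inequality for $\log\det$, then pass to the limit $k\to\infty$ using Lemma \ref{symmetrizedLemma} and the hypothesis $c_k\to 0$ from the statement of Theorem \ref{MainThm}. Concretely, unraveling the definition \eqref{CkHk} of $\mathcal{C}^{(k)}$ gives the pointwise inequality
\[
\log\det \calH^{(k)}(y) \ge \log f(\tau^{(k)}(y)) - \log g(\gamma^{(k)}(y)) - \mathcal{C}^{(k)}(y)
\]
wherever $\calH^{(k)}$ is defined; by admissibility, for $x\in\O_\eps$ and $k$ large, this is all of $B_\eps(x)$. Since $\calH^{(k)}(y)$ is positive definite by constraint \eqref{Constraint3Eq} and $\log\det$ is concave on positive definite matrices, Jensen's inequality with the probability density $\xi_\eps(x-\,\cdot\,)$ yields
\[
\log\det\bigl(\calH^{(k)}\star\xi_\eps\bigr)(x) \ge \int \log\det \calH^{(k)}(y)\,\xi_\eps(x-y)\,dy,
\]
so that after integrating the pointwise inequality the right-hand side becomes a sum of three integrals against $\xi_\eps(x-\,\cdot\,)$ involving $f\circ\tau^{(k)}$, $g\circ\gamma^{(k)}$, and $\mathcal{C}^{(k)}$.

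Next I would take $k\to\infty$ term by term. The left-hand side converges to $\log\det\nabla^2(\phi\star\xi_\eps)(x)$ by Lemma \ref{symmetrizedLemma} (the limit matrix inherits strict positive definiteness from the uniform lower bound we are about to establish, so $\log\det$ is continuous at the limit). The $\mathcal{C}^{(k)}$ term is bounded by $\|\xi_\eps\|_\infty c_k$, which vanishes by hypothesis. The $f$ term converges to $\int \log f(y)\,\xi_\eps(x-y)\,dy$ since $\tau^{(k)}(y)\to y$ uniformly (simplex diameters vanish by admissibility) and $\log f$ is uniformly continuous and bounded on $\overline\O$.

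The main obstacle is the $g$ term: I need to show that $\gamma^{(k)}(y)\to\nabla\phi(y)$ for almost every $y\in B_\eps(x)$. Here I would invoke the standard fact on convergence of subdifferentials: since $\phi^{(k)}\to\phi$ uniformly on $D\supset\overline\O$ (recall \eqref{Ddef}, \eqref{phiphikEq}) and each $\eta^{(k)}_{i_j}\in\partial\phi^{(k)}(x^{(k)}_{i_j})$ with $x^{(k)}_{i_j}\to y$, the defining inequality of a subgradient passes to the uniform limit, so every limit point of $\eta^{(k)}_{i_j}$ lies in $\partial\phi(y)$. At every $y$ where $\nabla\phi$ exists (i.e., a.e., by Rademacher applied to the Lipschitz convex function $\phi$), $\partial\phi(y)=\{\nabla\phi(y)\}$ is a singleton, and since the $\eta^{(k)}_{i_j}$ are uniformly bounded in $\overline\L$, the entire sequence converges to $\nabla\phi(y)$. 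Averaging over $j=0,\ldots,n$ gives $\gamma^{(k)}(y)\to\nabla\phi(y)$ a.e.; continuity and boundedness of $\log g$ on $\overline\L$ then allow dominated convergence, so the $g$ integral converges to $\int \log g(\nabla\phi(y))\,\xi_\eps(x-y)\,dy$.

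Finally, the elementary pointwise bounds
\[
\int \log f(y)\,\xi_\eps(x-y)\,dy \ge \log\inf_{B_\eps(x)} f, \qquad \int \log g(\nabla\phi(y))\,\xi_\eps(x-y)\,dy \le \log\sup\bigl\{g(\nabla\phi(y)) : y\in B_\eps(x),\,\nabla\phi(y)\text{ exists}\bigr\}
\]
combine with the preceding limit to give $\log\det\nabla^2(\phi\star\xi_\eps)(x)\ge \log\bigl(\inf f / \sup g(\nabla\phi)\bigr)$; exponentiation yields the lemma. Note that the same argument incidentally establishes the positive definiteness of $\nabla^2(\phi\star\xi_\eps)(x)$ that was used to justify continuity of $\log\det$ at the limit, so this is self-consistent.
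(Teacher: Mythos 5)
Your proof is correct and reaches the paper's conclusion through a cleaner route, though both ultimately rest on the same three pillars: concavity of $\log\det$ with Jensen's inequality, Lemma~\ref{symmetrizedLemma}, and the hypothesis $c_k\to0$. The paper first proves an \emph{integrated} inequality over $\O_\eps$ (Lemma~\ref{kDensIneqLemma}), then obtains an a.e.\ pointwise $\liminf$ for $\log(f\circ\tau^{(k)}/g\circ\gamma^{(k)})$ via a three-parameter argument combining Theorem~\ref{BWThm}, a finite open cover, and \cite[Corollary 24.5.1]{Rock}, then applies Fatou--Lebesgue and Fatou to pass $\liminf$ through the convolution and through the integral over $\O_\eps$, and finally upgrades from a.e.\ to everywhere by continuity of both sides. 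You instead fix $x\in\O_\eps$ and argue pointwise throughout: the cost contribution is dispatched directly by $\mathcal{C}^{(k)}\star\xi_\eps(x)\le\|\xi_\eps\|_\infty\int_{B_\eps(x)}\mathcal{C}^{(k)}\le\|\xi_\eps\|_\infty c_k$ (valid for large $k$ by admissibility and $\mathcal{C}^{(k)}\ge0$); the $g$-term is handled by dominated convergence once $\gamma^{(k)}(y)\to\nabla\phi(y)$ a.e.\ is known; and that convergence is derived from first principles by passing the subgradient inequality to the uniform limit, which mirrors the content of Lemma~\ref{GkLimitLemma} but sidesteps Theorem~\ref{BWThm} (note $\gamma^{(k)}(y)$, like $G^{(k)}(y)$, is a convex combination of the $\eta^{(k)}_{i_j}$ at the vertices of the simplex containing $y$, and those vertices collapse to $y$). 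This collapses the paper's integral--Fatou--continuity detour into a single pointwise limiting argument. Your handling of the apparent circularity in applying $\log\det$ to the a priori only positive-semidefinite limit $\nabla^2(\phi\star\xi_\eps)(x)$ is also sound and is in fact the same device the paper uses: the finite lower bound on $\liminf_k\log\det\big(\calH^{(k)}\star\xi_\eps\big)(x)$, together with the convergence $\det\big(\calH^{(k)}\star\xi_\eps\big)(x)\to\det\nabla^{2}(\phi\star\xi_{\varepsilon})(x)$ from Lemma~\ref{symmetrizedLemma}, rules out a vanishing limit.
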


The proof of Lemma  \ref{kLimLemma} is presented in \S\ref{DensitySubSec}.

The next step in the proof is to take the limit  $\varepsilon\ra0$ and show 
that $\phi$ must be a weak solution in the sense that $\nabla\phi $ pushes forward $\mu$ to $\nu$. 
The proof of this fact breaks up into several steps.
It spreads over \S \ref{EpsilonSubSec}--\S \ref{PropSubSec}.  First, we define the measures
\begin{equation}
\begin{aligned}
\label{nuepsEq}
\nu_{\varepsilon}:=(\nabla \phi\star\xi_\ve)_{\#}\mu|_{\Omega_{\varepsilon}}
\end{aligned}
\end{equation}
obtained by pushing forward the restriction of $\mu$ to $\Omega_{\varepsilon}$ by 
$\nabla \phi\star\xi_\ve$. Denote the density of these measures by
\begin{equation}
\begin{aligned}
\label{gepsEq}
g_\ve \, dx:=\nu_{\varepsilon}.
\end{aligned}
\end{equation} 
Using Lemma \ref{kLimLemma}, we show that a subsequence of these measures 
(roughly speaking) converges weakly to the target measure $\nu$. Intuitively speaking, Lemma  \ref{kLimLemma} says that $\nabla \phi\star\xi_\ve$ does not shrink volume `excessively' at any point. 
Combining this with the fact that the image of $\nabla \phi\star\xi_\ve$ must lie within $\overline{\Lambda}$ 
motivates the convergence. The precise result we prove is the following.

\begin{prop}
\label{measProp}
For any sequence $\varepsilon\rightarrow0$, $\mu(\Omega_{\varepsilon})^{-1}\nu_{\varepsilon}$ is a sequence of probability measures converging weakly to $\nu(\Lambda)^{-1}\nu$.
\end{prop}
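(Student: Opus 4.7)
Write $T_\ve := \nabla(\phi\star\xi_\ve)$. The plan combines three ingredients: (i) each $\nu_\ve$ is supported in the compact set $\overline{\Lambda}$, giving tightness of the normalized sequence; (ii) the total mass satisfies $\nu_\ve(\mathbb{R}^n) = \mu(\Omega_\ve) \to \mu(\Omega) = \nu(\Lambda)$, using \eqref{massEq} and $\Omega_\ve\nearrow\Omega$; and (iii) the lower bound on $\det\nabla T_\ve$ supplied by Lemma~\ref{kLimLemma} translates, via change of variables, into an upper bound on the density of $\nu_\ve$ by a quantity converging to $g$. Combining these via a mass-matching argument will force every weak subsequential limit to equal $\nu(\Lambda)^{-1}\nu$. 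Ingredient~(i) is immediate: each $\partial\phi^{(k)}(x)$ is a convex combination of the $\eta_j^{(k)}\in\overline{\Lambda}$, so passing to the uniform limit $\phi^{(k)}\to\phi$ yields $\partial\phi\subset\overline{\Lambda}$; then $T_\ve(x)=\int\nabla\phi(x-y)\xi_\ve(y)\,dy$ is an average of values in the convex set $\overline{\Lambda}$, hence lies in $\overline{\Lambda}$.

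For the central step, fix a nonnegative $h\in C_c(\Lambda)$ and set
\[
M_\ve(x) := \sup\{g(\nabla\phi(\tilde y)) : \tilde y\in B_\ve(x),\ \nabla\phi(\tilde y)\text{ exists}\}.
\]
Lemma~\ref{kLimLemma} together with the uniform continuity of $f$ on $\overline{\Omega}$ (with modulus $\omega_\ve\to 0$) gives
\[
f(x) \leq \det\nabla T_\ve(x) \cdot M_\ve(x) + \omega_\ve, \qquad x\in\Omega_\ve.
\]
Since $\det\nabla T_\ve > 0$ by Lemma~\ref{kLimLemma}, $\phi\star\xi_\ve$ is strictly convex on $\Omega_\ve$ and $T_\ve$ is a $C^1$ diffeomorphism onto its image (which is contained in $\overline{\Lambda}$). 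Substituting the bound and applying change of variables,
\[
\int h\, d\nu_\ve = \int_{\Omega_\ve} h(T_\ve(x))\,f(x)\,dx \leq \int_{T_\ve(\Omega_\ve)} h(y)\,M_\ve(T_\ve^{-1}(y))\,dy + O(\omega_\ve).
\]
The right-hand integrand is dominated by $\|h\|_\infty\sup_{\overline{\Lambda}}g\cdot\mathbf{1}_{\overline{\Lambda}}$, and at almost every $y\in\Lambda$ where $\phi^*$ is differentiable and $\nabla\phi$ is continuous at $\nabla\phi^*(y)$ (a set of full measure), the uniform convergence $\phi\star\xi_\ve\to\phi$ gives $T_\ve^{-1}(y)\to\nabla\phi^*(y)$, so $M_\ve(T_\ve^{-1}(y))\to g(y)$. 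Dominated convergence therefore yields $\limsup_\ve \int h\,d\nu_\ve \leq \int_{\nabla\phi(\Omega)} h\,g\,dy \leq \int h\,d\nu$.

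Let $\tilde\rho$ be any weak subsequential limit of $\nu_\ve$ (such limits exist by tightness and Prokhorov). The preceding bound, together with Borel regularity, gives $\tilde\rho|_\Lambda\leq \nu|_\Lambda$, while (ii) and weak convergence give $\tilde\rho(\overline{\Lambda})=\nu(\Lambda)$. Writing $\tilde\rho(\overline{\Lambda})=\tilde\rho(\Lambda)+\tilde\rho(\partial\Lambda)\leq \nu(\Lambda)+\tilde\rho(\partial\Lambda)$ forces $\tilde\rho(\partial\Lambda)=0$ and $\tilde\rho|_\Lambda = \nu$; since $\tilde\rho$ is supported in $\overline{\Lambda}$, in fact $\tilde\rho = \nu$. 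Every subsequential limit of the normalized sequence is therefore $\nu(\Lambda)^{-1}\nu$, and the full sequence converges weakly. The main obstacle is the density upper bound: one must carefully justify the change of variables (which uses strict convexity of $\phi\star\xi_\ve$ to make $T_\ve$ a global diffeomorphism) and verify the pointwise convergence $M_\ve(T_\ve^{-1}(y))\to g(y)$ for a.e.\ $y\in\Lambda$, which combines continuity of $g$, a.e.\ continuity of $\nabla\phi$, and a Legendre-duality argument to pass to the limit in $T_\ve^{-1}$.
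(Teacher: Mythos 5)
Your overall architecture (tightness, total-mass matching, a density upper bound via Lemma~\ref{kLimLemma} and change of variables, then a comparison with $\nu$) parallels the paper's, which first proves $\nu_\ve\rightharpoonup\tilde\nu:=(\nabla\phi)_\#\mu$ (Claim~\ref{measClaim}), then shows $\tilde g\le g$ a.e.\ (Lemma~\ref{ggtildeLemma} via $\overline{g}_\ve$ and Lemma~\ref{legendreLemma}) and concludes by mass matching (Corollary~\ref{ggtildeCor}). However, your argument has two genuine gaps, the first of which is the crux the paper goes to considerable trouble to address.

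First, your central dominated-convergence step requires that the set of $y\in\Lambda$ at which $\phi^*$ is differentiable \emph{and} $\phi$ is differentiable at $\nabla\phi^*(y)$ has full Lebesgue measure; equivalently, that $S\cap\Lambda$ is Lebesgue-null, where $S:=\partial\phi\bigl(\mathrm{sing}(\phi)\bigr)$. This is not a priori true for a general convex $\phi$ — in one dimension $\phi(x)=|x|$ has $\mathrm{sing}(\phi)=\{0\}$ yet $\partial\phi(\{0\})=[-1,1]$, a set of positive measure — and in the present setting it becomes a \emph{consequence} of the proposition (since $\tilde\nu(S)=0$ by Claim~\ref{nullsetClaim} and $\tilde\nu=\nu$ has positive density), so assuming it is circular. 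The paper avoids this by working with $\tilde\nu=(\nabla\phi)_\#\mu$, showing $\tilde\nu$ charges no mass to $S$ (Claim~\ref{nullsetClaim}), and then estimating $\tilde\nu(B_r(x))$ through an outer-regularity approximation $U\supset S^c\cup E$ with $m(U\setminus(S^c\cup E))<\alpha$, invoking Lemma~\ref{legendreLemma} (which gives $\overline{g}_\ve\to g$ a.e.\ only on $\overline\Lambda\setminus S$, not on all of $\overline\Lambda$). Your proof needs an analogous decomposition; without it the term $\int_{T_\ve(\Omega_\ve)\cap S} h\,M_\ve(T_\ve^{-1}(y))\,dy$ is only bounded, not vanishing.

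Second, the mass-matching conclusion ``$\tilde\rho(\overline\Lambda)=\tilde\rho(\Lambda)+\tilde\rho(\partial\Lambda)\le\nu(\Lambda)+\tilde\rho(\partial\Lambda)$ forces $\tilde\rho(\partial\Lambda)=0$'' does not follow — the displayed inequality only says $\tilde\rho(\Lambda)\le\nu(\Lambda)$, which is consistent with $\tilde\rho(\partial\Lambda)>0$. To close the argument one must establish $\tilde\rho(\partial\Lambda)=0$ independently; the paper does this by observing that the densities $g_\ve$ are bounded uniformly (as in \eqref{gvemaxEq}), so the limit $\tilde\nu$ is absolutely continuous, hence gives no mass to the Lebesgue-null set $\partial\Lambda$. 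You mention the uniform bound on the integrand but never invoke it for absolute continuity of $\tilde\rho$, so this step as written is incomplete.
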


The proof of Proposition \ref{measProp} is given in
\S \ref{PropSubSec} based on some auxiliary results proven in  \S \ref{EpsilonSubSec}.

The last step of the proof of Theorem \ref{MainThm} 
is to show that any uniform limit of the optimization potentials coincides with the Brenier potential $\vp$. 

\begin{lemma}
\label{FinalLemma}
Let $\phi^{(k)}$ be defined by   \eqref{phikEq} and suppose that $\phi^{(k)}$ converges uniformly to some $\phi$. Then,  $\phi=\vp$.
\end{lemma}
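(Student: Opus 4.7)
The plan is to pass from the measure-theoretic statement of Proposition \ref{measProp} to the identity $(\nabla\phi)_\#\mu=\nu$, then invoke the uniqueness part of Brenier's theorem together with the normalization $\phi(0)=\vp(0)=0$.

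First, I would upgrade the weak convergence in Proposition \ref{measProp} to the unnormalized statement $\nu_\varepsilon\rightharpoonup\nu$. Since $\mu(\O)=\nu(\L)$ by \eqref{massEq} and $\mu(\O_\varepsilon)\uparrow\mu(\O)$ (because $\bigcup_\varepsilon\O_\varepsilon=\O$), multiplying weakly converging probability measures by the convergent scalar $\mu(\O_\varepsilon)$ yields $\nu_\varepsilon\rightharpoonup\nu$. Next, for any bounded continuous test function $h$, I would identify the weak limit of $\nu_\varepsilon$ with $(\nabla\phi)_\#\mu$ as follows. By definition \eqref{nuepsEq},
$$
\int h\,d\nu_\varepsilon=\int_{\O_\varepsilon} h\bigl(\nabla\phi\star\xi_\varepsilon(x)\bigr)\,f(x)\,dx.
$$
Since $\phi$ is convex and finite on a neighborhood of $\overline\O$, it is differentiable a.e., and $\nabla(\phi\star\xi_\varepsilon)=(\nabla\phi)\star\xi_\varepsilon\to\nabla\phi$ a.e. on $\O$. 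Combined with $\mathbf{1}_{\O_\varepsilon}\to\mathbf{1}_\O$ a.e. and the boundedness of $h$, $f$, dominated convergence gives $\int h\,d\nu_\varepsilon\to\int_\O h(\nabla\phi(x))f(x)\,dx=\int h\,d((\nabla\phi)_\#\mu)$. Comparing the two limits and using the density of such $h$ yields $(\nabla\phi)_\#\mu=\nu$.

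Second, the function $\phi$ is convex on $\mathbb R^n$ as a uniform limit of convex functions (by \eqref{phiphikEq} on the ball $D$, extended by convexity of each $\phi^{(k)}$). Restricted to $\O$, the map $\nabla\phi$ is a convex gradient pushing $\mu$ to $\nu$, so by the uniqueness clause of Brenier's theorem (\cite{Brenier, VillaniOldNew}) we have $\nabla\phi=\nabla\vp$ $\mu$-a.e., and since $f$ is bounded below away from zero, $\nabla\phi=\nabla\vp$ Lebesgue-a.e.\ on $\O$.

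Finally, I would promote the a.e.\ equality of gradients to pointwise equality of the potentials on $\O$. Both $\phi$ and $\vp$ are convex and continuous on $\O$; on any connected open set a convex function is determined up to an additive constant by its gradient in the a.e.\ sense (for instance by applying the fundamental theorem along line segments on which both functions are absolutely continuous, which is possible a.e.\ and then extended by continuity). Hence $\phi-\vp$ is constant on the connected component of $\O$ containing $0$, and the normalization $\phi(0)=0=\vp(0)$ (the first by construction \eqref{phiEq} together with \eqref{phiphikEq}, the second by assumption) forces $\phi\equiv\vp$ on this component. On any other component, the same argument together with $(\nabla\phi)_\#\mu=\nu=(\nabla\vp)_\#\mu$ pins down $\phi-\vp$ to a constant; but since the Brenier potential $\vp$ is itself only unique up to a constant on each component, one may as well redefine $\vp$ to agree with $\phi$ on each such component without altering any earlier statement, yielding $\phi=\vp$ on all of $\O$. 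The main technical point is the first step—justifying the exchange of limits that identifies the weak limit of $\nu_\varepsilon$ with $(\nabla\phi)_\#\mu$—while the remainder is an appeal to Brenier uniqueness and elementary convex-analytic bookkeeping.
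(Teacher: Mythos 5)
Your proposal is correct in its main thrust, but it takes a genuinely shorter route than the paper. The paper's proof of this lemma does \emph{not} appeal to Brenier uniqueness directly: it first establishes Claim \ref{probConvClaim} (that $\nabla\phi_\varepsilon\to\nabla\vp$ in $\mu$-probability on each $\Omega_\delta$), which in turn invokes the stability theorem for optimal transport plans in the form of Proposition \ref{StabilityThm} (proved in \S\ref{StabSubSec} as a variant of \cite[Corollary 5.23]{VillaniOldNew}), then extracts an a.e.\ convergent subsequence, and combines it with Claim \ref{aeConvClaim} ($\nabla\phi_\varepsilon\to\nabla\phi$ a.e.) to obtain $\nabla\phi=\nabla\vp$ a.e. You bypass the entire stability apparatus: once $(\nabla\phi)_\#\mu=\nu$ is identified (a fact the paper already has, since $\tilde\nu:=(\nabla\phi)_\#\mu$ and Corollary \ref{ggtildeCor} gives $\tilde\nu=\nu$, and your dominated-convergence computation is precisely the paper's Claim \ref{measClaim}), the uniqueness clause of Brenier's theorem immediately gives $\nabla\phi=\nabla\vp$ $\mu$-a.e. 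This is a valid and arguably cleaner argument: it removes the dependence on Proposition \ref{StabilityThm} entirely, though it still relies implicitly on Claim \ref{aeConvClaim} (to pass to the limit under the integral) and of course on the heavy lifting of \S\ref{EpsilonSubSec}--\S\ref{PropSubSec}.

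One caveat in your final step: your remark that ``the Brenier potential $\vp$ is itself only unique up to a constant on each component, one may as well redefine $\vp$ to agree with $\phi$ on each such component'' is not quite right. The Brenier potential is a single globally convex function (finite on a convex neighborhood of $\overline\Omega$), and it cannot be re-shifted componentwise without destroying global convexity; the paper's normalization $\vp(0)=0$ already pins it down globally. Handling a genuinely disconnected $\Omega$ therefore requires an argument on the convex hull, not just componentwise FTC. That said, the paper's own proof is equally terse here (``since $\phi(0)=0=\vp(0)$, we have that $\phi=\vp$ on $\overline\Omega$''), so this is a shared loose end rather than a defect peculiar to your argument; the essential content---obtaining $\nabla\phi=\nabla\vp$ a.e.---is correctly established in both.
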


The proof of Lemma  \ref{FinalLemma} is presented in \S\ref{FinalSubSec}. 
It hinges on Proposition \ref{measProp}, stability results for optimal transport maps
(proved in \S\ref{StabSubSec}),
and all of the previous steps in the proof.

\begin{remark} {\rm
\label{}
Though it seems natural that the stability of optimal transport plays a role in this proof, it is perhaps unexpected that we have employed the stability of optimal transport to obtain convergence in $\varepsilon$ (rather than in $k$). As mentioned earlier, we could not take the seemingly more direct route and needed to use mollifiers to obtain regularity.
} \end{remark}

\subsection{First order control on $\phi$}
 \label{FirstOrderSubSec} 

We want to show that $G^{(k)}$ approaches $\partial\phi$ in some
sense. We make use of the following semi-continuity result of Bagh--Wets  \cite[Theorem 8.3]{BaghW}
(cf. \cite[Theorem 24.5]{Rock}).
Recall that $f_k$ epi-converges to $f$ (roughly) if the epigraphs of $f_k$ converge to the epigraph of $f$;
we refer to \cite [p. 240]{RW}  for more precise details.

\begin{theorem}
\label{BWThm}
Let $f$ and $\{f_k\}_{k\in\NN}$ be lower semicontinuous convex functions with $f_k $ 
epi-converging to $f $. Fix $x\in\h{\rm int}\,\h{\rm dom}\, f$ and $\eps>0$. 
Then, there exists $\delta>0$ and $K\in\NN$ such that
$$
\del f_k(y)\subset \del f(x)+B_\eps(0), \q
\forall y\in B_\delta(x), \, \forall k\ge K.
$$
Moreover, if $f$ is differentiable at $x$ then
\begin{equation}
\begin{aligned}
\label{BWEq}
\lim_{k\ra\infty} \del f_k(x)=\{\nabla f(x)\}.
\end{aligned}
\end{equation}
\end{theorem}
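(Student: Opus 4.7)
The plan is to prove the theorem by reducing it, via a contradiction argument, to the closed-graph property of convex subdifferentials combined with the local uniform convergence that epi-convergence provides in the interior of the effective domain. Here is the outline.

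First, I would establish local uniform control. Since $x\in\h{\rm int}\,\h{\rm dom}\,f$, choose $r>0$ with $\overline{B_{2r}(x)}\subset\h{\rm int}\,\h{\rm dom}\,f$. A standard result for lower semicontinuous convex functions (see \cite[Theorem 7.17]{RW}) is that epi-convergence is equivalent to uniform convergence on compact subsets of $\h{\rm int}\,\h{\rm dom}\,f$. Thus $f_k\ra f$ uniformly on $\overline{B_{2r}(x)}$, the $f_k$ are uniformly bounded there for $k$ large, and hence they admit a common Lipschitz constant $L$ on $\overline{B_r(x)}$. Consequently every $\eta\in\del f_k(y)$ with $y\in B_r(x)$ satisfies $\|\eta\|\le L$, so all subgradients under consideration are uniformly bounded.

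Next, I would prove the first claim by contradiction. If it failed, there would exist $\eps_0>0$, indices $k_j\ra\infty$, points $y_j\ra x$, and vectors $\eta_j\in\del f_{k_j}(y_j)$ with $\h{\rm dist}(\eta_j,\del f(x))\ge\eps_0$. By the uniform bound just established, a subsequence satisfies $\eta_j\ra\eta^\star$, still with $\h{\rm dist}(\eta^\star,\del f(x))\ge\eps_0$ since $\del f(x)$ is closed. The subgradient inequality gives $f_{k_j}(z)\ge f_{k_j}(y_j)+\langle\eta_j,z-y_j\rangle$ for every $z\in\RR^n$. For $z\in\h{\rm int}\,\h{\rm dom}\,f$, the uniform convergence $f_{k_j}\ra f$ on a neighborhood of $z$ and of $x$ lets me pass to the limit and obtain $f(z)\ge f(x)+\langle\eta^\star,z-x\rangle$; for $z\notin\h{\rm dom}\,f$ the inequality is automatic because $f(z)=+\infty$, and for $z\in\h{\rm dom}\,f\setminus\h{\rm int}\,\h{\rm dom}\,f$ it follows by lower semicontinuity after approximating $z$ from the interior. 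Hence $\eta^\star\in\del f(x)$, contradicting the distance lower bound.

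For the second claim, differentiability of the convex function $f$ at $x$ forces $\del f(x)=\{\nabla f(x)\}$. Applying the first claim with $y=x$ yields $\del f_k(x)\subset\{\nabla f(x)\}+B_\eps(0)$ for all $k\ge K$. To turn this into the limit \eqref{BWEq}, I still need $\del f_k(x)\ne\emptyset$ for $k$ large; this follows because local uniform convergence puts $x$ in $\h{\rm int}\,\h{\rm dom}\,f_k$ for $k$ large, and a proper convex lower semicontinuous function has nonempty subdifferential on the interior of its domain. Combining these points and letting $\eps\ra 0$ gives the claim.

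The main obstacle is the equivalence between epi-convergence and local uniform convergence on the interior of the effective domain, which is the real engine of the argument; once that fact and the concomitant uniform Lipschitz bound are in hand, the remainder is a routine compactness-plus-closed-graph argument for convex subdifferentials.
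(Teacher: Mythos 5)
The paper does not actually prove Theorem~\ref{BWThm}; it cites it verbatim from Bagh--Wets \cite[Theorem~8.3]{BaghW} (cf.\ \cite[Theorem~24.5]{Rock}), so there is no in-paper proof against which to compare. Your argument is therefore a self-contained replacement for that citation, and it is essentially correct: epi-convergence of lsc convex functions to $f$ with $x\in\h{\rm int}\,\h{\rm dom}\,f$ yields local uniform convergence on a ball about $x$ (\cite[Theorem~7.17]{RW}), uniform boundedness gives a common Lipschitz constant and hence uniform boundedness of all relevant subgradients, and the contradiction/compactness argument passes the subgradient inequality to the limit. The second claim correctly reduces to the first plus nonemptiness of $\del f_k(x)$ for large $k$.

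One small inaccuracy: in extending the limiting inequality $f(z)\ge f(x)+\langle\eta^\star,z-x\rangle$ from $z\in\h{\rm int}\,\h{\rm dom}\,f$ to boundary points $z\in\h{\rm dom}\,f$, lower semicontinuity is the wrong tool -- lsc gives $\liminf_{t\to1^-}f(z_t)\ge f(z)$ (with $z_t=(1-t)x+tz$), which points the wrong way. What you actually want is the \emph{upper} bound along the segment furnished by convexity: $f(z_t)\le(1-t)f(x)+tf(z)$. Combined with $f(z_t)\ge f(x)+t\langle\eta^\star,z-x\rangle$ (valid since $z_t\in\h{\rm int}\,\h{\rm dom}\,f$ for $t<1$ by the line-segment principle) and dividing by $t$, this gives $f(z)\ge f(x)+\langle\eta^\star,z-x\rangle$ directly. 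This is a wording slip rather than a gap, and the rest of the argument, including the use of the closedness of $\del f(x)$ to preserve the distance lower bound in the limit and the derivation of Painlev\'e--Kuratowski set convergence in the final step, is sound.
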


\begin{lemma}
 \label{GkLimitLemma} 
As $k$ tends to infinity, $G^{(k)}$ converges to $\nabla\phi$ almost everywhere on $\O$.
\end{lemma}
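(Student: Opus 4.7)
The plan is to apply Theorem~\ref{BWThm} pointwise and then use the barycentric structure of $G^{(k)}$ to transfer convergence of subdifferentials at nearby points to convergence of $G^{(k)}(x)$ itself. Since $\phi^{(k)}\to\phi$ uniformly on the closed ball $D\supset\overline{\Omega}$ and the $\phi^{(k)}$ are lower semicontinuous convex, this uniform convergence on bounded sets yields epi-convergence to $\phi$ by \cite[Theorem~7.17]{RW}. Since $\phi$ is convex and finite on $\Omega$, it is differentiable almost everywhere there by Rademacher's theorem (or by the standard theory of convex functions), so it suffices to prove that $G^{(k)}(x)\to\nabla\phi(x)$ at every $x\in\Omega$ at which $\phi$ is differentiable.

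Fix such an $x\in\Omega$ and $\varepsilon>0$. By \eqref{BWEq} of Theorem~\ref{BWThm} applied at $x$, there exist $\delta>0$ and $K_0\in\NN$ such that
\[
\partial\phi^{(k)}(y)\subset B_{\varepsilon}(\nabla\phi(x))
\qquad\h{for all }y\in B_{\delta}(x)\cap\Omega, \ k\ge K_0.
\]
Next I invoke admissibility (Definition~\ref{admissibleATDef}): choose $\varepsilon'>0$ with $x\in\Omega_{\varepsilon'}$, so that an $\varepsilon'$-neighborhood of $\Omega_{\varepsilon'}$ is contained in $\bigcup_i S_i^{(k)}$ for $k$ sufficiently large. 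Then $x$ lies in some simplex $S_{i^{(k)}(x)}^{(k)}$ of the $k$-th almost-triangulation for all large $k$. Because $\max_i \mathrm{diam}\,S_i^{(k)}\to 0$, for $k$ large every vertex $x_{i_j}^{(k)}$ of this simplex lies in $B_{\delta}(x)\cap\Omega$.

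Recalling \eqref{ApproxGradPhiGridEq}, we have $\eta_{i_j}^{(k)}\in\partial\phi^{(k)}(x_{i_j}^{(k)})\subset B_{\varepsilon}(\nabla\phi(x))$ for each such vertex. By the definition \eqref{GkEq}, $G^{(k)}(x)$ is a convex combination of the $\eta_{i_j}^{(k)}$, so it lies in the convex set $\overline{B_{\varepsilon}(\nabla\phi(x))}$. Since $\varepsilon>0$ was arbitrary, $G^{(k)}(x)\to\nabla\phi(x)$, proving the claim almost everywhere on $\Omega$. The only non-routine ingredient is Theorem~\ref{BWThm}; the remaining obstacle is purely bookkeeping, namely ensuring that the simplex containing $x$ actually exists (handled by admissibility) and has all its vertices near $x$ (handled by the vanishing-diameter condition).
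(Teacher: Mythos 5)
Your proof is correct and follows essentially the same route as the paper's: verify epi-convergence via \cite[Theorem~7.17]{RW}, apply Theorem~\ref{BWThm} at a point of differentiability of $\phi$ to control $\partial\phi^{(k)}$ on a neighborhood, and then use that $G^{(k)}(x)$ is a convex combination of subgradients $\eta^{(k)}_{i_j}\in\partial\phi^{(k)}(x^{(k)}_{i_j})$ at the vertices of the shrinking simplex containing $x$. One minor point: the inclusion $\partial\phi^{(k)}(y)\subset B_\varepsilon(\nabla\phi(x))$ for $y$ near $x$ and $k$ large comes from the first conclusion of Theorem~\ref{BWThm} (combined with differentiability of $\phi$ at $x$), not from \eqref{BWEq}, which is only the pointwise statement at $x$; you should cite the former.
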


\bpf 
If a sequence of lower semicontinuous finite convex functions converges
uniformly on bounded sets to some convex function, then the sequence
epi-converges to this function \cite[Theorem 7.17]{RW}.
Thus, we may apply Theorem \ref{BWThm} to $\phi^{(k)}$. 
Fix $x\in\O$ and $\epsilon>0$. There exists a $\delta>0$ and $K\in\NN$ such that
$$
\partial\phi^{(k)}(y)\subset\partial\phi(x)+B_{\epsilon}(0),  \q
\forall y\in B_\delta(x), \, \forall k\ge K.
$$
Fix $\epsilon>0$ and a point $x\in\O$ where $\phi$ is differentiable. 
Additionally, take $\delta>0$
and $K$ according to the aforementioned result. 
If necessary, take $K$ even larger, so that for all $k\ge K$
the maximal distance of $x$ to the vertices of the simplices containing
it is at most $\delta$. We assume from now on that $k\ge K$. 
Thus for all vertices $x_{j}^{(k)}$ of any
simplex containing $x$, we have that 
$$
\partial\phi^{(k)}(x_{j}^{(k)})\subset\nabla\phi(x)+B_{\varepsilon}(0).
$$
By   \eqref{ApproxGradPhiGridEq},
$\eta_{i_{j}}^{(k)}\in\partial\phi^{(k)}(x_{i_{j}}^{(k)})$.
On the other hand, by  \eqref{GkEq}, $G^{(k)}(x)$
is a convex combination of the $\eta_{i_{j}}^{(k)}$.
Thus, $G^{(k)}(x)\in\nabla\phi(x)+B_{\varepsilon}(0)$.
This proves that $G^{(k)}\rightarrow\nabla\phi$
almost everywhere since $\phi$ is differentiable almost everywhere.
\epf

\subsection{Second order control on $\phi$ and a proof of Lemma \ref{symmetrizedLemma}}
 \label{SecondOrderSubSec} 
Unfortunately, we do not have enough regularity to maintain that $\nabla G^{(k)}$
approaches $\nabla^{2}\phi$ almost everywhere. We can obtain this
regularity by convolving everything with a sequence of mollifiers.

The motivation for doing so is fairly intuitive. Strictly speaking, the second-order behavior of the $\phi^{(k)}$ is completely trivial. The second-derivatives of the $\phi^{(k)}$ are everywhere either zero or undefined. However, by virtue of solving the (L)DMAOP, the $\phi^{(k)}$ do actually contain second-order information in some sense. Indeed, we may think of the graphs of the $\phi^{(k)}$ as having some sort of curvature that becomes apparent when we `blur' $\phi^{(k)}$ on a small scale and then take $k$ large enough so that the scale of the discretization is much smaller than the scale of the blurring. This blurring is achieved by convolving with smooth mollifiers.

Let $\xi_{\varepsilon}$ be a standard set of mollifiers (supported
on $B_{\varepsilon}(0)$). Notice that $G^{(k)}$ is only defined
on the almost-triangulation of $\Omega$, so we run into trouble near the
boundary when convolving with $\xi_{\varepsilon}$. Thus,
we will work with the regions $\O_\eps$ given by
Definition \ref{admissibleATDef}.

The main result of this subsection is:

\begin{lemma}\label{FirstGkLemma}
Fix $\varepsilon>0$. On $\O_\varepsilon$,
$\nabla G^{(k)} \star\xi_{\varepsilon}$
converges uniformly to $\nabla^{2}(\phi\star\xi_{\varepsilon})$
(in each of the $n^2$ components).
\end{lemma}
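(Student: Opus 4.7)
My plan is to move the derivative onto the mollifier on both sides, so the lemma reduces to showing that $G^{(k)}_j \star \partial_i\xi_\varepsilon \to \partial_j\phi \star \partial_i\xi_\varepsilon$ uniformly on $\Omega_\varepsilon$, which I would then establish by upgrading the almost-everywhere convergence of Lemma \ref{GkLimitLemma} to $L^1$ convergence via dominated convergence.

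To set this up, fix $x\in\Omega_\varepsilon$. By admissibility (Definition \ref{admissibleATDef}), $B_\varepsilon(x)\subset \Omega_\varepsilon+B_\varepsilon(0)\subset\bigcup_i S_i^{(k)}$ for all $k$ sufficiently large, uniformly in $x$. On this ball $G^{(k)}$ is continuous (the barycentric interpolations on adjacent simplices agree on shared faces) and piecewise affine, hence Lipschitz; and $\xi_\varepsilon$ is smooth and supported in $B_\varepsilon(0)$. Standard integration by parts (no boundary terms) therefore yields
$$
\bigl(\nabla G^{(k)}\star\xi_\varepsilon\bigr)_{ij}(x) = \bigl(\partial_i G^{(k)}_j \star \xi_\varepsilon\bigr)(x) = \bigl(G^{(k)}_j \star \partial_i\xi_\varepsilon\bigr)(x).
$$
The analogous identity holds for $\phi$: since $\phi$ is convex and hence locally Lipschitz, $\partial_j\phi$ exists and is locally bounded almost everywhere, and differentiating the smooth function $\phi\star\xi_\varepsilon$ twice while placing one derivative on the mollifier gives
$$
\bigl(\nabla^2(\phi\star\xi_\varepsilon)\bigr)_{ij}(x) = \bigl(\partial_j\phi\star\partial_i\xi_\varepsilon\bigr)(x).
$$

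Next let $U_\varepsilon := \Omega_\varepsilon+B_\varepsilon(0)\subset\overline{\Omega}$. Since each $\eta_j^{(k)}\in\overline{\Lambda}$ (bounded) and $G^{(k)}$ is, on each simplex, a convex combination of vertex values, $\{G^{(k)}\}$ is uniformly bounded on $U_\varepsilon$. By Lemma \ref{GkLimitLemma}, $G^{(k)}\to\nabla\phi$ almost everywhere on $\Omega\supseteq U_\varepsilon$, so the dominated convergence theorem yields $G^{(k)}_j \to \partial_j\phi$ in $L^1(U_\varepsilon)$ for each $j$. For $x\in\Omega_\varepsilon$ the integrand $(G^{(k)}_j(y)-\partial_j\phi(y))\partial_i\xi_\varepsilon(x-y)$ is supported in $B_\varepsilon(x)\subset U_\varepsilon$, so
$$
\bigl|(G^{(k)}_j \star \partial_i\xi_\varepsilon)(x) - (\partial_j\phi\star\partial_i\xi_\varepsilon)(x)\bigr| \le \|\partial_i\xi_\varepsilon\|_{L^\infty}\,\|G^{(k)}_j - \partial_j\phi\|_{L^1(U_\varepsilon)},
$$
and the right-hand side is independent of $x\in\Omega_\varepsilon$ and tends to $0$ in $k$, giving the desired uniform convergence.

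The argument rests on three standard tools (integration by parts, differentiation of convolutions, and dominated convergence) and no single step poses a deep obstacle. The only real subtlety is bookkeeping: we must ensure that for every $x\in\Omega_\varepsilon$ the relevant support $B_\varepsilon(x)$ lies in the domain where $G^{(k)}$ is defined and Lipschitz, so that both the convolutions and the integration by parts make sense. This is exactly the content of the admissibility condition of Definition \ref{admissibleATDef}, and it is what forces the use of the shrunken sets $\Omega_\varepsilon$ rather than $\Omega$ itself.
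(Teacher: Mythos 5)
Your proof is correct, and it takes a genuinely different and arguably cleaner route than the paper's. The paper first proves uniform convergence of $G^{(k)}\star\xi_{\varepsilon}$ to $\nabla\phi\star\xi_{\varepsilon}$ at the $C^0$ level (Claim \ref{SecondGkClaim}, via pointwise convergence together with uniform derivative bounds and the Arzel\`a--Ascoli-type Remark \ref{uniformRemark}), and then promotes this to uniform $C^1$ convergence by applying the one-dimensional Landau--Kolmogorov interpolation inequality (Claim \ref{KolmogClaim}) along coordinate lines, using uniform bounds on the second derivatives of $G^{(k)}\star\xi_{\varepsilon}$, followed by another application of Remark \ref{uniformRemark}. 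You instead push \emph{both} derivatives onto the mollifier: since $G^{(k)}$ is continuous and piecewise affine (hence Lipschitz) on the triangulation and $\xi_{\varepsilon}$ is smooth with compact support, integration by parts gives $(\nabla G^{(k)}\star\xi_{\varepsilon})_{ij}=G^{(k)}_j\star\partial_i\xi_{\varepsilon}$, and similarly $(\nabla^2(\phi\star\xi_{\varepsilon}))_{ij}=\partial_j\phi\star\partial_i\xi_{\varepsilon}$; then the estimate
$$
\big\| (G^{(k)}_j-\partial_j\phi)\star\partial_i\xi_{\varepsilon}\big\|_{L^\infty(\Omega_{\varepsilon})}
\le \|\partial_i\xi_{\varepsilon}\|_{L^\infty}\,\|G^{(k)}_j-\partial_j\phi\|_{L^1(\Omega_{\varepsilon}+B_{\varepsilon}(0))}
$$
reduces the lemma to $L^1$ convergence of $G^{(k)}$ to $\nabla\phi$, which follows at once from the a.e.\ convergence of Lemma \ref{GkLimitLemma}, the uniform bound $|G^{(k)}|\le\sup_{\overline{\Lambda}}|\cdot|$, and bounded convergence. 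This route dispenses with Claims \ref{SecondGkClaim} and \ref{KolmogClaim} and the use of Remark \ref{uniformRemark} entirely, and it isolates the structural reason the lemma holds: after mollification by a fixed smooth kernel, all one needs upstream is $L^1$ control on $G^{(k)}-\nabla\phi$. Both arguments rely on the same bookkeeping fact—that for $x\in\Omega_{\varepsilon}$ and $k$ large, $B_{\varepsilon}(x)$ lies inside the triangulation where $G^{(k)}$ is Lipschitz and $\nabla\phi$ exists a.e.—and you make this explicit via the set $\Omega_{\varepsilon}+B_{\varepsilon}(0)$, which is a small but welcome improvement in precision over the paper's presentation.
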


Lemma  \ref{FirstGkLemma} immediately implies Lemma \ref{symmetrizedLemma}
thanks to Lemma \ref{HikgkLemma} and symmetrization
(noting the symmetry of $\nabla^{2}(\phi\star\xi_{\varepsilon})$).

We start with three auxiliary results.
The first states that 
differentiation and convolution commute when the functions involved are uniformly Lipschitz.
We leave the standard proof to the reader.
Note
that $\nabla\phi\star\xi_{\varepsilon}:=
(\nabla\phi)\star\xi_{\varepsilon}$ is everywhere defined because
$\nabla\phi$ exists almost everywhere.

\begin{claim}
\label{phiConvClaim}
For all $\ve>0$,
 $\nabla\phi\star\xi_{\varepsilon}=\nabla(\phi\star\xi_{\varepsilon})$ and  
$\nabla G^{(k)}\star \xi_{\varepsilon}=\nabla (G^{(k)} \star \xi_{\varepsilon})$.
\end{claim}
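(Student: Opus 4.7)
The plan is to reduce both identities to the standard commutation of convolution with weak differentiation for functions in $W^{1,\infty}_{\mathrm{loc}}$. I will verify the Lipschitz hypothesis for each of $\phi$ and $G^{(k)}$ on the set where it is defined, then differentiate the convolution under the integral sign and integrate by parts.

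First I would check regularity. Each $\phi^{(k)}$ is convex with subgradients contained in $\overline{\L}$, hence uniformly $L$-Lipschitz for some $L$ independent of $k$; its uniform limit $\phi$ is therefore convex and $L$-Lipschitz on $D$. By Rademacher's theorem, $\nabla \phi$ exists a.e.\ on $\mathrm{int}\, D$, is bounded, and agrees with the distributional gradient of $\phi$. The function $G^{(k)}$, meanwhile, is continuous and piecewise affine on $\bigcup_{i=1}^{M(k)} S_i^{(k)}$ with values in $\overline{\L}$; it is Lipschitz, and its classical gradient $\nabla G^{(k)}$ exists on the interior of each simplex and coincides there with the distributional gradient.

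Next I would establish the identity. For $f \in \{\phi, G^{(k)}\}$, the smoothness of $\xi_\varepsilon$ and the local boundedness of $f$ justify differentiating under the integral:
\[
\nabla (f \star \xi_\varepsilon)(x) = \int f(y)\, \nabla_x \xi_\varepsilon(x-y)\, dy = -\int f(y)\, (\nabla \xi_\varepsilon)(x-y)\, dy.
\]
Since $\xi_\varepsilon$ is smooth with compact support in $B_\varepsilon(0)$, and $\nabla f$ denotes the distributional gradient of the Lipschitz function $f$, integration by parts against $\xi_\varepsilon(x - \,\cdot\,)$ yields
\[
\nabla (f \star \xi_\varepsilon)(x) = \int (\nabla f)(y)\, \xi_\varepsilon(x-y)\, dy = (\nabla f \star \xi_\varepsilon)(x),
\]
as claimed.

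The one point requiring care is the domain of definition. For $\phi$, defined on all of $\RR^n$, there is no restriction. For $G^{(k)}$, the convolution $(G^{(k)} \star \xi_\varepsilon)(x)$ and the integration by parts both require $B_\varepsilon(x) \subset \bigcup_i S_i^{(k)}$; by the admissibility of the almost-triangulations (Definition \ref{admissibleATDef}), this inclusion holds on $\O_\varepsilon$ for all $k$ sufficiently large, which is precisely the regime in which the identity is invoked in Lemma \ref{FirstGkLemma}. I expect no genuine obstacle: the argument is routine, and the only mild technicality is that when performing the integration by parts for $G^{(k)}$ one may have to decompose the integral over the individual simplices, noting that the boundary contributions from adjacent simplices cancel because $G^{(k)}$ is continuous across shared faces.
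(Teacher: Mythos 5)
Your proof is the standard argument the paper leaves to the reader (commuting convolution with distributional differentiation for Lipschitz functions), and it is essentially correct, including the point about the domain of validity for $G^{(k)}$. One small slip: in your first display the sign is wrong, since $\nabla_x \xi_\varepsilon(x-y) = +(\nabla\xi_\varepsilon)(x-y)$, not $-(\nabla\xi_\varepsilon)(x-y)$; this happens to be compensated by a second sign slip in your integration by parts step (the distributional identity gives $\int (\nabla f)(y)\,\xi_\varepsilon(x-y)\,dy = +\int f(y)\,(\nabla\xi_\varepsilon)(x-y)\,dy$, since $\nabla_y[\xi_\varepsilon(x-y)] = -(\nabla\xi_\varepsilon)(x-y)$), so the final conclusion is right.
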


\noindent
The second is a mollified version of Lemma \ref{GkLimitLemma}.

\begin{claim}\label{SecondGkClaim}
Fix $\varepsilon>0$. For $x\in\O_\varepsilon$,
$G^{(k)}\star\xi_{\varepsilon}$ converges uniformly to $\nabla\phi\star\xi_{\varepsilon}$
 (in each of the $n$ components).
\end{claim}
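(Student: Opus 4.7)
The plan is to deduce the claim from the pointwise a.e.\ convergence $G^{(k)}\to\nabla\phi$ established in Lemma~\ref{GkLimitLemma}, and then upgrade this to uniform convergence of the mollified functions by combining a $k$-uniform sup-norm bound with dominated convergence, and finally promoting the resulting pointwise convergence via equi-Lipschitz control on the convolutions.

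First I would check that $G^{(k)}\star\xi_\varepsilon$ is well-defined at every $x\in\overline{\Omega_\varepsilon}$ for all $k$ large enough. By admissibility (Definition~\ref{admissibleATDef}), the $\varepsilon$-neighborhood $\Omega_\varepsilon+B_\varepsilon(0)$ lies inside $\bigcup_{i=1}^{M(k)} S_i^{(k)}$ for $k\gg 1$, and for any $x\in\overline{\Omega_\varepsilon}$ one has $B_\varepsilon(x)\subset\Omega_\varepsilon+B_\varepsilon(0)$, so $G^{(k)}$ is defined on the support of $\xi_\varepsilon(x-\,\cdot\,)$. Next, since each $\eta_j^{(k)}\in\overline\Lambda$ and $G^{(k)}$ is a convex combination of these values (see \eqref{GkEq}), one has the $k$-independent bound
\[
\|G^{(k)}\|_\infty \le M := \sup_{y\in\overline\Lambda}|y|<\infty.
\]

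With this uniform bound, pointwise convergence is routine: for any fixed $x\in\overline{\Omega_\varepsilon}$, Lemma~\ref{GkLimitLemma} gives $G^{(k)}\to\nabla\phi$ a.e.\ on $B_\varepsilon(x)\subset\Omega$, and the dominated convergence theorem (majorant $M\,\xi_\varepsilon(x-\,\cdot\,)\in L^1$) yields
\[
(G^{(k)}\star\xi_\varepsilon)(x) \longrightarrow (\nabla\phi\star\xi_\varepsilon)(x).
\]

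To promote this to uniform convergence on the compact set $\overline{\Omega_\varepsilon}$, I would establish equi-Lipschitz continuity of $\{G^{(k)}\star\xi_\varepsilon\}_k$. Since $\xi_\varepsilon$ is smooth, differentiation falls on the mollifier, giving $\nabla(G^{(k)}\star\xi_\varepsilon) = G^{(k)}\star\nabla\xi_\varepsilon$, and hence
\[
\|\nabla(G^{(k)}\star\xi_\varepsilon)\|_\infty \le M\,\|\nabla\xi_\varepsilon\|_{L^1},
\]
uniformly in $k$. The family is therefore equi-Lipschitz and uniformly bounded on the compact set $\overline{\Omega_\varepsilon}$, so a standard Arzel\`a--Ascoli argument, combined with the already established pointwise limit, forces uniform convergence on $\overline{\Omega_\varepsilon}$ (hence on $\Omega_\varepsilon$), componentwise. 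I do not expect a serious obstacle here; the only subtle point is ensuring the convolution is well-defined up to the boundary of $\Omega_\varepsilon$, which is exactly what the regions $\Omega_\varepsilon$ from Definition~\ref{admissibleATDef} were introduced to handle.
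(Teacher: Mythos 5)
Your argument is correct and follows the same path as the paper: pointwise convergence via dominated/bounded convergence using the $k$-uniform bound $\|G^{(k)}\|_\infty\le\mathrm{diam}\,\Lambda$, followed by a uniform gradient bound $\|\nabla(G^{(k)}\star\xi_\varepsilon)\|_\infty\le C(\Lambda,\varepsilon)$, and then an Arzel\`a--Ascoli bootstrap (which is exactly what the paper's Remark~\ref{uniformRemark} encapsulates). The only stylistic difference is that you make the well-definedness of the convolution near $\partial\Omega_\varepsilon$ explicit; the substance is identical.
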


\bpf First, we claim pointwise convergence, i.e., that 
\begin{equation}
\begin{aligned}
\label{GkptwiseEq}
\lim_{k\ra\infty}G^{(k)}\star\xi_{\varepsilon}(x)
=\nabla\phi\star\xi_{\varepsilon}(x) \q
\h{ \rm for each $x\in\Omega_{\varepsilon}$}.
\end{aligned}
\end{equation}
  To check that this is true,
note that for $x\in\Omega_{\varepsilon}$, 
$$
\baeq
\Big| 
G^{(k)}\star\xi_{\varepsilon}(x)-\nabla\phi\star\xi_{\varepsilon}(x)
\Big|
&=
\Big|
\int\big(G^{(k)}(y)-\nabla\phi(y)\big)\xi_{\varepsilon}(y-x)\, dy
\Big|
\cr
&\leq
\int\big| G^{(k)}(y)-\nabla\phi(y)\big|\xi_{\varepsilon}(y-x)\, dy
\eaeq
$$
(once again note that these integrals make sense since
$\nabla\phi$ exists almost everywhere).
 Since 
$\vert G^{(k)}(y)-\nabla\phi(y)\vert\rightarrow0$
a.e. by Lemma \ref{GkLimitLemma}, while $|\xi_{\varepsilon}|\le C$ 
and the $G^{(k)}$ are uniformly bounded, equation   \eqref{GkptwiseEq} follows from
bounded convergence (note that $\varepsilon$ is constant in this
limit). 

Next, notice that the $G^{(k)}$ are uniformly bounded independently of $k$,
in fact (since we may assume, without loss of generality, that $0\in\L$),
\begin{equation}
\begin{aligned}
\label{GkBoundEq}
|G^{(k)}|\le \h{\rm diam}\,\L
\end{aligned}
\end{equation}
by \eqref{GkEq}.
Write
\begin{equation}
\begin{aligned}
\label{GkiEq}
G^{(k)}=(G_1^{(k)},\ldots,G_n^{(k)}).
\end{aligned}
\end{equation}
Thus, $G_i^{(k)}\star\xi_{\varepsilon}, i=1,\ldots,n,$ are uniformly
bounded independently of $k$. 
$$
\baeq
|\nabla(G_i^{(k)}\star\xi_{\varepsilon})|
&=
\vert G_{i}^{(k)}\star\nabla\xi_{\varepsilon}(x)
\vert
\cr
&=\Big|\int G_{i}^{(k)}(y)\nabla\xi_{\varepsilon}(x-y)\, dy\Big|
\cr
&\leq \h{\rm diam}\,\L\Big|\int\nabla\xi_{\varepsilon}(x-y)\, dy\Big|
\le C,
\eaeq
$$
with $C=C(\L,\varepsilon)$. Thus, the $G^{(k)}\star\xi_{\varepsilon}$ have uniformly
bounded derivatives (in each component).
The statement of the claim now follows from 
Remark \ref{uniformRemark} below.
\epf

\begin{remark} {\rm
\label{uniformRemark}
We will use the following fact more than once.
If a uniformly bounded sequence of differentiable functions 
$\{f_{k}\}_{k\in\NN}$
with uniformly bounded (first) derivatives on compact sets satisfies $f_{k}\rightarrow f$
pointwise, then $f_{k}\rightarrow f$ uniformly  on compact sets. (This can be established easily using the Arzel\`a--Ascoli theorem.)
} \end{remark}

The third auxiliary result is a one-variable interpolation-type result. 
\begin{claim}
\label{KolmogClaim}
Let $I\subset\RR$ be a closed interval, and let 
  $\{f_{k}\}_{k\in\NN},f:I\rightarrow\mathbb{R}$ be
smooth functions such that (i) $f_{k}\rightarrow f$
uniformly, (ii) the $f_{k}''$ are uniformly bounded independently of  $k$, and (iii) $f''$ is bounded.
Then, $f_{k}'\rightarrow f'$ uniformly. 

\end{claim}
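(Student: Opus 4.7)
The plan is to derive a Kolmogorov--Landau-type interpolation estimate for the difference $g_k:=f_k-f$, which by hypothesis satisfies $\|g_k\|_\infty\to 0$ while $\|g_k''\|_\infty\le M:=\sup_k\|f_k''\|_\infty+\|f''\|_\infty<\infty$ (finite by (ii) and (iii)). The strategy is to pass from uniform control of $g_k$ and uniform bounds on $g_k''$ to uniform control of $g_k'$ via a Taylor-remainder argument.

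First, for any $x\in I$ and any $h$ with $[x,x+h]\subset I$ (resp.\ $[x+h,x]\subset I$ if $h<0$), Taylor's theorem with integral remainder gives
\[
g_k(x+h)=g_k(x)+h\,g_k'(x)+\int_0^h(h-t)\,g_k''(x+t)\,dt.
\]
Solving for $g_k'(x)$ and applying the triangle inequality together with the bound $\|g_k''\|_\infty\le M$ yields
\[
|g_k'(x)|\le\frac{2\|g_k\|_\infty}{|h|}+\frac{|h|}{2}\,M.
\]
Then I would optimize in $|h|$: choosing $|h|=2\sqrt{\|g_k\|_\infty/M}$ (which tends to $0$ as $k\to\infty$) gives the interpolation estimate
\[
|g_k'(x)|\le 2\sqrt{M\,\|g_k\|_\infty},
\]
and the right side is independent of $x$ and tends to $0$ by hypothesis (i). This yields $f_k'\to f'$ uniformly on $I$.

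The only mild obstacle is the boundary of $I=[a,b]$: the chosen $h$ must satisfy $[x,x+h]\subset I$ or $[x+h,x]\subset I$. For $k$ large enough the optimizing choice satisfies $|h|\le (b-a)/2$, so for any $x\in I$ at least one of $x+h$ or $x-h$ lies in $I$; using the forward expansion when $x\le (a+b)/2$ and the backward expansion otherwise (replacing $h$ by $-h$, which only changes signs in the computation and not the absolute value bound) handles every point uniformly. No other subtlety arises, and the argument uses (i), (ii), (iii) in exactly the stated strengths.
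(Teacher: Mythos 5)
Your argument is correct and follows the same route as the paper: apply the Landau--Kolmogorov interpolation inequality $\|g'\|_\infty \leq C\|g\|_\infty^{1/2}\|g''\|_\infty^{1/2}$ to $g_k = f_k - f$ and conclude from $\|g_k\|_\infty\to0$ together with the uniform bound on $\|g_k''\|_\infty$. The only difference is that the paper cites this inequality, whereas you rederive it via a Taylor-remainder argument with an optimized step size $|h|$ and the boundary check; this is a valid self-contained substitute for the citation.
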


\begin{proof}
We make use of the Landau--Kolmogorov inequality 
\begin{equation}
\begin{aligned}
\label{LKInEq}
\Vert g'\Vert_{\infty}\leq 
C\Vert g\Vert_{\infty}^{1/2}\Vert g''\Vert_{\infty}^{1/2}
\end{aligned}
\end{equation}
for
$g\in C^2(I)$
(see, e.g., \cite{chuiLandau}).
Apply   \eqref{LKInEq} to $g=f_{k}-f$ to obtain 
$\Vert f_{k}'-f'\Vert_{\infty}
\leq C\Vert f_{k}-f\Vert_{\infty}^{1/2}\Vert f_{k}''-f''\Vert_{\infty}^{1/2}$.
Since $f''$ is bounded and the $f_{k}''$ are uniformly bounded,
we have that $\Vert f_{k}''-f''\Vert_{\infty}^{\frac{1}{2}}$ is uniformly
bounded in $k$. Of course, $\Vert f_{k}-f\Vert_{\infty}\rightarrow0$
by uniform convergence.
Therefore, $\Vert f_{k}'-f'\Vert_{\infty}\rightarrow0$,
as desired.
\end{proof}

\bpf[Proof of Lemma \ref{FirstGkLemma}]
The functions (recall  \eqref{GkiEq}) 
$$
\partial_{j}^{2}(G_{i}^{(k)}\star\xi_{\varepsilon})=G_{i}^{(k)}\star\partial_{j}^{2}\xi_{\varepsilon},
\q
i,j\in\{1,\ldots,n\}, 
$$
are uniformly bounded independently of $k$ by a constant depending on $\varepsilon$
(by the uniform boundedness of $G_{i}^{(k)}$---recall  \eqref{GkBoundEq}). 
By Claim \ref{phiConvClaim},
 $\nabla\phi\star\xi_{\varepsilon}=\nabla(\phi\star\xi_{\varepsilon})$
so also $\nabla(\nabla\phi\star\xi_{\varepsilon})=\nabla^{2}(\phi\star\xi_{\varepsilon})$.
Thus, since $\phi\star\xi_{\varepsilon}$ is smooth,
$\nabla(\nabla\phi\star\xi_{\varepsilon})$ is bounded in all of its $n^2$
components.

Let $x\in\Omega_{\varepsilon}$. Then fix $i,j$ and let $\delta>0$ be small enough such that
$I:=\{ x+t e_{j}\,:\,t\in[-\delta,\delta]\} \subset\Omega_{\varepsilon}$.
By Claim \ref{SecondGkClaim}, $G_{i}^{(k)}\star\xi_{\varepsilon}\rightarrow\partial_{i}\phi\star\xi_{\varepsilon}$
uniformly. Restricting to the $j$-th variable and applying Claim \ref{KolmogClaim},
$$
\partial_{j}(G_{i}^{(k)}\star\xi_{\varepsilon})\rightarrow\partial_{j}(\partial_{i}\phi\star\xi_{\varepsilon})
$$ 
uniformly on $I\ni x$. Since $x$, $i$, and $j$ were arbitrary, we see that 
$$
\nabla(G^{(k)}\star\xi_{\varepsilon})
\rightarrow\nabla(\nabla\phi\star\xi_{\varepsilon})
$$
pointwise (though we cannot yet say that this convergence is uniform). 
The uniformity of the convergence now follows from Remark \ref{uniformRemark}.
Finally, invoking Claim \ref{phiConvClaim} implies the statement of Lemma \ref{FirstGkLemma}.
\epf

\subsection{Obtaining a density inequality for $\phi\star\xi_{\varepsilon}$
and a proof of Lemma \ref{kLimLemma}}
 \label{DensitySubSec} 

In this subsection we prove Lemma \ref{kLimLemma}.

First, we prove a mollified version of \eqref{ckInt}.

 \begin{lemma}
 \label{kDensIneqLemma}
Fix $\varepsilon>0$. For $k$ sufficiently large,
$$
\int_{\O_\ve} \max\Big\{0,-\log\det \big(\calH^{(k)}\star \xi_\ve\big)
+\log\frac{f \circ \tau^{(k)}}{g\circ\gamma^{(k)}}\star\xi_\ve \Big\} \leq c_k,
$$
where $c_k$ is the optimal cost defined in \eqref{ckEq},
and $\tau^{(k)}$ and $\gamma^{(k)}$ are  defined in
 \eqref{taukEq}--\eqref{gammakEq}.
\end{lemma}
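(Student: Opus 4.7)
The plan is to reduce the inequality to a pointwise application of Jensen's inequality via a joint convexity argument, followed by a Fubini interchange. First, I would observe that on the convex domain $\mathrm{Sym}^2_{>0}(\RR^n)\times\RR$, the function
$$\Phi(A,c):=\max\{0,\,-\log\det A+c\}$$
is convex: $-\log\det$ is convex on the positive-definite cone, adding the linear term $c$ preserves convexity, and taking the pointwise maximum with $0$ preserves convexity as well.

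Next, I would package the two mollifications together by defining the vector-valued map
$$\calF^{(k)}(y):=\Big(\calH^{(k)}(y),\;\log\tfrac{f(\tau^{(k)}(y))}{g(\gamma^{(k)}(y))}\Big),$$
so that the integrand appearing in the statement is precisely $\Phi\big((\calF^{(k)}\star\xi_\ve)(x)\big)$ (with convolution applied componentwise). Since $\xi_\ve\geq 0$ with $\int\xi_\ve=1$, Jensen's inequality applied pointwise on $\O_\ve$ gives
$$\Phi\big((\calF^{(k)}\star\xi_\ve)(x)\big)\leq \big((\Phi\circ\calF^{(k)})\star\xi_\ve\big)(x)=\big(\mathcal{C}^{(k)}\star\xi_\ve\big)(x),$$
where the final identification is exactly \eqref{CkHk}. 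The one point requiring verification is that $\calF^{(k)}$ is defined on $B_\ve(x)$ for every $x\in\O_\ve$ with $\calH^{(k)}$ positive definite there: this is ensured, for all $k$ sufficiently large, by the admissibility hypothesis (Definition \ref{admissibleATDef}), which yields $\O_\ve+B_\ve(0)\subset\bigcup_i S_i^{(k)}$, combined with the constraint \eqref{Constraint3Eq}. The convolved matrix $\calH^{(k)}\star\xi_\ve$ is itself positive definite as a weighted average of positive-definite matrices, so the Jensen step takes place entirely inside the domain where $\Phi$ is convex.

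To conclude, I would integrate the pointwise inequality over $\O_\ve$ and interchange the order of integration, which is legitimate by non-negativity of $\mathcal{C}^{(k)}$ and $\xi_\ve$:
$$\int_{\O_\ve}\mathcal{C}^{(k)}\star\xi_\ve(x)\,dx=\int \mathcal{C}^{(k)}(y)\Big(\int_{\O_\ve}\xi_\ve(x-y)\,dx\Big)dy\leq \int \mathcal{C}^{(k)}(y)\,dy=c_k,$$
using $\int\xi_\ve=1$ and the formula \eqref{ckInt} for $c_k$. I do not foresee a real obstacle here; the only delicate point is the domain bookkeeping near $\partial\O$, which is handled cleanly by admissibility.
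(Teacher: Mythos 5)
Your proof is correct and takes essentially the same route as the paper: Jensen's inequality for convolution, followed by a Fubini interchange using nonnegativity and the admissibility hypothesis to control the domain near $\partial\O$. The only cosmetic difference is that you bundle the two Jensen steps the paper performs in sequence (first for the increasing convex map $\max\{0,\cdot\}$, then for $-\log\det$ on the positive-definite cone, combined via monotonicity of $\max\{0,\cdot\}$) into a single application of Jensen to the jointly convex function $\Phi(A,c)=\max\{0,-\log\det A+c\}$ on $\mathrm{Sym}^2_{>0}(\RR^n)\times\RR$; this is a tidy repackaging of the same argument rather than a genuinely different approach.
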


\bpf

Note that $\max(0,\cdot)$ is convex, so applying Jensen's inequality to \eqref{CkHk} yields
$$
\baeq
\mathcal{C}^{(k)}\star \xi_\ve
& = \max\Big\{0,-\log\det \calH^{(k)}-\log (g\circ\gamma^{(k)})+\log (f \circ \tau^{(k)}) \Big\} \star \xi_\ve
\cr
& \geq
\max\Big\{0,\Big[-\log\det \calH^{(k)}-\log (g\circ\gamma^{(k)})+\log (f \circ \tau^{(k)}) \Big] \star \xi_\ve\Big\}
\cr
& = \max\Big\{0,\big[-\log\det \calH^{(k)}\big]\star \xi_\ve +\log\frac{f \circ \tau^{(k)}}{g\circ\gamma^{(k)}}\star\xi_\ve \Big\}.
\eaeq
$$
Now by the convexity of $-\log\det(\cdot)$ on the set of positive semidefinite matrices and Jensen's inequality once again, we have 
$$
\big[-\log\det \calH^{(k)}\big]\star \xi_\ve \geq -\log\det \big(\calH^{(k)}\star \xi_\ve\big),
$$
and combining the last two inequalities yields 
\beq
\lb{CkConvIneq}
\mathcal{C}^{(k)}\star \xi_\ve \geq 
\max\Big\{0,-\log\det \big(\calH^{(k)}\star \xi_\ve\big)
+\log\frac{f \circ \tau^{(k)}}{g\circ\gamma^{(k)}}\star\xi_\ve \Big\}.
\eeq
Recall from Definition \ref{admissibleATDef} that
$$
\Omega_{\varepsilon}+B_\eps(0)
\subset \bigcup_{i=1}^{M(k)}
S_i^{(k)}, \q \forall k\gg 1.
$$
Thus for such $k$ sufficiently large, noting that $\mathcal{C}^{(k)} \geq 0$, we have 
$$
\baeq
\int_{\O_\ve} \mathcal{C}^{(k)}\star \xi_\ve
& = \int_{\O_\ve} \int_{B_\ve (0)} \mathcal{C}^{(k)}(x-y)\xi_\ve(y)\,dy\,dx
\cr
& = \int_{B_\ve (0)} \xi_\ve(y) \int_{\O_\ve}  \mathcal{C}^{(k)}(x-y)\,dx\,dy
\cr
& \leq \int_{B_\ve (0)} \xi_\ve(y) \int_{\bigcup_{i=1}^{M(k)}S_i^{(k)}}  \mathcal{C}^{(k)}(x)\,dx\,dy
\cr
& = c_k,
\eaeq
$$
where the last line follows from \eqref{ckInt}. Combining with \eqref{CkConvIneq} completes the proof.
\epf

At least intuitively, in order to prove Lemma \ref{kLimLemma} we need to `take the limit as $k\rightarrow\infty$' in  Lemma \ref{kDensIneqLemma}
so that we can employ Lemma \ref{symmetrizedLemma}. 
The main technical obstacle in taking the limit in $k$ is controlling 
the behavior of $\tau^{(k)}$ and $\gamma^{(k)}$; the proof relies on
tools from convex analysis.

\begin{remark}
 \label{fguniformRemark} 
Notice that in the case that $f$ and $g$ are uniform densities on $\Omega$ and $\Lambda$, respectively, the proof of Lemma \ref{kLimLemma} is almost trivial. 
Even in the case that only $g$ is uniform, the proof is 
still considerably easier. This is true because the most difficult part of the proof is controlling the behavior of $\gamma^{(k)}$, which requires results from convex analysis, most crucially a result on the `locally uniform' convergence of the subdifferentials of a sequence of convergent convex functions.
\end{remark}

\begin{proof}[Proof of Lemma \ref{kLimLemma}]

Let $\alpha,\beta>0$ and fix $x\in\O_\ve$. Using Theorem \ref{BWThm},
for every $z\in\overline{B}_{\alpha}(x)$,
there exists $\delta(z)>0$ and $N_{\beta}(z)$ such that
\begin{equation}
\begin{aligned}
\label{}
 \partial\phi^{(k)}(y)\subset\partial\phi(z)+B_{\beta}(0),
\q
\forall y\in B_{\delta(z)}(z),\; \forall k\geq N_{\beta}(z).
\end{aligned}
\end{equation}
By compactness,
there exist $z_{1},\ldots,z_{p}\in\overline{B}_{\alpha}(x)$ such that the $B_{\delta(z_{i})}$
cover $\overline{B}_{\alpha}(x)$.
Setting
$$
N'_{x,\alpha,\beta}:=\max_{i\in\{1,\ldots,p\}}N_{\beta}(z_{i}),
$$
we thus have that 
\beq
\label{nbhdEq}
\partial\phi^{(k)}(y)\subset
\bigcup_{z\in\overline{B}_{\alpha}(x)}\partial\phi(z) +B_{\beta}(0),
\q
\forall k\geq N'_{x,\alpha,\beta},\;
\forall y\in\overline{B}_{\alpha}(x).
\eeq

For $k$ sufficiently large, i.e., 
$$
k\geq N_{\alpha}
$$
 for some $N_{\alpha}$ depending only on $\alpha$, we have that 
\begin{equation}
\begin{aligned}
\label{simplexinEq}
\h{the
simplex $i^{(k)}(x)$ 
(recall  \eqref{ikEq})
containing $x$ is contained in $\overline{B}_{\alpha}(x)$}
\end{aligned}
\end{equation}
by the admissibility   of our sequence of almost-triangulations
(Definition \ref{admissibleATDef}); in particular 
$$
\tau^{(k)}(x)\in \overline{B}_{\alpha}(x).
$$ 
Statement \eqref{simplexinEq} also implies, by  \eqref{ApproxGradPhiGridEq}, 
that $\gamma^{(k)}(x)$ is a convex combination of $n+1$ elements
of $\bigcup_{y\in\overline{B}_{\alpha}(x)}\partial\phi^{(k)}(y)$,
so by \eqref{nbhdEq},
$$
\gamma^{(k)}(x)\in\mathrm{co}
\Big(
\bigcup_{z\in\overline{B}_{\alpha}(x)}\partial\phi(z)+B_{\beta}(0)\Big),
\q
\forall k\geq N_{x,\alpha,\beta},
$$
where
$$
N_{x,\alpha,\beta}:=\max\{N_{\alpha},N'_{x,\alpha,\beta}\}.
$$
Thus for $k\geq N_{x,\alpha,\beta}$,
\beq
\label{minMaxIneq}
\log\frac{ f (\tau^{(k)}(x))}{g(\gamma^{(k)}(x))}\geq\log
\frac
{\min\{ f (y):y\in\overline{B}_{\alpha}(x)\}}
{
\max\left\{ g(z):z\in\mathrm{co}\left(
\bigcup_{z\in\overline{B}_{\alpha}(x)}\partial\phi(z)+B_{\beta}(0)\right)\right\}
}.
\eeq

For almost every $x\in\O_\ve$
we have that for any $\gamma>0$ there exists 
$$
C(x,\gamma)>0
$$ 
such that 
  \cite[Corollary 24.5.1]{Rock} 
\[
\partial\phi(z)\subset\nabla\phi(x)+B_{\gamma}(0)
=B_{\gamma}(\nabla\phi(x)),
\q
\forall z\in\overline{B}_{\alpha}(x), \;
\forall \alpha\in (0,C).
\]
Hence, for $\alpha \in (0,C)$, 
$$
\bigcup_{z\in\overline{B}_{\alpha}(x)}\partial\phi(z)+B_{\beta}(0)
\subset B_{\gamma+\beta}(\nabla\phi(x)),
$$
implying that 
\beq
\label{hullContain}
\mathrm{co}\Big(
{\textstyle\bigcup_{z\in\overline{B}_{\alpha}(x)}}\partial\phi(z)
+B_{\beta}(0)\Big)\subset B_{\gamma+\beta}(\nabla\phi(x)).
\eeq

Therefore for a.e. $x$ and any $\alpha,\beta,\gamma>0$ with 
$\alpha\in(0,C(x,\gamma))$
we have by \eqref{minMaxIneq} and \eqref{hullContain} that 
\[
\log\frac{ f (\tau^{(k)}(x))}{g(\gamma^{(k)}(x))}\geq\log\frac{\min\{ f (y):y\in\overline{B}_{\alpha}(x)\}}{\max\left\{ g(z):z\in \overline{B}_{\gamma+\beta}(\nabla\phi(x))\right\} },
\h{ for $k\geq N_{x,\alpha,\beta}$}.
\]
It follows that 
\beq
\label{liminf1Ineq}
\liminf_{k\rightarrow\infty}\log\frac{ f (\tau^{(k)}(x))}{g(\gamma^{(k)}(x))}\geq\log\frac{\min\{ f (y):y\in\overline{B}_{\alpha}(x)\}}{\max\left\{ g(z):z\in 
\overline{B}_{\gamma+\beta}(\nabla\phi(x))\right\} }.
\eeq
 for a.e. $x$ and any $\alpha,\beta,\gamma>0$ with $\alpha\in(0,C(x,\gamma))$.
By the continuity of $ f $ and $g$, 
\beq
\label{numLimit}
\lim_{\alpha\rightarrow0}\min\{ f (y):y\in\overline{B}_{\alpha}(x)\}= f (x)
\eeq
 and for a.e. $x$ also,
\beq
\label{denomLimit}
\lim_{\left(\beta,\gamma\right)\rightarrow0}\max\left\{ g(z):z\in B_{\gamma+\beta}(\nabla\phi(x))\right\} =g(\nabla\phi(x)).
\eeq
Taking limits in \eqref{liminf1Ineq} (first $\alpha \rightarrow 0$, followed by $\left(\beta,\gamma\right)\rightarrow0$) and applying \eqref{numLimit} and \eqref{denomLimit},
\beq
\label{liminfIneq2}
\liminf_{k\rightarrow\infty}\log\frac{ f (\tau^{(k)}(x))}{g(\gamma^{(k)}(x))}\geq\log\frac{ f (x)}{g(\nabla\phi(x))} \q
\h{ for a.e. $x$.}
\eeq
Next we observe that mollification preserves this inequality in the following sense:
\begin{eqnarray}
\liminf_{k\rightarrow\infty}\left(\log\frac{ f \circ\tau^{(k)}}{g\circ\gamma^{(k)}}\star\xi_{\varepsilon}\right)(x)\label{bigLiminfIneqStart}
 & = & \liminf_{k\rightarrow\infty}\int\xi_{\varepsilon}(x-y)\cdot\log\frac{ f (\tau^{(k)}(y))}{g(\gamma^{(k)}(y))}\, dy \nonumber\\
 & \geq & \int\xi_{\varepsilon}(x-y)\cdot\liminf_{k\rightarrow\infty}\log\frac{ f (\tau^{(k)}(y))}{g(\gamma^{(k)}(y))}\, dy \nonumber\\
 & \geq & \int\xi_{\varepsilon}(x-y)\cdot\log\frac{ f (y)}{g(\nabla\phi(y))}\, dy \nonumber\\
 & = & \left(\log\frac{ f }{g\circ\nabla\phi}\star\xi_{\varepsilon}\right)(x), \label{bigLiminfIneqEnd}
\end{eqnarray}
 where we have used the Fatou-Lebesgue theorem (applicable since the sequence of integrands is dominated by an integrable function; indeed, the domain is bounded and, as $f$ and $g$ are bounded away from zero, the integrands are uniformly bounded) to pass the $\liminf$
within the integral and \eqref{liminfIneq2} in the penultimate step. Note that   \eqref{bigLiminfIneqEnd} makes sense since
$\nabla \phi$ exists a.e.

Now, take a $\liminf$ in Lemma \ref{kDensIneqLemma} 
and use the fact that $c_{k}\rightarrow0$, along with \eqref{bigLiminfIneqEnd}, to see that
\begin{eqnarray}
0=\liminf_{k\ra\infty} c_k & \geq & \liminf_{k\ra\infty}
\int_{\O_\ve} \max\Big\{0,-\log\det \big(\calH^{(k)}\star \xi_\ve\big)
+\log\frac{f \circ \tau^{(k)}}{g\circ\gamma^{(k)}}\star\xi_\ve \Big\} \nonumber\\
& \geq &
\int_{\O_\ve} \liminf_{k\ra\infty} \max\Big\{0,-\log\det \big(\calH^{(k)}\star \xi_\ve\big)
+\log\frac{f \circ \tau^{(k)}}{g\circ\gamma^{(k)}}\star\xi_\ve \Big\} \nonumber\\
& \geq & \int_{\O_\ve} \max\Big\{0,-\limsup_{k\ra\infty}\log\det \big(\calH^{(k)}\star \xi_\ve\big)
+\log\frac{ f }{g\circ\nabla\phi}\star\xi_{\varepsilon} \Big\}. \q  \label{bigMaxInt}
\end{eqnarray}
Note that we have passed the $\liminf$ inside of the integral using Fatou's lemma (applicable since the integrands are nonnegative), and we have used the fact that $$\liminf_k \max(0,a_k)=\max(0,\liminf_k a_k),$$ which follows from the monotonicity and continuity of $\max(0,\cdot)$.

Now it follows from \eqref{bigMaxInt} that 
\beq
\lb{limsupIneq}
\limsup_{k\ra\infty}\log\det \big(\calH^{(k)}\star \xi_\ve\big)
\geq \log\frac{ f }{g\circ\nabla\phi}\star\xi_{\varepsilon}
\eeq
a.e. on $\O_\ve$.
But Lemma \ref{symmetrizedLemma} 
implies that in fact 
$$
\det \big(\calH^{(k)}\star\xi_{\varepsilon}\big)
$$ 
is pointwise convergent on $\O_\ve$ as $k\ra\infty$, so we must have a.e. in $\O_\ve$ that
$$
\lim_{k\ra\infty} \det \big(\calH^{(k)}\star\xi_{\varepsilon}\big) > 0
$$
because otherwise \eqref{limsupIneq} is violated on a set of positive Lebesgue measure. Then we conclude that the sequence on the left-hand side of \eqref{limsupIneq} is actually convergent a.e. in $\O_\ve$ and 
$$
\log \lim_{k\ra\infty}\det \big(\calH^{(k)}\star \xi_\ve\big)
\geq \log\frac{ f }{g\circ\nabla\phi}\star\xi_{\varepsilon}
$$
a.e. in $\O_\ve$. Of course, by Lemma \ref{symmetrizedLemma} we then have 
\[
\log\det\nabla^{2}(\phi\star\xi_{\varepsilon})\geq\log\frac{ f }{g\circ\nabla\phi}\star\xi_{\varepsilon},
\]
a.e. in $\O_\ve$ and in fact, by the continuity of both sides of the inequality, everywhere in $\O_\ve$. 
This implies (since $\xi_{\varepsilon}$ is supported on $B_{\varepsilon}(0)$)
\[
\log\det\nabla^{2}(\phi\star\xi_{\varepsilon})(x)\geq\log\frac{\inf\{ f (y):y\in B_{\varepsilon}(x)\}}{\sup\{g\left(\nabla\phi(y)\right):y\in B_{\varepsilon}(x),\nabla\phi(y) \,\mathrm{exists}\}},
\]
for $x\in\Omega_{\varepsilon}$,
completing the proof of Lemma \ref{kLimLemma}.
\end{proof}

\subsection{Passing to the limit in $\varepsilon$---part I}
 \label{EpsilonSubSec} 
 In this subsection we prove 
Proposition \ref{measProp}. 

\def\phive{\phi_\varepsilon}

Let
\begin{equation}
\begin{aligned}
\label{LepsEq}
\Lambda_{\varepsilon}:=
\nabla\phi_{\varepsilon}(\Omega_{\varepsilon}).
\end{aligned}
\eeq
Because $\L$ is convex, $\Lambda_{\varepsilon} \subset\overline{\L}$.

To ease the notation in the following, we set 
$$
\phi_\ve := \phi\star\xi_\ve.
$$

\begin{claim}
\label{geClaim}
Fix $\varepsilon>0$. Let $g_\ve$ be defined as   \eqref{gepsEq}. For $y\in\L_\varepsilon$,
$$
g_{\varepsilon}(y) =  f \big(
(\nabla\phi_{\varepsilon})^{-1}(y)\big)
/
\det\nabla^{2}\phi_{\varepsilon}
\big((\nabla\phi_{\varepsilon})^{-1}(y)\big).
$$
\end{claim}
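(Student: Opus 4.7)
The plan is to recognize the claim as a direct application of the change-of-variables formula, provided we can justify that $\nabla\phi_\varepsilon$ is a diffeomorphism from $\Omega_\varepsilon$ onto $\Lambda_\varepsilon$. First I would note that $\phi_\varepsilon = \phi \star \xi_\varepsilon$ is smooth on $\Omega_\varepsilon$ (since $\phi$ is locally Lipschitz on $D \supset \overline{\O}$ and $\xi_\varepsilon$ is a smooth mollifier supported in $B_\varepsilon(0)$), so both $\nabla \phi_\varepsilon$ and $\nabla^2 \phi_\varepsilon$ are classical and continuous on $\Omega_\varepsilon$.

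Next, I would invoke Lemma \ref{kLimLemma} to conclude that $\det \nabla^2 \phi_\varepsilon(x) > 0$ for every $x \in \Omega_\varepsilon$: indeed, the right-hand side of the inequality in Lemma \ref{kLimLemma} is strictly positive because $f$ is bounded below and $g$ is bounded above (both being bounded away from zero and infinity). Combined with the fact that $\phi_\varepsilon$ is convex (as a mollification of the convex function $\phi$), this forces $\nabla^2 \phi_\varepsilon$ to be positive definite on $\Omega_\varepsilon$. Hence $\phi_\varepsilon$ is strictly convex there, and $\nabla \phi_\varepsilon : \Omega_\varepsilon \to \Lambda_\varepsilon$ is injective, with a smooth inverse on its image by the inverse function theorem.

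With the map $\nabla\phi_\varepsilon$ established as a $C^1$-diffeomorphism onto $\Lambda_\varepsilon$, I would apply the standard change-of-variables formula for pushforward measures. For any continuous test function $h$ supported on $\Lambda_\varepsilon$, by the definition \eqref{nuepsEq}--\eqref{gepsEq} of $\nu_\varepsilon$ and $g_\varepsilon$,
$$
\int h(y)\, g_\varepsilon(y)\, dy = \int_{\Omega_\varepsilon} h\bigl(\nabla \phi_\varepsilon(x)\bigr)\, f(x)\, dx.
$$
Substituting $y = \nabla\phi_\varepsilon(x)$, $x = (\nabla\phi_\varepsilon)^{-1}(y)$, $dx = dy / \det \nabla^2 \phi_\varepsilon(x)$, the right-hand side becomes
$$
\int_{\Lambda_\varepsilon} h(y) \cdot \frac{f\bigl((\nabla\phi_\varepsilon)^{-1}(y)\bigr)}{\det \nabla^2\phi_\varepsilon\bigl((\nabla\phi_\varepsilon)^{-1}(y)\bigr)}\, dy.
$$
Since this holds for every test function $h$ on $\Lambda_\varepsilon$, the two densities agree a.e., and by continuity of both sides on $\Lambda_\varepsilon$, the identity holds pointwise, yielding the claim.

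The main obstacle, and the only point that requires nontrivial input from earlier in the paper, is the injectivity of $\nabla \phi_\varepsilon$ on $\Omega_\varepsilon$, which in turn rests on the strict positivity of $\det \nabla^2 \phi_\varepsilon$ furnished by Lemma \ref{kLimLemma}. Everything else is routine calculus: smoothness from mollification, the inverse function theorem on a convex-image of a strictly convex function, and the standard change of variables.
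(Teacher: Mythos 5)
Your approach is the same as the paper's — establish that $\nabla\phi_\varepsilon$ is a smooth invertible map on $\Omega_\varepsilon$ (using Lemma \ref{kLimLemma} to get strong convexity) and then apply the change-of-variables formula for push-forwards — but there is a gap in the injectivity step that the paper explicitly flags and handles.

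You assert that positive-definiteness of $\nabla^2\phi_\varepsilon$ on $\Omega_\varepsilon$ implies strict convexity "there," hence injectivity of $\nabla\phi_\varepsilon$ on $\Omega_\varepsilon$. That inference is valid only if $\Omega_\varepsilon$ is convex: the standard argument integrates $\langle\nabla^2\phi_\varepsilon(z)(x-y),x-y\rangle$ along the segment from $y$ to $x$ to get strict monotonicity of the gradient, and it needs that segment to stay inside the region where the Hessian bound holds. But in this paper $\Omega$ (and hence $\Omega_\varepsilon$) is \emph{not} assumed convex — that is one of the selling points of the DMAOP — so the segment $[x,y]$ can exit $\Omega_\varepsilon$, where Lemma \ref{kLimLemma} gives you nothing. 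The inverse function theorem you cite only gives local injectivity and cannot close this hole on its own. The paper's proof confronts exactly this: it notes that $\phi_\varepsilon$ is nonetheless defined and convex (hence has nonnegative second directional derivative) on a larger ball $D_\varepsilon$ containing the convex hull of $\Omega$, so along the whole segment the directional second derivative is $\geq 0$, and it is strictly positive near the two endpoints $x,y\in\Omega_\varepsilon$; integrating then still yields $\nabla\phi_\varepsilon(x)\neq\nabla\phi_\varepsilon(y)$. If you add that observation — that the mollified potential is globally convex on a convex neighborhood of $\overline{\Omega}$, so monotonicity along chords is available even when $\Omega_\varepsilon$ is nonconvex — your proof is complete and coincides with the paper's.
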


\begin{proof}
Since convolution with a non-negative kernel preserves convexity, $\phi_{\varepsilon}$ is a smooth convex function. Moreover, 
Lemma \ref{kLimLemma} in fact implies that $\phi_{\varepsilon}$ is uniformly convex
on any compact subset of $\Omega_{\varepsilon}$,
so $\nabla\phi_{\varepsilon}$ is invertible
on $\Omega_{\varepsilon}$. This is evident if $\O_\ve$ is convex, but is also true in general. Indeed, for any two points $x,y\in\Omega_{\varepsilon}$, consider the restriction of $\phi_{\varepsilon}$ to the line containing these two points. The second directional derivative of $\phi_{\varepsilon}$ in the direction of a unit vector parallel to this line must be non-negative along this line and strictly positive near both $x$ and $y$ (recall that for $\ve$ small, $\phi_\ve$ is defined, convex and finite on 
a ball containing $\O$, in particular on the convex hull of $\O$, and uniformly convex when restricted to any compact subset of $\O_\ve$). In conclusion, 
$\nabla\phive (x)$ and $\nabla\phive( y)$ cannot agree. Thus, 
\begin{equation}
\begin{aligned}
\label{nablaphiinvEq}
\h{$(\nabla\phi_{\varepsilon})^{-1}$ exists on $\Omega_{\varepsilon}$}
\end{aligned}
\end{equation}
 as claimed. The standard formula for the push-forward of a measure 
and the definitions \eqref{nuepsEq}--\eqref{gepsEq}
then imply the statement.
\end{proof}

 Let
\begin{equation}
\begin{aligned}
\label{gbarEpsDef}
&\overline{g}_{\varepsilon}(y):=
\cr
&\q\begin{cases}
\displaystyle
 f\big((\nabla\phi_{\varepsilon})^{-1}(y)\big)
\frac{
\sup
\left\{ 
g(\nabla\phi(x)):x\in 
B_{\varepsilon}\big((\nabla\phi_{\varepsilon})^{-1}(y)\big),
\nabla\phi(x) \,\mathrm{exists} 
\right\} 
}
{
\inf
\left\{  
f (x):x\in B_{\varepsilon}\big((\nabla\phi_{\varepsilon})^{-1}(y)\big)
\right\} 
}, 
& y\in \Lambda_{\varepsilon},\cr
g, & y\in \overline{\Lambda}\backslash\Lambda_{\varepsilon}.
\end{cases}
\end{aligned}
\end{equation}

\begin{claim}
\label{epsDensClaim}
On $\overline{\Lambda}$,
\beq
\label{epsDensIneq}
g_{\varepsilon}\leq\overline{g}_{\varepsilon}.
\eeq

\end{claim}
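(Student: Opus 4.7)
The plan is to split $\overline{\Lambda}$ into the image $\Lambda_{\varepsilon} = \nabla\phi_{\varepsilon}(\Omega_{\varepsilon})$ and its complement, and treat each piece by essentially unwinding definitions and invoking Lemma~\ref{kLimLemma}.

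First I would handle the easy piece $\overline{\Lambda}\setminus\Lambda_{\varepsilon}$. By the definition \eqref{nuepsEq} of $\nu_{\varepsilon}$, this measure is supported on $\Lambda_{\varepsilon}$, so its density $g_{\varepsilon}$ vanishes (a.e.) on the complement. On the other hand, by \eqref{gbarEpsDef}, $\overline{g}_{\varepsilon}=g\geq 0$ there, so the inequality holds trivially on $\overline{\Lambda}\setminus\Lambda_{\varepsilon}$.

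On $\Lambda_{\varepsilon}$, I would fix $y\in\Lambda_{\varepsilon}$ and set $x_{0}:=(\nabla\phi_{\varepsilon})^{-1}(y)\in\Omega_{\varepsilon}$, which is well-defined by \eqref{nablaphiinvEq}. By Claim~\ref{geClaim},
$$
g_{\varepsilon}(y)=\frac{f(x_{0})}{\det\nabla^{2}\phi_{\varepsilon}(x_{0})}.
$$
Now Lemma~\ref{kLimLemma}, applied at the point $x_{0}\in\Omega_{\varepsilon}$, gives
$$
\det\nabla^{2}\phi_{\varepsilon}(x_{0})\geq\frac{\inf\{f(x):x\in B_{\varepsilon}(x_{0})\}}{\sup\{g(\nabla\phi(x)):x\in B_{\varepsilon}(x_{0}),\,\nabla\phi(x)\text{ exists}\}}.
$$
Since $f>0$, dividing $f(x_{0})$ by the two sides reverses the inequality and yields exactly
$$
g_{\varepsilon}(y)\leq f(x_{0})\cdot\frac{\sup\{g(\nabla\phi(x)):x\in B_{\varepsilon}(x_{0}),\,\nabla\phi(x)\text{ exists}\}}{\inf\{f(x):x\in B_{\varepsilon}(x_{0})\}}=\overline{g}_{\varepsilon}(y),
$$
using the definition \eqref{gbarEpsDef} of $\overline{g}_{\varepsilon}$ on $\Lambda_{\varepsilon}$.

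There is essentially no obstacle here: the content of the claim is already packaged inside Lemma~\ref{kLimLemma}, and the only things to be careful about are (a) that $\nu_{\varepsilon}$ really is supported on $\Lambda_{\varepsilon}$ so that the complement case is free, (b) that $(\nabla\phi_{\varepsilon})^{-1}$ is defined on $\Lambda_{\varepsilon}$ (which is Claim~\ref{geClaim}/\eqref{nablaphiinvEq}), and (c) the positivity of $f$ so that division does not flip sign unexpectedly. Since the inequality is pointwise on $\overline{\Lambda}$ after these observations, we are done.
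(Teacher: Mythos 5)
Your proof is correct and follows exactly the paper's approach: the case $\overline{\Lambda}\setminus\Lambda_{\varepsilon}$ is trivial since $g_{\varepsilon}$ vanishes there, and on $\Lambda_{\varepsilon}$ the claim follows by combining Claim~\ref{geClaim} with Lemma~\ref{kLimLemma}. You have simply spelled out the short computation that the paper leaves implicit.
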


\begin{proof}
The inequality is trivial on $\overline{\Lambda}\backslash\Lambda_{\varepsilon}$
since by the definitions \eqref{nuepsEq}, \eqref{gepsEq}, and   \eqref{LepsEq},
$$
g_\ve = 0 \h{\ outside of $\L_\ve$}.
$$
On the other hand, Lemma \ref{kLimLemma} and Claim \ref{geClaim} precisely imply 
  \eqref{epsDensIneq} on $\L_\ve$. 
\epf

For any $\psi$ convex defined on a convex set $C$, let 
$$
\mathrm{dom}(\psi)
$$ 
denote the set of points in $C$ at which $\psi$ is finite, and let 
\begin{equation}
\begin{aligned}
\label{singEq}
\mathrm{sing}(\psi)
\end{aligned}
\end{equation}
 denote the set of points in $\mathrm{int}\,\mathrm{dom}(\psi)$ where $\psi$ is not differentiable.
Similarly, denote by
\begin{equation}
\begin{aligned}
\label{DiffEq}
\Delta(\psi)
\end{aligned}
\end{equation}
the complement of $\mathrm{sing}(\psi)$ in $\mathrm{int}\,\mathrm{dom}(\psi)$. Note that $\Delta(\psi)$ has full measure in $\mathrm{int}\,\mathrm{dom}(\psi)$ because convex functions are locally Lipschitz.

\begin{lemma}
\label{legendreLemma}
As $\varepsilon\rightarrow0$,
$\overline{g}_{\varepsilon}\rightarrow g$ a.e. on $\overline{\Lambda} \backslash \partial\phi\left(\mathrm{sing}(\phi)\right)$. 
\end{lemma}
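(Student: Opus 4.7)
The plan is to analyze $\overline{g}_\varepsilon(y)$ for $y\in\overline{\Lambda}\setminus\partial\phi(\mathrm{sing}(\phi))$ by splitting into two cases. If $y\notin\Lambda_\varepsilon$ for all sufficiently small $\varepsilon$, then by \eqref{gbarEpsDef} we have $\overline{g}_\varepsilon(y)=g(y)$ eventually, and convergence is immediate. So the remaining case is when $y\in\Lambda_\varepsilon$ along a sequence $\varepsilon\to 0$, in which $\overline{g}_\varepsilon(y)$ is given by the first branch of \eqref{gbarEpsDef} with $x_\varepsilon:=(\nabla\phi_\varepsilon)^{-1}(y)$, and the goal is to show $\overline{g}_\varepsilon(y)\to g(y)$.

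The strategy, motivating the lemma's name, is to exploit Legendre duality. Since each $\phi^{(k)}$ is Lipschitz with constant $\mathrm{diam}(\Lambda)$ and $\phi^{(k)}\to\phi$ uniformly on $D$, $\phi$ extends to a finite convex Lipschitz function on $\mathbb{R}^n$; consequently $\phi_\varepsilon=\phi\star\xi_\varepsilon\to\phi$ uniformly on compact sets. A standard convex analysis fact then gives that the Legendre duals $\phi_\varepsilon^\ast$ epi-converge to $\phi^\ast$. Crucially, since $\nabla\phi_\varepsilon$ is invertible on $\Omega_\varepsilon$ by \eqref{nablaphiinvEq}, we have the identification $x_\varepsilon=\nabla\phi_\varepsilon^\ast(y)$. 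Being a finite convex function, $\phi^\ast$ is differentiable outside a Lebesgue null set, so for a.e.\ $y$ we may apply \eqref{BWEq} of Theorem \ref{BWThm} to conclude $x_\varepsilon=\nabla\phi_\varepsilon^\ast(y)\to\nabla\phi^\ast(y)=:x^\ast$.

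Next I would pin down $x^\ast$: by the duality relation $x^\ast\in\partial\phi^\ast(y)\Leftrightarrow y\in\partial\phi(x^\ast)$, hence $y\in\partial\phi(x^\ast)$, and differentiability of $\phi^\ast$ at $y$ makes $x^\ast$ the unique such point. The hypothesis $y\notin\partial\phi(\mathrm{sing}(\phi))$ forces $x^\ast\notin\mathrm{sing}(\phi)$, so $\phi$ is differentiable at $x^\ast$ and $y=\nabla\phi(x^\ast)$. (The possibility $x^\ast\in\partial\Omega$ can be excluded on a null set of $y$, for instance by noting that the set of such $y$ is contained in $\partial\phi(\partial\Omega)$, which is controlled via Lipschitzness of $\phi$ and the fact that $\partial\Omega$ has measure zero.)

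With $x_\varepsilon\to x^\ast$ and $\phi$ differentiable at $x^\ast$, the final step is a routine continuity computation. Continuity of $f$ on $\overline\Omega$ yields $f(x_\varepsilon)\to f(x^\ast)$ and $\inf\{f(x):x\in B_\varepsilon(x_\varepsilon)\}\to f(x^\ast)$. For the supremum term, upper semi-continuity of $\partial\phi$ at its differentiability point $x^\ast$ (see \cite[Corollary 24.5.1]{Rock}) gives $\nabla\phi(x)\to\nabla\phi(x^\ast)=y$ as $x\to x^\ast$ through $\Delta(\phi)$, so by continuity of $g$ on $\overline\Lambda$ we get $\sup\{g(\nabla\phi(x)):x\in B_\varepsilon(x_\varepsilon)\cap\Delta(\phi)\}\to g(y)$. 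Combining,
$$
\overline{g}_\varepsilon(y)\;\longrightarrow\;f(x^\ast)\cdot\frac{g(y)}{f(x^\ast)}\;=\;g(y).
$$
The main obstacle will be the careful handling of the convergence of Legendre duals and the accompanying boundary/exceptional-set bookkeeping needed to make the ``a.e.'' statement rigorous; the rest is essentially extracting the right continuity statements.
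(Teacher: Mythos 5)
Your argument mirrors the paper's proof essentially step for step: split according to whether $y\in\Lambda_\varepsilon$, identify $(\nabla\phi_\varepsilon)^{-1}(y)=\nabla\phi_\varepsilon^*(y)$, invoke epi-convergence of $\phi_\varepsilon^*$ to $\phi^*$ and Theorem~\ref{BWThm} to obtain $x_\varepsilon\to\nabla\phi^*(y)$, then use the duality relation together with $y\notin\partial\phi(\mathrm{sing}(\phi))$ to conclude $\phi$ is differentiable at $x^*=\nabla\phi^*(y)$ with $\nabla\phi(x^*)=y$, and finish with continuity of $f$, continuity of $g$, and upper semicontinuity of $\partial\phi$ at $x^*$. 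The only point worth flagging is the parenthetical about $x^*\in\partial\Omega$: it is unnecessary, because in the paper's setup $\phi$ (hence $\mathrm{sing}(\phi)$) is defined on the ball $D$ with $\overline\Omega\subset\mathrm{int}\,D$, so $\partial\Omega\subset\mathrm{int}\,\mathrm{dom}(\phi)$ and the hypothesis already covers boundary points; moreover, the proposed justification is not valid in general, since the subdifferential map of a Lipschitz convex function need not carry Lebesgue-null sets to Lebesgue-null sets. Since that aside is not load-bearing, the overall proof is correct and coincides with the paper's.
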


\begin{remark}
As in Remark   \ref{fguniformRemark}, the proof of this lemma becomes considerably easier in the case that $g$ is a uniform density and trivial in the case that both $f$ and $g$ are uniform densities.
\end{remark}

Before proving Lemma \ref{legendreLemma} we make several technical remarks. Recall from the beginning of the proof (see \eqref{Ddef}) that $\phi$ is taken to be defined on a ball $D$ containing $\overline{\Omega}$ in its interior, and in fact $\phi$ is the uniform limit of the $\phi^{(k)}$ on $D$. Accordingly, $\phi$ is convex and continuous on $D$, and $\mathrm{dom}(\phi)=D$. Likewise $\mathrm{dom}(\phi_\ve)=D_\ve$, where $D_\ve$ is the closed ball (concentric with $D$) of radius $\ve$ less than that of $D$.

Thus far, we have only studied the behavior of $\phi$ inside of 
$\Omega$
as there has been no need to consider its behavior elsewhere. However, since $\O$ may not be convex, it is important to consider $\phi$ as being defined on a (larger) convex set in order to employ the language and results of convex analysis.

Then,  the convex conjugate of $\phi$
$$
\phi^{*}(y):=\sup_{x\in D}[\langle x,y\rangle - \phi(x)]
$$
is finite on all of $\mathbb{R}^n$, i.e., $\mathrm{dom}(\phi^*)=\mathbb{R}^n$.

The next claim collects basic properties concerning the Legendre dual that we need later.

\begin{claim}
\label{invConvClaim}
(i) $\nabla \phi_{\varepsilon}^{*}=\left(\nabla \phi_\varepsilon\right)^{-1}$ on $\Lambda_{\varepsilon}$,
\\ 
(ii) $\nabla \phi^{*}_{\varepsilon} \rightarrow\nabla\phi^{*}$ pointwise on $\Delta(\phi^*)$.
\end{claim}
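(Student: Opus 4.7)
Claim (i) is a standard Legendre-duality computation. The proof of Claim \ref{geClaim} already shows that $\phi_\varepsilon = \phi\star\xi_\varepsilon$ is smooth and uniformly convex on each compact subset of $\Omega_\varepsilon$, so $\nabla\phi_\varepsilon$ is a bijection from $\Omega_\varepsilon$ onto $\Lambda_\varepsilon$. Fix $y\in\Lambda_\varepsilon$ and set $x:=(\nabla\phi_\varepsilon)^{-1}(y)\in\Omega_\varepsilon$. The concave function $z\mapsto\langle z,y\rangle-\phi_\varepsilon(z)$ on $D_\varepsilon$ has $x$ as an interior critical point, so $x$ is a global maximum over $D_\varepsilon$; moreover strict concavity in a neighborhood of $x$, combined with global concavity on $D_\varepsilon$, forces $x$ to be the unique maximizer (any other maximizer would make the whole segment joining them maximizing, contradicting strict concavity near $x$). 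Hence the supremum in the definition of $\phi_\varepsilon^*(y)$ is attained uniquely at $x$, and standard convex analysis then yields that $\phi_\varepsilon^*$ is differentiable at $y$ with $\nabla\phi_\varepsilon^*(y)=x=(\nabla\phi_\varepsilon)^{-1}(y)$.

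For claim (ii), my plan is to apply Theorem \ref{BWThm} to the sequence $\phi_\varepsilon^*$. The first step is to prove $\phi_\varepsilon\to\phi$ uniformly on compact subsets of $\mathrm{int}\, D$, which is the classical property of mollifying the continuous function $\phi$. I would then deduce $\phi_\varepsilon^*\to\phi^*$ uniformly on compact subsets of $\mathbb{R}^n$ by splitting the estimate $|\phi_\varepsilon^*(y)-\phi^*(y)|$ into two pieces: the error from replacing $\phi_\varepsilon$ by $\phi$ in the supremum, which is controlled by $\sup_{x\in D_\varepsilon}|\phi_\varepsilon(x)-\phi(x)|\to 0$, and the error from supremizing over $D_\varepsilon$ rather than $D$, which vanishes since $D_\varepsilon\nearrow\mathrm{int}\,D$ and $\phi$ is continuous up to $\partial D$. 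By \cite[Theorem 7.17]{RW}, this uniform-on-bounded-sets convergence implies epi-convergence $\phi_\varepsilon^*\to\phi^*$.

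Once epi-convergence is in hand, applying \eqref{BWEq} of Theorem \ref{BWThm} at any $y\in\Delta(\phi^*)$ gives $\partial\phi_\varepsilon^*(y)\to\{\nabla\phi^*(y)\}$; the singleton condition in particular implies that $\phi_\varepsilon^*$ is differentiable at $y$ for all $\varepsilon$ sufficiently small, yielding the desired pointwise convergence $\nabla\phi_\varepsilon^*(y)\to\nabla\phi^*(y)$. The main technical subtlety in this argument concerns the interplay between $\phi_\varepsilon$ being defined only on the smaller ball $D_\varepsilon$ and the conjugation operation, which involves a supremum over a domain that changes with $\varepsilon$; this is handled routinely via the continuity of $\phi$ on $\overline{D}$ and the monotone exhaustion of $\mathrm{int}\,D$ by the sets $D_\varepsilon$.
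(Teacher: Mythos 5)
Your proof is correct; here is how it compares with the paper's. For part (i), the paper simply cites the standard formula for the gradient of the Legendre dual of a smooth, locally strongly convex function \cite[Theorem 26.5]{Rock} together with the invertibility established in Claim \ref{geClaim}, while you unpack that citation by showing the supremum defining $\phi_\varepsilon^*(y)$ is attained uniquely at $(\nabla\phi_\varepsilon)^{-1}(y)$; this is the same idea made self-contained. For part (ii), the two arguments genuinely diverge in the middle step: the paper passes $\phi_\varepsilon\ra\phi$ (uniformly on compacts) to epi-convergence via \cite[Theorem 7.17]{RW} and then invokes \cite[Theorem 11.34]{RW} — the fact that Legendre conjugation is continuous with respect to epi-convergence — to conclude $\phi_\varepsilon^*\ra\phi^*$ epi-converges, whereas you instead establish the stronger statement that $\phi_\varepsilon^*\ra\phi^*$ uniformly on compact sets by a two-term estimate (mollification error on $D_\varepsilon$ plus the boundary-layer error from supremizing over $D_\varepsilon$ rather than $D$), and only then use \cite[Theorem 7.17]{RW} on the conjugates themselves to get epi-convergence. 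Both then apply Theorem \ref{BWThm}. Your route is more elementary and avoids the heavier duality machinery at the cost of a slightly longer hands-on estimate; the paper's route is shorter but relies on a deeper black box. One small point worth keeping in mind: strictly speaking, the conclusion of \eqref{BWEq} gives shrinking of the subdifferential sets $\partial\phi_\varepsilon^*(y)$ to $\{\nabla\phi^*(y)\}$, rather than immediate differentiability of $\phi_\varepsilon^*$ at $y$; the paper deliberately writes the conclusion with $\partial\phi_\varepsilon^*$ for this reason, and you should phrase your final step the same way (or observe, as you implicitly do, that $\phi_\varepsilon^*$ is in fact differentiable wherever $\partial\phi_\varepsilon^*$ is a singleton).
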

\bpf
(i) This follows from  \eqref{nablaphiinvEq} and the standard formula for the gradient of the Legendre dual of a smooth strongly convex function \cite[Theorem 26.5]{Rock}, noting that, by Lemma \ref{kLimLemma}, $\phi_{\ve}$ is indeed strongly convex on compact subsets of $\O_\ve$.

(ii) Note that $\phi_{\varepsilon}\rightarrow \phi$ uniformly on compact subsets of $\mathrm{int}\,\mathrm{dom}(\phi) = \mathrm{int}\,D$. Then by \cite[Theorem 7.17]{RW}, the $\phi_{\varepsilon}$ epi-converge to $\phi$ 
(we again recall that $f_k$ epi-converges to $f$ (roughly) if the epigraphs of $f_k$ converge to the epigraph of $f$; see \cite [p. 240]{RW}  for more precise details). 
Then by \cite[Theorem 11.34]{RW}, we have that the $\phi_{\varepsilon}^{*}$ epi-converge to $\phi^{*}$. Again using Theorem \ref{BWThm}, we have that $\partial \phi_{\varepsilon}^{*}(x)\rightarrow \nabla \phi^{*}(x)$ for all $x\in \mathrm{int\,dom}\,\phi^{*}=\RR^n$ such that $\nabla \phi^{*}(x)$ exists.
\epf

\bpf[Proof of Lemma \ref{legendreLemma}]
 
It suffices to assume that 
 $y\in \Delta(\phi^*)\cap \big(\overline{\Lambda} \backslash \partial\phi\left(\mathrm{sing}(\phi)\right)\big)$
 since otherwise $y $ is contained in a measure zero set. Thus, by Claim
 \ref{invConvClaim} (ii),
 \beq
 \lim_{\ve\ra0}
\nabla\phi^{*}_{\varepsilon}(y)= \nabla\phi^{*}(y). \nonumber
\eeq

Let 
$$
E:= \{\varepsilon>0\,:\,y\in\Lambda_{\varepsilon}\}.
$$ 
Since $\overline{g}_{\varepsilon}(y) = g(y)$ whenever $\varepsilon\notin E$
(recall \eqref{gbarEpsDef}), it suffices to show that 
$\overline{g}_{\varepsilon_{j}} (y) \rightarrow g(y)$ for all sequences $\varepsilon_{j}\in E$ that tend to zero. Let $\varepsilon_{j}$ be such a sequence. 
Notice that since $y\in \Lambda_{\varepsilon_{j}}$ for all $j$, by 
Claim \ref{invConvClaim} (i), 
$\nabla \phi_{\varepsilon_{j}}^{*}(y)
=
(\nabla \phi_{\varepsilon_{j}})^{-1}(y)$ for all $j$, so 
\beq
\label{gradConv}
\lim_{j\ra\infty}\left(\nabla\phi_{\varepsilon_{j}}\right)^{-1}(y)= \nabla\phi^{*}(y). 
\eeq
 Let $\gamma>0$. Then by \eqref{gradConv}, there exists $N\in\NN$ so that 
 \beq
 \left| \left(\nabla\phi_{\varepsilon_{j}}\right)^{-1}(y) -  \nabla\phi^{*}(y) \right| < \gamma/2,
 \q \forall j\geq N.\nonumber
\eeq
Take $N$ large enough so that $\varepsilon_{j} < \gamma/2$ for all $j\geq N$, so then 
\beq
B_{\varepsilon_{j}}\left(\left(\nabla\phi_{\varepsilon_{j}}\right)^{-1}(y)\right) \subset B_{\gamma}\left(\nabla\phi^{*}(y)\right)\cap\Omega,\q \forall j\geq N. \nonumber
\eeq
Thus,
\beq
\inf\left\{  f (x):x\in B_{\varepsilon_{j}}\left(\left(\nabla\phi_{\varepsilon_{j}}\right)^{-1}(y)\right)\right\} \geq \inf\left\{  f (x):x\in B_{\gamma}\left(\nabla\phi^{*}(y)\right)\cap\Omega\right\},
\q \forall j\geq N.\nonumber
\eeq
Then taking the $\liminf$ as $j\rightarrow \infty$ followed by the limit as $\gamma \rightarrow 0$, and using the continuity of $f$, 
\beq
\liminf_{j\rightarrow \infty} \inf\left\{  f (x):x\in B_{\varepsilon_{j}}\left(\left(\nabla\phi_{\varepsilon_{j}}\right)^{-1}(y)\right)\right\} \geq f\left(\nabla\phi^{*}(y)\right) \nonumber.
\eeq
Also, 
\beq
\inf\left\{  f (x):x\in B_{\varepsilon_{j}}\left(\left(\nabla\phi_{\varepsilon_{j}}\right)^{-1}(y)\right)\right\} \leq f\left(\left(\nabla\phi_{\varepsilon_{j}}\right)^{-1}(y)\right) \underset{j\rightarrow \infty}{\longrightarrow} f\left(\nabla\phi^{*}(y)\right) \nonumber
\eeq
(where the limit follows by the continuity of $f$), so 
\beq
\limsup_{j\rightarrow \infty} \inf\left\{  f (x):x\in B_{\varepsilon_{j}}\left(\left(\nabla\phi_{\varepsilon_{j}}\right)^{-1}(y)\right)\right\} \leq f\left(\nabla\phi^{*}(y)\right). \nonumber
\eeq
Thus,
\beq
\lim_{j\rightarrow \infty} \inf\left\{  f (x):x\in B_{\varepsilon_{j}}\left(\left(\nabla\phi_{\varepsilon_{j}}\right)^{-1}(y)\right)\right\} = f\left(\nabla\phi^{*}(y)\right) \nonumber.
\eeq
It follows that
\beq
\lim_{j\rightarrow \infty} \frac{f\left( \left( \nabla\phi_{\varepsilon_{j}} \right) ^{-1} (y) \right)}{\inf\left\{  f (x):x\in B_{\varepsilon_{j}}\left(\left(\nabla\phi_{\varepsilon_{j}}\right)^{-1}(y)\right)\right\} } = 1 \nonumber.
\eeq

Thus, to conclude the proof of the lemma it remains only to show that
\beq
\lim_{j\rightarrow \infty} \sup\left\{ g(\nabla\phi(x)):x\in B_{\varepsilon_{j}}\left(\left(\nabla\phi_{\varepsilon_{j}}\right)^{-1}(y)\right),\nabla\phi(x) \,\mathrm{exists}\right\} = g(y). \label{remainderEq}
\eeq

By the same arguments as above we have that 
\begin{eqnarray}
& & \limsup_{j\rightarrow\infty} \sup\left\{ g(\nabla\phi(x)):x\in B_{\varepsilon_{j}}\left(\left(\nabla\phi_{\varepsilon_{j}}\right)^{-1}(y)\right),\nabla\phi(x) \,\mathrm{exists}\right\} \nonumber \\ & & \leq \ \sup\left\{ g(\nabla\phi(x)):x\in B_{\gamma}\left(\nabla\phi^{*}(y)\right)\cap\Omega,\nabla\phi(x) \,\mathrm{exists}\right\} \label{gSupRHS}
\end{eqnarray}
for any $\gamma>0$.

We claim that $\phi$ is differentiable at $\nabla\phi^{*}(y)$. 
Indeed, 
$z\in \partial \phi(\nabla \phi^{*}(y))$ if and only if $\nabla \phi^{*}(y)\in \partial \phi^{*}(z)$ \cite[Corollary 23.5.1]{Rock}. Thus plugging in $y$ for $z$, we see that $y\in\partial\phi(\nabla \phi^{*}(y))$. By assumption, $y\notin \partial\phi\left(\mathrm{sing}(\phi)\right)$, so it must be that $\nabla \phi^{*}(y) \notin \mathrm{sing}(\phi)$, as claimed, and,
moreover, 
\begin{equation}
\begin{aligned}
\label{nablayyEq}
\nabla \phi\left( \nabla \phi^{*} (y)\right)=y.
\end{aligned}
\end{equation}
Next, for any $\alpha >0$ there exists
 $\gamma>0$ such that \cite[Corollary 24.5.1]{Rock}
\beq
\label{subgradCont}
\partial\phi(\nabla \phi^{*} (y)+v)\subset\nabla\phi(\nabla \phi^{*} (y))+B_{\alpha}(0) = B_{\alpha}(y),
\q \forall v\in\overline{B}_{\gamma}(0).
\eeq
  Together with the continuity of $g$, this implies that the right hand side of \eqref{gSupRHS} converges to $g(y)$ as $\gamma\rightarrow 0$, so we have

\beq
\limsup_{j\rightarrow\infty} \sup\left\{ g(\nabla\phi(y)):y\in B_{\varepsilon_{j}}\left(\left(\nabla\phi_{\varepsilon_{j}}\right)^{-1}(y)\right),\nabla\phi(y) \,\mathrm{exists}\right\} \leq g(y) \nonumber.
\eeq
Of course, we also have 
\beq
\sup\left\{ g(\nabla\phi(x)):x\in B_{\varepsilon_{j}}\left(\left(\nabla\phi_{\varepsilon_{j}}\right)^{-1}(y)\right),\nabla\phi(x) \,\mathrm{exists}\right\} \geq g\left( \nabla\phi\left( \left(\nabla\phi_{\varepsilon_{j}}\right)^{-1} (y)+v_{j}\right)\right), \nonumber
\eeq
where $v_j$ is a vector with length less than $\varepsilon_{j}$ chosen so that $\phi$ is differentiable at $\left(\nabla\phi_{\varepsilon_{j}}\right)^{-1} (y)+v_{j}$. 
Then following from \eqref{subgradCont}, the continuity of $g$, \eqref{gradConv}, and 
  \eqref{nablayyEq}, we have that the right hand side tends to $g(y)$ as $j\rightarrow \infty$. 
  Thus, \eqref{remainderEq} holds, and the proof of Lemma \ref{legendreLemma} complete.
\epf

\subsection{Passing to the limit in $\varepsilon$---part II: proof of Proposition \ref{measProp}}
\label{PropSubSec}

Define a measure supported on $\L$,
\begin{equation}
\begin{aligned}
\label{tnuepsEq}
\tilde\nu:=(\nabla \phi)_{\#}\mu.
\end{aligned}
\end{equation}

\begin{claim}
\label{measClaim}
For any sequence $\varepsilon\rightarrow0$, 
$\nu_{\varepsilon}=(\nabla \phi_\ve)_{\#}\mu|_{\Omega_{\varepsilon}}$  converges weakly to $\tilde\nu$.

\end{claim}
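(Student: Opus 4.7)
The plan is to establish weak convergence by the standard definition: for every $h \in C_b(\RR^n)$, show that $\int h\,d\nu_\ve \to \int h\,d\tilde\nu$ as $\ve \to 0$. By the definitions \eqref{nuepsEq} and \eqref{tnuepsEq} together with the pushforward formula,
\[
\int h\,d\nu_\ve = \int_{\O_\ve} h(\nabla\phi_\ve(x))\,f(x)\,dx, \qquad \int h\,d\tilde\nu = \int_{\O} h(\nabla\phi(x))\,f(x)\,dx,
\]
so it suffices to rewrite the former as $\int_{\O} \mathbf{1}_{\O_\ve}(x)\,h(\nabla\phi_\ve(x))\,f(x)\,dx$ and pass to the limit inside the integral.

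The three ingredients I would assemble for this passage to the limit are the following. First, $\mathbf{1}_{\O_\ve}(x) \to 1$ pointwise on $\O$ as $\ve \to 0$, which is immediate from the admissibility condition $\O_\ve \uparrow \O$ in Definition \ref{admissibleATDef}. Second, $\nabla\phi_\ve(x) \to \nabla\phi(x)$ at almost every $x \in \O$. This is the one technical point: since $\phi$ is convex and finite on $D$ (hence locally Lipschitz), $\nabla\phi$ exists a.e. and is locally bounded; Claim \ref{phiConvClaim} gives $\nabla\phi_\ve = \nabla\phi \star \xi_\ve$ wherever the latter is defined, and the standard Lebesgue differentiation / approximate-identity theorem for $L^1_{\mathrm{loc}}$ functions gives convergence of the mollifications to $\nabla\phi$ at every Lebesgue point of $\nabla\phi$, i.e., a.e. Third, continuity of $h$ then promotes this to $h(\nabla\phi_\ve(x)) \to h(\nabla\phi(x))$ a.e.\ on $\O$.

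Finally, since $h$ is bounded and $f$ is bounded (in particular integrable on the bounded set $\O$), the integrands $\mathbf{1}_{\O_\ve}\cdot (h\circ\nabla\phi_\ve)\cdot f$ are uniformly dominated by $\|h\|_\infty\,f \in L^1(\O)$. The dominated convergence theorem then yields
\[
\int h\,d\nu_\ve \;\longrightarrow\; \int_{\O} h(\nabla\phi(x))\,f(x)\,dx \;=\; \int h\,d\tilde\nu,
\]
which is exactly the weak convergence asserted. The main (and rather mild) obstacle is the a.e.\ pointwise convergence $\nabla\phi_\ve \to \nabla\phi$; everything else is bookkeeping with the dominated convergence theorem. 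Note that no appeal to the density bounds of Lemma \ref{kLimLemma} or to Lemma \ref{legendreLemma} is needed here, since at the level of this claim we compare $\nu_\ve$ to the auxiliary measure $\tilde\nu = (\nabla\phi)_\#\mu$ rather than to the true target $\nu = g\,dx$; the identification $\tilde\nu = \nu$ (up to the measure of $\O_\ve$ vs.\ $\O$) is precisely what Lemmas \ref{kLimLemma} and \ref{legendreLemma} will be used for in the subsequent derivation of Proposition \ref{measProp}.
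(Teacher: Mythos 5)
Your proposal is correct and structurally identical to the paper's proof: both pass through the change-of-variables formula, establish $\nabla\phi_\ve \to \nabla\phi$ a.e.\ together with $\chi_{\O_\ve}\to\chi_\O$ pointwise, and close with dominated (bounded) convergence. The one place you deviate is in how you justify the a.e.\ convergence of $\nabla\phi_\ve$. The paper invokes a semicontinuity result for subdifferential maps from convex analysis (Rockafellar, Theorem~24.5), stated later as Claim~\ref{aeConvClaim}: at any point where $\partial\phi$ is a singleton, $\partial\phi_\ve(x)\subset\partial\phi(x)+B_\alpha(0)$ for $\ve$ small. You instead use Claim~\ref{phiConvClaim} to write $\nabla\phi_\ve = (\nabla\phi)\star\xi_\ve$ and then appeal to the approximate-identity/Lebesgue-differentiation theorem at Lebesgue points of the locally bounded function $\nabla\phi$. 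Both routes are valid; yours is more elementary in that it bypasses convex analysis entirely and would work for any $W^{1,1}_{\mathrm{loc}}$ function being mollified, while the paper's route exploits convexity and yields slightly more (uniform control of subgradients near $x$, which the paper reuses elsewhere). A minor cosmetic difference: the paper first proves convergence of the normalized measures $\mu(\Omega)\mu(\Omega_\ve)^{-1}\nu_\ve$ and then divides out the normalization using $\mu(\Omega_\ve)\to\mu(\Omega)$; you work with $\nu_\ve$ directly, which is slightly cleaner.
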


\bpf
Let $\varepsilon\rightarrow0$, and 
let 
$\zeta$ be a bounded continuous function on $\mathbb{R}^{n}$. 
Recalling the definition of $\nu_\ve$  \eqref{nuepsEq}, the change of variables formula for the push-forward measure gives 
$$
\baeq
\mu(\Omega_{\varepsilon})^{-1} \int \zeta\,d\nu_{\varepsilon} 
& =  
\mu(\Omega_{\varepsilon})^{-1} \int \zeta\circ\nabla\phi_{\varepsilon}\,d\mu\vert_{\Omega_{\varepsilon}} \\
& =  \mu(\Omega_{\varepsilon})^{-1} 
\int_{\Omega} 
(\zeta\circ\nabla\phi_{\varepsilon})\cdot f\cdot\chi_{\Omega_{\varepsilon}}\,dx.
\eaeq
$$
Now $\nabla \phi_{\varepsilon} \rightarrow \nabla\phi$ pointwise almost everywhere, and $\chi_{\Omega_{\varepsilon}}\rightarrow\chi_{\Omega}$ pointwise, so (recalling that $\zeta$ is bounded and continuous), we have by bounded convergence and the fact that $\mu(\Omega_{\varepsilon})\rightarrow \mu(\Omega)$ that
$$
\baeq
\lim_{\ve\rightarrow0} 
\mu(\Omega_{\varepsilon})^{-1} \int \zeta\,d\nu_{\varepsilon} & =  \mu(\Omega)^{-1} \int_\Omega \zeta\circ\nabla\phi \, d\mu 
\cr
& = \mu(\Omega)^{-1} \int_\Omega \zeta \, d\left( (\nabla\phi)_{\#}\mu\right).
\eaeq
$$
This proves that $\mu(\Omega)\mu(\Omega_{\varepsilon})^{-1}\nu_{\varepsilon}$ converges weakly to $\tilde{\nu}:=(\nabla\phi)_{\#}\mu$. 
Since 
$$
\lim_{\ve\rightarrow0} \mu(\Omega_{\varepsilon})=\mu(\Omega),
$$ 
we are done.
\epf

Observe that $\tilde{\nu}$ must be absolutely continuous because
the densities $g_\ve$ of $\nu_{\varepsilon}$
are bounded above uniformly in $\ve$, see 
 \eqref{gbarEpsDef},
\begin{equation}
\begin{aligned}
\label{gvemaxEq}
g_\ve\le \sup f \sup g/\inf f<C
\end{aligned}
\end{equation}
 and supported on the compact set $\overline{\Lambda}$.
Hence, $\tilde{\nu}$ has a density that we denote by
\begin{equation}
\begin{aligned}
\label{tgEq}
\tilde g\,dx:=\tilde\nu.
\end{aligned}
\end{equation} 
 Proposition \ref{measProp} follows from Claim \ref{measClaim} and the next
two results. 

\begin{lemma}
\label{ggtildeLemma}
$\tilde g \le g$ a.e.
\end{lemma}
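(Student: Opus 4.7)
The plan is to localize to balls $B_r(y_0)$, combine the pointwise density bound $g_\varepsilon\le\overline g_\varepsilon$ from Claim~\ref{epsDensClaim} with the weak convergence $\nu_\varepsilon\to\tilde\nu$ from Claim~\ref{measClaim}, and pass to the limit $\varepsilon\to0$ via reverse Fatou and Lemma~\ref{legendreLemma}. Since $\tilde\nu$ is already known to be absolutely continuous with density bounded by a constant $C$ (via \eqref{gvemaxEq}), it suffices to establish the pointwise bound $\tilde g(y_0)\le g(y_0)$ for almost every $y_0\in\overline\Lambda$.

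First, fix $y_0\in\overline\Lambda$ and a radius $r>0$ with $\tilde\nu(\del B_r(y_0))=0$; since $\tilde\nu$ is a finite absolutely continuous measure, this excludes only countably many $r$. By the portmanteau theorem applied to $\mu(\Omega_\varepsilon)^{-1}\nu_\varepsilon\to\mu(\Omega)^{-1}\tilde\nu$ together with $\mu(\Omega_\varepsilon)\to\mu(\Omega)$, one obtains $\tilde\nu(B_r(y_0))=\lim_{\varepsilon\to0}\nu_\varepsilon(B_r(y_0))$. Invoking Claim~\ref{epsDensClaim}, the uniform bound $\overline g_\varepsilon\le C$ from \eqref{gbarEpsDef}, and reverse Fatou on the bounded set $\overline\Lambda$,
\begin{equation*}
\tilde\nu(B_r(y_0))\;\le\;\limsup_{\varepsilon\to0}\int_{B_r(y_0)}\overline g_\varepsilon\,dy\;\le\;\int_{B_r(y_0)}\Bigl(\limsup_{\varepsilon\to0}\overline g_\varepsilon\Bigr)\,dy.
\end{equation*}
By Lemma~\ref{legendreLemma}, $\limsup_{\varepsilon\to0}\overline g_\varepsilon=g$ almost everywhere on $\overline\Lambda\setminus\del\phi(\mathrm{sing}(\phi))$, while $\limsup_{\varepsilon\to0}\overline g_\varepsilon\le C$ everywhere, so the right-hand side is at most $\int_{B_r(y_0)}g\,dy+C\,\bigl|B_r(y_0)\cap\del\phi(\mathrm{sing}(\phi))\bigr|$. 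Dividing by $|B_r(y_0)|$, sending $r\to0$ along admissible radii, and applying the Lebesgue differentiation theorem simultaneously to $\tilde g$, $g$, and $\chi_{\del\phi(\mathrm{sing}(\phi))}$ yields $\tilde g(y_0)\le g(y_0)$ for almost every $y_0\in\overline\Lambda\setminus\del\phi(\mathrm{sing}(\phi))$.

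The main obstacle is that $\del\phi(\mathrm{sing}(\phi))$ need not be Lebesgue negligible (e.g., $\phi(x)=|x|$ in one dimension sends the singleton $\{0\}$ to $[-1,1]$), so a separate argument is required on this set. Fortunately, convex duality forces $\tilde\nu(\del\phi(\mathrm{sing}(\phi)))=0$, which gives $\tilde g=0\le g$ almost everywhere there. Indeed, suppose $x\in\Delta(\phi)\cap\Omega$ satisfies $\nabla\phi(x)\in\del\phi(\mathrm{sing}(\phi))$; then there is some $x'\in\mathrm{sing}(\phi)$ with $\nabla\phi(x)\in\del\phi(x')$. The identity $y\in\del\phi(z)\Leftrightarrow z\in\del\phi^*(y)$ places both $x$ and $x'$ in $\del\phi^*(\nabla\phi(x))$; since $\Delta(\phi)$ and $\mathrm{sing}(\phi)$ are disjoint, $x\ne x'$, and so $\del\phi^*(\nabla\phi(x))$ is not a singleton, i.e.\ $\nabla\phi(x)\in\mathrm{sing}(\phi^*)$. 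Since $\phi$ is bounded and continuous on the closed ball $D$, one has $\mathrm{dom}(\phi^*)=\mathbb{R}^n$, so $\phi^*$ is a finite convex function on $\mathbb{R}^n$ and $\mathrm{sing}(\phi^*)$ has Lebesgue measure zero. Absolute continuity of $\tilde\nu$ then gives $\tilde\nu(\mathrm{sing}(\phi^*))=0$, and the inclusion $\nabla\phi^{-1}(\del\phi(\mathrm{sing}(\phi)))\subset\nabla\phi^{-1}(\mathrm{sing}(\phi^*))$ just established produces
\begin{equation*}
\tilde\nu\bigl(\del\phi(\mathrm{sing}(\phi))\bigr)\;=\;\mu\bigl(\nabla\phi^{-1}(\del\phi(\mathrm{sing}(\phi)))\bigr)\;\le\;\mu\bigl(\nabla\phi^{-1}(\mathrm{sing}(\phi^*))\bigr)\;=\;\tilde\nu(\mathrm{sing}(\phi^*))\;=\;0.
\end{equation*}
Combining this with the conclusion of the previous paragraph gives $\tilde g\le g$ almost everywhere on $\overline\Lambda$.
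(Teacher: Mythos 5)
Your argument follows the same broad route as the paper: weak convergence of $\nu_\varepsilon$ (Claim~\ref{measClaim}), the density comparison $g_\varepsilon\le\overline g_\varepsilon$ (Claim~\ref{epsDensClaim}), the a.e.\ limit $\overline g_\varepsilon\to g$ off $S:=\partial\phi(\mathrm{sing}(\phi))$ (Lemma~\ref{legendreLemma}), a Legendre-duality argument to show $\tilde\nu$ assigns no mass to $S$, and Lebesgue differentiation to conclude. The organization differs slightly---you run the differentiation argument first and quarantine $S$ afterwards, whereas the paper kills $\Delta(\phi^*)\cap S$ up front (Claim~\ref{nullsetClaim}) and then uses outer regularity of Lebesgue measure to pick an open $U\supset S^c\cup E$ and apply the open-set form of the portmanteau theorem---but the ingredients are the same.

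There is, however, a genuine gap: the measurability of $S=\partial\phi(\mathrm{sing}(\phi))$ is assumed but never established. You write $\bigl|B_r(y_0)\cap S\bigr|$ and $\tilde\nu(S)=\mu\bigl(\nabla\phi^{-1}(S)\bigr)$, and you invoke the Lebesgue differentiation theorem for $\chi_S$; all of these presuppose that $S$ is Lebesgue (hence, by absolute continuity of $\tilde\nu$, also $\tilde\nu$-) measurable. But $S$ is the image of the Borel set $\mathrm{sing}(\phi)$ under the set-valued map $\partial\phi$, i.e.\ it is the projection onto the second factor of the Borel set $\mathrm{graph}(\partial\phi)\cap\bigl(\mathrm{sing}(\phi)\times\RR^n\bigr)\subset\RR^n\times\RR^n$. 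Projections of Borel sets are in general only analytic, not Borel, so measurability of $S$ is not free. This is precisely the point to which the paper devotes a nontrivial amount of work: it introduces the auxiliary Borel function $\Phi$ (a selection of $\partial\phi^*$) to show that $\Delta(\phi^*)\cap S$ is Borel via the set identity \eqref{SDeltaphiEq}, and then works with the Lebesgue-measurable set $S^c\cup E$, with $E\subset\mathrm{sing}(\phi^*)$ null, instead of $S$ itself. Your proof can be patched---replace $S$ in the differentiation step by the honestly measurable set $\bigl\{\limsup_{\varepsilon}\overline g_\varepsilon\ne g\bigr\}$, which is contained in $S\cup N$ for a Lebesgue-null $N$, and justify $\tilde\nu^*(S)=0$ by covering $S$ with the Borel $\tilde\nu$-null set $\mathrm{sing}(\phi^*)\cup\bigl(\Delta(\phi^*)\cap(\nabla\phi^*)^{-1}(\mathrm{sing}(\phi))\bigr)$---but as written the argument silently treats $S$ as measurable, which is exactly the technical obstacle the paper's proof is structured to avoid.
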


\begin{cor}
\label{ggtildeCor}
$\tilde g =g$ a.e., i.e.,
$\tilde \nu =\nu$.
\end{cor}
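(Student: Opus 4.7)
The plan is to upgrade the one-sided bound of Lemma \ref{ggtildeLemma} to equality via a total-mass argument. The three ingredients I will use are: (a) Lemma \ref{ggtildeLemma}, which gives $\tilde g \leq g$ a.e.; (b) the mass-compatibility hypothesis \eqref{massEq}, namely $\int_\O f\,dx = \int_\L g\,dx$; and (c) the fact that $\tilde\nu = (\nabla\phi)_\#\mu$ is supported in $\overline{\L}$.

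First I would compute the total mass of $\tilde\nu$. By definition of push-forward and by \eqref{massEq},
$$
\int_{\RR^n} \tilde g\,dx \;=\; \tilde\nu(\RR^n) \;=\; \mu(\O) \;=\; \int_\O f\,dx \;=\; \int_\L g\,dx.
$$
Next I would argue that $\tilde g$ vanishes a.e.\ outside $\overline{\L}$. For each $k$, the piecewise affine convex function $\phi^{(k)}$ is the maximum of affine functions with slopes $\eta_j^{(k)} \in \overline{\L}$, so $\nabla\phi^{(k)}(x) \in \overline{\L}$ at every $x$ where the gradient exists (which is a.e.). Passing to the limit using Lemma \ref{GkLimitLemma} (or directly from the fact that $G^{(k)} \to \nabla\phi$ a.e.\ and $\overline{\L}$ is closed), we obtain $\nabla\phi(x) \in \overline{\L}$ for a.e.\ $x\in\O$, so $\tilde\nu = (\nabla\phi)_\#\mu$ is concentrated on $\overline{\L}$ and hence $\tilde g = 0$ a.e.\ on $\RR^n\setminus\overline{\L}$.

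Combining these observations gives
$$
\int_{\overline{\L}} \tilde g\,dx \;=\; \int_{\RR^n} \tilde g\,dx \;=\; \int_{\overline{\L}} g\,dx.
$$
Since Lemma \ref{ggtildeLemma} provides the pointwise inequality $\tilde g \leq g$ a.e., the equality of the integrals over $\overline{\L}$ forces $\tilde g = g$ a.e.\ on $\overline{\L}$, and we already have $\tilde g = 0 = g \cdot \chi_{\overline{\L}}$ a.e.\ outside $\overline{\L}$ (where $\nu$ is likewise supported). This yields $\tilde g = g$ a.e.\ as desired, which is equivalent to $\tilde\nu = \nu$.

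I do not foresee a serious obstacle here: the only subtle point is verifying that $\nabla\phi$ actually lands in $\overline{\L}$ almost everywhere, which is inherited from the constraint \eqref{Constraint2Eq} on the $\eta_j^{(k)}$ together with the convergence $G^{(k)} \to \nabla\phi$ a.e.\ already established in Lemma \ref{GkLimitLemma}. Once that is in hand, the proof is just the standard principle that a nonnegative integrable function dominated a.e.\ by another, with equal integrals, must agree with it a.e.
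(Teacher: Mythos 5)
Your argument is correct and rests on the same two pillars as the paper's proof --- the one-sided bound $\tilde g\le g$ from Lemma~\ref{ggtildeLemma} and the mass balance \eqref{massEq} --- but it reaches the conclusion by a shorter, more elementary route. The difference lies in how the equality of total masses $\tilde\nu(\overline{\L})=\nu(\L)$ is established. The paper routes this through the weak convergence $\nu_\ve\to\tilde\nu$ of Claim~\ref{measClaim}: it uses the absolute continuity of $\tilde\nu$ to conclude that $\partial\L$ is $\tilde\nu$-null, hence $\overline{\L}$ is a $\tilde\nu$-continuity set, and then evaluates $\tilde\nu(\L)=\lim_{\ve\to0}\nu_\ve(\overline{\L})=\lim_{\ve\to0}\mu(\O_\ve)=\mu(\O)$. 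You instead read off $\tilde\nu(\RR^n)=\mu(\O)$ directly from the change-of-variables definition of the push-forward $(\nabla\phi)_\#\mu$, and separately verify that $\tilde\nu$ is concentrated on $\overline{\L}$ by observing $\nabla\phi\in\overline{\L}$ a.e.\ --- a consequence of the constraint $\eta_j^{(k)}\in\overline{\L}$, the convexity of $\overline{\L}$ (so the barycentric interpolants $G^{(k)}$ also take values in $\overline{\L}$), and the a.e.\ convergence $G^{(k)}\to\nabla\phi$ from Lemma~\ref{GkLimitLemma}. Your version sidesteps the continuity-set/Portmanteau step entirely, and incidentally furnishes a proof of the support assertion that the paper states without elaboration immediately after \eqref{tnuepsEq}; the paper's version instead re-uses the weak-convergence machinery it has already built. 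Both routes are sound.
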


\begin{proof}[Proof of Corollary \ref{ggtildeCor}] 
By Lemma \ref{ggtildeLemma} $g\geq\tilde{g}$ a.e.; thus,
\beq
\label{measureEq}
\baeq
\int_{\Lambda}\vert g -\tilde{g}\vert 
& = \int_{\Lambda}\left(g-\tilde{g}\right)\cr
 & =  \int_{\Lambda}g-\int_{\Lambda}\tilde{g}  =  \nu(\Lambda)-\tilde{\nu}(\Lambda). 
\eaeq
\eeq
As noted in the previous paragraph $\tilde{\nu}$
is absolutely continuous (with respect to the Lebesgue measure).
Hence, as $\partial\Lambda$ is a Lebesgue null set,
$\tilde{\nu}(\partial\Lambda)=0$, i.e., $\Lambda$
is a continuity set of $\tilde{\nu}$. 
Therefore, by Claim \ref{measClaim},
\[
\tilde{\nu}(\Lambda)=\tilde{\nu}(\overline{\Lambda})=\lim_{\ve\ra 0}\nu_{\varepsilon}(\overline{\Lambda})=\lim_{\ve\ra 0}\mu\vert_{\Omega_{\varepsilon}}\big(\left(\nabla\phi_{\varepsilon}\right)^{-1}(\overline{\Lambda})\big).
\]
 Now $\left(\nabla\phi_{\varepsilon_{n}}\right)^{-1}(\overline{\Lambda})\supset\Omega_{\varepsilon_{n}}$,
so $\mu\vert_{\Omega_{\varepsilon_{n}}}
\big(\left(\nabla\phi_{\varepsilon_{n}}\right)^{-1}(\overline{\Lambda})\big)=\mu\left(\Omega_{\varepsilon_{n}}\right)$,
and $\tilde{\nu}(\Lambda)=\mu(\Omega)$. Of course, by   \eqref{massEq}, $\mu(\Omega)=\nu(\Lambda)$,
so by \eqref{measureEq} we have that $\int_{\Lambda}\vert g-\tilde{g}\vert=0$.
This implies that $g=\tilde{g}$ a.e., so $\nu=\tilde{\nu}$.
\end{proof}

\bpf[Proof of Lemma \ref{ggtildeLemma}] 
Define (recall \eqref{singEq})
\begin{equation}
\begin{aligned}
\label{}
S:=\partial \phi \left(\mathrm{sing}(\phi)\right).
\end{aligned}
\end{equation}

\begin{claim}
 \label{nullsetClaim} 
$\Delta(\phi^{*})\cap S$ is a $\tilde{\mathrm{\nu}}$-null set.
\end{claim}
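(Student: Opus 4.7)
The plan is to unpack the definition of $\tilde\nu$ as a push-forward and then use the Rockafellar duality between the subdifferentials of $\phi$ and $\phi^*$ to show that the preimage of $\Delta(\phi^*)\cap S$ under $\nabla\phi$ is empty up to a Lebesgue null set.

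First, recall that $\mu = f\,dx$ is absolutely continuous, and $\nabla\phi$ exists on $\Omega$ outside a Lebesgue null set (call it $Z$) since $\phi$ is convex. By the definition of the push-forward in \eqref{tnuepsEq}, for any Borel set $A\subset\RR^n$,
\[
\tilde\nu(A)=\mu\bigl(\{x\in\Omega\setminus Z\,:\,\nabla\phi(x)\in A\}\bigr).
\]
Applying this with $A=\Delta(\phi^*)\cap S$, it suffices to show that the set
\[
E:=\{x\in\Omega\setminus Z\,:\,\nabla\phi(x)\in\Delta(\phi^*)\cap S\}
\]
is in fact empty.

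Suppose for a contradiction that $x\in E$, and set $y:=\nabla\phi(x)$. On the one hand, $y\in S=\partial\phi(\mathrm{sing}(\phi))$, so there exists some $x'\in\mathrm{sing}(\phi)$ with $y\in\partial\phi(x')$. By the Rockafellar inversion theorem for subdifferentials \cite[Corollary 23.5.1]{Rock}, this is equivalent to $x'\in\partial\phi^*(y)$, and similarly $y\in\partial\phi(x)$ gives $x\in\partial\phi^*(y)$. On the other hand, $y\in\Delta(\phi^*)$ means $\phi^*$ is differentiable at $y$, so $\partial\phi^*(y)=\{\nabla\phi^*(y)\}$ is a singleton. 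Hence $x=x'$. But $x'\in\mathrm{sing}(\phi)$ contradicts $x\in\Omega\setminus Z$ being a point where $\nabla\phi$ exists.

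Therefore $E=\emptyset$, so $\tilde\nu(\Delta(\phi^*)\cap S)=\mu(E)=0$, as required. The only subtlety is being careful that the Rockafellar inversion applies even when one of $x,x'$ is a singular point; this is fine because \cite[Corollary 23.5.1]{Rock} applies to general convex functions and their subdifferentials without any differentiability hypothesis. I do not expect any serious obstacle beyond this bookkeeping.
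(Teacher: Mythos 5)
Your core argument---using Rockafellar's duality $y\in\partial\phi(x)\iff x\in\partial\phi^*(y)$ to show that the preimage of $\Delta(\phi^*)\cap S$ under $\nabla\phi$ is empty---is correct, and it is in fact the same key idea the paper uses (the paper phrases it as: the integrand $\chi_{\Delta(\phi^*)\cap S}\circ\nabla\phi$ vanishes off the Lebesgue-null set $\mathrm{sing}(\phi)$). However, there is a genuine gap: you invoke the push-forward identity $\tilde\nu(A)=\mu\bigl((\nabla\phi)^{-1}(A)\bigr)$ for $A=\Delta(\phi^*)\cap S$, but you never verify that this set is Borel (or even $\tilde\nu$-measurable), which is exactly what that identity requires. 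This is not automatic: $S=\partial\phi(\mathrm{sing}(\phi))$ is the image of a set under a multivalued map and need not a priori be Borel, so knowing $(\nabla\phi)^{-1}(A)=\emptyset$ does not by itself produce a Borel superset of $A$ with $\tilde\nu$-measure zero.

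The paper closes this gap explicitly, and moreover needs the Borelness later (to construct the open set $U$ with $m(U\setminus(S^c\cup E))<\alpha$ in the proof of Lemma \ref{ggtildeLemma}). The fix is available from the very duality you already use: the same argument shows
$$
\Delta(\phi^*)\cap S \;=\; \Delta(\phi^*)\cap(\nabla\phi^*)^{-1}\bigl(\mathrm{sing}(\phi)\bigr),
$$
and the right-hand side is Borel because $\mathrm{sing}(\phi)$ is Borel and $\nabla\phi^*$, suitably extended off $\mathrm{sing}(\phi^*)$ (the paper's auxiliary map $\Phi$), is a Borel function. You should add this observation before applying the push-forward formula; with that in place your proof is complete and matches the paper's.
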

\bpf
First we claim that on the set where $\phi^* $ is differentiable, $S$ can be written as 
\begin{equation}
\begin{aligned}
\label{SDeltaphiEq}
\Delta(\phi^{*}) \cap S 
=
\Delta(\phi^{*}) \cap
\left(\nabla\phi^{*}\right)^{-1}\left(\mathrm{sing}(\phi)\right).
\end{aligned}
\end{equation}
Indeed, suppose that $y\in \Delta(\phi^{*}) \cap
\left(\nabla\phi^{*}\right)^{-1}\left(\mathrm{sing}(\phi)\right)$. Then $\nabla \phi^{*}(y)\in\mathrm{sing}(\phi)$. Now by duality \cite[Corollary 23.5.1] {Rock}, $y\in\partial\phi \left(\nabla \phi^{*}(y)\right)$, so $y\in S$.

Then suppose that $y\in \Delta(\phi^{*}) \cap S$. Then $y\in \partial\phi(x)$ for some $x\in \mathrm{sing}(\phi)$, implying, again by duality \cite[Corollary 23.5.1] {Rock}, that $x\in \partial \phi^{*}(y)$, i.e., $x=\nabla\phi^{*}(y)$, and $y\in \left(\nabla\phi^{*}\right)^{-1}\left(\mathrm{sing}(\phi)\right)$. This gives the claimed set equality.

Next, we claim that 
\begin{equation}
\begin{aligned}
\label{BorelEq}
\h{$\Delta(\phi^{*})\cap S$ is Borel}.
\end{aligned}
\end{equation}
For the proof, define an auxiliary vector-valued function $\Phi: \mathbb{R}^n \ra \overline{\O}$ by
\begin{equation}
\begin{aligned}
\label{PhiEq}
\Phi(y):=
\begin{cases}
\nabla \phi^{*}(y) & \h{ on $\Delta(\phi^{*})$},\cr
z_0& \h{ on $\mathrm{sing}(\phi^{*})$},
\end{cases}
\end{aligned}
\end{equation}
where $z_0\in \Delta(\phi)\cap\O$ (any such (fixed) $z_0$ will do).

\begin{claim}
\label{}
$\Phi$ is a Borel-measurable function. 
\end{claim}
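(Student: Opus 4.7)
The plan is to decompose $\RR^n$ as the disjoint union $\Delta(\phi^*) \sqcup \mathrm{sing}(\phi^*)$, verify that both pieces are Borel, and show that $\Phi$ restricts to a Borel-measurable function on each (continuous, equal to $\nabla\phi^*$, on the first; constantly $z_0$ on the second). Borel measurability of $\Phi$ on $\RR^n$ will then follow by decomposing $\Phi^{-1}(B)$ into its intersections with the two pieces.

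The principal step is to show that $\Delta(\phi^*)$ (hence $\mathrm{sing}(\phi^*)$) is Borel. For this I plan to exploit the fact that $\phi^*$ is finite and continuous on all of $\RR^n$, so that for each direction $v\in\RR^n$ the one-sided directional derivative $\phi^{*\prime}(y;v) := \lim_{t\downarrow 0}[\phi^*(y+tv)-\phi^*(y)]/t$ exists everywhere. By convexity of $\phi^*$ the difference quotient $[\phi^*(y+tv)-\phi^*(y)]/t$ is nondecreasing in $t>0$, so
\[
\phi^{*\prime}(y;v) = \inf_{t>0} \frac{\phi^*(y+tv)-\phi^*(y)}{t},
\]
which is upper semicontinuous in $y$ as an infimum of continuous functions, hence Borel. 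The standard characterization of differentiability of a convex function \cite[Theorem 25.2]{Rock} then gives
\[
\Delta(\phi^*) = \bigcap_{i=1}^n \bigl\{ y\in\RR^n : \phi^{*\prime}(y;e_i) + \phi^{*\prime}(y;-e_i) = 0 \bigr\},
\]
an intersection of Borel sets.

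Next, I would invoke Rockafellar's theorem \cite[Theorem 25.5]{Rock} to conclude that $\nabla\phi^*$ is continuous on $\Delta(\phi^*)$ with its subspace topology; the hypothesis holds because $\mathrm{dom}(\phi^*)=\RR^n$, so every point of $\Delta(\phi^*)$ lies in $\mathrm{int}\,\mathrm{dom}(\phi^*)$. Assembling: for any Borel $B\subset\RR^n$,
\[
\Phi^{-1}(B) = \bigl((\nabla\phi^*)^{-1}(B)\cap \Delta(\phi^*)\bigr) \cup A_B,
\]
where $A_B = \mathrm{sing}(\phi^*)$ when $z_0\in B$ and $A_B = \emptyset$ otherwise; both summands are Borel, so $\Phi$ is Borel measurable.

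The main obstacle is the first step, namely Borel-ness of $\Delta(\phi^*)$; once that is in hand, the appeal to \cite[Theorem 25.5]{Rock} and the final bookkeeping are routine. An attempt to define $\nabla\phi^*$ directly as a pointwise limit of difference quotients of $\phi^*$ without first isolating $\Delta(\phi^*)$ would raise delicate measurability issues at the non-differentiability points, which is why the upper-semicontinuity route via one-sided directional derivatives seems cleanest.
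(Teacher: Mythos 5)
Your proof is correct and follows the same overall structure as the paper's: decompose $\RR^n$ into $\Delta(\phi^*)$ and $\mathrm{sing}(\phi^*)$, observe both are Borel and that $\nabla\phi^*$ is continuous on $\Delta(\phi^*)$, and conclude by checking preimages of Borel (or open) sets. The one place you genuinely diverge is in justifying that $\Delta(\phi^*)$ is Borel. The paper simply cites the general fact (attributed to Zahorski) that the set of differentiability points of an arbitrary continuous function is Borel; you instead give a self-contained argument exploiting convexity, writing the one-sided directional derivative as an infimum of continuous, monotone difference quotients to get an upper semicontinuous (hence Borel) function, and then characterizing $\Delta(\phi^*)$ as the set where $\phi^{*\prime}(\cdot;e_i)+\phi^{*\prime}(\cdot;-e_i)=0$ for all $i$. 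This buys you elementariness: it avoids the much harder Zahorski-type theorem for general continuous functions, at the cost of being specific to the convex case (which is all that is needed here). Two small points worth tightening: the characterization of differentiability of a finite convex function via the $n$ two-sided partials is Rockafellar's Theorem 25.1 (Theorem 25.2 is the unique-subgradient characterization), and in the final bookkeeping you should remark explicitly that a set which is relatively open (or relatively Borel) in $\Delta(\phi^*)$ is Borel in $\RR^n$ \emph{because} $\Delta(\phi^*)$ is Borel; this is the hinge on which the whole argument turns, and the paper makes it visible by writing $\Phi^{-1}(U)=O\cap\Delta(\phi^*)$ with $O$ open.
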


\begin{proof}

Recall that the set of points of differentiability of a continuous function (e.g., $\phi$ and $\phi^{*}$) is Borel 
(this is an elementary fact, though see, e.g., \cite{Zahorski}), 
and hence $\mathrm{sing}(\phi)$, $\mathrm{sing}(\phi^{*})$ are also Borel.

Let $U$ be open. 
Recall that $\nabla \phi^{*}$ is continuous on $\Delta(\phi^{*})$
 \cite[Corollary 24.5.1]{Rock}. 
Thus, if $z_0\notin U$, then $\Phi^{-1}(U)$ is open in $\Delta(\phi^{*})$, i.e., $\Phi^{-1}(U)=O\cap\Delta(\phi^{*})$ for some open $O$. If $z_0\in U$, then $\Phi^{-1}(U)$ is the union of $\mathrm{sing}(\phi^{*})$ with some set open in $\Delta(\phi^{*})$. In either case, $\Phi^{-1}(U)$ is Borel, i.e.,
$\Phi$ is a Borel-measurable function.
\end{proof}

\noindent
 Therefore,  $\Phi^{-1}(\mathrm{sing}(\phi))$ is Borel. 
 Notice, since by construction $z_0 \notin \mathrm{sing}(\phi)$, that 
$$
\Phi^{-1}(\mathrm{sing}(\phi)) 
= \left(\nabla\phi^{*}\right)^{-1}\left(\mathrm{sing}(\phi)\right)\cap\Delta(\phi^{*}),
$$
which together with   \eqref{SDeltaphiEq} proves \eqref{BorelEq}. 

Thus, $\Delta(\phi^{*})\cap S$ is $\tilde{\nu}$-measurable
since $\tilde{\nu}$ is absolutely continuous. Compute,
$$
\baeq
\tilde{\nu}(\Delta(\phi^{*})\cap S) & =  \int \chi_{\Delta(\phi^{*})\cap S} \, d\left( (\nabla \phi)_{\#} \mu \right) \\
& =  
\int_{\Omega} \chi_{\Delta(\phi^{*})\cap S} \circ \nabla\phi \, d\mu.
\eaeq
$$
This integral vanishes since
the integrand is only nonzero on the 
Lebesgue-null set $\mathrm{sing}(\phi)$, 
while $\mu$ is absolutely continuous. The proof of Claim \ref{nullsetClaim} is complete.
\epf

Define a set $E\subset\mathrm{sing}(\phi^{*})$ by,
\begin{equation}
\begin{aligned}
\label{EDefEq}
\left(\Delta(\phi^{*})\cap S\right)^{c}=
S^{c} \cup \mathrm{sing}(\phi^{*})=:S^{c} \cup E,
\end{aligned}
\end{equation}
and by requiring that the union in the last expression be disjoint.
 Since $E$ is contained within a set of Lebesgue-measure zero, $E$ is Lebesgue-measurable with measure zero, 
and hence $S^{c}$ is Lebesgue-measurable as well.
Claim \ref{nullsetClaim} implies that
\begin{equation}
\begin{aligned}
\label{tnurewriteEq}
\tilde{\nu} = 
\tilde{\nu}\vert_{\left(\Delta(\phi^{*})\cap S\right)^{c}}
=
\tilde{\nu}\vert_{S^{c} \cup \mathrm{sing}(\phi^{*})}
=
\tilde{\nu}\vert_{S^{c} \cup E}.
\end{aligned}
\end{equation}
By the absolute continuity of $\tilde{\nu}$ (recall  \eqref{tgEq}), 
\beq
\label{rnDeriv}
\tilde{g}(x)=\lim_{r\rightarrow0}\frac{\tilde{\nu}(B_{r}(x))}{\mathrm{vol}(B_{r}(x))},
\q \h{for a.e. $x\in\Lambda$}. 
\eeq
Let $\alpha>0$ and let $U$ be an open set containing $S^{c} \cup E$ with 

\begin{equation}
\begin{aligned}
\label{malphaEq}
m\big(U\backslash (S^{c} \cup E)\big)<\alpha
\end{aligned}
\end{equation}
(where $m$ denotes the Lebesgue measure). 
This is possible because $S^{c} \cup E$ is Borel
thanks to   \eqref{BorelEq} and \eqref{EDefEq}. Then, using   \eqref{tnurewriteEq}, 
\beq
\tilde{\nu}(B_{r}(x)) = \tilde{\nu}\big(B_{r}(x)\cap (S^{c} \cup E)\big) \leq \tilde{\nu}(B_{r}(x)\cap U). \label{openIneq}
\eeq
Now since $B_{r}(x)\cap U$ is open, by Claim \ref{measClaim},  
\beq
\label{weakConvIneq}
\tilde{\nu}(B_{r}(x)\cap U)\leq\liminf_{\ve\ra 0}\nu_{\ve}(B_{r}(x)\cap U),
\q \forall r>0. 
\eeq
Observe that 
$$
\baeq
B_{r}(x) \cap U &= \big(B_{r}(x) \cap (S^{c} \cup E)\big) \cup \big(B_{r}(x) \cap (U\backslash (S^{c} \cup E))\big) 
\cr
&\subset \big(B_{r}(x) \cap (S^{c} \cup E)\big) \cup \big(U\backslash (S^{c} \cup E)\big),
\eaeq
$$
so then  by   \eqref{gepsEq}, \eqref{gvemaxEq}, \eqref{malphaEq}, and Claim \ref{epsDensClaim},
$$
\baeq
\nu_{\ve}(B_{r}(x)\cap U) & \leq  
\nu_{\ve}\big(B_{r}(x) \cap (S^{c} \cup E)\big) 
+ 
\nu_{\ve}\big(U\backslash (S^{c} \cup E)\big) \\ 
& \leq  \int_{B_{r}(x)\cap (S^{c} \cup E)} g_{\ve} + C m\big(U\backslash (S^{c} \cup E)\big) \\
& \leq  \int_{B_{r}(x)\cap S^{c}} \overline{g}_{\ve} + C\alpha,
\eaeq
$$
where in the last step we used the fact that $E$ has Lebesgue measure zero. 
Now by Lemma \ref{legendreLemma}, $\overline{g}_{\ve}\rightarrow g$ a.e. on $S^{c}$, so by bounded convergence (since the $\overline{g}_{\varepsilon}$ are uniformly bounded
by definition \eqref{gbarEpsDef} by the same bound as in  \eqref{gvemaxEq}) the last expression is convergent and
\beq
\liminf_{\ve\ra 0} \nu_{\ve}(B_{r}(x)\cap U) 
\leq  \int_{B_{r}(x)\cap S^{c}} g + C\alpha \label{liminfIneq}.
\eeq
Then by \eqref{openIneq}, \eqref{weakConvIneq}, and \eqref{liminfIneq}, we have that 
\beq
\tilde{\nu}(B_{r}(x)) \leq \int_{B_{r}(x)\cap S^{c}} g + C\alpha \leq \int_{B_{r}(x)} g + C\alpha \nonumber
\eeq
for all $\alpha >0$, i.e.,
\beq
\tilde{\nu}(B_{r}(x)) \leq \int_{B_{r}(x)} g. \label{ballIneq}
\eeq
Then by \eqref{rnDeriv}, \eqref{ballIneq} and continuity of $g $,
\beq
\tilde{g}(x)  \leq \liminf_{r\rightarrow0}\frac{1}{\mathrm{vol}(B_{r}(x))} \int_{B_{r}(x)} g = g(x), \nonumber
\eeq
for a.e. $x$, concluding the proof of Lemma \ref{ggtildeLemma}. 
\epf

\subsection{Concluding the proof via the stability of optimal transport}
 \label{FinalSubSec}
 
 We are at last in a position to complete the proof of the main theorem. As explained in \S \ref{strategySubSec} it remains only to establish Lemma  \ref{FinalLemma},
whose proof hinges on two claims. The proof of these claims will follow the proof of the lemma.

Recall that $\nabla\vp$ is the unique optimal transport map from $\mu$ to $\nu$
and $\vp(0)=0$.

\begin{claim}
\label{probConvClaim}
 Fix $\delta>0$. As $\ve$ tends to zero,
$\nabla\phi_{\varepsilon}$ converges to $\nabla\vp$ in probability with respect to 
$\mu\vert_{\Omega_{\delta}}/\mu(\Omega_{\delta})$.
\end{claim}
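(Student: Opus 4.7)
The plan is to realize $\nabla\phi_\ve$ as the Brenier map of an optimal transport problem whose source and target measures converge weakly, and then invoke a stability theorem for optimal transport.

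First, I would observe that $\phi_\ve$ is smooth and convex (convolution with a nonnegative kernel preserves convexity), and by Lemma \ref{kLimLemma} it is in fact strongly convex on compact subsets of $\Omega_\ve$. As in the proof of Claim \ref{geClaim}, this implies that $\nabla\phi_\ve$ is injective on $\Omega_\ve$. Writing $\mu_\ve := \mu\vert_{\Omega_\ve}/\mu(\Omega_\ve)$ and $\tilde\nu_\ve := \nu_\ve/\mu(\Omega_\ve)$, we have by construction that $(\nabla\phi_\ve)_\#\mu_\ve = \tilde\nu_\ve$, and by Brenier's theorem \cite{Brenier,VillaniOldNew} the map $\nabla\phi_\ve$ is the (unique) optimal transport map from $\mu_\ve$ to $\tilde\nu_\ve$ for quadratic cost.

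Second, I would verify weak convergence of both families of probability measures. For the source, $\mu_\ve \rightharpoonup \bar\mu := \mu/\mu(\Omega)$ as $\ve \to 0$, which is immediate from $\Omega_\ve \nearrow \Omega$ together with $\mu(\Omega_\ve) \to \mu(\Omega)$. For the target, Proposition \ref{measProp} combined with Corollary \ref{ggtildeCor} yields $\tilde\nu_\ve \rightharpoonup \bar\nu := \nu/\nu(\Lambda)$. By the mass-balance condition \eqref{massEq}, $\mu(\Omega)=\nu(\Lambda)$, so the map $\nabla\vp$ is still the Brenier map from $\bar\mu$ to $\bar\nu$.

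Third, I would invoke a stability result for optimal transport maps (for instance \cite[Corollary 5.23]{VillaniOldNew}). The hypotheses needed are the weak convergence of $\mu_\ve$ and $\tilde\nu_\ve$ just established, the absolute continuity of the limit source $\bar\mu$ (which holds since $f$ is bounded away from zero and infinity), and uniform tightness of the target measures (which is automatic since all $\tilde\nu_\ve$ are supported in the bounded set $\overline\Lambda$). The theorem then gives $\nabla\phi_\ve \to \nabla\vp$ in $\bar\mu$-probability on $\Omega$. Fixing $\delta>0$, the probability measure $\mu\vert_{\Omega_\delta}/\mu(\Omega_\delta)$ is absolutely continuous with respect to $\bar\mu$ with bounded Radon--Nikodym derivative, so convergence in $\bar\mu$-probability on $\Omega$ implies convergence in $\mu\vert_{\Omega_\delta}/\mu(\Omega_\delta)$-probability, as desired.

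The main obstacle I expect is verifying the precise hypotheses of the stability theorem in the form that is most convenient, especially because $\nabla\phi_\ve$ is defined only on $\Omega_\ve$ rather than on all of $\Omega$; this can be handled either by extending $\nabla\phi_\ve$ arbitrarily outside $\Omega_\ve$ (the error contribution is negligible in $\bar\mu$-measure since $\mu(\Omega\setminus\Omega_\ve)\to 0$), or by first passing to a fixed subdomain $\Omega_\delta$ and working on it from the start.
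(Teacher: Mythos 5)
Your overall strategy coincides with the paper's: realize $\nabla\phi_\ve$ as the Brenier map from $\mu|_{\Omega_\ve}/\mu(\Omega_\ve)$ to $\nu_\ve/\mu(\Omega_\ve)$, use Proposition \ref{measProp} for weak convergence of the targets, invoke a stability theorem, and then pass to $\mu|_{\Omega_\delta}$. However, there is a real gap in step three. Villani's \cite[Corollary 5.23]{VillaniOldNew} assumes a \emph{fixed} source measure $\alpha$; you correctly note that the source measures $\mu_\ve = \mu|_{\Omega_\ve}/\mu(\Omega_\ve)$ vary with $\ve$, but the result you cite simply does not cover that situation. The paper handles this by stating and proving its own Proposition \ref{StabilityThm}, a variant in which \emph{both} the source measures $\alpha_j$ and the target measures $\beta_j$ converge weakly, under the additional domination hypothesis $\alpha_j \leq C\alpha$ (verified here), with the conclusion stated in $\alpha_j$-probability rather than $\alpha$-probability. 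That last point matters: $T_j = \nabla\phi_{\ve_j}$ is only defined on $\Omega_{\ve_j}$, and a conclusion in $\alpha_j$-probability sidesteps having to extend $T_j$ to all of $\Omega$.

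Your two proposed workarounds at the end do not close this gap. Extending $\nabla\phi_\ve$ arbitrarily outside $\Omega_\ve$ destroys the property of being the gradient of a convex function, so the extended map is no longer an optimal transport map from $\bar\mu$ to anything, and the stability theorem no longer applies to it. Working on a fixed $\Omega_\delta$ from the start would require knowing the weak limit of $(\nabla\phi_\ve)_\# \mu|_{\Omega_\delta}$, but Proposition \ref{measProp} only controls $\nu_\ve = (\nabla\phi_\ve)_\# \mu|_{\Omega_\ve}$; the difference $(\nabla\phi_\ve)_\# \mu|_{\Omega_\ve\setminus\Omega_\delta}$ carries mass bounded away from zero, so the restricted push-forwards do not converge to anything you control without circular reasoning. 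The correct fix is the one the paper adopts: prove the tailored stability lemma with varying sources and a domination condition, and apply it with $\alpha_j := \mu(\Omega_{\ve(j)})^{-1}\mu|_{\Omega_{\ve(j)}}$, $T_j := \nabla\phi_{\ve(j)}$, $\alpha := \mu/\mu(\Omega)$, $\beta := \nu/\mu(\Omega)$; the reduction from $\mu|_{\Omega_\ve}$-probability to $\mu|_{\Omega_\delta}$-probability (for fixed $\delta$, once $\ve$ is small enough that $\Omega_\delta \subset \Omega_\ve$) then goes through exactly as in your last sentence.
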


\begin{claim}
\label{aeConvClaim}
As $\ve$ tends to zero,
$\nabla\phi_{\varepsilon}$ converges to $\nabla\phi$ a.e. on $\Omega$.
\end{claim}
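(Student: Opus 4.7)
The plan is to prove pointwise convergence at every $x \in \Omega$ at which $\phi$ is differentiable; since $\phi$ is convex (hence differentiable almost everywhere) this will give convergence a.e.\ on $\Omega$.

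First I would fix $x \in \Omega$ where $\nabla\phi(x)$ exists and choose $\varepsilon$ small enough that $x \in \Omega_\varepsilon$ and $B_\varepsilon(x) \subset D$, so that $\nabla\phi_\varepsilon(x)$ is well-defined. By Claim~\ref{phiConvClaim} we have the identity $\nabla\phi_\varepsilon = (\nabla\phi) \star \xi_\varepsilon$ at $x$, so
$$
\nabla\phi_\varepsilon(x) - \nabla\phi(x) = \int \bigl(\nabla\phi(y) - \nabla\phi(x)\bigr)\,\xi_\varepsilon(x-y)\,dy,
$$
with the integrand well-defined almost everywhere on $B_\varepsilon(x)$.

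Next, I would invoke the continuity of the subdifferential of a convex function at a point of differentiability (\cite[Corollary~24.5.1]{Rock}, already cited in the paper): for every $\beta>0$ there exists $\delta>0$ such that
$$
\partial\phi(y) \subset B_\beta\bigl(\nabla\phi(x)\bigr), \qquad \forall\, y \in B_\delta(x).
$$
Since $\nabla\phi(y) \in \partial\phi(y)$ wherever it exists, this gives $|\nabla\phi(y) - \nabla\phi(x)| < \beta$ for a.e.\ $y \in B_\delta(x)$. For any $\varepsilon < \delta$, taking absolute values in the displayed identity and using $\int \xi_\varepsilon = 1$ yields $|\nabla\phi_\varepsilon(x) - \nabla\phi(x)| \le \beta$. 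Since $\beta>0$ was arbitrary, $\nabla\phi_\varepsilon(x) \to \nabla\phi(x)$, which is the claim.

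There is no serious obstacle here: the only subtlety is that $\nabla\phi$ is defined only almost everywhere, but this is harmless because $\xi_\varepsilon$ is absolutely continuous with respect to Lebesgue measure, so the mollification is unaffected by null sets and the pointwise bound on $\partial\phi$ transfers to a pointwise (in $y$) bound on $\nabla\phi$ almost everywhere. All of the convex-analytic machinery needed has already been deployed earlier in the paper, so the proof is essentially a one-line mollifier estimate combined with Corollary~24.5.1 of \cite{Rock}.
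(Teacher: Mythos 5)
Your proof is correct, and it takes a route noticeably different from the paper's. The paper's argument first observes that $\phi_\varepsilon\to\phi$ pointwise (as convex functions) and then invokes semi-continuity of the subdifferential map under convergence of convex functions, \cite[Theorem 24.5]{Rock} (a "low-brow version" of Theorem~\ref{BWThm}), to conclude $\partial\phi_\varepsilon(x)\subset\partial\phi(x)+B_\alpha(0)$ for $\varepsilon$ small. Your argument instead exploits the specific mollifier structure: the commutation $\nabla\phi_\varepsilon=(\nabla\phi)\star\xi_\varepsilon$ from Claim~\ref{phiConvClaim} lets you write $\nabla\phi_\varepsilon(x)-\nabla\phi(x)$ as an average of $\nabla\phi(y)-\nabla\phi(x)$ over $B_\varepsilon(x)$, which you then bound using the continuity of the subdifferential of the \emph{fixed} convex function $\phi$ at a point of differentiability, \cite[Corollary 24.5.1]{Rock}. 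Both are clean; yours is slightly more elementary and self-contained (it only needs a local estimate on $\partial\phi$, no machinery about variational convergence of a family of convex functions), while the paper's has the virtue of applying verbatim to any family of convex functions converging pointwise to $\phi$, not only to mollifications, which makes it more natural given that Theorem~\ref{BWThm} is already in use throughout the argument.
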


\begin{proof}[Proof of Lemma  \ref{FinalLemma}]
Let $\beta>0$.
A consequence of Claim  \ref{probConvClaim} is that
there exists a sequence $\ve_{j}\ra0$ such that 
$\nabla\phi_{\varepsilon_j}
\rightarrow \nabla\vp$
$\mu$-almost everywhere on $\Omega_{\beta}$, 
hence a.e. on $\Omega_{\beta}$ (because $f$ is bounded away from zero). 
But, by  Claim  \ref{aeConvClaim}, 
$\nabla\phi_{\varepsilon_j}\rightarrow\nabla\phi$
a.e. on $\Omega_{\beta}$. Hence, $\nabla\phi=\nabla\vp$
on $\Omega_{\beta}$. 
Since $\bigcup_{\beta>0}\Omega_{\beta}=\Omega$,
we have that $T=\nabla\phi$ on $\Omega$, i.e., $\nabla\vp=\nabla\phi$
a.e. Since $\vp,\phi\in C^{0,1}(\overline{\Omega})$ 
both are absolutely continuous and since $\phi(0)=0=\vp(0)$, we have that $\phi=\vp$ on $\overline{\O}$. 
\end{proof}

\bpf[Proof of Claim \ref{probConvClaim}] 
The stability theorem for optimal transport maps  
states that whenever the push-forward of a given probability measure 
$\alpha$ under a sequence of optimal transport maps $\{T_j\}_{j\in\NN}$
converges weakly to $\beta$, i.e.,
$$
(T_j)_{\#}\alpha\ra \beta \h{\ \ weakly as\  $j\ra\infty$},
$$
then $T_j$ converges in probability to the unique optimal transport
map pushing-forward $\alpha$ to $\beta$, assuming such a unique map exists
\cite[Corollary 5.23]{VillaniOldNew}.   
We need a slight extension of this result where instead of a fixed measure $\alpha $ we have a sequence of measures $\alpha_j$
converging weakly to $\alpha $, and
$$
(T_j)_{\#}\alpha_j\ra \beta \h{\ \ weakly as\  $j\ra\infty$}.
$$
The result we need is stated in Proposition \ref{StabilityThm} below. Its proof is given  in \S\ref{StabilityThmSubSec}.

Now, Proposition \ref{StabilityThm} may be 
applied to
$$
\alpha_j:=\mu(\Omega_{\varepsilon(j)})^{-1}\mu_{\varepsilon(j)},
\q
T_j:=\nabla\phi_{\varepsilon(j)},
\q
\alpha:=\mu(\Omega)^{-1}\mu,
\q
\beta:=\mu(\Omega)^{-1}\nu,
$$
where $\{\epsilon (j)\}_{j\in\NN}$ is any sequence 
of positive numbers converging to $0$. 
Indeed, Brenier's theorem \cite{Brenier} gives that $T_j$ is an optimal
transport map pushing-forward 
$\alpha_j$
to $\mu(\Omega_{\varepsilon(j)})^{-1}\nu_{\varepsilon(j)}$
and these latter measures converge weakly to $\beta $ by Proposition \ref{measProp},
while $\alpha_j$ evidently weakly converges to $\alpha $.
Thus,
\beq
\label{quasiProbConv}
\lim_{\ve\ra0}\mu\vert_{\Omega_{\varepsilon}}
\Big(
\big\{ 
x\in\Omega\,:\, 
d\big(
\nabla\phi_{\varepsilon}(x),\nabla\vp(x))\geq\gamma
\big)
\big\} 
\Big)\rightarrow0,
\q
\forall \gamma>0. 
\eeq
Let $\delta,\gamma>0$.
Then for all $\ve$ sufficiently close to zero, 
$\Omega_{\delta}\subset\Omega_{\varepsilon}\subset\Omega$,
so 
$$
\baeq
\mu\vert_{\Omega_{\delta}}\big(\!\left\{ x\in\Omega_{\delta}:d\left(\nabla\phi_{\varepsilon}(x),\nabla\vp(x)\right)\geq\gamma\right\}\!\big) 
& =  
\mu\big(\Omega_{\delta}\cap\left\{ x\in\Omega_{\delta}:d\left(\nabla\phi_{\varepsilon}(x),\nabla\vp(x)\right)\geq\gamma\right\}\!\big)
\\
 & \leq  
\mu\big(\Omega_{\varepsilon}\cap\left\{ x\in\Omega:d\left(\nabla\phi_{\varepsilon}(x),\nabla\vp(x)\right)\geq\gamma
\right\}\!\big)
\\
 & =  
\mu\vert_{\Omega_{\varepsilon}}\big(\!\left\{ x\in\Omega:d\left(\nabla\phi_{\varepsilon}(x),\nabla\vp(x)\right)\geq\gamma
\right\}\!\big),
\eaeq
$$
 and the last expression approaches zero as $\ve\ra 0$ 
by \eqref{quasiProbConv}, concluding the proof of Claim \ref{probConvClaim}.
 \epf

\bpf[Proof of Claim \ref{aeConvClaim}] 
Because $\phi$ is convex and continuous, 
$\phi_{\varepsilon}\rightarrow\phi$ pointwise. 
Semi-continuity of the subdifferential map
 \cite[Theorem 24.5]{Rock} gives that for any $x\in\Omega$ and $\alpha>0$,
$$
\partial\phi_{\varepsilon}(x)\subset\partial\phi(x)+B_{\alpha}(0),
\q \h{for all $\ve$ sufficiently small}
$$
(this is a low-brow version of Theorem \ref{BWThm}). 
Thus at a point $x$ such that $\partial\phi$ is a singleton, 
$$
\vert
\nabla\phi(x)-\nabla\phi_{\varepsilon}(x)
\vert<\alpha
$$
for all $\ve$ sufficiently small, 
i.e., $\nabla\phi_{\varepsilon}(x)\rightarrow\nabla\phi(x)$.
Since $\partial\phi$ is a singleton almost everywhere,
the claim follows.
\epf

\subsection{A stability result}
\label{StabSubSec}
 \label{StabilityThmSubSec} 

Let $\Pi(\alpha,\beta) $  denote the set of probability measures  
on $X\times Y$ whose marginals are
$\alpha$ on $X$ and $\beta $ on $Y$, i.e.,
for every $\mu\in \Pi(\alpha,\beta)$,
$(\pi_1)_{\#}\mu=\alpha, (\pi_2)_{\#}\mu=\beta$, where $\pi_1:X\times Y\ra X,
\pi_2:X\times Y\ra Y$ are the natural projections. 
Elements of $\Pi(\alpha,\beta) $ are called transference plans.
 Given a function $c:X\times Y\rightarrow\mathbb{R}$, define the cost associated to $\mu\in \Pi(\alpha,\beta)$ by 
$$
\int_ {X\times Y}c\,d\mu.
$$ 
A transference plan is called optimal if it realizes the infimum of the cost over $\Pi(\alpha,\beta) $.
Optimal transference plans
satisfy the following standard stability result  \cite[Theorem 5.20]{VillaniOldNew}.

\begin{theorem}
\label{StabilityThm0}
Let $X$ and $Y$ be open subsets
of $\mathbb{R}^{n}$, and let $c:X\times Y\rightarrow\mathbb{R}$
be a continuous cost function with $\inf c>-\infty$.
Let $\alpha_{j}$
and $\beta_{j}$ be sequences of probability measures on $X$ and $Y$,
respectively, such that 
$\alpha_j$ converges weakly to $\alpha$ and
$\beta_{j}$ converges weakly to $\beta$. 
For each $j$, let $\pi_{j}$
be an optimal transference plan between $\alpha_{j}$ and $\beta_{j}$.
Assume that
\[
\int c\, d\pi_{j}<\infty, \;\forall j,\q \liminf_{j}\int c\, d\pi_{j}<\infty.
\]
 Then, there exists a subsequence $\{j_l\}_{l\in\NN}$ such that 
 $\pi_{j_l}$ converges weakly to an  optimal transference plan.
\end{theorem}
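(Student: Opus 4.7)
The plan is to follow the standard pattern for stability of optimal transport: establish tightness of the sequence of plans, extract a weakly convergent subsequence, verify the limit is a transference plan between $\alpha$ and $\beta$, and then prove the limit is optimal.

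First I would establish tightness. Since $\alpha_j\rightharpoonup\alpha$ and $\beta_j\rightharpoonup\beta$ weakly, Prokhorov's theorem implies that the sequences $\{\alpha_j\}$ and $\{\beta_j\}$ are each tight. Given $\delta>0$, pick compacts $K_X\subset X$ and $K_Y\subset Y$ with $\alpha_j(X\setminus K_X)<\delta/2$ and $\beta_j(Y\setminus K_Y)<\delta/2$ for all $j$. Since $(\pi_1)_{\#}\pi_j=\alpha_j$ and $(\pi_2)_{\#}\pi_j=\beta_j$, the complement of $K_X\times K_Y$ has $\pi_j$-mass at most $\delta$ uniformly in $j$, so $\{\pi_j\}$ is tight. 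Prokhorov then yields a subsequence $\pi_{j_l}\rightharpoonup\pi$. Because the projections $\pi_1,\pi_2$ are continuous, pushforward along them is continuous with respect to weak convergence, so $(\pi_1)_{\#}\pi=\lim_l\alpha_{j_l}=\alpha$ and $(\pi_2)_{\#}\pi=\beta$, i.e., $\pi\in\Pi(\alpha,\beta)$.

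Next I would note that the cost functional is lower semicontinuous along the limit. Because $c$ is continuous and bounded below, $c-\inf c$ is non-negative and lower semicontinuous, so the Portmanteau theorem gives
$$
\int c\,d\pi\leq\liminf_l\int c\,d\pi_{j_l}<\infty,
$$
using the finite-$\liminf$ hypothesis. It then remains only to show $\int c\,d\pi\leq\int c\,d\pi'$ for every $\pi'\in\Pi(\alpha,\beta)$ with $\int c\,d\pi'<\infty$.

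The optimality step is the main obstacle. My preferred route is via $c$-cyclical monotonicity: each optimal $\pi_j$ is concentrated on a $c$-cyclically monotone subset $\Gamma_j\subset X\times Y$ by Rockafellar--R\"uschendorf, and this property is stable under weak limits when $c$ is continuous (any cyclic inequality involving finitely many points in $\operatorname{supp}\pi$ is approximated by points in $\operatorname{supp}\pi_{j_l}$ using the Portmanteau characterization of support). Thus $\operatorname{supp}\pi$ is $c$-cyclically monotone, and under our hypotheses ($c$ real-valued continuous, $\inf c>-\infty$, $\int c\,d\pi<\infty$) this suffices for optimality. Alternatively, one can avoid cyclical monotonicity by a glueing argument: given any competitor $\pi'\in\Pi(\alpha,\beta)$ with finite cost, disintegrate and glue to construct $\pi'_l\in\Pi(\alpha_{j_l},\beta_{j_l})$ with $\pi'_l\rightharpoonup\pi'$ and $\int c\,d\pi'_l\to\int c\,d\pi'$; optimality of $\pi_{j_l}$ then gives $\int c\,d\pi_{j_l}\leq\int c\,d\pi'_l$, and passing to $\liminf$ combined with lower semicontinuity yields $\int c\,d\pi\leq\int c\,d\pi'$. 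Either route completes the proof.
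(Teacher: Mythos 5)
The paper does not prove this statement; it is cited directly as \cite[Theorem 5.20]{VillaniOldNew}, and your proposal reproduces the standard argument given there (tightness and extraction via Prokhorov, identification of the limit's marginals under the continuous projections, lower semicontinuity of the cost integral, and optimality via stability of $c$-cyclical monotonicity under weak limits). The argument is essentially correct; two points are worth spelling out if this were to be written in full: the weakly convergent subsequence should be extracted from a further subsequence along which $\int c\,d\pi_j$ actually realizes the finite $\liminf$, so that the limit plan provably has finite cost, and the inference ``$c$-cyclically monotone support plus finite cost implies optimal'' for continuous real-valued $c$ is itself a nontrivial theorem (Pratelli; cf.\ Villani, Theorem 5.10) rather than a formality, though it does hold under the stated hypotheses.
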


The following result, and its proof, are a slight modification of  \cite[Corollary 5.23]{VillaniOldNew}.

\begin{prop}
\label{StabilityThm}
Let $X$ and $Y$ be open subsets
of $\mathbb{R}^{n}$, and let $c:X\times Y\rightarrow\mathbb{R}$
be a continuous cost function with $\inf c>-\infty$. 
Let $\alpha_{j}$
and $\beta_{j}$ be sequences of probability measures on $X$ and $Y$,
respectively, such that $\alpha_{j} \leq C \alpha$ for all $j$, and
$\alpha_j$ converges weakly to $\alpha$ and
$\beta_{j}$ converges weakly to $\beta$. For each $j$, let $\pi_{j}$
be an optimal transference plan between $\alpha_{j}$ and $\beta_{j}$.
Assume that
\[
\int c\, d\pi_{j}<\infty, \;\forall j,\q \liminf_{j}\int c\, d\pi_{j}<\infty.
\]
Suppose that there exist measurable
maps $T_{j},T:X\rightarrow Y$ such that 
$\pi_{j}=\left(\id\otimes T_{j}\right)_{\#}\alpha_{j}$
and $\pi=\left(\id\otimes T\right)_{\#}\alpha$. Assume
additionally that $\pi$ is the unique 
 optimal transference plan in $\Pi(\alpha,\beta)$. 
 Then,
\[
\lim_{j\rightarrow\infty}\alpha_{j}\left[\left\{ x\in X\,:\,\vert T_{j}(x)-T(x)\vert>\varepsilon\right\} \right]=0,
\q
\h{for all $\varepsilon>0$}.
\]
\end{prop}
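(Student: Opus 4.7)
The plan is to adapt Villani's proof of Corollary 5.23 in \cite{VillaniOldNew}, with the key modification being to exploit the domination hypothesis $\alpha_j \le C\alpha$ to control errors coming from the fact that the source measures themselves are varying with $j$. I would proceed by contradiction/subsequence extraction: suppose for a contradiction that the conclusion fails, so there is some $\varepsilon>0$ and $\delta_0>0$ and a subsequence (still denoted $j$) along which $\alpha_j(\{|T_j - T|>\varepsilon\}) \ge \delta_0$. I would then apply Theorem \ref{StabilityThm0} to extract a further subsequence along which $\pi_j$ converges weakly to some optimal transference plan $\pi_\infty$. Since $(\pi_1)_\#\pi_j = \alpha_j \rightharpoonup \alpha$ and $(\pi_2)_\# \pi_j = \beta_j \rightharpoonup \beta$, the marginals of $\pi_\infty$ are $\alpha$ and $\beta$. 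By the uniqueness hypothesis, $\pi_\infty = \pi = (\id\otimes T)_\#\alpha$.

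The second step is to use Lusin's theorem: since $T$ is $\alpha$-measurable and $\alpha$ is Radon, for every $\eta>0$ there is a compact set $K_\eta \subset X$ with $\alpha(X\setminus K_\eta)<\eta$ such that $T|_{K_\eta}$ is continuous. Define the closed set
\[
A_{\varepsilon,\eta} := \{(x,y)\in K_\eta \times Y : |y-T(x)|\ge \varepsilon\}.
\]
By the portmanteau theorem applied along the weakly convergent subsequence,
\[
\limsup_{j\to\infty} \pi_j(A_{\varepsilon,\eta}) \le \pi(A_{\varepsilon,\eta}) = \alpha\bigl(\{x\in K_\eta : |T(x)-T(x)|\ge\varepsilon\}\bigr) = 0,
\]
since on $K_\eta$ the $\pi$-almost-sure relation $y=T(x)$ is exact. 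Thus $\pi_j(A_{\varepsilon,\eta}) \to 0$ as $j\to\infty$, for each fixed $\eta$.

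The third step, where the new hypothesis $\alpha_j \le C\alpha$ plays its decisive role, is to control the complementary piece where $x\notin K_\eta$. Since $\pi_j = (\id\otimes T_j)_\#\alpha_j$, the $X$-marginal of $\pi_j$ is $\alpha_j$, so
\[
\pi_j\bigl(\{(x,y) : x\notin K_\eta\}\bigr) = \alpha_j(X\setminus K_\eta) \le C\,\alpha(X\setminus K_\eta) < C\eta.
\]
Combining, for every $\eta>0$,
\[
\limsup_{j\to\infty}\,\alpha_j\bigl(\{|T_j - T|>\varepsilon\}\bigr) = \limsup_{j\to\infty}\,\pi_j\bigl(\{|y-T(x)|>\varepsilon\}\bigr) \le 0 + C\eta.
\]
Letting $\eta\to 0$ gives zero, contradicting $\alpha_j(\{|T_j-T|>\varepsilon\})\ge\delta_0$ along the subsequence, and completing the proof.

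I expect the main obstacle to be bookkeeping around the fact that $T$ is only measurable (not continuous) and that the measures $\alpha_j$ vary: without continuity of $T$ one cannot directly invoke portmanteau on the set $\{|y-T(x)|\ge\varepsilon\}$, and without the domination $\alpha_j\le C\alpha$ the leftover mass on $X\setminus K_\eta$ cannot be made uniformly small in $j$. Lusin's theorem resolves the first issue and the hypothesis $\alpha_j\le C\alpha$ resolves the second; the rest is the standard subsequence-extraction argument coupled with Villani's stability theorem.
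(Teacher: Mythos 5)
Your proof is correct and follows essentially the same approach as the paper's: Lusin's theorem applied to $T$ to obtain a compact set on which $T$ is continuous, the portmanteau theorem applied to the closed set $\{(x,y)\in K\times Y : |y-T(x)|\ge\varepsilon\}$, and the domination $\alpha_j\le C\alpha$ to control the mass on $X\setminus K$ uniformly in $j$. The only cosmetic difference is that you argue by contradiction with subsequence extraction, whereas the paper directly observes (via uniqueness of $\pi$ and the sub-subsequence principle) that the whole sequence $\pi_j$ converges weakly to $\pi$ and then estimates.
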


\begin{proof}
First, note that by Theorem \ref{StabilityThm0} and the uniqueness of $\pi$, we have that $\pi_{j}\rightarrow \pi$ weakly (and there is no need to take a subsequence).
Now, let $\varepsilon>0$ and $\delta>0$. By Lusin's theorem, there exists a compact set $K\subset X$ with $\alpha(X\backslash K)< C^{-1}\delta$ (so $\alpha_{j}(X\backslash K)< \delta$) such that the restriction of $T$ to $K$ is continuous. Then let 
\[
A_{\varepsilon} = \left\{ (x,y) \in K\times Y\,:\,\vert T(x)-y\vert \geq \varepsilon \right\}.
\]
By the continuity of $T$ on $K$, $A_{\varepsilon}$ is closed in $K\times Y$, hence also in $X\times Y$. Since $\pi = \left(\id\otimes T\right)_{\#}\alpha$, meaning in particular that $\pi$ is concentrated on the graph of $T$, we have that $\pi (A_\varepsilon)=0$. Then by weak convergence and the fact that $A_{\varepsilon}$ is closed,
$$
\baeq
0 = \pi(A_\varepsilon) & \geq  \limsup_{j\rightarrow\infty} \pi_{j} (A_{\varepsilon}) \\
& = 
\limsup_{j\rightarrow\infty} \pi_{j} \left( \left\{ (x,y) \in K\times Y\,:\,\vert T(x)-y\vert \geq \varepsilon \right\} \right) \\
& =  \limsup_{j\rightarrow\infty} \alpha_{j}  \left( \left\{ x \in K \,:\,\vert T(x)-T_{j}(x)\vert \geq \varepsilon \right\} \right) \\
& \geq   \limsup_{j\rightarrow\infty} \alpha_{j}  \left( \left\{ x \in X \,:\,\vert T(x)-T_{j}(x)\vert \geq \varepsilon \right\} \right) - \alpha_{j}(X\backslash K) \\
& \geq   \limsup_{j\rightarrow\infty} \alpha_{j}  \left( \left\{ x \in X \,:\,\vert T(x)-T_{j}(x)\vert \geq \varepsilon \right\} \right) - \delta, \\
\eaeq
$$
and the desired result follows by letting $\delta$ tend to zero.
\end{proof}

\section{Upgrading the convergence proof for the DMAOP}
\label{upgrade}

It is readily seen that the proof of Theorem \ref{MainThm} in the LDMAOP case holds with only slight modifications in the DMAOP case. Thus, to establish the DMAOP case of Theorem \ref{SecondMainThm}, it only remains to prove a stronger version of Proposition \ref{DMAOPProp}, i.e., a version that does not require regularity of the Brenier potential $\vp$ up to the boundary. In the following we only assume $\vp\in C^2(\O)$.

First, we make some updated definitions. By analogy with \eqref{FiDef} we redefine $F_i$ as follows:  
\beq
\label{FiDef2}
F_i\Big( \big\{\psi_j^{(k)},\eta_j^{(k)}\big\}_{j=1}^{N(k)} \Big)
:=
\max\bigg\{0,-(\det H_i)^{1/n}
+ 
\Big(
f\Big({\textstyle
\frac{ \sum_{j=0}^{n}x_{i_{j}}} {n+1} }\Big)
/
g\Big({\textstyle
\frac{ \sum_{j=0}^n \eta_{i_j} }{n+1} }\Big)
\Big)^{1/n}
\bigg\}.
\eeq
We redefine $F$ with respect to $F_i$ as in \eqref{FDef}:
\beq
\label{FDef2}
F\Big( \big\{\psi_j^{(k)},\eta_j^{(k)}\big\}_{j=1}^{N(k)} \Big)
:=
\sum_{i=1}^{M(k)} V_i \cdot F_i\Big( \big\{\psi_j^{(k)},\eta_j^{(k)}\big\}_{j=1}^{N(k)} \Big),
\eeq
so again $F_i$ is a per-simplex penalty, and $F$ is the objective function of the DMAOP.

To prove the strengthened version of Proposition \ref{DMAOPProp}, it will not suffice as it did before to simply plug the discrete data \eqref{vpkEq} associated to the Brenier potential $\vp$ into the DMAOP and hope that the corresponding cost goes to zero as $k\ra \infty$. The reason is that we no longer have that the Hessian of $\vp$ is bounded away from zero on $\overline{\O}$. Instead, we will define functions that are strongly convex on $\mathbb{R}^n$ and that agree with $\vp$ on subsets that exhaust $\overline{\O}$. Since the functions that we construct may not be differentiable, we will also need to mollify slightly before plugging the associated data into the DMAOP.

Let $U$ be open and compactly contained in $\O$. In turn, let $V$ be open such that $U \subset \subset V$ and $V\subset \subset \O$. Then there exists $\alpha>0$ such that $\nabla^2 \vp (x) \geq \alpha I$ for all $x\in \overline{V}$, and $\delta := \mathrm{dist}(\overline{U},V^c)$ is strictly positive.

Also, $\mathrm{dist}(\nabla\vp(x),\L^c)$ is continuous in $x$ over the compact set $\overline{U}$ and (since $\nabla\vp(\overline{U})\subset \L$) strictly positive. Hence the function attains a positive minimum $\gamma$ over $\overline{U}$, i.e., $\mathrm{dist}(\nabla\vp(x),\L^c) \geq \gamma > 0$ for all $x\in \overline{U}$.

Let $R = \mathrm{diam} (\O)$, and define 
\beq
\label{betaDefin}
\beta := \min \left\{\alpha,\frac{\alpha\delta^2}{R^2},\frac{\gamma}{2R} \right\}.
\eeq
For every point $y\in\overline{U}$, define a quadratic polynomial $Q_y$ on all of $\mathbb{R}^n$ by 
\beq
\label{Qdef}
Q_y (x) := \vp(y) + \langle\nabla\vp(y),x-y\rangle + \half\beta |x-y|^2.
\eeq

\begin{lemma}
\label{QLemma}
For any fixed $y\in \overline{U}$, $Q_y \leq \vp$ on $\overline{U}$ and $\nabla Q_y(\O_+) \subset \L$ for some open set $\O_+$ such that $\O\subset\subset\O_+$.
\end{lemma}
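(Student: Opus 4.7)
The plan is to handle the two claims of Lemma~\ref{QLemma} separately. For the inequality $Q_y\leq\vp$ on $\overline{U}$, I would fix $x\in\overline{U}$ and split into two cases according to whether $|x-y|\leq\delta$ or $|x-y|>\delta$.

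In the near case $|x-y|\leq\delta$, the segment from $y$ to $x$ stays inside $B_\delta(y)\subset V$ (by the very definition of $\delta$), so Taylor's theorem applied to $\vp\in C^2(V)$ gives
$$\vp(x)=\vp(y)+\langle\nabla\vp(y),x-y\rangle+\half\langle\nabla^2\vp(\xi)(x-y),x-y\rangle$$
for some $\xi$ on that segment. Since $\nabla^2\vp(\xi)\geq\alpha I\geq\beta I$, this yields $\vp(x)\geq Q_y(x)$. In the far case $|x-y|>\delta$, I would introduce the intermediate point $z=y+(\delta/|x-y|)(x-y)$, which lies at distance exactly $\delta$ from $y$ along the ray from $y$ through $x$. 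The Taylor argument applied on $[y,z]\subset\overline{V}$ gives
$$\vp(z)\geq\vp(y)+\langle\nabla\vp(y),z-y\rangle+\half\alpha\delta^2,$$
after which I would invoke the global convexity of the Brenier potential $\vp$ along the segment from $y$ through $x$: writing $z=(1-s)y+sx$ with $s=\delta/|x-y|\in(0,1)$, convexity gives $\vp(x)\geq\vp(y)+s^{-1}(\vp(z)-\vp(y))$, which upon substitution produces
$$\vp(x)\geq\vp(y)+\langle\nabla\vp(y),x-y\rangle+\half\alpha\delta|x-y|.$$
The linear-in-$|x-y|$ term dominates $\half\beta|x-y|^2$ thanks to the middle entry of the minimum defining $\beta$ in \eqref{betaDefin}: using $\delta\leq R$ and $|x-y|\leq R$,
$$\beta|x-y|^2\leq\frac{\alpha\delta^2}{R^2}|x-y|^2\leq\alpha\delta|x-y|,$$
so $\vp(x)\geq Q_y(x)$ in this case as well.

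For the gradient inclusion I would take $\O_+$ to be the open $R$-neighborhood of $\overline{\O}$, noting that the same $\O_+$ works uniformly in $y\in\overline{U}$ and that it contains $\overline{\O}$. By the triangle inequality, every $x\in\O_+$ satisfies $|x-y|<2R$, since $y\in\overline{U}\subset\overline{\O}$ and $\mathrm{diam}(\O)=R$. Since $\nabla Q_y(x)=\nabla\vp(y)+\beta(x-y)$, the third entry of the minimum \eqref{betaDefin} yields
$$|\nabla Q_y(x)-\nabla\vp(y)|=\beta|x-y|<\frac{\gamma}{2R}\cdot 2R=\gamma\leq\mathrm{dist}\bigl(\nabla\vp(y),\L^c\bigr),$$
so $\nabla Q_y(x)$ cannot lie in the closed set $\L^c$, i.e.,\ $\nabla Q_y(x)\in\L$.

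The main obstacle is the far case of the first claim: one must patch a purely local Hessian lower bound (available only on $\overline{V}$) with the \emph{global} convexity of the Brenier potential in order to control $\vp-Q_y$ on all of $\overline{U}$. The precise calibration $\beta\leq\alpha\delta^2/R^2$ is exactly what converts the resulting linear-in-$|x-y|$ lower bound into the quadratic bound $\half\beta|x-y|^2$ required by $Q_y$, while the independent calibration $\beta\leq\gamma/(2R)$ is what makes the gradient inclusion work over the slightly enlarged domain $\O_+$.
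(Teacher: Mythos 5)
Your proof is correct and follows essentially the same strategy as the paper: split into a near case $|x-y|\le\delta$ handled by the Hessian lower bound on $\overline V$, and a far case handled by convexity through an intermediate point on $\partial B_\delta(y)$, with the three entries of $\beta$ in \eqref{betaDefin} playing exactly their intended roles. The only cosmetic differences are that you retain the convexity factor $s=\delta/|x-y|$ to get the sharper linear lower bound $\tfrac12\alpha\delta|x-y|$ rather than the paper's $\tfrac12\alpha\delta^2$, and you take $\O_+$ to be the $R$-neighborhood of $\overline\O$ rather than a set of diameter $\le\tfrac32 R$; both variants close the argument.
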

\begin{proof}
To simplify the proof of this claim, we fix $y\in\overline{U}$ and consider 
\beq
\label{Qineq}
\psi (x) := \vp(x) - \vp(y) - \langle\nabla\vp(y),x-y\rangle.
\eeq
For the first statement it suffices to show that 
\beq
\lb{sufficesEq}
\psi(x) \geq \half\beta|x-y|^2, \q \h{for all $x\in\overline{U}$.}
\eeq   
Note that $\psi$ is convex on $\mathbb{R}^n$ (since the Brenier potential may be taken to be defined on $\mathbb{R}^n$) with $\psi(0)=0$, $\nabla\psi(0)=0$, and $\nabla^2 \psi (x) \geq \alpha I$ for all $x\in \overline{V}$. Evidently $\psi \geq 0$ everywhere.
By integration along rays, for $x\in \overline{B_\delta (y)} \subset \overline{V}$, 
\beq
\label{Qineq}
\psi(x) \geq \half\alpha |x-y|^2
\eeq
In particular, since $\alpha \geq \beta$,  the inequality \eqref{sufficesEq} follows 
but only for $x\in\overline{B_\delta (y)}$.

Now let $x\in \overline{U} \setminus \overline{B_\delta(y)}$. 
In order to deal with the possible non-convexity of the domain $U$,
let $z\in \partial B_\delta(y)$ (so $|x-z|=\delta$) such that $x,y,z$ are collinear. From \eqref{Qineq} we have that $\psi(z) \geq \half\alpha\delta^2$. From this fact, together with \eqref{betaDefin}, we obtain
$$
\half\beta|x-y|^2 \leq \half\beta R^2 \leq \half\alpha \delta^2 \leq \psi(z).
$$
Now by our choice of $z$, we can write $z=tx+(1-t)y$ for some $t\in[0,1]$. Then by convexity (and the non-negativity of $\psi$), 
$$
\psi(z) \leq t\psi(x) + (1-t)\psi(y) = t\psi(x) \leq \psi(x).
$$
Thus we have established that $\half\beta|x-y|^2 \leq \psi(x)$, and the first statement of the lemma is proved.

Take $\O_+$ to be an open set with $\mathrm{diam}(\O_+) \leq \frac{3}{2} R$ such that $\O \subset\subset \O_+$.
It remains to show that $\nabla Q_y(\O_+) \subset \L$. Let $x\in \O_+$, and note that 
$$
\nabla Q_y(x) = \nabla \vp(y) + \beta(x-y),
$$
so (recalling \eqref{betaDefin})
$$
\vert \nabla Q_y(x) - \nabla \vp(y) \vert \leq \frac{3}{2}\beta R \leq \frac{3}{4} \gamma.
$$
But since $\mathrm{dist}(\nabla\vp(x),\L^c) \geq \gamma$, it follows that $\nabla Q_y(x)\in\L$. This completes the proof.
\end{proof}

In summary, we have shown that for any $U$ open and compactly contained in $\O$, there exists $\beta>0$ such that 
the quadratic polynomial $Q_y$ as defined in \eqref{Qdef} satisfies $Q_y \leq \vp$ on $\overline{U}$ and $\nabla Q_y(\O_+) \subset \L$ for all $y$, for some open $\O_+$ such that $\O\subset\subset\O_+$. (Note that $\beta$ does not depend on $y$.) Then define $\vp_U$ on all of $\mathbb{R}^n$ via 
\beq
\vp_U (x) := \sup_{y\in \overline{U}} Q_y (x).
\eeq
Evidently $\vp_U = \vp$ on $\overline{U}$ and $\nabla \vp_U (\O_+) \subset \L$
(see, e.g., \cite[Proposition 2.7] {RT}). Since the pointwise supremum of $\beta$-strongly convex functions is $\beta$-strongly convex, $\vp_U$ is $\beta$-strongly convex.

Although $\vp_U$ is not necessarily differentiable, we can substitute $\vp_U$ with a smooth approximation via the following lemma.

\begin{lemma}
Let $\ve>0$, and consider an open set $U\subset\subset\O$. There exists a smooth convex function $\tilde{\vp}:\mathbb{R}^n \ra \mathbb{R}$ with $\nabla^2 \tilde{\vp} \geq \beta I$, $\nabla \tilde{\vp} (\overline{\O}) \subset \L$, and $\Vert \tilde{\vp} - \vp \Vert_{C^{2}(\overline{U})} <\ve$.
\end{lemma}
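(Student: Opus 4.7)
The plan is to mollify the function $\vp_{U'}$ produced by the preceding construction, with $U'$ taken slightly larger than $U$, and verify that the resulting smooth function satisfies all three required properties. Concretely, I would first choose open sets with $U\subset\subset U'\subset\subset V'\subset\subset\O$ and rerun the construction of this section with $U'$ in place of $U$ and $V'$ in place of $V$. This produces $\vp_{U'}\colon\mathbb{R}^n\ra\mathbb{R}$ which is convex, agrees with $\vp$ on $\overline{U'}$ (hence on a full neighborhood of $\overline{U}$), is strongly convex with a positive constant, and whose gradient maps some open set $\O_+$ with $\O\subset\subset\O_+$ into $\L$. In fact, the proof of Lemma \ref{QLemma} actually gives the refined estimate $\mathrm{dist}(\nabla Q_y(x),\L^c)\geq \gamma/4$ for every $y\in\overline{U'}$ and $x\in\O_+$, so $\nabla\vp_{U'}$ takes values in the set $K:=\{z\in\L:\mathrm{dist}(z,\L^c)\geq\gamma/4\}$ at every point of differentiability in $\O_+$. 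Note that $K$ is a compact convex subset of $\L$, since $z\mapsto\mathrm{dist}(z,\L^c)$ is concave on the convex set $\L$.

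Next, I would set $\tilde\vp := \vp_{U'}\star \xi_\eta$ for a standard symmetric mollifier $\xi_\eta$ supported in $B_\eta(0)$, with $\eta>0$ to be taken sufficiently small. Smoothness of $\tilde\vp$ is immediate. For the Hessian lower bound I would decompose $\vp_{U'}=(\vp_{U'}-\frac{\beta}{2}|\cdot|^2)+\frac{\beta}{2}|\cdot|^2$: the first summand is convex by strong convexity of $\vp_{U'}$, so its convolution with $\xi_\eta$ is smooth and convex, while a direct computation using the symmetry of $\xi_\eta$ yields $\frac{\beta}{2}|\cdot|^2\star\xi_\eta=\frac{\beta}{2}|\cdot|^2+C(\eta)$ for a constant $C(\eta)$; adding gives $\nabla^2\tilde\vp\geq\beta I$. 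For the image condition, for $x\in\overline{\O}$ and $\eta<\mathrm{dist}(\overline{\O},\O_+^c)$, the gradient $\nabla\tilde\vp(x)=\int \nabla\vp_{U'}(x-y)\xi_\eta(y)\,dy$ is a probability average of values lying almost everywhere in the compact convex set $K$, hence belongs to $K\subset\L$. Finally, for the $C^2$ approximation I would choose $\eta<\mathrm{dist}(\overline{U},(U')^c)$ so that, for $x\in\overline{U}$, the convolution only sees $\vp_{U'}$ on $\overline{U'}$, where $\vp_{U'}=\vp$; hence $\tilde\vp=\vp\star\xi_\eta$ on $\overline{U}$, and since $\vp\in C^2$ on a neighborhood of $\overline{U'}$ in $\O$, standard mollification theory gives $\vp\star\xi_\eta\to\vp$ in $C^2(\overline{U})$ as $\eta\to 0$; pick $\eta$ small enough.

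The main technical subtlety is a slight mismatch between the strong-convexity constants: the $\beta$ of \eqref{betaDefin} corresponds to $U$ and $V$, while the construction for $U'$ and $V'$ naturally yields a possibly smaller constant $\beta'$, so the mollification is only $\beta'$-strongly convex. Because $\alpha$, $\delta$, and $\gamma$ in \eqref{betaDefin} depend continuously on the chosen open sets, this loss can be made arbitrarily small by taking $U'$ close to $U$ and $V'$ close to $V$; alternatively, one reads the lemma as asserting the existence of some positive lower bound on $\nabla^2\tilde\vp$, which is all that the subsequent proof of the DMAOP version of Proposition \ref{DMAOPProp} actually needs. Everything else — preservation of strong convexity under symmetric mollification, the averaging into the compact convex set $K\subset\L$, and the $C^2$ convergence of standard mollifications of $C^2$ functions — is routine.
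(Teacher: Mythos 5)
Your proposal is correct and follows essentially the same approach as the paper: build the strongly convex extension $\vp_V$ on an intermediate open set $V$ with $U\subset\subset V\subset\subset\O$, mollify, and use (a) the fact that $\vp_V=\vp$ near $\overline{U}$ to get the $C^2$ approximation, (b) preservation of strong convexity under mollification, and (c) averaging of gradients into $\L$ using the convexity of $\L$. Your two extra observations — that the proof of Lemma \ref{QLemma} actually gives the quantitative bound $\mathrm{dist}(\nabla Q_y(x),\L^c)\geq\gamma/4$, so gradients land in a compact convex $K\subset\L$ rather than merely in $\L$, and that the strong-convexity constant for the extension built from $V$ may be smaller than the $\beta$ of \eqref{betaDefin} — are legitimate refinements of points the paper glosses over, but they do not change the argument in any essential way; as you note, only the existence of some positive Hessian lower bound is actually used downstream.
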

\begin{proof}
Let $V$ be open such that $U\subset\subset V \subset\subset \O$. By the preceding arguments, we can take $\vp_V$ to be $\beta$-strongly convex and agreeing with $\vp$ on $V$ such that $\nabla \vp_V (\O_+) \subset \L$. Let $\xi_\delta$ denote, as before, a standard mollifier supported on $B_\delta (0)$. For $\delta$ sufficiently small, a $\delta$-neighborhood of $U$ is contained in $V$, so in fact, for small $\delta$, we have $\vp_V \star \xi_\delta = \vp \star \xi_\delta$ on $\overline{U}$. It follows that $\Vert \vp_V \star \xi_\delta - \vp \Vert_{C^{2}(\overline{U})} < \ve$ for small enough $\delta$. Furthermore, for $\delta$ sufficiently small, a $\delta$-neighborhood of $\overline{\Omega}$ is contained in $\O_+$, so by the convexity of $\L$, we have that $\nabla (\vp_V\star\xi_\delta) (\overline{\O}) \subset \L$. Noticing that mollification preserves $\beta$-strong convexity, the proof is completed by taking $\tilde{\vp} = \vp_V \star \xi_\delta$ for some $\delta$ small enough.
\end{proof}
\begin{remark}
One can avoid using the convexity of $\L$ in the proof of the preceding lemma by a more complicated argument. However, since we have assumed this fact elsewhere in this article, we make use of it here to keep the proof as simple as possible.
\end{remark}

As before, let $\Omega_\ve$ be as in Definition \ref{admissibleATDef}, and define $U_\ve := \O_\ve + B_{\ve /2}(0) \subset \O$. By the preceding lemma, we can let $\vp_\ve$ be smooth and convex such that $\nabla^2 \vp_\ve \geq \beta I$, $\nabla \vp_\ve (\overline{\O}) \subset \L$, and $\Vert \vp_\ve - \vp \Vert_{C^{2}(\overline{U_\ve})} <\ve$.
(Note that here $\vp_\ve$ is \textit{not} the same as $\vp \star \xi_\ve$.)

By analogy with \eqref{dkDef}, we consider the cost 
\begin{equation}
\begin{aligned}
\label{dkDef2}
d_\ve ^{(k)}:=F\Big( \big\{\vp_\ve (x_j^{(k)}),\nabla\vp_\ve (x_j^{(k)})\big\}_{j=1}^{N(k)} \Big)
\end{aligned}
\end{equation}
associated to the data
\begin{equation}
\begin{aligned}
\label{vpkEq2}
\big\{\vp_\ve (x_j^{(k)}),\nabla\vp_\ve (x_j^{(k)})\big\}_{j=1}^{N(k)}\in(\RR\times\RR^n)^{N(k)}.
\end{aligned}
\end{equation}
extracted from our modified Brenier potential $\vp_\ve$.

We now state and prove our improvement of Proposition \ref{DMAOPProp}.

\begin{prop}
\label{DMAOPProp2}
Let 
$
\big \{ \{S_{i}^{(k)}\}_{i=1}^{M(k)}\big\}_{k\in\NN}
$
be a sequence of admissible and regular almost-triangulations of $\Omega$ 
(recall Definitions \ref{admissibleATDef} and \ref{regularATDef}).
Let $\vp$ be the unique Brenier solution
of the Monge--Amp\`ere equation \eqref{MAEq} with $\varphi(0)=0$,
and suppose that $\vp\in C^{2}(\O)$.
Then:
\hfill\break
(i)
The data  \eqref{vpkEq2} satisfies
the constraints  \eqref{Constraint1Eq2}--\eqref{Constraint3Eq2} for all $k$ sufficiently large.
\hfill\break
(ii)
$\limsup_k d_{\ve}^{(k)} = o(\ve)$, where $d_\ve ^{(k)}$ is defined as in \eqref{dkDef2}.
\end{prop}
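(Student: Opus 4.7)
The plan builds on that of Proposition \ref{DMAOPProp} while exploiting the two structural advantages of the DMAOP: the uniform boundedness of $(\cdot)^{1/n}$ near zero (as opposed to $\log$), and the extension of $\vp_\ve$ to all of $\RR^n$ as a smooth, $\beta$-strongly convex function. For part (i), constraints \eqref{Constraint1Eq2} and \eqref{Constraint2Eq2} follow immediately from the convexity of $\vp_\ve$ on $\RR^n$ and from the property $\nabla \vp_\ve(\overline{\O}) \subset \L$ guaranteed by its construction. For constraint \eqref{Constraint3Eq2}, I would invoke Lemma \ref{LindseyLemma} applied to $\vp_\ve$ in place of $\vp$; this is legitimate because $\vp_\ve$ is $C^{\infty}$ on $\RR^n$ (hence, in particular, $C^{2,\alpha}$ on $\overline{\O}$), yielding $\max_i \Vert H_i^{(k)} - \nabla^2 \vp_\ve(x_{i_0}^{(k)}) \Vert \to 0$. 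Combined with $\nabla^2 \vp_\ve \geq \beta I$, this gives $H_i^{(k)} \geq (\beta/2) I$ for all $i$ and all $k$ sufficiently large, which is strictly stronger than the required $H_i^{(k)} \geq 0$.

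For part (ii), the strategy is to split the sum defining $d_\ve^{(k)}$ according to whether a simplex $S_i^{(k)}$ lies inside $U_\ve$ (``good'' simplices, index set $G_k$) or not (``bad'' simplices, index set $B_k$). On good simplices, all the relevant analysis stays within $U_\ve$, where $\Vert \vp_\ve - \vp \Vert_{C^2(\overline{U_\ve})} < \ve$ by construction. Combining this with Lemma \ref{LindseyLemma} applied to $\vp_\ve$, with the Monge--Amp\`ere equation $\det \nabla^2 \vp = f / g(\nabla\vp)$ satisfied by $\vp$ on $\O$, with the uniform continuity of $(\det \cdot)^{1/n}$ on the compact set of positive-definite matrices with smallest eigenvalue $\geq \beta/2$ and bounded norm, and with the uniform continuity of $f$, $g$, $\nabla\vp$, and $\nabla\vp_\ve$, I anticipate a per-simplex bound $F_i \leq C(\ve + o_k(1))$ that is uniform in $i \in G_k$. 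Summing against $\sum V_i \leq \mathrm{vol}(\O)$ and taking $\limsup_k$ then yields an $O(\ve)$ contribution from the good simplices.

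On bad simplices, the main (and indeed decisive) advantage of the DMAOP over the LDMAOP enters: the per-simplex penalty \eqref{FiDef2} is bounded above, independently of $H_i^{(k)}$, by $(\sup_{\O} f / \inf_{\L} g)^{1/n}$, because $(\cdot)^{1/n}$ remains bounded on $[0,\infty)$ near zero, whereas the corresponding $\log$ expression in the LDMAOP does not. Each bad simplex meets $U_\ve^c$ and has diameter at most $h(k) \to 0$, so the union of bad simplices lies in an $h(k)$-thickening of $\overline{\O} \setminus U_\ve$; hence $\sum_{i \in B_k} V_i \leq \mathrm{vol}(\overline{\O} \setminus U_\ve) + o_k(1)$, and taking $\limsup_k$ yields a bound of $C' \cdot \mathrm{vol}(\overline{\O} \setminus U_\ve)$. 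Combining the two estimates,
$$\limsup_k d_\ve^{(k)} \leq O(\ve) + C' \cdot \mathrm{vol}(\overline{\O} \setminus U_\ve),$$
and since $U_\ve \supset \O_\ve$ exhausts $\O$ as $\ve \to 0$, the right-hand side tends to zero with $\ve$, which is what ``$o(\ve)$'' abbreviates here. The principal obstacle is precisely the bad-simplex bound: absent $C^2$-control of $\vp_\ve - \vp$ near $\partial \O$, one is forced to rely on the structural boundedness of the DMAOP penalty, and it is this feature that permits convergence under the weaker hypothesis $\vp \in C^2(\O)$ rather than $\vp \in C^{2,\alpha}(\overline{\O})$.
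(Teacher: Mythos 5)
Your proposal reproduces the paper's argument essentially verbatim: the same construction of $\vp_\ve$ (via the strongly convex extensions $Q_y$ and mollification), the same invocation of Lemma \ref{LindseyLemma} for $\vp_\ve$ to handle part (i), the same decomposition into simplices inside $\overline{U_\ve}$ versus those outside, the same use of the $C^2$-closeness of $\vp_\ve$ to $\vp$ on $\overline{U_\ve}$ on the good simplices, and the same exploitation of the boundedness of $(\cdot)^{1/n}$ to uniformly bound the per-simplex penalty on the bad simplices, whose total volume shrinks to $\mathrm{vol}(\overline{\O}\setminus\O_\ve)$. Both proofs conclude by letting $\ve\to0$; there is no meaningful divergence in approach.
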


Let $c_k$ be the optimal cost of the $k$-th DMAOP. Let $\ve>0$. If the data \eqref{vpkEq2} associated with $\vp_\ve$ is feasible (which is true by part (i) of Proposition \ref{DMAOPProp2} for $k$ sufficiently large), then $c_k \leq d_\ve ^{(k)}$. Thus by part (ii) of Proposition \ref{DMAOPProp2}, $\limsup_k c_k = o(\ve)$. This yields the following analogue of Corollary \ref{costCor}.

\begin{cor}
\label{upgradedcostCor}
Under the assumptions of Proposition \ref{DMAOPProp2}, $\lim_k c_{k}=0$.
\end{cor}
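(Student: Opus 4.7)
The plan is to deduce this directly from Proposition \ref{DMAOPProp2}, with essentially no additional work beyond bookkeeping. The key observation is that part (i) of the proposition provides, for each $\varepsilon>0$, a sequence of feasible points $\{\vp_\varepsilon(x_j^{(k)}), \nabla\vp_\varepsilon(x_j^{(k)})\}_{j=1}^{N(k)}$ for the $k$-th DMAOP (for all $k$ sufficiently large, depending on $\varepsilon$). Since $c_k$ is by definition the \emph{optimal} (minimum) cost, this feasible point provides an upper bound, and hence $c_k \leq d_\varepsilon^{(k)}$ for all $k \geq K(\varepsilon)$.

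Next, I would invoke part (ii), which states $\limsup_k d_\varepsilon^{(k)} = o(\varepsilon)$. Combining with the previous bound gives $\limsup_k c_k = o(\varepsilon)$. Since $\varepsilon>0$ was arbitrary, sending $\varepsilon \to 0$ yields $\limsup_k c_k = 0$. Of course $c_k \geq 0$ (as it is a sum of maxima of $0$ and a real number, weighted by positive volumes), so $\liminf_k c_k \geq 0$, and we conclude $\lim_k c_k = 0$.

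There is no substantive obstacle here; the work has all been absorbed into Proposition \ref{DMAOPProp2}. The only thing to be careful about is that $K(\varepsilon)$ depends on $\varepsilon$, which is precisely why the conclusion of the proposition is phrased as $\limsup_k d_\varepsilon^{(k)} = o(\varepsilon)$ rather than a joint statement: we first send $k \to \infty$ (with $\varepsilon$ fixed), and only afterwards send $\varepsilon \to 0$. The two-parameter nature of the argument matters because, unlike in the LDMAOP case of Corollary \ref{costCor}, we cannot simply plug $\vp$ itself into the DMAOP—the Hessian of $\vp$ need not be bounded below on $\overline{\Omega}$ without boundary regularity—so we are forced to use the strongly convex approximations $\vp_\varepsilon$ constructed just above the statement of Proposition \ref{DMAOPProp2}, and accept a quantitative loss of order $\varepsilon$ when doing so.
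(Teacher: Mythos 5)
Your proof is correct and follows the same route as the paper: use feasibility (part (i)) to get $c_k\leq d_\varepsilon^{(k)}$, apply part (ii) to get $\limsup_k c_k = o(\varepsilon)$, then send $\varepsilon\to 0$ and use $c_k\geq 0$. Your remark on the order of limits ($k\to\infty$ first, then $\varepsilon\to 0$) accurately captures why the LDMAOP argument cannot be reused directly here.
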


\begin{proof}[Proof of Proposition \ref{DMAOPProp2}.]

First let 
\beq
I_k = \left\{i=1,\ldots,M(k)\,:\, S_i^{(k)} \subset \overline{U_\ve} \right\},
\eeq
and let 
\beq
J_k = \{1,\ldots,M(k)\} \setminus I_k.
\eeq
Also, recall that given a matrix $A=[a_{ij}]$, we define
$$
||A||=\max_{i,j}|a_{ij}|.
$$

Since $\vp_\ve$ is smooth (and so in particular in $C^{2,\alpha}(\overline{\Omega})$) and strongly convex on $\mathbb{R}^n$, we have by the same reasoning as in the proof of Lemma \ref{LindseyLemma}, that (recalling  (\ref{HiEq})) 
$$
\lim_k\max_{i\in\{1,\ldots,M(k)\}}
\Big\Vert 
H\Big(S^{(k)}_i,\Big\{
\nabla\vp_\ve (x_{i_0}^{(k)}),\ldots,
\nabla\vp_\ve (x_{i_n}^{(k)})
\Big\}\Big)
-\nabla^{2}\vp_\ve (x_{i_{0}}^{(k)})\Big\Vert
=0.$$
From this it follows (as in the proof of Proposition \ref{DMAOPProp}) that the data \eqref{vpkEq2} satisfies the constraint \eqref{Constraint3Eq2}. That the data satisfies the constraints \eqref{Constraint1Eq2} and \eqref{Constraint2Eq2} is evident from the construction of $\vp_\ve$.

Furthermore, since $\Vert \vp_\ve - \vp \Vert_{C^{2}(\overline{U_\ve})} <\ve$ by construction, we have that 
$$
\limsup_k\max_{i\in I_k}
\Big\Vert 
H\Big(S^{(k)}_i,\Big\{
\nabla\vp_\ve (x_{i_0}^{(k)}),\ldots,
\nabla\vp_\ve (x_{i_n}^{(k)})
\Big\}\Big)
-\nabla^{2}\vp (x_{i_{0}}^{(k)})\Big\Vert
\leq \ve.$$
From this it follows (as in the proof of Proposition \ref{DMAOPProp}) that 
$$
\limsup_k \max_{i\in I_k} F_i \left( \big\{\vp_\ve (x_j^{(k)}),\nabla\vp_\ve (x_j^{(k)})\big\}_{j=1}^{N(k)} \right) = o(\ve).
$$
Henceforth we will abbreviate $F_i := F_i \left( \big\{\vp_\ve (x_j^{(k)}),\nabla\vp_\ve (x_j^{(k)})\big\}_{j=1}^{N(k)} \right)$. Then the preceding implies that 
$$
\limsup_k \sum_{i\in I_k} V_i\cdot F_i = o(\ve).
$$
Thus to establish that $\limsup_k d_\ve ^{(k)} = o(\ve)$, it will suffice to show that 
$$
\limsup_k \sum_{i\in J_k} V_i\cdot F_i = o(\ve).
$$
However, since $f$ is bounded above, $g$ is bounded away from zero, and $(\cdot)^{1/n}$ is bounded below by zero over the nonnegative numbers, it follows from $\eqref{FiDef2}$ that $F_i \leq C$ for some constant $C$ (depending only on $f,g$). Thus 
$$
\limsup_k \sum_{i\in J_k} V_i\cdot F_i \leq C \limsup_k \sum_{i\in J_k} V_i = C \limsup_k \mathrm{vol}\left(\bigcup_{i\in J_k} S_i \right).
$$
Now suppose $i\in J_k$, and assume $k$ is large enough such that the maximal simplex diameter is at most $\ve/4$. Then $S_i$ contains a point $x$ that is not in $\overline{U_\ve}$. Recall that $U_\ve = \O_\ve + B_{\ve/2}(0)$, so it follows that $\mathrm{dist}(x,\O_\ve) \geq \ve/2$. Since $\mathrm{diam}(S_i)\leq \ve/4$, we see that $S_i \subset \overline{\O} \setminus \Omega_\ve$. Thus 
$$
\mathrm{vol}\left(\bigcup_{i\in J_k} S_i \right)
\leq \mathrm{vol}\left(\overline{\O} \setminus \Omega_\ve \right) = o(\ve),
$$
and this completes the proof.

\end{proof}

\section{Numerical experiments}

\subsection{Implementation details}
We provide only experiments for the DMAOP. Only three details of the implementation bear mentioning. First, we used DistMesh for the triangulation of $\Omega$ \cite{distMesh}. 
Second, we solved each convex optimization problem using MOSEK \cite{mosek}, called via the modeling language YALMIP \cite{YALMIP}. 
Third, our MATLAB program for solving the DMAOP 
allows the user to hand-draw the support of the source measure, and several of the following examples have source measures with hand-drawn support.

There is significant room for improvement in the efficiency of the implementation. The most
computationally 
expensive inefficiency is that we do not call MOSEK directly. Nonetheless, we are still able to solve the DMAOP over fine triangulations in an acceptable amount of time. It seems that problem size for the current implementation is actually limited not by run time, but by roundoff errors. We leave a detailed study of numerical aspects of this method to future work, though we provide basic observations about run time and convergence in  \S\ref{runtime} below.

\subsection{Examples}
\label{numEx}

We will consider only examples in the plane. Furthermore, we will always take the target measure $\nu$ to be the measure whose support is the unit ball and having uniform density on its support. It is not difficult to consider other convex target domains or to consider non-uniform log-concave densities (the most prominent examples being Gaussian densities). However, the visualizations that follow are more intuitive in the case that the target measure has uniform density on its support.

For our first example, we consider a source measure (see Figure \ref{convex_source}) supported on a convex polygon $\Omega$ with an oscillatory density $f$ bounded away from zero.
\begin{figure}
    \includegraphics[scale=0.5]{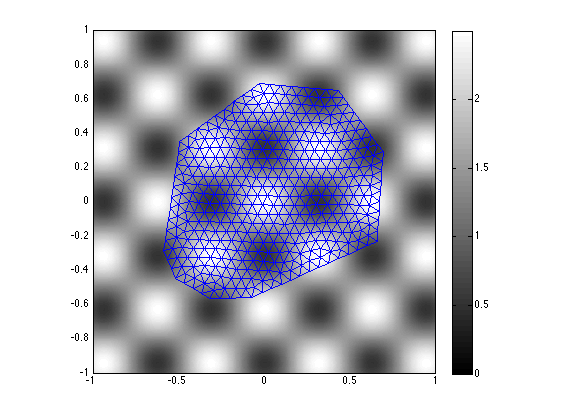}
    \caption{Source measure supported on a convex polygon $\Omega$ (seen triangulated in the above). The shading in the background represents the density of $f$, though we understand that $f\equiv 0$ outside of $\Omega$. There are 405 points in this triangulation.} 
    \label{convex_source}
\end{figure}

For the triangulation (consisting of 405 vertices) pictured in Figure \ref{convex_source}, the DMAOP took 49.3 seconds to solve on a 2011 MacBook Pro with a 2.2 GHz Intel Core i7 processor. (All numerical computations for this article were performed on this machine.)

For every point $x$ in the triangulation we can consider the interpolation $T_{t}(x):=(1-t)x+tT(x)$ for $t\in[0,1]$. We visualize this interpolation at times $t=0,\frac{1}{3},\frac{2}{3},1$. This interpolation can be understood as the solution of a dynamical optimal transport problem, though we will not discuss this fact further. See Figure \ref{convex_movie}.

\begin{figure}
    \noindent \begin{center}
    \includegraphics[scale=0.35, trim = 0mm 5mm 0mm 0mm, clip]{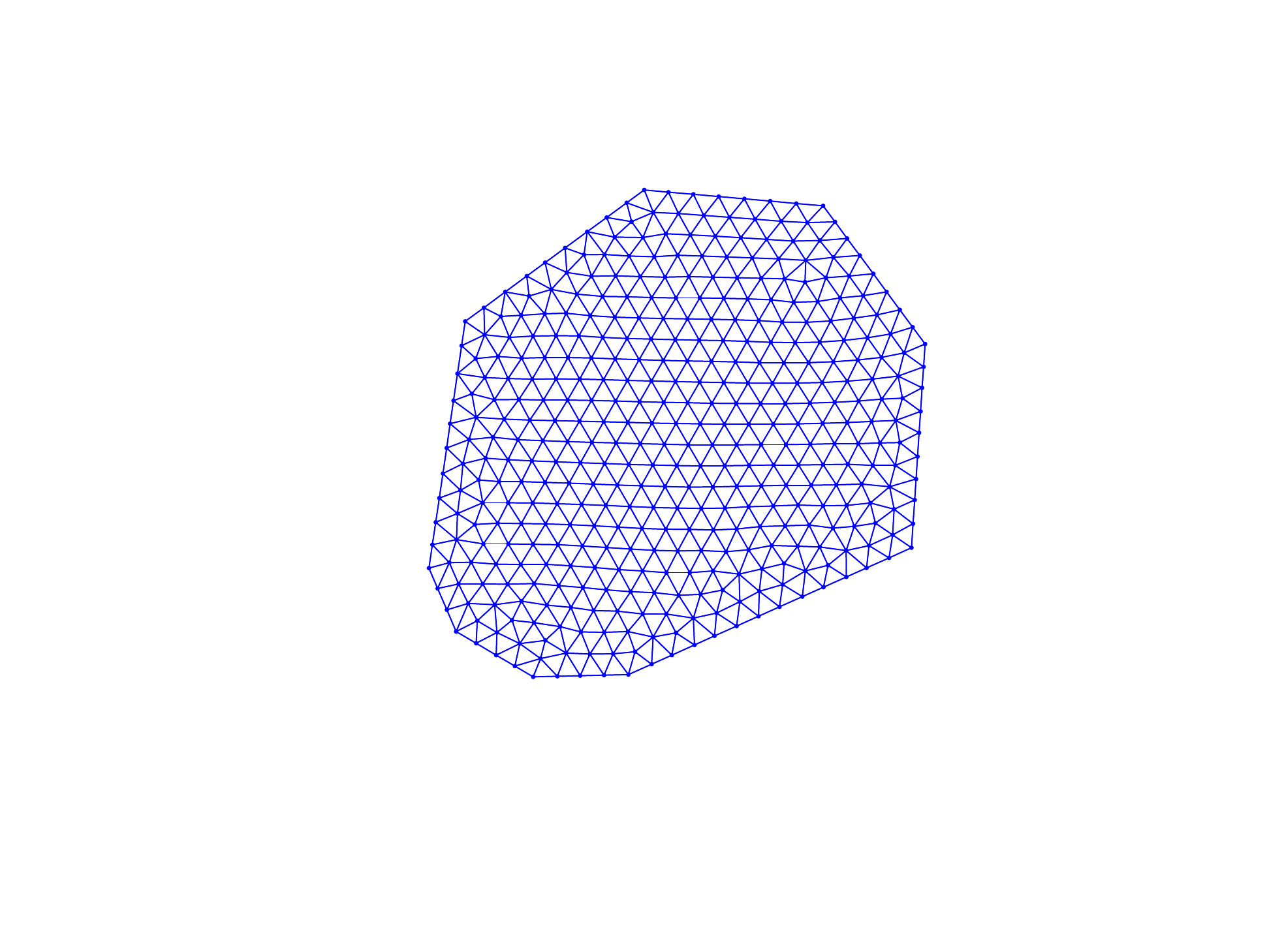}\includegraphics[scale=0.35, trim = 0mm 5mm 0mm 0mm, clip]{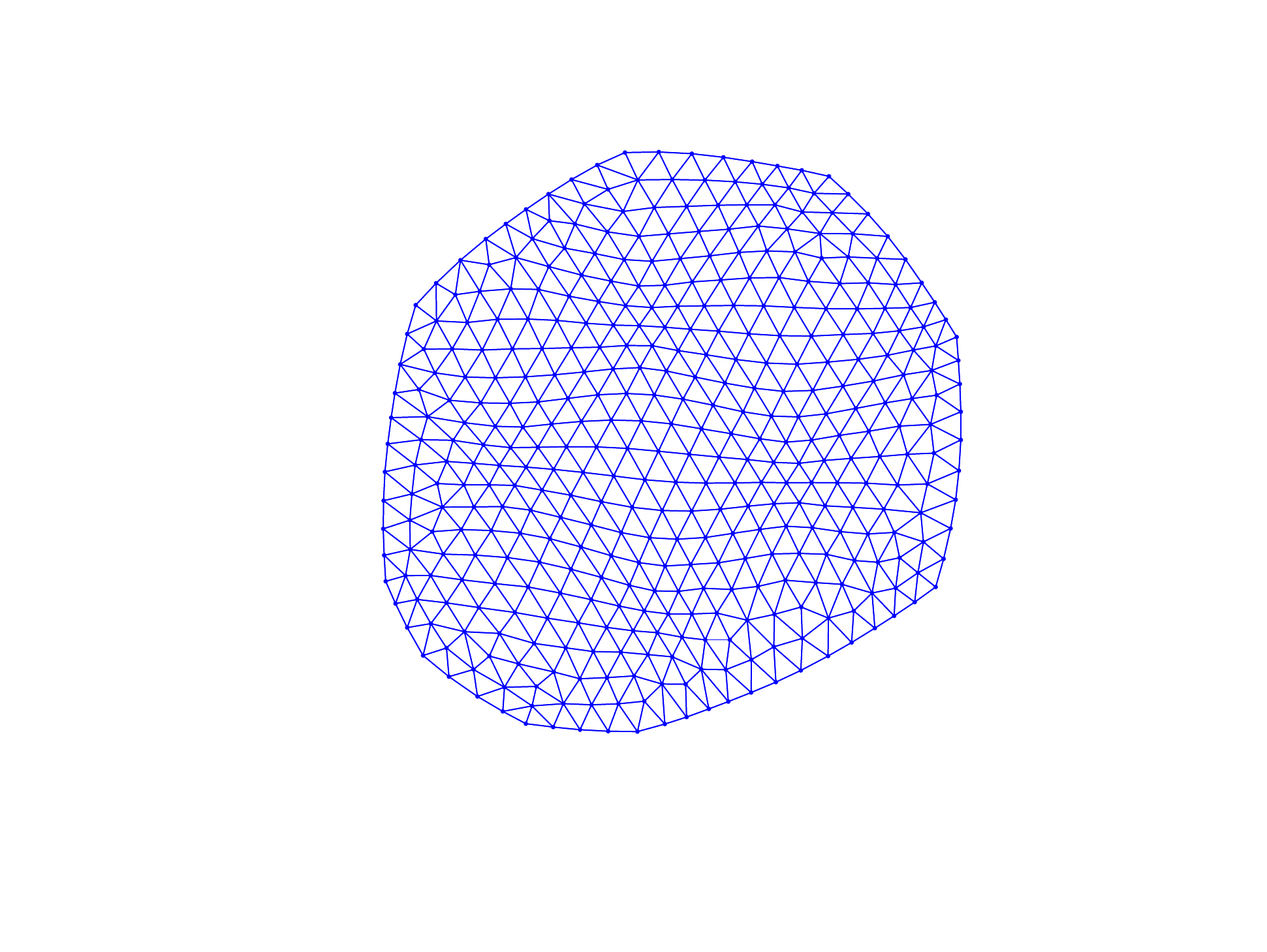}
    \par\end{center}
    
    \noindent \begin{center}
    \includegraphics[scale=0.35, trim = 0mm 7mm 0mm 0mm, clip]{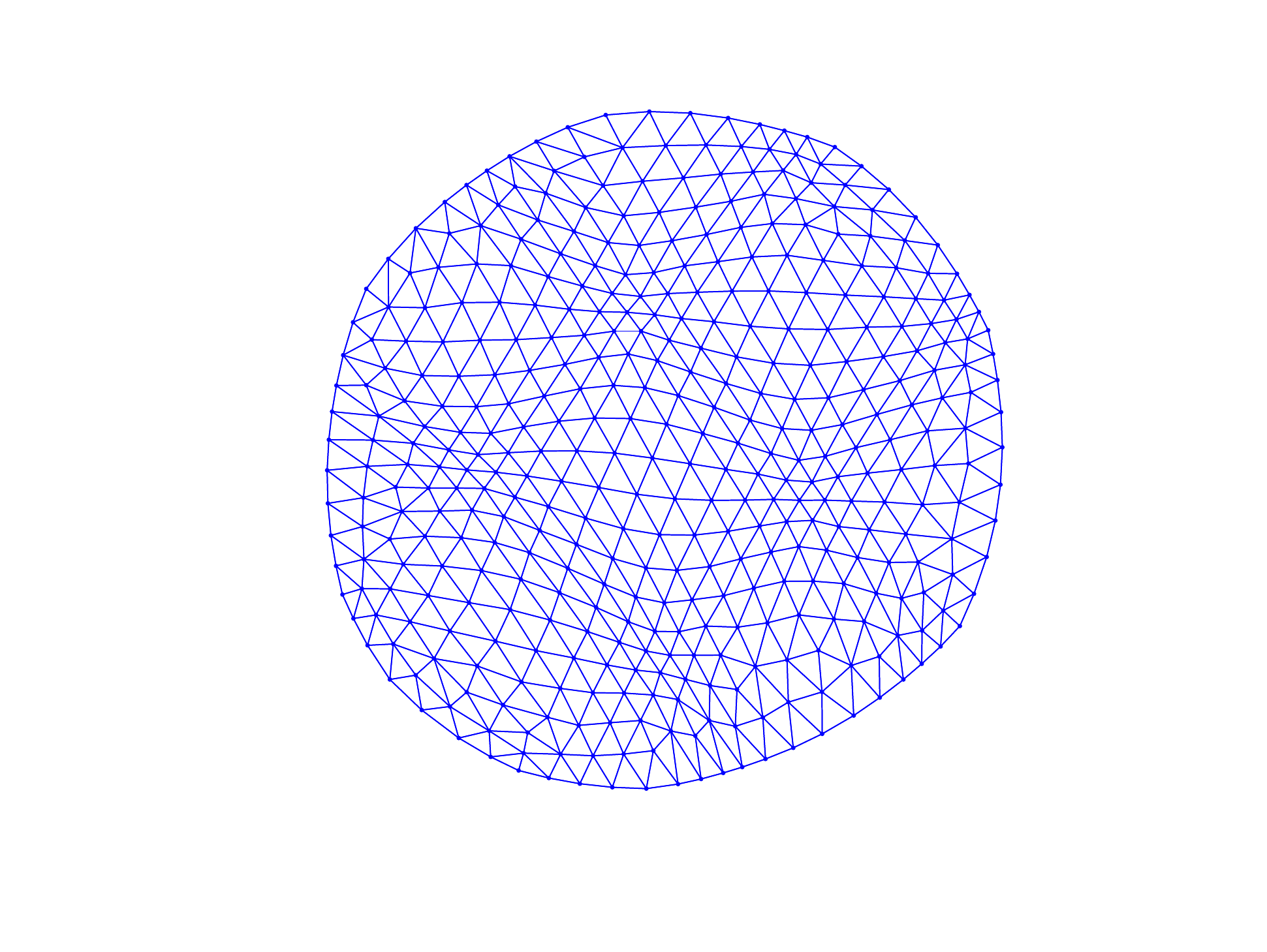}\includegraphics[scale=0.35, trim = 0mm 7mm 0mm 0mm, clip]{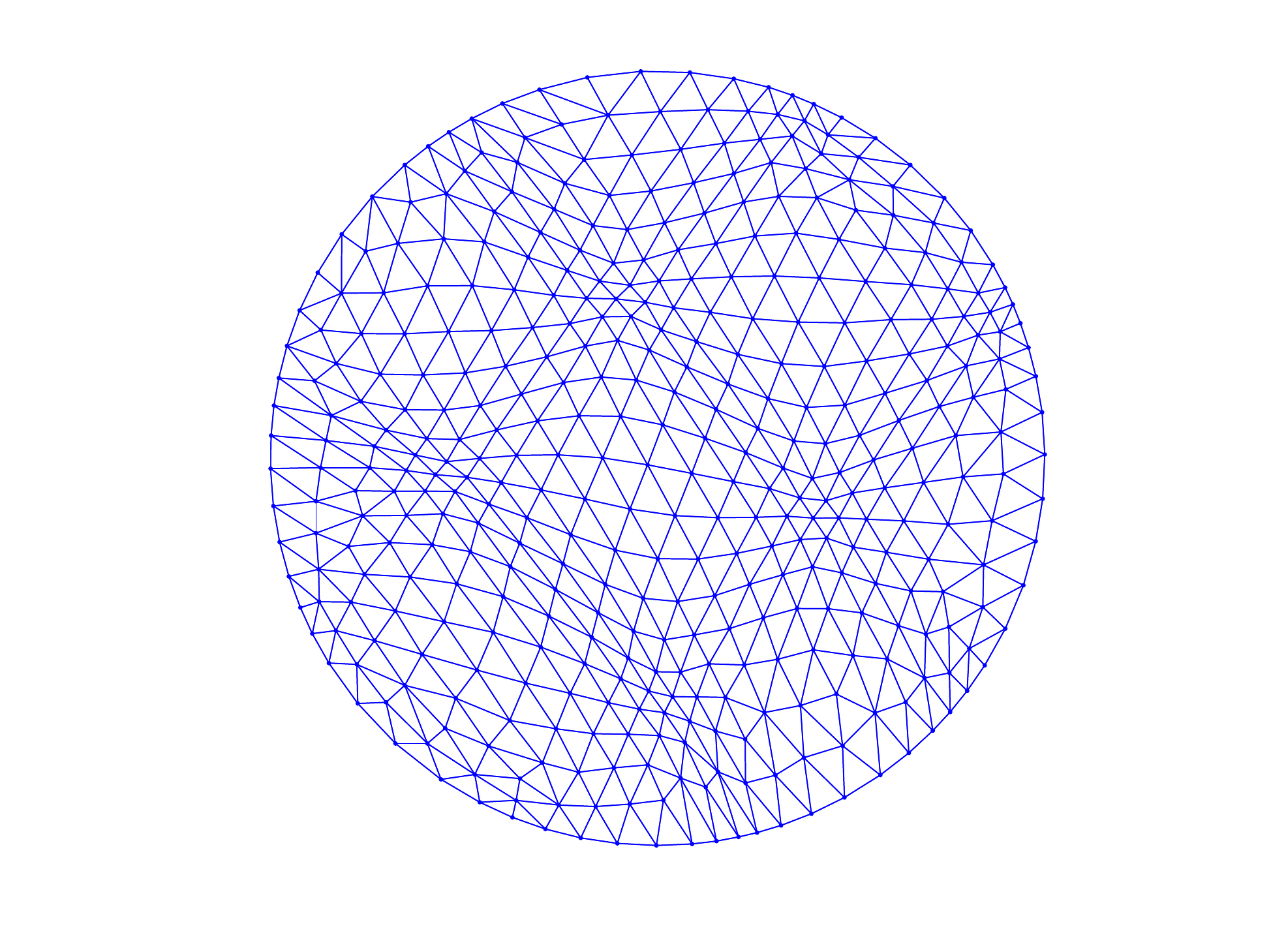}
    \par\end{center}
    
    \caption{Visualization of numerical solution of the dynamical optimal transport problem associated with the example of Figures \ref{convex_source} and \ref{convex_potential}. Times $t=0,\frac{1}{3},\frac{2}{3},1$ are depicted at upper left, upper right, lower left, and lower right respectively. (The target measure is uniform on the unit disc.)}
    
    \label{convex_movie}
\end{figure}

Next we consider a source measure with uniform density on a non-convex support. See Figure \ref{caf_movie} for a visualization of the domain, its triangulation, the numerical solution to the dynamical optimal transport problem. Our triangulation uses 340 points, and solving the DMAOP took 51.2 seconds. Note that a detailed theoretical study of a similar example is given in \cite{CJLPR}.
\begin{figure}
    \noindent \begin{center}
    \includegraphics[scale=0.35, trim = 0mm 0mm 0mm 2mm, clip]{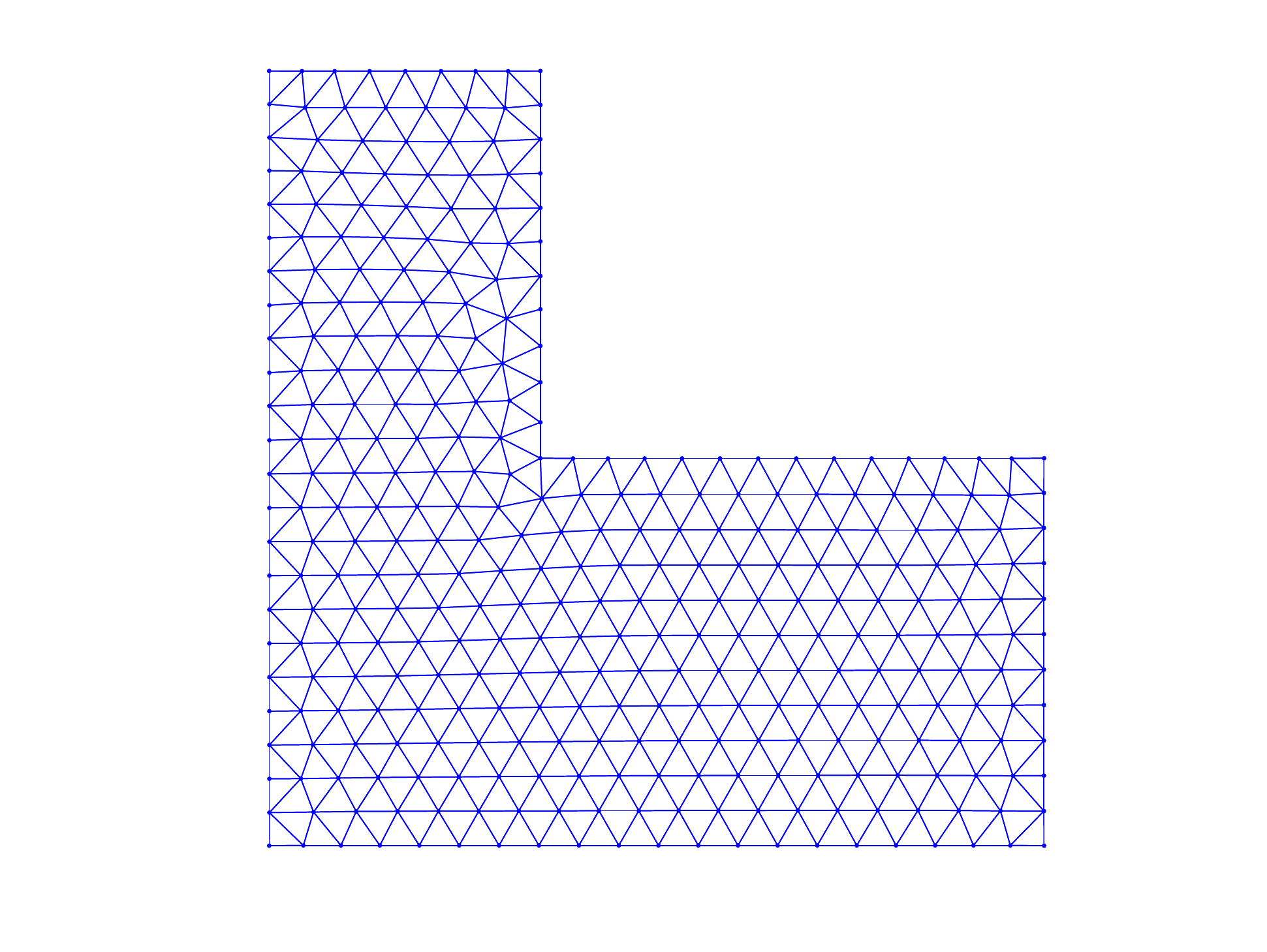}\includegraphics[scale=0.35, trim = 0mm 0mm 0mm 2mm, clip]{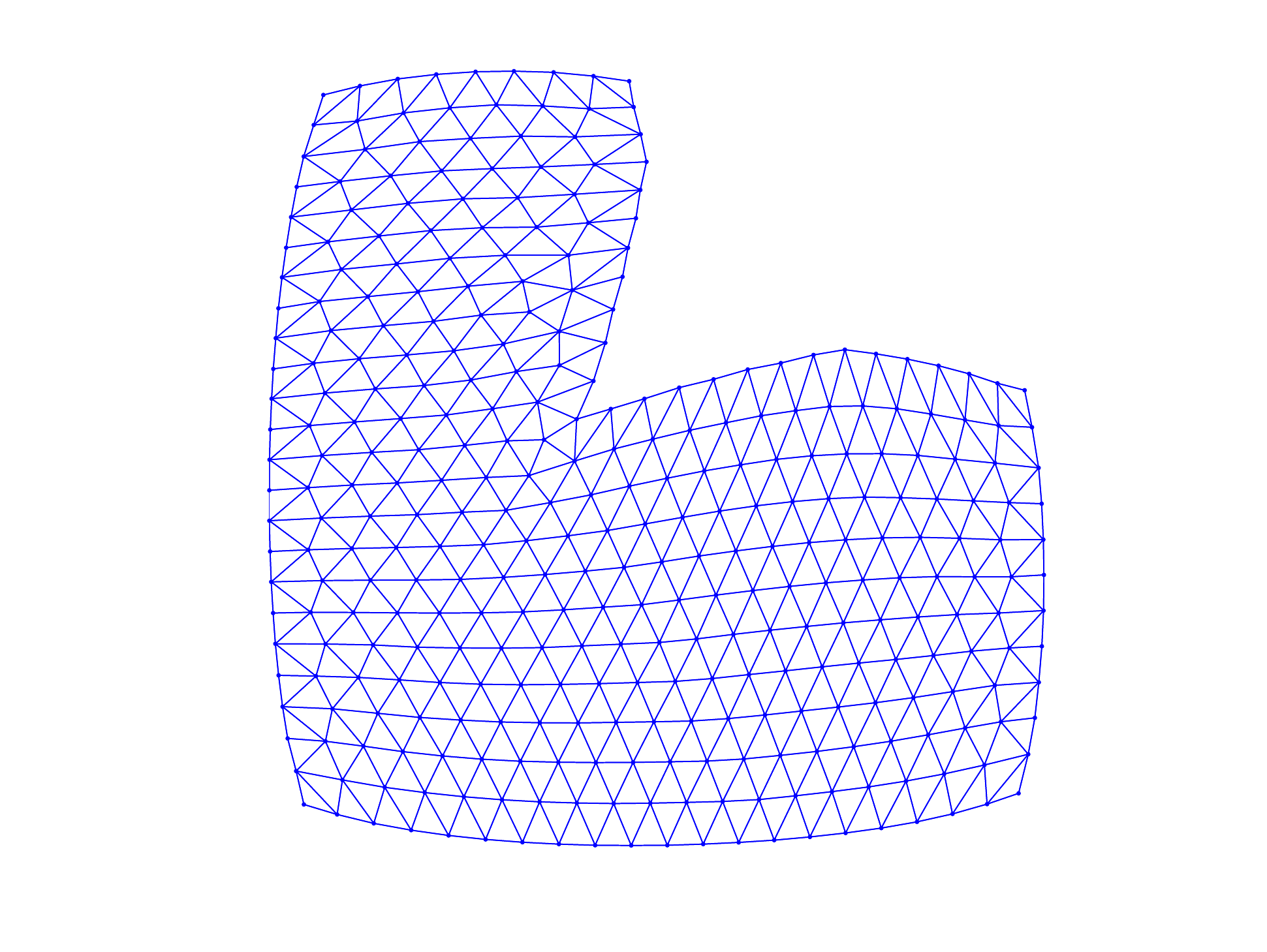}
    \par\end{center}
    
    \noindent \begin{center}
    \includegraphics[scale=0.35]{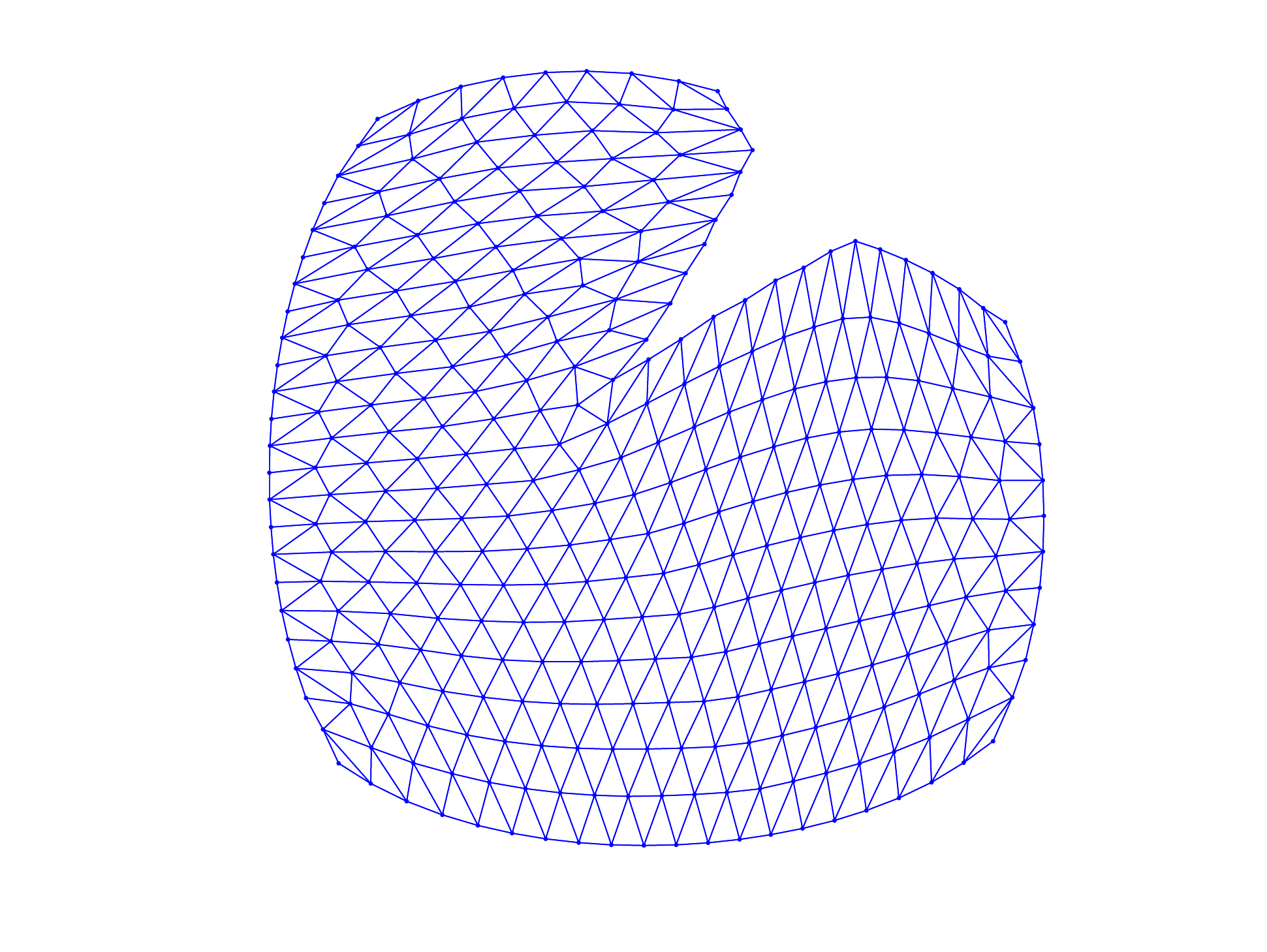}\includegraphics[scale=0.35]{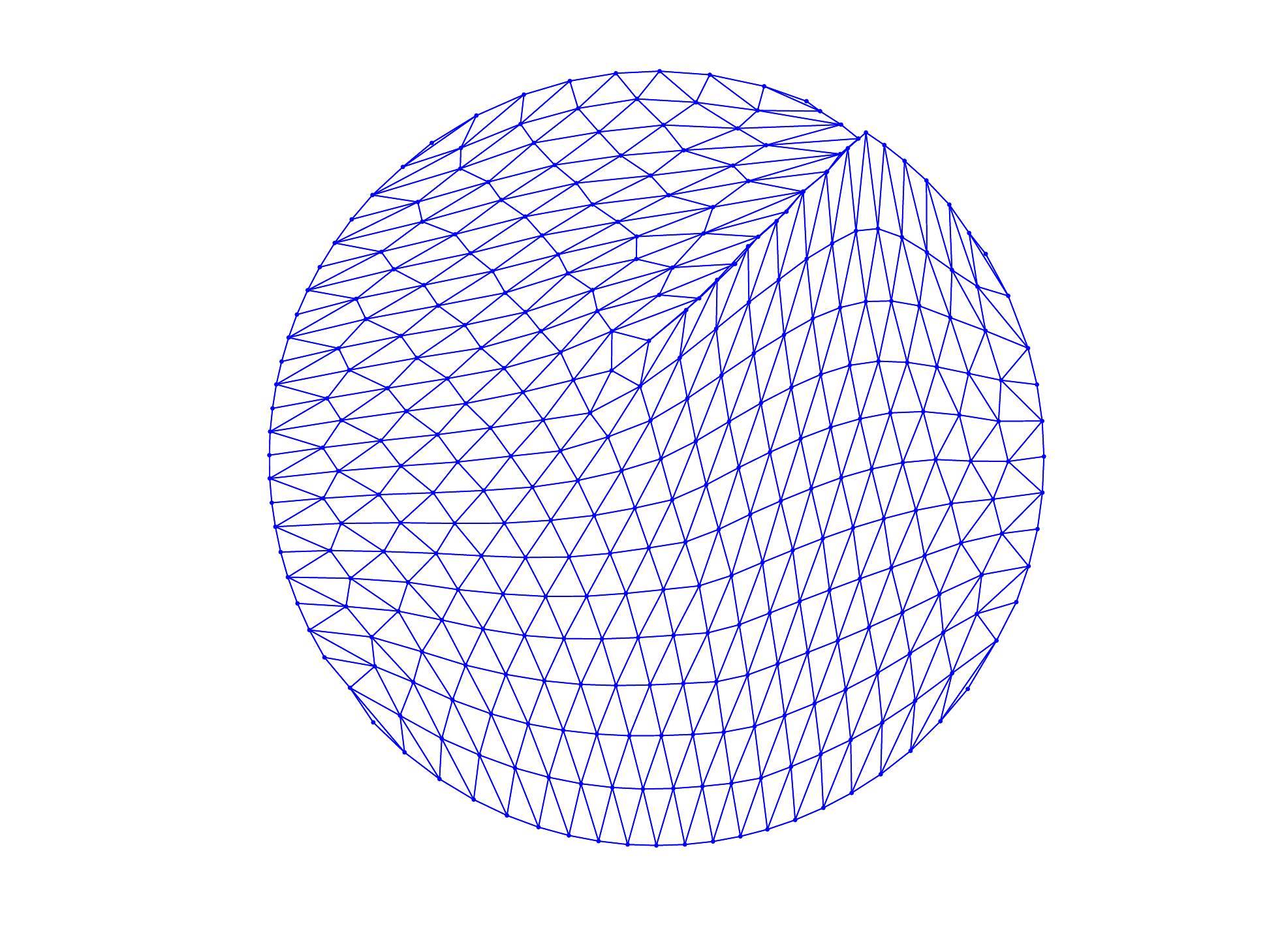}
    \par\end{center}
    
    \caption{Visualization of numerical solution of the dynamical optimal transport problem with source measure taken to be uniform and supported on the domain in the upper left and target measure taken to be uniform on the unit disc. Times $t=0,\frac{1}{3},\frac{2}{3},1$ are depicted at upper left, upper right, lower left, and lower right respectively.}
    
    \label{caf_movie}
\end{figure}
In Figure \ref{convex_potential} we visualize the computed convex potential.

\begin{figure}
    \includegraphics[scale=0.275]{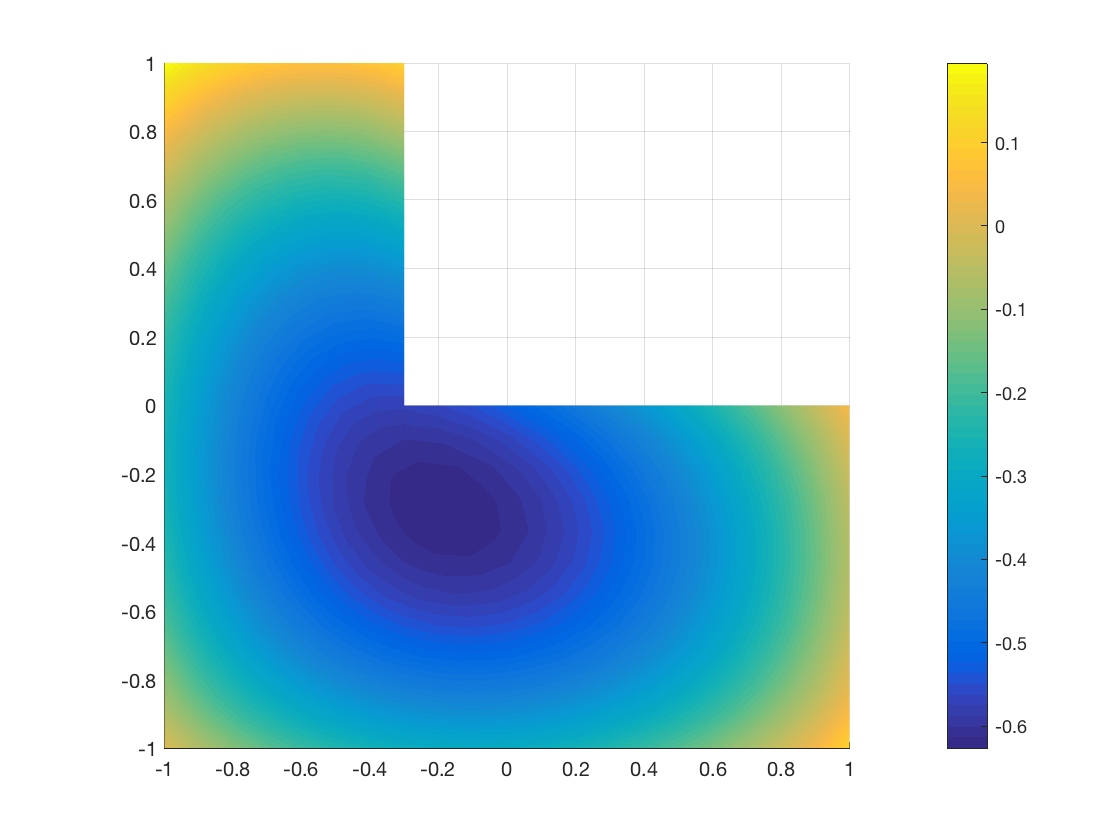}
    \caption{Visualization of the convex potential retrieved by solving the DMAOP associated to the source measure and triangulation in Figure \ref{caf_movie}, with shading corresponding to the value of the potential. (The target measure is the uniform on the unit disc.)}
    \label{convex_potential}
\end{figure}

Lastly we consider an example in which the source measure has highly irregular support (again with uniform density on its support). See Figure \ref{weird_movie} for details. There are 359 points in our triangulation, and solving the DMAOP took 58.9 seconds.

\begin{figure}
    \noindent \begin{center}
    \includegraphics[scale=0.35]{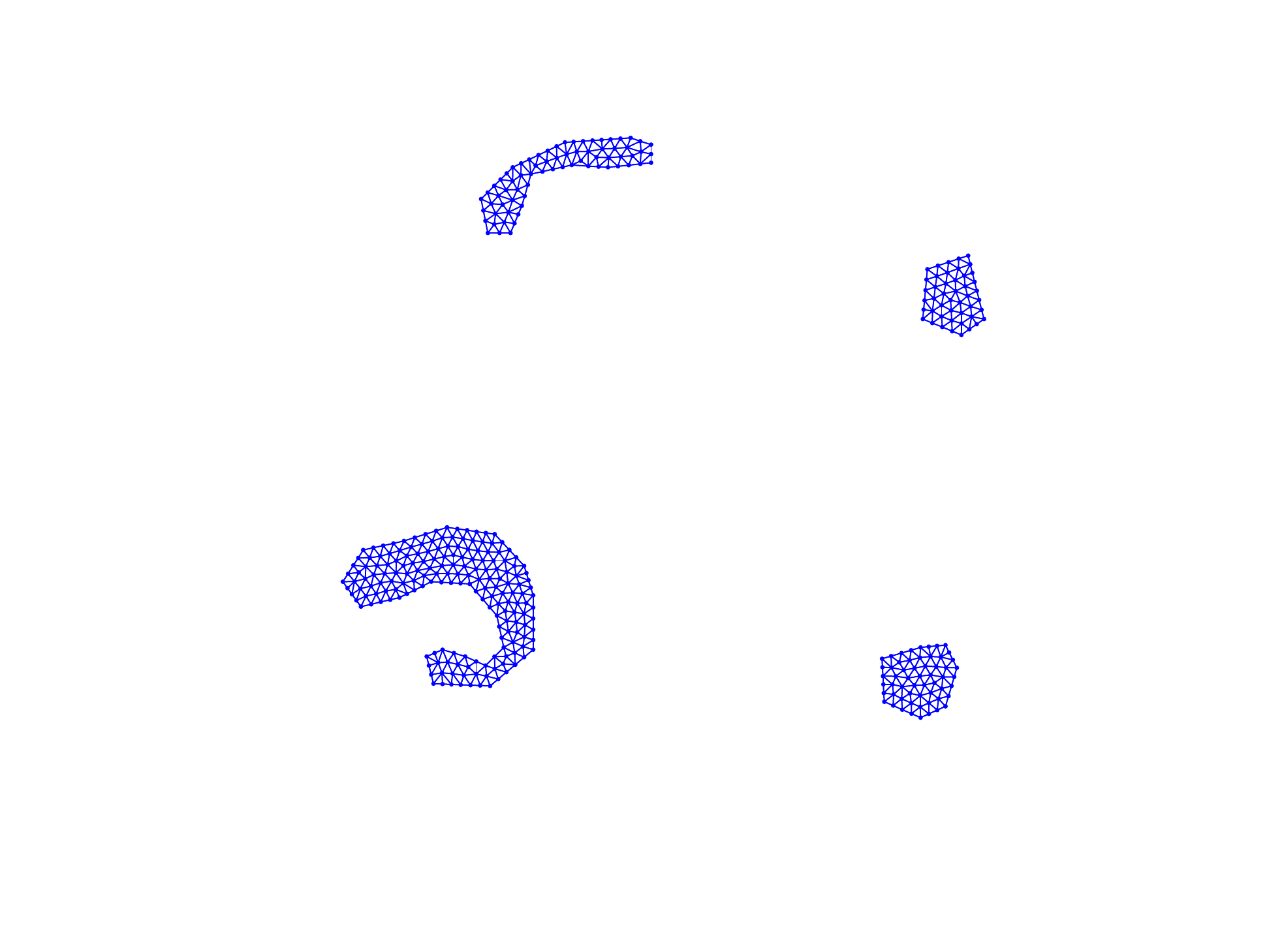}\includegraphics[scale=0.35]{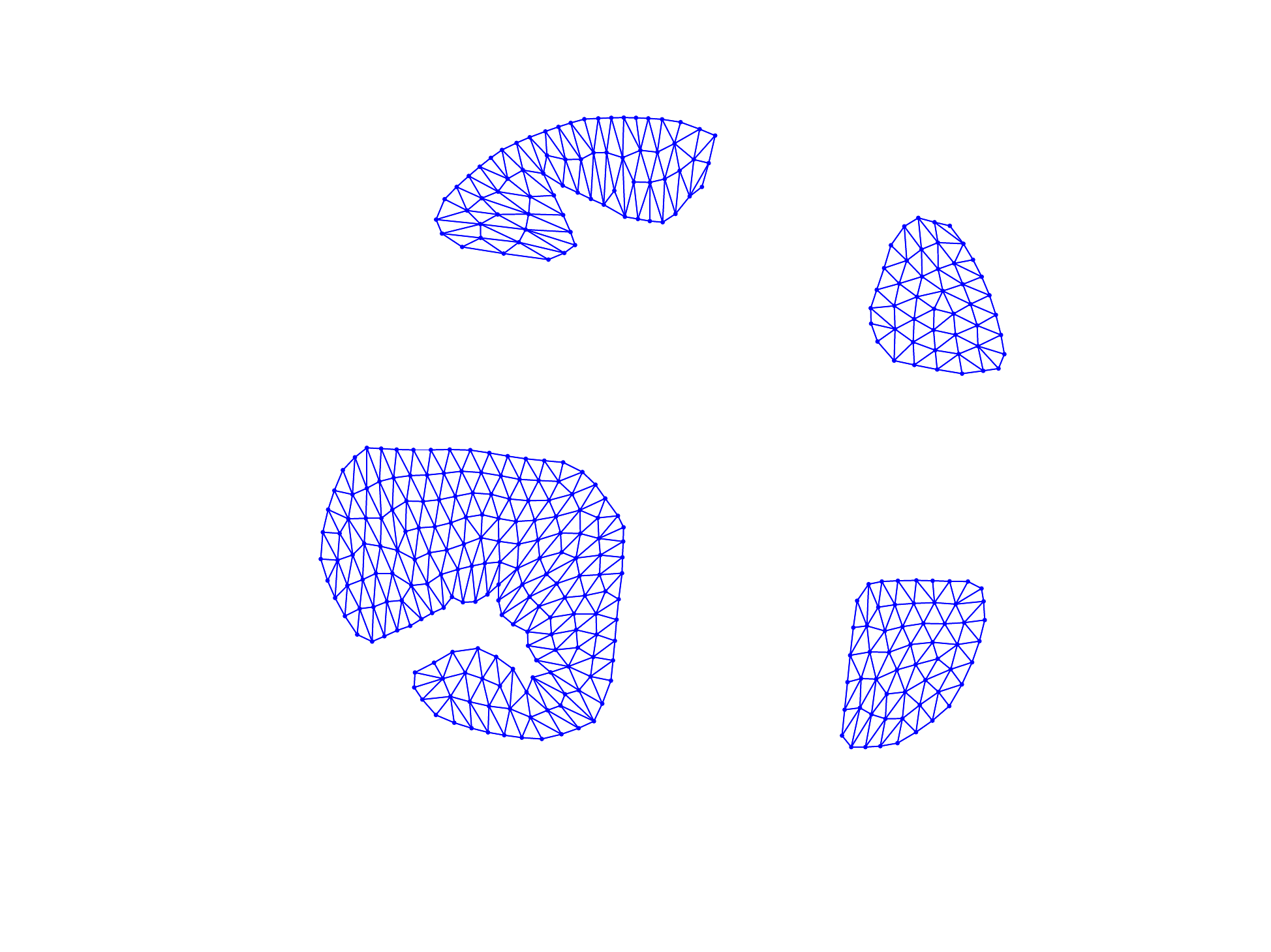}
    \par\end{center}
    
    \noindent \begin{center}
    \includegraphics[scale=0.35]{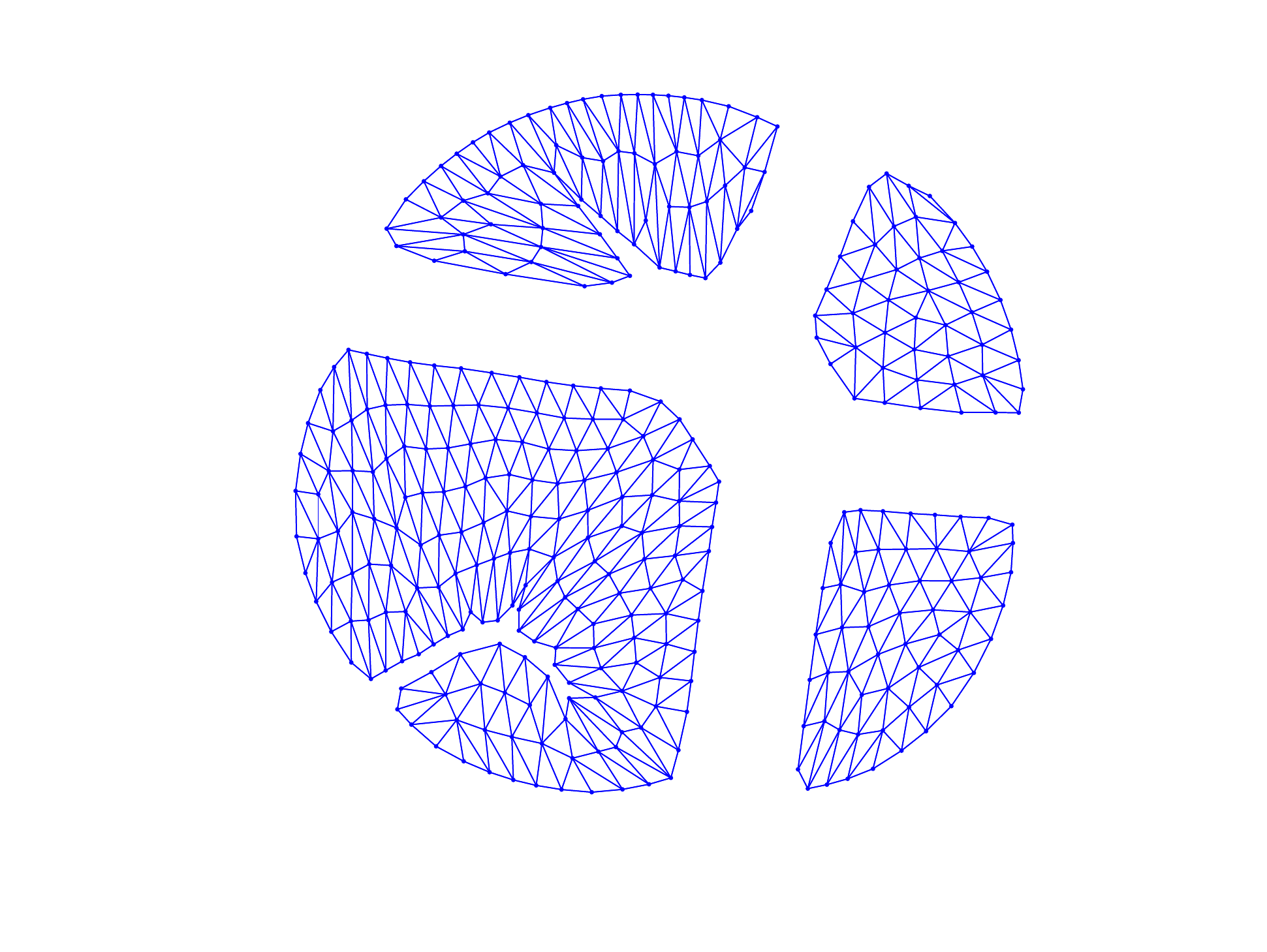}\includegraphics[scale=0.35]{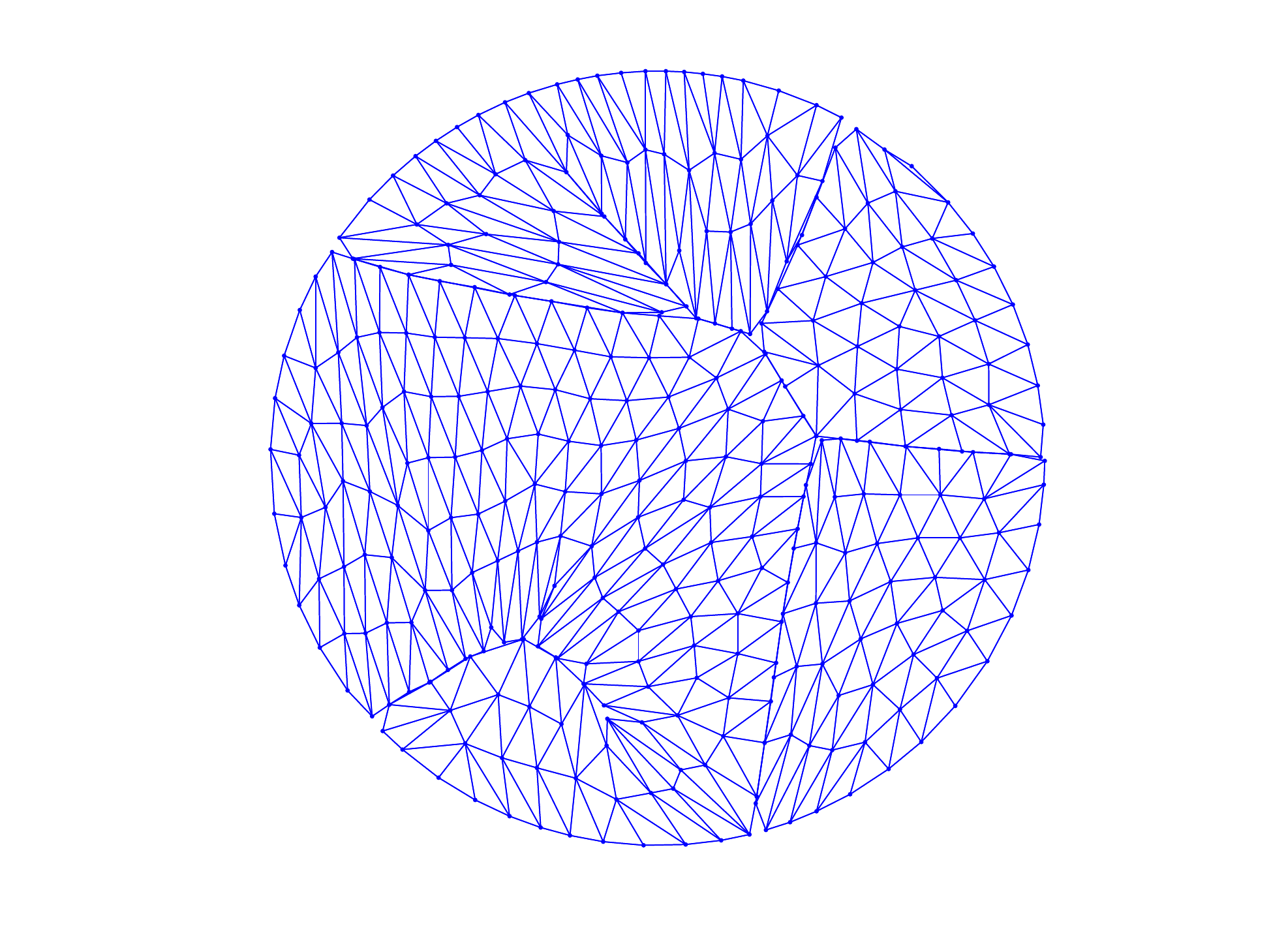}
    \par\end{center}
    
    \caption{Visualization of numerical solution of the dynamical optimal transport problem with source measure taken to be uniform and supported on the domain in the upper left and target measure taken to be uniform on the unit disc. Times $t=0,\frac{1}{3},\frac{2}{3},1$ are depicted at upper left, upper right, lower left, and lower right respectively.}
    
    \label{weird_movie}
\end{figure}

Notice that in the last two examples above, the inverse optimal maps are discontinuous. Nonetheless, we are able to approximate them by calculating the (continuous) forward maps and then inverting. Our method is particularly effective for highlighting the discontinuity sets of these inverse maps.

\subsection{Run time and convergence analysis}
\label{runtime}

We now fix an example problem and analyze the performance of our algorithm on discretizations of varying coarseness. In specific, we analyze the problem in the second example considered above (depicted in Figure \ref{caf_movie}), but we remark that such analysis does not depend noticeably on the choice of problem.

See Figure \ref{timePlot} for the dependence of run time on problem size. (All numerical computations were performed on a 2011 MacBook Pro with a 2.2 GHz Intel Core i7 processor.) The asymptotic behavior of the total run time (which includes mesh generation as well as the costly step of setting up the convex problem in the modeling language YALMIP) is, empirically, no worse than quadratic. However, the time spent by the convex solver (MOSEK) on the actual optimization problem is arguably a more fundamental quantity, and examination of the slope of a log-log plot (not pictured) of solver time against number of discretization points indicates quadratic growth. It is reasonable that this would be the case, since the number of constraints of the DMAOP grows quadratically in the number of discretization points. We expect that more efficient implementations could significantly cut down on time spent outside of the optimization step.

Next we examine the dependence of the cost (as in Definition \ref{DMAOPDef}) of our numerical solution on problem size. Figure \ref{costPlot} indicates that the cost decays as $N^{-\frac{1}{2}}$ (where $N$ is the number of discretization points). Since we are in dimension two, we expect that the mesh scale $h$ decays as $N^{-\frac{1}{2}}$, so in fact the cost decays like $h$. Note that the proof of Lemma \ref{LindseyLemma} guarantees that the optimal cost is $O(h^\alpha)$ whenever the Brenier potential satisfies $\varphi\in C^{2,\alpha}(\overline\O)$ and generally 
one may take $\alpha=1$ if $f$ and $g$ are sufficiently regular.

\begin{figure}
    \centering
    \includegraphics[scale=0.55]{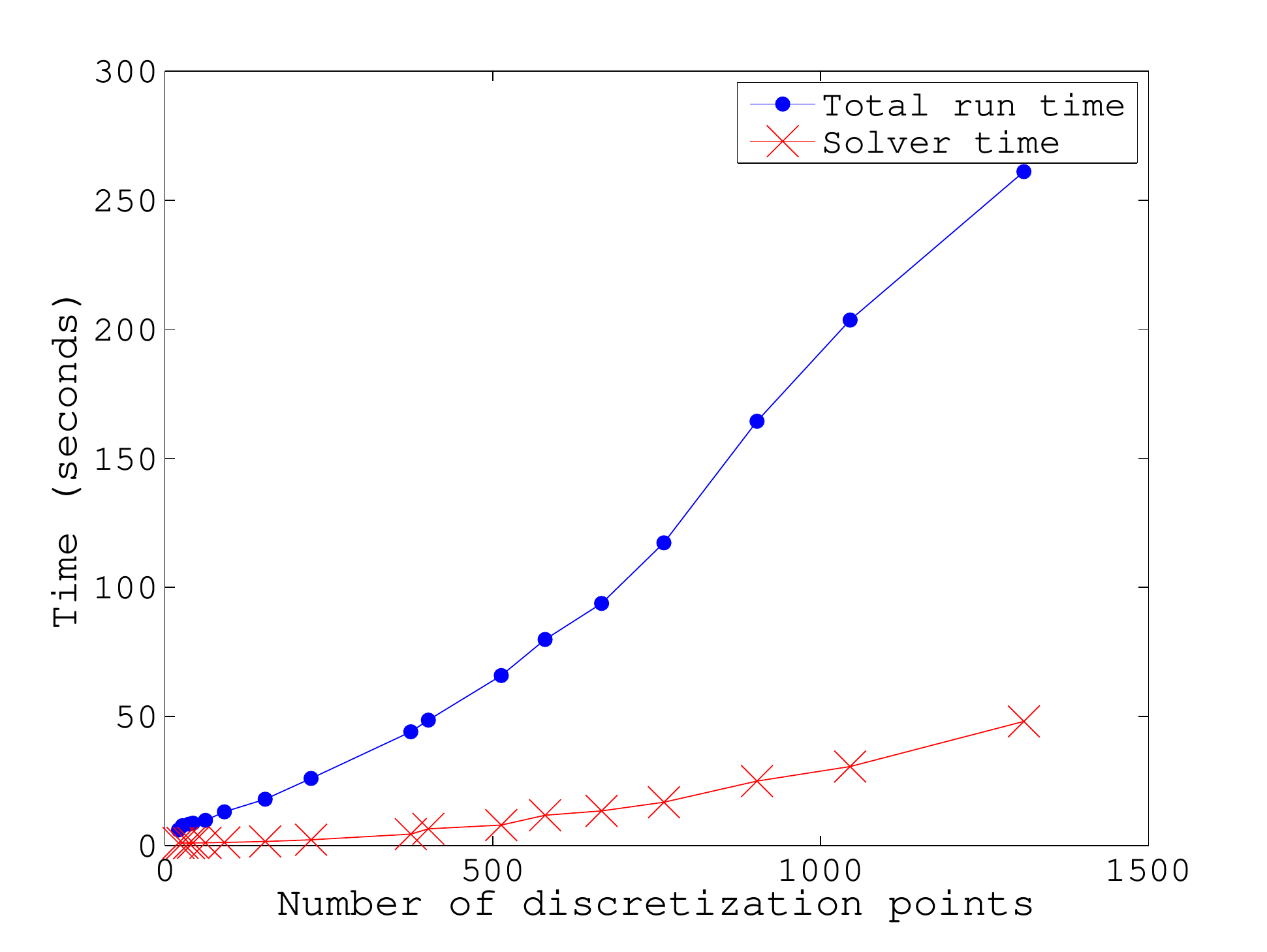}
    \caption{Plot of total run time of the algorithm and run time of the convex solver against the number of discretization points.}
    \label{timePlot}
\end{figure}

We also study the decay of a two-sided cost (not explicitly optimized in the DMAOP) that penalizes both excessive contraction and excessive expansion. With a view toward Definition \ref{mDMAOPDef} consider the quantity
$$
\tilde{c} := \sum_{i=1}^{M} V_i \cdot \Big| -\left(\det H_i\right)^{1/n}
+ \left( f\Big({\textstyle
\frac{ \sum_{j=0}^{n}x_{i_{j}}} {n+1} }\Big) 
/ g\Big({\textstyle
\frac{ \sum_{j=0}^n \eta_{i_j} }{n+1} }\Big) \right)^{1/n}
\Big|,
$$
where the $\eta_j$ are the $\eta_j$ of our solution of the DMAOP, and the $H_i$ are defined with respect to the $\eta_j$ as in \eqref{HiEq}. $\tilde{c}$ can be thought of as the average over the simplices of a two-sided penalty on area distortion. The dependence of $\tilde{c}$ on $N$ is depicted in Figure \ref{costPlot} and does not differ qualitatively from the dependence of the DMAOP cost on $N$.

\begin{figure}
    \centering
    \includegraphics[scale=0.55]{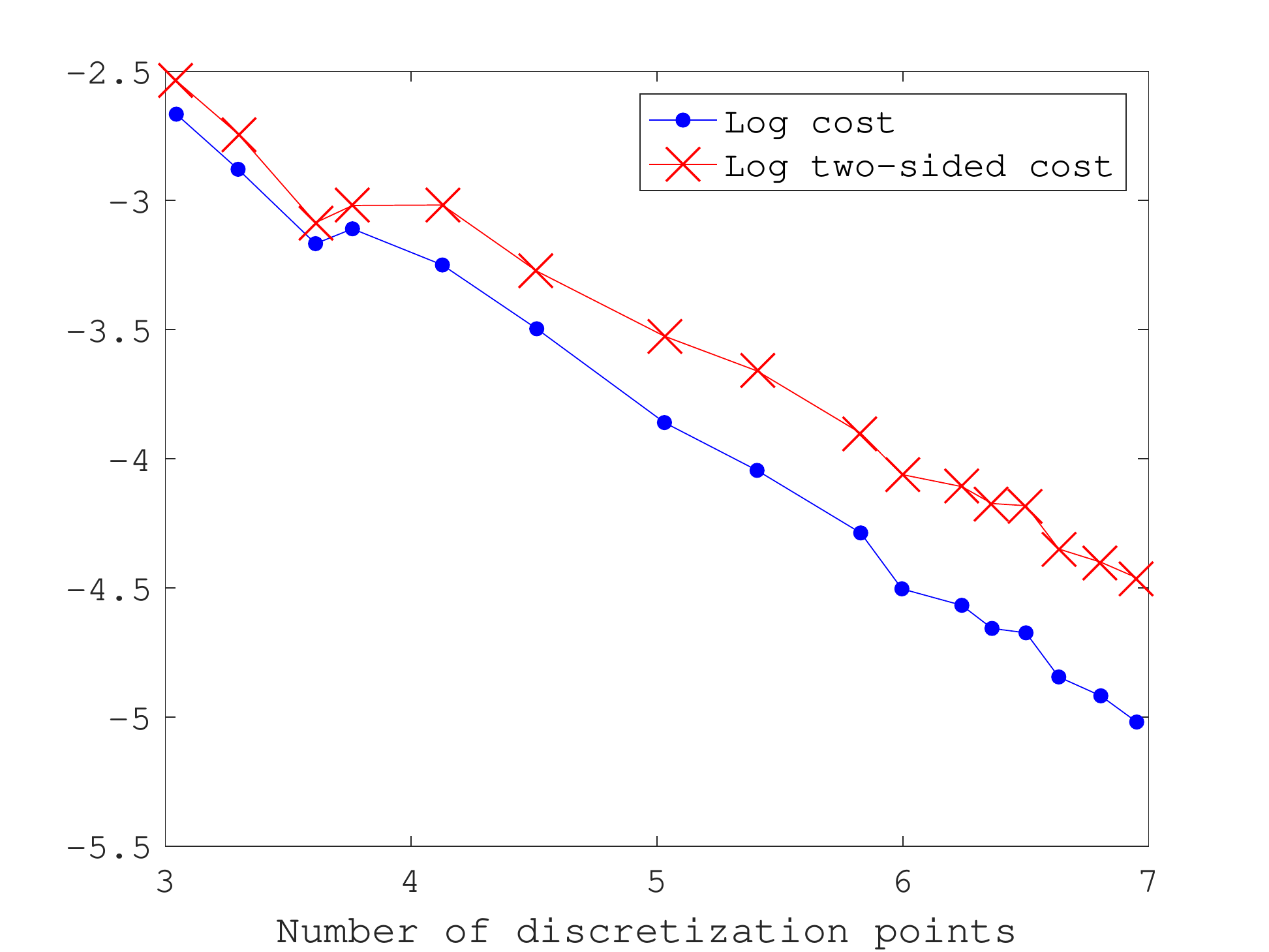}
    \caption{Log-log plot of the DMAOP cost of the numerical solution and a two-sided cost ($\tilde{c}$, introduced in \S\ref{runtime}) against the number of discretization points.}
    \label{costPlot}
\end{figure}

\subsection{Discussion}

We will not undertake a comparison with existing methods for numerical optimal transport, but we do make some remarks on behaviors, advantages, and disadvantages of our numerical method.

First, we comment that the method can be used to compute discontinuous optimal maps by inverting optimal maps from non-convex to convex domains, and the discontinuity sets can be resolved sharply (see \S\ref{numEx} for examples). These examples are in practice no more computationally expensive than convex-to-convex examples.

Also, although we have not taken advantage of this feature in the examples of \S\ref{numEx}, we remark that the method naturally allows for the preferential allocation of computational resources to more `difficult' regions (or any regions of particular interest) within the source domain. Indeed, we may simply solve the DMAOP for a triangulation with a greater density of vertices in desired areas.

In addition, we note that the extension of the implementation to higher dimensions is straightforward.

A significant limitation of our implementation is the requirement that the target measure have density $g$ for which $g^{-1/n}$ is convex. While the DMAOP still admits a minimizer for general target measures, it is only clear a priori that the DMAOP can be practically solved when it is convex (though see Section \ref{futureWork} below).

Another limitation is that the method is only first-order accurate. This is confirmed empirically in \S\ref{runtime}, but it is also to be expected due to the use of first-order finite difference quotients in the definition \eqref{HiEq} of the $H_i$ in terms of the subgradients $\eta_j$. It could be fruitful to replace these with higher-order difference quotients (this could be done easily on a Cartesian grid), though the convergence proof might require nontrivial modification to account for such a change (as the specific form of the $H_i$ is exploited quite directly in the proof), and it is not clear that faster convergence would follow.

Lastly, the quadratic growth of run time in $N$ is not ideal. This growth owes to the quadratic growth of the number of constraints of the DMAOP. We comment that this can be remedied by not including the $\eta_j$ as optimization variables in Definition \ref{DMAOPDef} and instead defining the $H_i$ directly in terms of the $\psi_j$ via finite difference quotients. However, we prefer Definition \ref{DMAOPDef} because it lends itself more naturally to the construction of the optimization potentials \eqref{phiEq}. We will leave the study of such a modified numerical approach to future work.

\section{Future directions}

As remarked in \S\ref{otherMethods}, our convergence result has the potential to be applied to other numerical schemes. Indeed, if one can show that a scheme yields vanishingly small one-sided deviation (in an $L^1$ sense) from satisfaction of the Monge--Amp\'ere equation, then our result is applicable. Such application could be pursued for new and existing methods.

Currently, it is an interesting open question whether our convergence proof can be upgraded to yield error bounds. At this point, we do not know how to achieve such bounds.

Lastly, we mention that though the (L)DMAOP is not a convex problem for general target measures, we may still ask whether there are any local minimizers that are not global. If there are no such `bad' local minima, then practical solution of the DMAOP may well be feasible even in the non-convex case. An investigation into this possibility shall be the subject of future work.

\label{futureWork}

\section*{Acknowledgments}
The authors are grateful to O. Chodosh, V. Jain, and L. Panchev
for many stimulating discussions and initial collaboration on this project
and on \cite{CJLPR}. 
This research was initially supported
by the Stanford University SURIM and OVPUE during Summer 2012, and
subsequently by NSF grants DMS-1206284,1515703 
and a Sloan Research Fellowship. 
Part of this work took place at MSRI (supported by NSF grant DMS-1440140)
during the Spring 2016 semester.


\begin{thebibliography}{99}
\def\bi{\bibitem}


\bi{AngenentHT}S. Angenent, S. Haker, A. Tannenbaum, { Minimizing flows for the Monge--Kantorovich problem,} SIAM J. Math. Anal. 35 (2003), 61--97. 

\bi{BaghW}A. Bagh, Roger J.-B. Wets, 
{ Convergence of set-valued mappings: equi-outer semicontinuity,}
Set-Valued Anal. 4 (1996), 333--360. 

\bi{BaldesWohlrab}A. Baldes, O. Wohlrab,
Computer graphics of solutions of the generalized Monge--Amp\`ere equation,
in: Geometric analysis and computer graphics (Berkeley, CA, 1988), 
Math. Sci. Res. Inst. Publ., Vol. 17, Springer, 1991, pp. 19--30,. 

\bi{BarlesS}G. Barles, P. E. Souganidis, { 
Convergence of approximation schemes for fully nonlinear second order equations,}
Asymptot. Anal., 4 (1991), 271--283.

\bi{BenamouBrenier}
J.-D. Benamou, Y. Brenier, { A computational fluid mechanics solution to the Monge-Kantorovich mass transfer problem,} 
Numer. Math. 84 (2000), 375--393. 

\bi{BenamouFO}
J.-D. Benamou, Jean-David; B.D. Froese, A.M. Oberman, { Numerical solution of the optimal transportation problem using the Monge--Amp\`ere equation,}
 J. Comput. Phys. 260 (2014), 107--126.

\bi{BenamouFO2}
J.-D. Benamou, B.D. Froese, A.M. Oberman, { A viscosity solution approach to the Monge--Amp\`ere formulation of the optimal transportation problem,} preprint,
arxiv:1208.4873.

\bi{Boyd}
S. Boyd, L. Vandenberghe, { Convex optimization,} Cambridge University Press, Cambridge, 2004.

\bi{Brenier}
Y. Brenier, 
{ Polar factorization and monotone rearrangement of vector-valued functions,}
Comm. Pure Appl. Math. 44 (1991), 375--417.

\bibitem{Caf1992}
L.~Caffarelli, {The regularity of mappings with a convex potential,} 
J. Amer. Math. Soc. 5, 99--104, (1992).

\bi{AMSAbstract} 
O. Chodosh, V. Jain, M. Lindsey, L. Panchev, Y.A. Rubinstein, 
{ Visualizing optimal transportation maps,} Abstracts
of papers presented to the Amer. Math. Soc., Vol. 34, No. 1086-28-496, 2013.

\bi{CJLPR}
 O. Chodosh, V. Jain, M. Lindsey, L. Panchev, Y.A. Rubinstein, 
{ On discontinuity of planar optimal transport maps,} 
J. Topology \& Analysis 7 (2015), 239--260.

\bibitem{chuiLandau}
C. K. Chui and P. W. Smith, {A note on Landau's problem for bounded intervals}, Am. Math. Mon. 82 (1975), 927--929.

\bibitem{DePFig}
G.~De~Philippis and A.~Figalli, {The Monge--Amp\`ere equation and its link to 
optimal transport,} preprint, available at {http://arxiv.org/abs/1310.6167} (2013)

\bi{FengGN}
X.-B. Feng, R. Glowinski, M. Neilan, { Recent developments in numerical methods for fully nonlinear second order partial differential equations,} 
SIAM Rev. 55 (2013), 205--267.

\bi{FroeseO} B.D. Froese, A.M. Oberman, { Convergent filtered schemes for the Monge--Amp\`ere partial differential equation,} 
SIAM J. Numer. Anal. 51 (2013), 423--444.

\bi{Guittet}K. Guittet, { On the time-continuous mass transport problem and its approximation by augmented Lagrangian techniques,} 
SIAM J. Numer. Anal. 41 (2003), 382--399.

\bi{HaberRT}
E. Haber, T. Rehman, A. Tannenbaum, 
{ An efficient numerical method for the solution of the $L^2$ 
optimal mass transfer problem, }
SIAM J. Sci. Comput. 32 (2010), 197--211.

\bibitem{HL1} J.-B. Hiriart-Urruty, C. Lemar\'echal, Convex analysis 
and minimization algorithms I, Springer, 1993.

\bi{Kitagawa} J. Kitagawa, 
{ An iterative scheme for solving the optimal transportation problem,} 
Calc. Var. Partial Differential Equations 51 (2014), 243--263. 

\bi{LoeperR}
G. Loeper, F. Rapetti,
{ Numerical solution of the Monge--Amp\`ere equation
by a Newton's algorithm,} 
C. R. Acad. Sci. Paris, Ser. I 340 (2005) 319--324.

\bibitem{YALMIP}
J. L\"{o}fberg, {YALMIP: A toolbox for modeling and optimization in MATLAB}, Proc. CACSD Conf., 2004.

\bi{QuentinOudet}
Q. M\'erigot, \'E. Oudet,
{ Discrete optimal transport: complexity, geometry and applications,}
preprint, available at: http://quentin.mrgt.fr/research/

\bi{Minkowski}H. Minkowski, { Volumen und Oberfl\"achen,} Math. Ann. 57 (1903),
447--495.

\bibitem{mosek}
MOSEK ApS, {The MOSEK optimization toolbox for MATLAB manual. Version 7.1 (Revision 28)} (2015), http://docs.mosek.com/7.1/toolbox/index.html. 

\bi{Neilan}M. Neilan, { Finite element methods for fully nonlinear second order PDEs based on a discrete Hessian with applications to the Monge--Amp\`ere equation,} 
J. Comput. Appl. Math. 263 (2014), 351--369.

\bi{OlikerPrussner}V.I. Oliker, L.D. Prussner, { On the numerical solution of the 
equation $(\partial^2z/\partial x^2)(\partial^2z/\partial y^2)-((\partial^2z/\partial x\partial y))^2=f$ and its discretizations. I,}
Numer. Math. 54 (1988), 271--293. 

\bi{PapadakisPO}N. Papadakis, G. Peyr\'e, E. Oudet,
{
Optimal transport with proximal splitting,}
SIAM J. Imaging Sci. 7 (2014), 212--238. 

\bibitem{distMesh}
P.-O. Persson, G. Strang, {A simple mesh generator in MATLAB}, SIAM Rev. 46 (2004), 329--345.

\bi{Pogorelov}
A.V. Pogorelov, { 
The Minkowski multidimensional problem,}, Wiley, 1978.

\bibitem{RT} J. Rauch, B.A. Taylor, 
{ The Dirichlet problem for 
the multidimensional Monge--Amp\`ere equation,}
Rocky Mountain J. Math. 7 (1977), 345--364.

\bibitem{Rock}
R.~T.~Rockafellar, {Convex analysis,} Princeton University Press, 1970.

\bi{RW} R.~T.~Rockafellar, R. J.-B. Wets, Variational analysis, Springer, 1997.

\bi{SulmanWR}
M.M. Sulman, J.F. Williams, R.D. Russell,
{
An efficient approach for the numerical solution of the Monge--Amp\`ere equation,} 
Appl. Numer. Math. 61 (2011), 298--307. 

\bibitem{Villani}
C.~Villani, {Topics in optimal transportation,}  
American Mathematical Society, 2003.

\bibitem{VillaniOldNew}
C.~Villani, {Optimal transport: old and new},  Springer, 2009.

\bibitem{Zahorski}
Z. Zahorski, 
{Sur l'ensemble des points de non-d\'erivabilit\'e d'une fonction continue,}
Bull.  Soc. Math. France 74 (1946), 147--178.
\end{thebibliography}
\end{document}